\theoremstyle{plain}
\newtheorem{theorem}{Theorem}[section]
\newtheorem{conjecture}[theorem]{Conjecture}
\newtheorem{lemma}[theorem]{Lemma}
\newtheorem{proposition}[theorem]{Proposition}
\newtheorem{theoremx}{Theorem}
\theoremstyle{definition}
\newtheorem{definition}[theorem]{Definition}
\newtheorem{remark}[theorem]{Remark}
\numberwithin{equation}{section}
\newcommand\fantome[1]{}
\def\bL{\mathbb L}
\def\bT{\mathbb T}
\def\cF{\mathcal F}
\def\Fq{\mathbb F_q}
\newcommand{\ppar}{${}$\par}
\DeclareMathOperator{\Init}{Init}
\DeclareMathOperator{\Li}{Li}
\DeclareMathOperator{\Si}{Si}
\newcommand{\F}{\mathbb{F}}
\newcommand{\C}{\mathbb{C}}
\newcommand{\bff}{\mathbf{f}}
\newcommand{\bg}{\mathbf{g}}
\newcommand{\bh}{\mathbf{h}}
\newcommand{\bv}{\mathbf{v}}
\newcommand{\fla}{\bm{\lambda}}
\newcommand{\fm}{\bm{\mu}}
\newcommand{\fs}{\mathfrak{s}}
\newcommand{\fe}{\bm{\epsilon}}
\newcommand{\fve}{\bm{\varepsilon}}
\newcommand{\frakL}{\mathfrak{L}}
\newcommand{\frakLi}{\mathfrak{Li}}
\newcommand{\fQ}{\mathfrak{Q}}
\newcommand{\N}{\ensuremath \mathbb{N}}
\DeclareMathOperator{\Mat}{Mat}
\DeclareMathOperator{\depth}{depth}
\newcommand{\invtwist}{^{(-1)}}
\newcommand{\twistinv}{^{(-1)}}
\author[B.-H. Im]{Bo-Hae Im}
\address{
Dept. of Mathematical Sciences, KAIST,
291 Daehak-ro, Yuseong-gu,
Daejeon 34141, South Korea
}
\email{bhim@kaist.ac.kr}
\author[H. Kim]{Hojin Kim}
\address{
Dept. of Mathematical Sciences, KAIST,
291 Daehak-ro, Yuseong-gu,
Daejeon 34141, South Korea
}
\email{hojinkim@kaist.ac.kr}
\author[K. N. Le]{Khac Nhuan Le}
\address{
Normandie Université,
Université de Caen Normandie - CNRS,
Laboratoire de Mathématiques Nicolas Oresme (LMNO), UMR 6139,
14000 Caen, France.
}
\email{khac-nhuan.le@unicaen.fr}
\author[T. Ngo Dac]{Tuan Ngo Dac}
\address{
Normandie Université,
Université de Caen Normandie - CNRS,
Laboratoire de Mathématiques Nicolas Oresme (LMNO), UMR 6139,
14000 Caen, France.
}
\email{tuan.ngodac@unicaen.fr}
\author[L. H. Pham]{Lan Huong Pham}
\address{
Institute of Mathematics, Vietnam Academy of Science and Technology, 18 Hoang Quoc Viet, 10307 Hanoi, Viet Nam
}
\email{plhuong@math.ac.vn}
\title{Zagier-Hoffman's conjectures in positive characteristic}
\subjclass[2010]{Primary 11M32; Secondary 11G09, 11J93, 11M38, 11R58}
\keywords{Anderson $t$-motives, Anderson-Brownawell-Papanikolas criterion, (alternating) multiple zeta values, (alternating) Carlitz multiple polylogarithms}
\begin{document}

\maketitle

\begin{abstract}
Multiples zeta values and alternating multiple zeta values in positive characteristic were introduced by Thakur and Harada as analogues of classical multiple zeta values of Euler and Euler sums. In this paper we determine all linear relations between alternating multiple zeta values and settle the main goals of these theories. As a consequence we completely establish Zagier-Hoffman's conjectures in positive characteristic formulated by Todd and Thakur which predict the dimension and an explicit basis of the span of multiple zeta values of Thakur of fixed weight.
\end{abstract}

\tableofcontents

\section*{Introduction} \label{introduction}

\subsection{Classical setting}

\subsubsection{Multiple zeta values}

Multiple zeta values of Euler (MZV's for short) are real positive numbers given by
	\[ \zeta(n_1,\dots,n_r)=\sum_{0<k_1<\dots<k_r} \frac{1}{k_1^{n_1} \dots k_r^{n_r}}, \quad \text{where } n_i \geq 1, n_r \geq 2. \]
Here $r$ is called the depth and $w=n_1+\dots+n_r$ is called the weight of the presentation $\zeta(n_1,\dots,n_r)$. These values cover the special values $\zeta(n)$ for $n \geq 2$ of the Riemann zeta function and have been studied intensively especially in the last three decades with important and deep connections to different branches of mathematics and physics, for example arithmetic geometry, knot theory and higher energy physics. We refer the reader to  \cite{BGF, Zag94} for more details.

The main goal of this theory is to understand all $\mathbb Q$-linear relations between MZV's. Goncharov \cite[Conjecture 4.2]{Gon97} conjectures that all $\mathbb Q$-linear relations between MZV's can be derived from those between MZV's of the same weight. As the next step, precise conjectures formulated by Zagier \cite{Zag94} and Hoffman \cite{Hof97} predict the dimension and an explicit basis for the $\mathbb Q$-vector space $\mathcal{Z}_k$ spanned by MZV's of weight $k$ for $k \in \mathbb{N}$.

\begin{conjecture}[Zagier's conjecture]
We define a Fibonacci-like sequence of integers $d_k$ as follows. Letting $d_0=1, d_1=0$ and $d_2=1$ we define $d_k=d_{k-2}+d_{k-3}$ for $k \geq 3$. Then for $k \in \N$ we have
	\[ \dim_{\mathbb Q} \mathcal Z_k = d_k. \]
\end{conjecture}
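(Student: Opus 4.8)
The plan is to establish the two inequalities $\dim_{\mathbb{Q}} \mathcal{Z}_k \le d_k$ and $\dim_{\mathbb{Q}} \mathcal{Z}_k \ge d_k$ separately, since they have a quite different character. For the upper bound I would argue motivically, following Goncharov--Terasoma and Deligne--Goncharov: one realizes the weight-$k$ MZV's as periods of mixed Tate motives over $\mathbb{Z}$ and lifts each $\zeta(n_1,\dots,n_r)$ to a motivic avatar $\zeta^{\mathfrak{m}}(n_1,\dots,n_r)$ lying in a graded algebra $\mathcal{H}$ equipped with a period homomorphism $\mathrm{per}\colon \mathcal{H}\to\mathbb{R}$ with $\mathrm{per}\bigl(\zeta^{\mathfrak{m}}(n_1,\dots,n_r)\bigr)=\zeta(n_1,\dots,n_r)$. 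The Tannakian Galois group of the category $MT(\mathbb{Z})$ is $\mathbb{G}_m\ltimes U$ with $U$ pro-unipotent and $\mathrm{Lie}(U)$ free on one generator in each odd degree $\ge 3$ (Deligne--Goncharov, using Borel's computation of the rational $K$-theory of $\mathbb{Z}$); consequently $\mathcal{H}$ has Poincaré series $\sum_k d_k\, t^k = \tfrac{1}{1-t^2-t^3}$, i.e. $\dim_{\mathbb{Q}}\mathcal{H}_k = d_k$. Hence any upper bound on $\dim\mathcal{H}_k$ descends to $\mathcal{Z}_k = \mathrm{per}(\mathcal{H}_k)$.

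For the upper bound it then remains to see that the motivic MZV's actually fill up $\mathcal{H}_k$, equivalently that the \emph{Hoffman elements} $\zeta(n_1,\dots,n_r)$ with every $n_i\in\{2,3\}$ already span $\mathcal{Z}_k$ --- there being exactly $d_k$ of them in weight $k$, since compositions of $k$ into parts $2$ and $3$ satisfy $d_k=d_{k-2}+d_{k-3}$ with the stated initial values. Here I would run Brown's inductive argument inside $\mathcal{H}$: using Goncharov's explicit formula for the coaction $\Delta\colon\mathcal{H}\to\mathcal{A}\otimes\mathcal{H}$, where $\mathcal{A}=\mathcal{H}/\zeta^{\mathfrak{m}}(2)\mathcal{H}$, one shows the $\mathbb{Q}$-span of the Hoffman motivic MZV's is stable under the infinitesimal coactions $D_r$ ($r\ge 3$ odd), and that the depth (level) filtration forces every $\zeta^{\mathfrak{m}}(n_1,\dots,n_r)$ to be congruent modulo lower depth to a $\mathbb{Q}$-linear combination of Hoffman elements --- the crucial nonvanishing being a $2$-adic valuation property of certain sums of binomial coefficients.

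The reverse inequality $\dim_{\mathbb{Q}}\mathcal{Z}_k\ge d_k$ --- equivalently, the $\mathbb{Q}$-linear independence of the Hoffman family of genuine real numbers --- is the \textbf{main obstacle}, and I do not expect these methods to resolve it: it would follow from the relevant case of Grothendieck's period conjecture, namely injectivity of $\mathrm{per}\colon\mathcal{H}\to\mathbb{R}$, but no unconditional proof is known, and already the irrationality of $\zeta(5)$ is open. This is precisely the point at which the function-field analogue behaves more favourably, and explains why the present paper can settle the corresponding Zagier--Hoffman predictions of Todd and Thakur unconditionally: over $\Fq[t]$ the place of the period map is taken by rigid-analytic trivializations of suitable Anderson $t$-motives, whose relations are controlled by the Anderson--Brownawell--Papanikolas criterion, and this yields both the dimension and the explicit basis.
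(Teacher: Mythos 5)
You have not given a proof of this statement, and rightly so: the statement you were assigned is labeled a \emph{conjecture} in the paper, and the paper itself does not prove it. In the surrounding discussion the authors state exactly what you say --- the algebraic (upper-bound) half is known by Terasoma, Deligne--Goncharov and Brown via mixed Tate motives over $\mathbb{Z}$, while ``the transcendental part which concerns lower bounds for $\dim_{\mathbb Q}\mathcal Z_k$ is completely open.'' Your write-up is therefore best read as a survey of the state of the art rather than a proof attempt, and judged on those terms it is accurate: the Tannakian description of $MT(\mathbb{Z})$ with $\operatorname{Lie}(U)$ free on generators in odd degrees $\ge 3$ does give $\sum_k \dim_{\mathbb Q}\mathcal H_k\, t^k = (1-t^2-t^3)^{-1}$, hence $\dim_{\mathbb Q}\mathcal Z_k\le d_k$; Brown's coaction/level argument with the $2$-adic nonvanishing lemma does show the Hoffman elements span; and the lower bound would follow from injectivity of the period map, i.e.\ Grothendieck's period conjecture for $MT(\mathbb{Z})$, which is unproved (indeed irrationality of $\zeta(5)$ is open).

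One small imprecision worth flagging: you present Brown's theorem as part of establishing the upper bound, but the upper bound $\dim_{\mathbb Q}\mathcal Z_k\le d_k$ already follows from $\dim_{\mathbb Q}\mathcal H_k=d_k$ together with surjectivity of $\mathrm{per}\colon\mathcal H_k\twoheadrightarrow\mathcal Z_k$; Brown's theorem is the stronger statement that the \emph{specific} Hoffman family spans (the Hoffman conjecture, which the paper states separately), and does not sharpen the Zagier bound itself. Your closing paragraph correctly identifies why the paper \emph{can} settle the function-field analogues unconditionally: the role of the conjectural period map is replaced by rigid-analytic trivializations of Anderson dual $t$-motives, and linear relations among the associated series are controlled by the Anderson--Brownawell--Papanikolas criterion (Theorem \ref{thm:ABP}), which has no known counterpart over $\mathbb Q$. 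This matches the paper's own framing of the classical conjecture as motivation and of Theorems \ref{thm: ZagierHoffman AMZV} and \ref{thm: ZagierHoffman} as its positive-characteristic resolution.
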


\begin{conjecture}[Hoffman's conjecture]
The $\mathbb Q$-vector space $\mathcal Z_k$ is generated by the basis consisting of MZV's of weight $k$ of the form $\zeta(n_1,\dots,n_r)$ with $n_i \in \{2,3\}$.
\end{conjecture}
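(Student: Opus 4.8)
The plan is to follow Brown's strategy inside the Tannakian category $\mathrm{MT}(\mathbb Z)$ of mixed Tate motives over $\mathbb Z$, reducing the assertion to an explicit computation with the motivic coaction. First I would replace the real numbers by the graded $\mathbb Q$-vector space $\mathcal H=\bigoplus_{k\ge 0}\mathcal H_k$ spanned by \emph{motivic} multiple zeta values $\zeta^{\mathfrak m}(n_1,\dots,n_r)$: this is a comodule over the fundamental Hopf algebra $\mathcal A$ of $\mathrm{MT}(\mathbb Z)$, and the period realization is a weight-graded $\mathbb Q$-algebra surjection $\mathrm{per}\colon\mathcal H_k\twoheadrightarrow\mathcal Z_k$ sending $\zeta^{\mathfrak m}(n_1,\dots,n_r)$ to $\zeta(n_1,\dots,n_r)$. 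Since $\mathrm{per}$ is onto in each weight, it suffices to show that the motivic Hoffman elements $\mathcal H^{\mathrm{Hof}}_k:=\mathrm{span}_{\mathbb Q}\{\zeta^{\mathfrak m}(n_1,\dots,n_r):n_i\in\{2,3\},\ \textstyle\sum_i n_i=k\}$ exhaust $\mathcal H_k$. Two numerology facts pin down the problem: since the pro-unipotent radical of the motivic Galois group is free on one generator in each odd degree $\ge 3$, the Deligne--Goncharov computation gives $\dim_{\mathbb Q}\mathcal H_k=d_k$; and the number of compositions of $k$ into parts $2$ and $3$ obeys the same recursion and initial values, so there are exactly $d_k$ motivic Hoffman elements. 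Hence "$\mathcal H^{\mathrm{Hof}}_k=\mathcal H_k$'' is equivalent to "the motivic Hoffman elements form a basis of $\mathcal H_k$'', and establishing either suffices to prove that the Hoffman MZVs generate $\mathcal Z_k$.

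Second, I would invoke the infinitesimal coaction. With $\mathcal L=\mathcal A_{>0}/\mathcal A_{>0}^2$ the Lie coalgebra of the motivic Galois group --- so $\mathcal L_{2r+1}$ is one-dimensional, generated by the class of $\zeta^{\mathfrak m}(2r+1)$ for $r\ge 1$, and $\mathcal L_j=0$ otherwise --- there is an operator $D_{<k}=\bigoplus_{3\le 2r+1<k}D_{2r+1}\colon\mathcal H_k\longrightarrow\bigoplus_r\mathcal L_{2r+1}\otimes\mathcal H_{k-2r-1}$ whose kernel on $\mathcal H_k$ is $\mathbb Q\,\zeta^{\mathfrak m}(k)$ when $k$ is odd and $\mathbb Q\,\zeta^{\mathfrak m}(2,\dots,2)$ when $k$ is even (using the identity $\zeta^{\mathfrak m}(\{2\}^{k/2})\in\mathbb Q^\times\cdot(\zeta^{\mathfrak m}(2))^{k/2}$, so the kernel already lies in $\mathcal H^{\mathrm{Hof}}_k$). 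I would then run a double induction, on the weight $k$ and, within a fixed weight, on the \emph{level} $\ell$ (the number of entries equal to $3$): granting $\mathcal H^{\mathrm{Hof}}_{k'}=\mathcal H_{k'}$ for all $k'<k$, it is enough to prove that $D_{<k}$ maps $\mathcal H^{\mathrm{Hof}}_k$ onto $D_{<k}(\mathcal H_k)$, since $\ker D_{<k}\subseteq\mathcal H^{\mathrm{Hof}}_k$. The key computation is Brown's $2$--$3$ evaluation: the motivic lift of Zagier's formula for the blocks $\zeta(\{2\}^a,3,\{2\}^b)$, together with Goncharov's combinatorial description of the coaction on iterated integrals, expresses $D_{2r+1}$ applied to a Hoffman generator of weight $k$ and level $\ell$ as an explicit $\mathbb Q$-linear combination of terms $(\text{generator of }\mathcal L_{2r+1})\otimes(\text{Hoffman element of weight }k-2r-1\text{ and level }\ell-1)$, whose second tensor factors lie in $\mathcal H^{\mathrm{Hof}}_{k-2r-1}$ by the inductive hypothesis. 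The induction then closes provided the matrix of these coefficients, indexed by Hoffman generators of weight $k$ against Hoffman elements of lower weight, has maximal rank.

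Third comes the arithmetic input that makes the rank statement true: Zagier's explicit evaluation of $\zeta(\{2\}^a,3,\{2\}^b)$ as a $\mathbb Q$-linear combination of $\zeta(2r+1)\,\zeta(\{2\}^m)$ with coefficients of the shape $\binom{2r}{2a+2}-(1-2^{-2r})\binom{2r}{2b+1}$, and the ensuing non-vanishing of a binomial determinant --- one reduces, using that the $\zeta(\{2\}^m)$ are linearly independent ($\mathbb Q^\times$-multiples of $\pi^{2m}$), to a Vandermonde-type expression that is visibly nonzero in the exact index ranges arising. Plugging this back in gives $D_{<k}(\mathcal H^{\mathrm{Hof}}_k)=D_{<k}(\mathcal H_k)$, hence $\mathcal H^{\mathrm{Hof}}_k=\mathcal H_k$, and applying $\mathrm{per}$ shows the Hoffman MZVs generate $\mathcal Z_k$. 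I expect the main obstacle to be the bookkeeping in the double induction --- arranging that each application of $D_{2r+1}$ lands strictly lower in both the weight and the level filtration so that the recursion terminates, and then checking that the binomial coefficient matrix is nonsingular precisely on the ranges that occur; this is where genuine arithmetic of MZVs, not just the formalism of $\mathrm{MT}(\mathbb Z)$, is needed. Finally, upgrading "generates $\mathcal Z_k$'' to "is a \emph{basis} of $\mathcal Z_k$'', i.e.\ to the equality $\dim_{\mathbb Q}\mathcal Z_k=d_k$ predicted by Zagier, is equivalent to the injectivity of $\mathrm{per}$ in each weight, a special case of the Grothendieck period conjecture; the argument above therefore settles the generation statement unconditionally and reduces the remaining linear independence to that transcendence problem.
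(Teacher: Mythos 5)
There is a genuine gap here, and it is worth being precise about where it lies. The statement you were asked to prove is Hoffman's conjecture in the classical setting, which asserts that the MZV's $\zeta(n_1,\dots,n_r)$ with $n_i \in \{2,3\}$ form a \emph{basis} of $\mathcal Z_k$ --- that is, both that they span $\mathcal Z_k$ and that they are $\mathbb Q$-linearly independent (equivalently, that $\dim_{\mathbb Q}\mathcal Z_k = d_k$). The paper does not prove this statement: it records it as a conjecture, quotes Brown's theorem immediately afterwards for the spanning half, and explicitly notes that the transcendental half (the lower bound on $\dim_{\mathbb Q}\mathcal Z_k$) is completely open in characteristic zero. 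Your proposal is, in substance, a sketch of Brown's proof of the spanning statement: lifting to motivic MZV's in $\mathrm{MT}(\mathbb Z)$, using the infinitesimal coaction operators $D_{2r+1}$, the kernel computation, the weight-and-level double induction, and Zagier's evaluation of $\zeta(\{2\}^a,3,\{2\}^b)$ to get the rank statement. That part of the plan is sound in outline (though the non-vanishing of the coefficient matrix is established by Brown via a $2$-adic valuation argument rather than a ``visibly nonzero Vandermonde-type'' determinant, so that step needs more care than your sketch suggests). But in your final paragraph you concede that upgrading ``generates'' to ``is a basis'' requires the injectivity of the period map $\mathrm{per}\colon\mathcal H_k\to\mathcal Z_k$, a case of the Grothendieck period conjecture, and you leave that unproved. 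Since the statement to be proved is precisely the basis assertion, this is not a bookkeeping issue but the essential missing ingredient: what you have outlined proves only the analogue of Brown's theorem, not Hoffman's conjecture.

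For context, this is exactly the dichotomy the paper is organized around: in the classical setting only the algebraic (upper bound / spanning) part is known, while the paper's actual contribution is to settle \emph{both} parts in the function field setting, where the transcendental part is attacked not through a period conjecture but through the Anderson--Brownawell--Papanikolas criterion applied to explicit dual $t$-motives (Theorems \ref{theorem: linear independence} and \ref{thm: trans ACMPL}), combined with a strong Brown-type theorem for ACMPL's. So your proposal cannot be compared to a proof in the paper --- none exists for this statement --- and as written it establishes strictly less than what the statement claims.
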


The algebraic part of these conjectures which concerns upper bounds for $\dim_{\mathbb Q} \mathcal Z_k$ was solved by Terasoma \cite{Ter02}, Deligne-Goncharov \cite{DG05} and Brown \cite{Bro12} using the theory of mixed Tate motives.

\begin{theorem}[Deligne-Goncharov, Terasoma]
For $k \in \N$ we have $\dim_{\mathbb Q} \mathcal Z_k \leq d_k$.
\end{theorem}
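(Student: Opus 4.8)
The plan is to follow the motivic strategy of Deligne--Goncharov (and, in a more explicit incarnation, of Terasoma): realise each $\zeta(n_1,\dots,n_r)$ as a period of a mixed Tate motive over $\mathbb{Z}$, transport the counting problem to a graded Hopf algebra attached to a Tannakian category, and bound the dimension of the relevant graded piece via Borel's computation of the ranks of the $K$-groups of $\mathbb{Z}$. Concretely, I would first invoke the Deligne--Goncharov construction of the motivic fundamental groupoid of $\mathbb{P}^1_{\mathbb{Q}}\setminus\{0,1,\infty\}$ with the tangential base points at $0$ and $1$: the pro-object underlying it, assembled as an iterated extension of Tate objects, has good reduction at every prime and therefore lies in the category $\mathrm{MT}(\mathbb{Z})$ of mixed Tate motives unramified over $\mathbb{Z}$. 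The entries of its motivic Betti--de Rham comparison are the motivic multiple zeta values $\zeta^{\mathfrak{m}}(n_1,\dots,n_r)$, elements of weight $w=n_1+\dots+n_r$ in the fundamental graded Hopf algebra $\mathcal{H}$ of $\mathrm{MT}(\mathbb{Z})$, and the period homomorphism $\mathrm{per}\colon\mathcal{H}\to\mathbb{R}$ is graded with $\mathrm{per}(\zeta^{\mathfrak{m}}(n_1,\dots,n_r))=\zeta(n_1,\dots,n_r)$. Hence the $\mathbb{Q}$-span of the weight-$k$ motivic MZV's surjects onto $\mathcal{Z}_k$, giving $\dim_{\mathbb{Q}}\mathcal{Z}_k\le\dim_{\mathbb{Q}}\mathcal{H}_k$.

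Next I would analyse $\mathrm{MT}(\mathbb{Z})$ as a neutral Tannakian category over $\mathbb{Q}$ (its existence as an abelian category with a well-behaved weight filtration resting on the Beilinson--Soulé vanishing for $\mathbb{Z}$). Its motivic Galois group sits in an exact sequence $1\to U\to G\to\Gm\to 1$ with $U$ pro-unipotent, and Tannakian duality identifies $\mathcal{H}$ with $\mathcal{O}(G)$. The graded Lie algebra of $U$ is \emph{freely} generated by a single generator in each odd degree $\ge 3$; equivalently, $\dim_{\mathbb{Q}}\mathrm{Ext}^1_{\mathrm{MT}(\mathbb{Z})}(\mathbb{Q}(0),\mathbb{Q}(m))$ is $1$ for $m$ odd and $\ge 3$, and $0$ otherwise, while $\mathrm{Ext}^2_{\mathrm{MT}(\mathbb{Z})}(\mathbb{Q}(0),\mathbb{Q}(m))=0$ for all $m$. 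These $\mathrm{Ext}$-statements are in turn the translation of Borel's theorem that $K_{2m-1}(\mathbb{Z})\otimes\mathbb{Q}$ has rank $1$ for $m$ odd and $\ge 3$ and rank $0$ otherwise, combined with the finiteness of $\mathbb{Z}^{\times}$ and the fact that $\mathrm{MT}(\mathbb{Z})$ has cohomological dimension $1$.

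It then remains to read off the Hilbert series of $\mathcal{H}$. The universal enveloping algebra of the free graded Lie algebra on one generator in each odd degree $\ge 3$ is the free associative algebra on these generators, with Hilbert series $\bigl(1-\sum_{n\ge 1}t^{2n+1}\bigr)^{-1}=(1-t^2)/(1-t^2-t^3)$; the residual $\Gm$-factor contributes $(1-t^2)^{-1}$, so that
\[ \sum_{k\ge 0}\bigl(\dim_{\mathbb{Q}}\mathcal{H}_k\bigr)\,t^k \;=\; \frac{1}{1-t^2-t^3}. \]
The coefficients of the right-hand side obey $d_0=1$, $d_1=0$, $d_2=1$ and $d_k=d_{k-2}+d_{k-3}$ for $k\ge 3$, so $\dim_{\mathbb{Q}}\mathcal{H}_k=d_k$, and combining with the surjection above yields $\dim_{\mathbb{Q}}\mathcal{Z}_k\le d_k$.

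The main obstacle is the first step: constructing the motivic MZV's and, above all, proving that the motivic $\pi_1$ of $\mathbb{P}^1\setminus\{0,1,\infty\}$ is unramified over $\mathbb{Z}$ rather than merely over some $\mathbb{Z}[1/N]$. This integrality is precisely what forces the free generators into odd degrees $\ge 3$ and produces the sharp sequence $(d_k)$; over $\mathbb{Z}[1/N]$ one would acquire additional generators in degree $1$ and a far weaker bound. The two deep external inputs --- the construction of the Tannakian category of mixed Tate motives (via Beilinson--Soulé) and Borel's rank computation for $K_\bullet(\mathbb{Z})$ --- enter here only as black boxes.
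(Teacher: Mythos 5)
The paper does not prove this statement: it is recalled in the introduction as background, attributed directly to Terasoma \cite{Ter02} and Deligne--Goncharov \cite{DG05} (together with Brown \cite{Bro12} for the basis statement), and no argument is given or sketched. There is therefore no ``paper's proof'' to compare against. Your sketch is a fair and essentially correct account of the Deligne--Goncharov argument: realise MZV's as periods of $\mathrm{MT}(\mathbb{Z})$, pass to the graded Hopf algebra of the motivic Galois group, use Borel/Beilinson--Soul\'e to identify the abelianisation of the prounipotent radical (one free generator in each odd degree $\ge 3$, no $\mathrm{Ext}^2$), and read off the Hilbert series $\sum_k d_k\, t^k = 1/(1-t^2-t^3)$, the $1/(1-t^2)$ factor accounting for the polynomial ring generated by $\zeta^{\mathfrak{m}}(2)$ (equivalently, the effective part of the $\mathbb{G}_m$-torsor of periods, which is what you are calling the ``residual $\mathbb{G}_m$-factor''). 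You also correctly flag the two deep external inputs --- the integrality of the motivic $\pi_1$ over $\mathbb{Z}$ rather than $\mathbb{Z}[1/N]$, and Borel's rank computation --- without which the generators would leak into degree $1$ and the bound would degrade. As a proof this remains a sketch relying on those black boxes, but it matches the structure of the cited proof and contains no gaps at the level of detail attempted.
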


\begin{theorem}[Brown]
The $\mathbb Q$-vector space $\mathcal Z_k$ is generated by MZV's of weight $k$ of the form $\zeta(n_1,\dots,n_r)$ with $n_i \in \{2,3\}$.
\end{theorem}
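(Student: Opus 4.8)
The plan is to follow Brown's method: lift the multiple zeta values to a motivic enhancement carrying an action of a pro-unipotent Galois group, prove the spanning statement there by induction on the weight, and then descend via the period map. Concretely, I would introduce the graded $\mathbb{Q}$-algebra $\mathcal{H}=\bigoplus_k\mathcal{H}_k$ of \emph{motivic} multiple zeta values, where $\mathcal{H}_k$ is spanned by symbols $\zeta^{\mathfrak{m}}(n_1,\dots,n_r)$ of weight $k$, realized inside the Tannakian category $\mathrm{MT}(\mathbb{Z})$ of mixed Tate motives over $\mathbb{Z}$ (equivalently, via the unipotent motivic fundamental groupoid of $\mathbb{P}^1_{\mathbb{Q}}\setminus\{0,1,\infty\}$ with tangential base points at $0$ and $1$). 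There is a canonical surjective period morphism $\mathcal{H}_k\twoheadrightarrow\mathcal{Z}_k$ sending $\zeta^{\mathfrak{m}}(n_1,\dots,n_r)$ to $\zeta(n_1,\dots,n_r)$, so it suffices to prove that $\mathcal{H}_k$ is spanned by the \emph{Hoffman elements}, those $\zeta^{\mathfrak{m}}(n_1,\dots,n_r)$ with every $n_i\in\{2,3\}$; write $\mathcal{H}_k^{2,3}$ for their span. The payoff of working motivically is that $\mathcal{H}$ is a graded comodule over the Hopf algebra $\mathcal{A}=\mathcal{H}/\zeta^{\mathfrak{m}}(2)\mathcal{H}$, whose coaction refines to infinitesimal operators $D_{2s+1}\colon\mathcal{H}_N\to\mathcal{L}_{2s+1}\otimes_{\mathbb{Q}}\mathcal{H}_{N-2s-1}$, where $\mathcal{L}=\mathcal{A}_{>0}/(\mathcal{A}_{>0})^2$ is the Lie coalgebra of indecomposables; only odd indices $2s+1\ge 3$ need be used, since by Borel's computation of $K_\bullet(\mathbb{Z})$ the graded Lie algebra dual to $\mathcal{L}$ is generated in degrees $3,5,7,\dots$. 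On the iterated-integral representatives of the $\zeta^{\mathfrak{m}}$, each $D_{2s+1}$ is given by Goncharov's explicit deconcatenation formula.

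I would then prove $\mathcal{H}_N=\mathcal{H}_N^{2,3}$ by induction on $N$, assuming $\mathcal{H}_M=\mathcal{H}_M^{2,3}$ for all $M<N$. Two standard structural inputs enter: first, the joint kernel of $D_{<N}:=\bigoplus_{3\le 2s+1\le N-3}D_{2s+1}$ on $\mathcal{H}_N$ is one-dimensional, spanned by $\zeta^{\mathfrak{m}}(N)$ when $N$ is odd and by $\zeta^{\mathfrak{m}}(2)^{N/2}$ when $N$ is even (from Goncharov's, equivalently Deligne--Goncharov's, description of the motivic Lie algebra); second, the motivic form $\dim_{\mathbb{Q}}\mathcal{H}_N\le d_N$ of the Deligne--Goncharov--Terasoma bound quoted above. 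By the inductive hypothesis applied in weights $<N$, the codomains $\mathcal{L}_{2s+1}\otimes\mathcal{H}_{N-2s-1}$ are spanned by tensors of classes of Hoffman words (in $\mathcal{L}_{2s+1}$) with Hoffman elements (in $\mathcal{H}_{N-2s-1}$), so it remains to show (a) that $\zeta^{\mathfrak{m}}(N)$, for $N$ odd, lies in $\mathcal{H}_N^{2,3}$, and (b) that $D_{<N}$ carries $\mathcal{H}_N^{2,3}$ onto $D_{<N}(\mathcal{H}_N)$ --- for then $\mathcal{H}_N=\mathcal{H}_N^{2,3}+\ker D_{<N}=\mathcal{H}_N^{2,3}$. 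Both (a) and (b) reduce to making the action of the $D_{2s+1}$ on Hoffman words fully explicit: applying $D_{2s+1}$ excises a sub-block of $2s+1$ consecutive letters from the binary word (the block recording the removed $\zeta^{\mathfrak{L}}$-factor), subject to the path-reversal and equal-endpoint vanishing of Goncharov's formula, and the surviving terms carry binomial-coefficient weights. This packages (a) and (b) into a single finite statement: a certain square integer matrix $M_N$, assembled from those binomial coefficients, is invertible.

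Establishing $\det M_N\neq 0$ is the one genuinely hard point, and the reason this theorem was long open. Rather than evaluating the determinant, I would follow Brown's device and show that $\det M_N$ is an \emph{odd} integer via a $2$-adic analysis --- that is, prove a family of congruences modulo $2$ for the binomial sums involved, conveniently organized through the generating series $(1-t^2-t^3)^{-1}=\sum_k d_k\,t^k$, whose coefficients are exactly the Fibonacci-type numbers $d_k$ (which also count the Hoffman words of each weight). Once $M_N$ is invertible the induction closes, giving $\mathcal{H}_N=\mathcal{H}_N^{2,3}$; combined with $\dim_{\mathbb{Q}}\mathcal{H}_N\le d_N$ this moreover forces the Hoffman words of weight $N$ to form a basis and $\dim_{\mathbb{Q}}\mathcal{H}_N=d_N$. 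Pushing forward along the surjection $\mathcal{H}_N\twoheadrightarrow\mathcal{Z}_N$ then shows that the MZV's $\zeta(n_1,\dots,n_r)$ of weight $N$ with all $n_i\in\{2,3\}$ span $\mathcal{Z}_N$, which is the assertion. I expect essentially all of the difficulty to sit in the $2$-adic nonvanishing of $M_N$: the motivic formalism of the first step is by now standard, and the combinatorics of the second is routine once Goncharov's formula is in hand.
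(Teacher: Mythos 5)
The paper states this result as classical background and cites \cite{Bro12}; it offers no proof of its own, so there is nothing internal to compare against. Your sketch is a faithful outline of Brown's original argument: pass to motivic multiple zeta values $\mathcal{H}$, exploit the motivic coaction via the infinitesimal operators $D_{2s+1}$, induct on the weight, identify the one-dimensional kernel of $D_{<N}$ (spanned by $\zeta^{\mathfrak m}(N)$ or $\zeta^{\mathfrak m}(2)^{N/2}$), and reduce the required surjectivity onto that codomain to the invertibility of an explicit matrix of binomial coefficients, which Brown verifies by a $2$-adic congruence. The reduction $\mathcal{H}_N=\mathcal{H}_N^{2,3}+\ker D_{<N}=\mathcal{H}_N^{2,3}$ is exactly the right way to package the spanning statement, which is all the theorem asserts, and the push-forward along the period surjection $\mathcal{H}_N\twoheadrightarrow\mathcal{Z}_N$ finishes it.

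One small correction to a side remark in your last paragraph: the clause that spanning \emph{combined with} the upper bound $\dim_{\mathbb{Q}}\mathcal{H}_N\le d_N$ ``forces the Hoffman words of weight $N$ to form a basis'' is not a valid inference --- spanning together with an upper bound on the dimension can only reconfirm the bound, never produce linear independence. What Brown actually establishes, through the level filtration on Hoffman words and the injectivity encoded by the very same invertible matrix $M_N$, is linear independence directly, from which the basis claim and $\dim_{\mathbb{Q}}\mathcal{H}_N=d_N$ follow. Since the statement you were asked to prove only requires spanning, this does not affect the soundness of your argument for the theorem itself, but the implication as you phrased it should be repaired if you intend to claim the basis statement as well.
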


Unfortunately, the transcendental part which concerns lower bounds for $\dim_{\mathbb Q} \mathcal Z_k$ is completely open. We refer the reader to \cite{BGF,Del13,Zag94} for more details and more exhaustive references.

\subsubsection{Alternating multiple zeta values}

There exists a variant of MZV's called the alternating multiple zeta values (AMZV's for short), also known as Euler sums. They are real numbers given by
	\[ \zeta \begin{pmatrix}
	\epsilon_1 & \dots & \epsilon_r \\
	n_1 & \dots & n_r
	\end{pmatrix}=\sum_{0<k_1<\dots<k_r} \frac{\epsilon_1^{k_1} \dots \epsilon_r^{k_r}}{k_1^{n_1} \dots k_r^{n_r}} \]
where $\epsilon_i \in \{\pm 1\}$, $n_i \in \N$ and $(n_r,\epsilon_r) \neq (1,1)$. Similar to MZV's, these values have been studied by Broadhurst, Deligne–Goncharov, Hoffman, Kaneko–Tsumura and many others because of the many connections in different contexts. We refer the reader to \cite{Har21,Hof19,Zha16} for further references.

As before, it is expected that all $\mathbb Q$-linear relations between AMZV's can be derived from those between AMZV's of the same weight. In particular, it is natural to ask whether one could formulate conjectures similar to those of Zagier and Hoffman for AMZV's of fixed weight. By the work of Deligne-Goncharov \cite{DG05}, the sharp upper bounds are achieved:
\begin{theorem}[Deligne-Goncharov]
For $k \in \N$ if we denote by $\mathcal A_k$ the $\mathbb Q$-vector space spanned by AMZV's of weight $k$, then $\dim_{\mathbb Q} \mathcal A_k \leq F_{k+1}$. Here $F_n$ is the $n$-th Fibonacci number defined by $F_1=F_2=1$ and $F_{n+2}=F_{n+1}+F_n$ for all $n \geq 1$.
\end{theorem}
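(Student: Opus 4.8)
The plan is to follow Deligne--Goncharov: exhibit every AMZV of weight $w$ as a period of a single pro-object of the Tannakian category $\mathrm{MT}(\mathbb{Z}[1/2])$ of mixed Tate motives over $\mathbb{Z}[1/2]$, and then read off the bound from the structure of its motivic Galois group. First I would recall that the ring of functions on the unipotent motivic fundamental groupoid of $X=\mathbb{P}^1_{\mathbb Q}\setminus\{0,1,-1,\infty\}=\Gm\setminus\mu_2$, equipped with the standard tangential base points $\vec 1_0$ and $\vec 1_1$, is naturally an Ind-object of $\mathrm{MT}(\mathbb{Z}[1/2])$ — the relevant point being that the removed points and chosen tangent vectors are rational and that $X$ has good reduction away from $2$, so that nothing ramified outside $2$ occurs — and that the convergent iterated integrals representing $\zeta\binom{\epsilon_1\cdots\epsilon_r}{n_1\cdots n_r}$ (convergence being exactly the admissibility condition $(n_r,\epsilon_r)\neq(1,1)$) are coefficients of the comparison isomorphism between its Betti and de Rham realizations, hence are periods, of weight $w=n_1+\dots+n_r$.

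Next I would pass to the motivic side: to each admissible tuple one attaches a motivic AMZV $\zeta^{\mathfrak m}\binom{\epsilon_1\cdots\epsilon_r}{n_1\cdots n_r}$ inside the graded $\mathbb{Q}$-subalgebra $\mathcal H$ of motivic periods spanned by these, together with a graded period homomorphism $\mathcal H\to\mathbb C$ sending $\zeta^{\mathfrak m}\mapsto\zeta$; since $\mathcal A_w$ is the image of $\mathcal H_w$ under this homomorphism, it suffices to prove $\dim_{\mathbb Q}\mathcal H_w\leq F_{w+1}$. The crucial structural input is that $\mathrm{MT}(\mathbb{Z}[1/2])$ has motivic Galois group $\Gm\ltimes U$ with $U$ pro-unipotent and $\Lie U$ \emph{free} graded — no relations, since $\mathrm{Ext}^2$ vanishes in this category — on a graded space whose degree-$n$ component has dimension $\dim_{\mathbb Q}\mathrm{Ext}^1_{\mathrm{MT}(\mathbb{Z}[1/2])}(\mathbb{Q}(0),\mathbb{Q}(n))$. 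Reading these off from algebraic $K$-theory, that dimension equals $1$ for $n=1$ (because $\mathbb{Z}[1/2]^\times\otimes\mathbb{Q}$ has rank one) and, by Borel's theorem, for every odd $n\geq3$, and vanishes for every even $n$; hence $\mathcal O(U)$ has Poincar\'e series $\bigl(1-(t+t^3+t^5+\cdots)\bigr)^{-1}=(1-t^2)/(1-t-t^2)$.

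Finally, the AMZV's being real, their motivic avatars involve only even powers of $2\pi i$, so $\mathcal H$ is contained in the graded subspace $\mathcal O(U)\otimes_{\mathbb Q}\mathbb{Q}[\zeta^{\mathfrak m}(2)]$ of the ring of motivic periods, the extra polynomial generator $\zeta^{\mathfrak m}(2)$ being a rational multiple of $(2\pi i)^2$, of weight $2$; hence
\[
\dim_{\mathbb Q}\mathcal H_w\ \leq\ \bigl[t^w\bigr]\,\frac{1-t^2}{1-t-t^2}\cdot\frac{1}{1-t^2}\ =\ \bigl[t^w\bigr]\,\frac{1}{1-t-t^2}\ =\ F_{w+1},
\]
and therefore $\dim_{\mathbb Q}\mathcal A_w\leq F_{w+1}$. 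I expect the real difficulty to lie not in the dimension count — essentially formal once the setup is available — but in the first step together with the deep inputs it rests on: the construction of the category of mixed Tate motives, the Beilinson--Soul\'e vanishing, Quillen's finite generation and Borel's computation of the ranks of the $K$-groups of $\mathbb{Z}[1/2]$, and the verification that the motivic fundamental groupoid of $X$ is unramified outside $2$. (Showing that this bound is sharp, or producing an explicit spanning family in the spirit of Brown's theorem for ordinary multiple zeta values, would require considerably more, and is not needed here.)
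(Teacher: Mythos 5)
This theorem is quoted in the paper as a classical result of Deligne--Goncharov \cite{DG05}, with no proof given there, so there is no internal argument to compare yours against. Your reconstruction follows exactly the route Deligne--Goncharov take: realize the motivic fundamental groupoid of $\mathbb{G}_m\setminus\mu_2$ (with tangential basepoints) as an Ind-object of $\mathrm{MT}(\mathbb{Z}[1/2])$, express AMZV's as periods of it, use the freeness of $\operatorname{Lie} U$ (from the vanishing of $\mathrm{Ext}^2$) and the Ext-group ranks from $K$-theory of $\mathbb{Z}[1/2]$ to get the Poincar\'e series of $\mathcal{O}(U)$, and then count. The Hilbert-series computation $\frac{1-t^2}{1-t-t^2}\cdot\frac{1}{1-t^2}=\frac{1}{1-t-t^2}$, whose coefficient of $t^w$ is $F_{w+1}$ in the paper's normalization, is correct. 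The one step you pass over quickly --- that the motivic AMZV's lie in $\mathcal{O}(U)\otimes\mathbb{Q}[\zeta^{\mathfrak m}(2)]$ and not merely in $\mathcal{O}(U)\otimes\mathbb{Q}[\mathbb{L}]$, i.e.\ that only even powers of $(2\pi i)^{\mathfrak m}$ occur --- does need a genuine argument (it is where the real structure on the period torsor, coming from the fact that the punctured line and the tangential basepoints are defined over $\mathbb{R}$, enters), and is not just a restatement of the real-valuedness of the numbers; but this is a standard point in the theory and your conclusion is sound.
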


The fact that the previous upper bounds would be sharp was also explained by Deligne in \cite{Del10} (see also \cite{DG05}) using a variant of a conjecture of Grothendieck. In the direction of extending Brown's theorem for AMZV's, there are several sets of generators for $\mathcal A_k$ (see for example \cite{Cha21,Del10}). However, we mention that these generators are only linear combinations of AMZV's.

Finally, we know nothing about non-trivial lower bounds for $\dim_{\mathbb Q} \mathcal A_k$.

\subsection{Function field setting} \label{sec:function fields}

\subsubsection{MZV's of Thakur and analogues of Zagier-Hoffman's conjectures} \label{sec:ZagierHoffman}

By analogy between number fields and function fields, based on the pioneering work of Carlitz \cite{Car35}, Thakur \cite{Tha04} defined analogues of multiple zeta values in positive characteristic. We now need to introduce some notations. Let $A=\Fq[\theta]$ be the polynomial ring in the variable $\theta$ over a finite field $\Fq$ of $q$ elements of characteristic $p>0$. We denote by $A_+$ the set of monic polynomials in $A$.  Let $K=\Fq(\theta)$ be the fraction field of $A$ equipped with the rational point $\infty$. Let $K_\infty$ be the completion of $K$ at $\infty$ and $\C_\infty$ be the completion of a fixed algebraic closure $\overline K$ of $K$ at $\infty$. We denote by $v_\infty$ the discrete valuation on $K$ corresponding to the place $\infty$ normalized such that $v_\infty(\theta)=-1$, and by $\lvert\cdot\rvert_\infty= q^{-v_\infty}$ the associated absolute value on $K$.  The unique valuation of $\mathbb C_\infty$ which extends $v_\infty$ will still be denoted by $v_\infty$. Finally we denote by $\overline{\mathbb F}_q$ the algebraic closure of $\mathbb F_q$ in $\overline{K}$.

Let $\mathbb N=\{1,2,\dots\}$ be the set of positive integers and $\mathbb Z^{\geq 0}=\{0,1,2,\dots\}$ be the set of non-negative integers. In \cite{Car35} Carlitz introduced the Carlitz zeta values $\zeta_A(n)$ for $n \in \N$ given by
	\[ \zeta_A(n) := \sum_{a \in A_+} \frac{1}{a^n} \in K_\infty \]
which are analogues of classical special zeta values in the function field setting.  For any tuple of positive integers $\mathfrak s=(s_1,\ldots,s_r) \in \N^r$, Thakur \cite{Tha04} defined the characteristic $p$ multiple zeta value (MZV for short) $\zeta_A(\fs)$ or $\zeta_A(s_1,\ldots,s_r)$ by
\begin{equation*}
\zeta_A(\fs):=\sum \frac{1}{a_1^{s_1} \ldots a_r^{s_r}} \in K_\infty
\end{equation*}
where the sum runs through the set of tuples $(a_1,\ldots,a_r) \in A_+^r$ with $\deg a_1>\cdots>\deg a_r$. We call $r$ the depth of $\zeta_A(\fs)$ and $w(\fs)=s_1+\dots+s_r$ the weight of $\zeta_A(\fs)$. We note that Carlitz zeta values are exactly depth one MZV's. Thakur \cite{Tha09a} showed that all the MZV's do not vanish. We refer the reader to \cite{AT90,AT09,GP21,LRT14,LRT21,Pel12,Tha04,Tha09,Tha10,Tha17,Tha20,Yu91} for more details on these objects.

As in the classical setting, the main goal of the theory is to understand all linear relations over $K$ between MZV's.  We now state analogues of Zagier-Hoffman's conjectures in positive characteristic formulated by Thakur in \cite[\S 8]{Tha17} and by Todd in \cite{Tod18}. For $w \in \N$ we denote by $\mathcal Z_w$ the $K$-vector space spanned by the MZV's of weight $w$. We denote by $\mathcal T_w$ the set of $\zeta_A(\fs)$ where $\fs=(s_1,\ldots,s_r) \in \N^r$ of weight $w$ with $1\leq s_i\leq q$ for $1\leq i\leq r-1$ and $s_r<q$.

\begin{conjecture}[Zagier's conjecture in positive characteristic] \label{conj: dimension}
Letting
\begin{align*}
d(w)=\begin{cases}
1 & \text{ if } w=0, \\
2^{w-1} & \text{ if } 1 \leq w \leq q-1, \\
2^{w-1}-1 & \text{ if } w=q,
\end{cases}
\end{align*}
we put $d(w)=\sum_{i=1}^q d(w-i)$ for $w>q$. Then for any $w \in \N$, we have
	\[ \dim_K \mathcal Z_w = d(w). \]
\end{conjecture}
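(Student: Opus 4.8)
The plan is to establish Conjecture~\ref{conj: dimension} by proving two things about the explicit family $\mathcal T_w$: that it spans $\mathcal Z_w$ over $K$ (giving $\dim_K\mathcal Z_w\le d(w)$), and that it is $K$-linearly independent (giving $\dim_K\mathcal Z_w\ge d(w)$). Together with the combinatorial identity $\lvert\mathcal T_w\rvert=d(w)$ these yield the theorem, and that identity is elementary: for $1\le w\le q-1$ every composition of $w$ lies in $\mathcal T_w$, so $\lvert\mathcal T_w\rvert=2^{w-1}$; for $w=q$ the only excluded composition is $(q)$, so $\lvert\mathcal T_q\rvert=2^{q-1}-1$; and for $w>q$, deleting the first entry $s_1\in\{1,\dots,q\}$ gives a bijection $\mathcal T_w\cong\bigsqcup_{i=1}^{q}\mathcal T_{w-i}$, which is exactly the recursion defining $d(w)$.

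For the upper bound I would induct on the weight, reducing an arbitrary $\zeta_A(s_1,\dots,s_r)$ of weight $w$ to a $K$-linear combination of elements of $\mathcal T_w$ by means of the available relations among MZV's of a fixed weight: Thakur's sum-shuffle relations, which express a product $\zeta_A(\fs_1)\zeta_A(\fs_2)$ as an $\Fq$-linear combination of MZV's of weight $w(\fs_1)+w(\fs_2)$, together with the Frobenius relations coming from the Carlitz module, the basic instance being $\zeta_A(1)^q=\zeta_A(q)$, which allow one to lower any exponent $s_i\ge q$. The reduction proceeds in two stages --- first bringing every interior exponent into the range $[1,q]$, then trading a trailing exponent $\ge q$ for smaller entries --- and the fact that $d(w)$ drops below $2^{w-1}$ once $w\ge q$ is precisely what these boundary relations account for. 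This is the ``algebraic'' half of the theorem.

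For the lower bound --- the $K$-linear independence of $\mathcal T_w$ --- I would go through the theory of Anderson $t$-motives. First, using the Anderson-Thakur and Chang formulas, I would write each $\zeta_A(\fs)$ with $\fs$ indexing $\mathcal T_w$, together with (as the argument will force) each of its alternating analogues, as an explicit $\overline K$-linear combination of Carlitz multiple polylogarithms evaluated at algebraic points. Next I would assemble the corresponding Frobenius difference systems into a single Anderson $t$-motive $\mathcal M_w$ attached to weight $w$, whose period matrix has the MZV's and alternating MZV's in question among its entries. Then the Anderson-Brownawell-Papanikolas criterion converts any $\overline K$-linear relation among these periods into a relation already visible, over $\overline K(t)$, in the rigid analytic trivialization of $\mathcal M_w$, so that linear independence reduces to a full-rank statement for an explicit matrix. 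That statement would be proved by a second induction on $w$: peeling off the leading block of $\mathcal M_w$ exhibits it as an iterated extension relating the weight-$w$ data to data of weights $w-1,\dots,w-q$, and one checks that the extension classes attached to distinct $\fs\in\mathcal T_w$ stay independent. The alternating values are unavoidable here --- the lower-weight pieces that occur are naturally alternating MZV's --- so one really proves the full Zagier-Hoffman statement (dimension and explicit basis) for alternating MZV's and then extracts the linear independence of $\mathcal T_w$ by restricting to trivial signs.

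The principal obstacle, as is typical for results of this kind, is the last step: ruling out \emph{unexpected} $K$-linear relations among the periods; by the Anderson-Brownawell-Papanikolas criterion this amounts to showing that the rigid analytic trivialization of $\mathcal M_w$ satisfies no $\overline K(t)$-linear relations beyond those forced by its block structure. Two points make this delicate. First, one needs a robust supply of small-weight base cases --- weights $w\le q$ and a handful beyond --- verified directly from the explicit $t$-motives. Second, in the inductive step one must control how the Carlitz-polylogarithm evaluation points interact with the sign characters $\fve$ carried by the alternating terms, so as to certify that the only $K$-linear relations that occur are the ``degenerate'' ones responsible for the gap between $d(w)$ and $2^{w-1}$ when $w\ge q$. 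Once the linear independence of the alternating basis is established, that of $\mathcal T_w$, and hence the equality $\dim_K\mathcal Z_w=d(w)$, follows at once.
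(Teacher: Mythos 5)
Your overall skeleton --- weak Brown's theorem for the upper bound, ABP criterion and alternating values for the lower bound --- is the right shape, but you are missing the two key ideas that actually make the argument close, and without them the plan would stall exactly where [ND21] stalled.

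First, you propose to show directly that $\mathcal T_w$ (Hoffman-type MZV's with all $s_i\le q$, $s_r<q$), enlarged by alternating analogues, is $K$-linearly independent by showing that ``the extension classes attached to distinct $\fs\in\mathcal T_w$ stay independent.'' The paper explicitly identifies this as the obstruction that blocked [ND21] beyond $w\le 2q-2$: there is no usable inductive system of Hoffman-type bases. The fix is a change of basis and a change of objects. The paper works with alternating Carlitz multiple \emph{polylogarithms} (ACMPL's), not with AMZV's, because the stuffle relations for ACMPL's are tame enough to yield a ``strong Brown'' theorem (Theorem~\ref{thm: strong Brown}): the span $\mathcal{AL}_w$ is already generated by the much smaller, more rigid set $\mathcal{AS}_w$ indexed by tuples with $q\nmid s_i$. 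It is this $q\nmid s_i$ basis --- not a Hoffman-type one --- that the transcendence machine can handle inductively. One then proves $\mathcal{AL}_w=\mathcal{AZ}_w$ (Theorem~\ref{thm:bridge}), restricts $\mathcal{AS}_w$ to trivial characters to get $\mathcal S_w\subset\mathcal Z_w$ with $\lvert\mathcal S_w\rvert=d(w)$, and the lower bound for $\mathcal Z_w$ follows. The basis property of $\mathcal T_w$ is then a counting consequence, not something proved head-on.

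Second, your transcendental step aims to ``rule out unexpected $K$-linear relations'' by a full-rank statement. That is not what happens. When $q-1\mid w$, the resulting $\sigma$-linear system \emph{does} have a one-dimensional space of nontrivial solutions, corresponding to a genuine $K$-linear relation among $\widetilde\pi^w$ and the ACMPL's in $\mathcal{AS}_w$. The decisive final step (see the end of the proof of Theorem~\ref{thm: trans ACMPL}) is to show the coefficient of $\widetilde\pi^w$ in that unique relation is nonzero; this is done by writing $\Li(q-1)^m$ in the $\mathcal{AS}_w$ generators, i.e.\ by invoking the strong Brown theorem. So the algebraic and transcendental halves are entangled in a way your plan does not anticipate: a pure ``no nontrivial kernel'' statement is false, and the argument survives only because of the strong Brown input. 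Accordingly, you also need the extra constant term $a$ in the linear relation (the paper's Theorem~\ref{theorem: linear independence} is precisely [ND21, Theorem B] upgraded to allow this term), which your sketch omits.

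A smaller point: the basic relation used to reduce exponents is not $\zeta_A(1)^q=\zeta_A(q)$; it is the fundamental relation $R_\varepsilon\colon \Si_d\!\begin{pmatrix}\varepsilon\\ q\end{pmatrix}+\varepsilon^{-1}D_1\,\Si_{d+1}\!\begin{pmatrix}\varepsilon&1\\1&q-1\end{pmatrix}=0$ with $D_1=\theta^q-\theta$, which is a binary (shifted-degree) relation, not a Frobenius power identity. The algebraic reduction is driven by the operators $\mathcal B^*$, $\mathcal C$, $\mathcal{BC}$ applied to $R_\varepsilon$, not by raising $\zeta_A(1)$ to the $q$-th power.
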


\begin{conjecture}[Hoffman's conjecture in positive characteristic] \label{conj: basis}
A $K$-basis for $\mathcal Z_w$ is given by  $\mathcal T_w$ consisting of $\zeta_A(s_1,\ldots,s_r)$ of weight $w$ with  $s_i \leq q$ for $1 \leq i <r$, and $s_r<q$.
\end{conjecture}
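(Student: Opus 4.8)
The plan is to establish both conjectures together by passing to the larger $K$-vector space $\mathcal{A}_w$ spanned by the weight-$w$ \emph{alternating} multiple zeta values (AMZV's) $\zeta_A\!\left(\begin{smallmatrix}\epsilon_1 & \cdots & \epsilon_r\\ s_1 & \cdots & s_r\end{smallmatrix}\right)$ with $\epsilon_i\in\Fq^{\times}$, $\fs=(s_1,\dots,s_r)\in\N^r$, and by determining $\dim_K\mathcal{A}_w$ together with an explicit basis. These AMZV's form a $K$-algebra under Thakur's sum-shuffle product, and $\mathcal{Z}_w\subseteq\mathcal{A}_w$ is the non-alternating part (all $\epsilon_i=1$). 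The argument divides into an \emph{algebraic part} --- an upper bound showing that $\mathcal{A}_w$, hence $\mathcal{Z}_w$, is spanned by an explicit ``admissible'' family whose non-alternating members are exactly $\mathcal{T}_w$ --- and a \emph{transcendental part} --- a lower bound showing that this admissible family is $K$-linearly independent. Granting the elementary identity $\#\mathcal{T}_w=d(w)$ (a generating-function check that the number of compositions of $w$ into parts $\le q$ with last part $<q$ obeys the same base cases and recursion as $d$), the two parts yield $\dim_K\mathcal{Z}_w=d(w)$ and that $\mathcal{T}_w$ is a basis.

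For the algebraic part I would assemble a reduction procedure from a short list of relation families: (i) Thakur's sum-shuffle relations, which rewrite the product of two AMZV's as a $K$-linear combination of weight-preserving AMZV's; (ii) relations peculiar to characteristic $p$ --- reflecting the vanishing of binomial coefficients modulo $p$ --- that decrease any interior exponent larger than $q$; and (iii) Carlitz's ``Eulerian'' evaluations $\zeta_A(n)\in K\cdot\widetilde{\pi}^{\,n}$ when $(q-1)\mid n$, their higher-depth analogues, and the single extra relation in weight $q$ (found by Thakur) responsible for $\zeta_A(q)\notin\mathcal{T}_q$. Running a double induction on the weight and, within a fixed weight, on the depth, one uses (i)--(iii) to rewrite an arbitrary weight-$w$ AMZV as a $K$-linear combination of admissible ones (interior exponents $\le q$, final datum admissible in the alternating sense); careful bookkeeping shows the resulting family has exactly the predicted cardinality, and restricting to $\epsilon_i=1$ gives that $\mathcal{T}_w$ spans $\mathcal{Z}_w$, so $\dim_K\mathcal{Z}_w\le\#\mathcal{T}_w=d(w)$.

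For the transcendental part I would use Anderson $t$-motives and the Anderson--Brownawell--Papanikolas (ABP) criterion. By an Anderson--Thakur-type formula (in its alternating refinement), each weight-$w$ AMZV is, up to an explicit factor in $A$, an $A$-linear combination of alternating Carlitz multiple polylogarithms of weight $w$ evaluated at algebraic points, and these polylogarithm values are coordinates of the period matrix of a dual $t$-motive obtained by iterated extensions of Carlitz twists. Assembling all the relevant ones into a single dual $t$-motive $M_w$ with rigid analytic trivialization $\Psi_w$ --- so that $\Psi_w^{(-1)}=\Phi_w\Psi_w$ with $\det\Phi_w$ a nonzero scalar times a power of $(t-\theta)$ --- a hypothetical nontrivial $K$-linear (equivalently $\overline K$-linear) relation among the candidate basis AMZV's becomes a $\overline K$-linear relation among prescribed entries of $\Psi_w(\theta)$; the ABP criterion then lifts it to a $\overline K[t]$-linear relation among the corresponding vector-valued functions that specializes back to it, which one rules out by inspecting lowest-order terms in $(t-\theta)$ --- equivalently, the initial coefficients of the defining series. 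Hence the admissible family, and in particular $\mathcal{T}_w$, is $K$-linearly independent; with the spanning statement this makes $\mathcal{T}_w$ a basis of $\mathcal{Z}_w$ of dimension $d(w)$.

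The main obstacle will be the algebraic step. Because characteristic-$p$ MZV's satisfy only partial shuffle relations, the reduction to the admissible family has to be driven essentially by sum-shuffle relations together with the Carlitz evaluations, and the induction must be arranged so that its output has \emph{exactly} the conjectured size $d(w)$: any larger and the family fails to span, any smaller and one has produced more relations than the transcendental lower bound permits. Controlling the boundary arithmetic --- the lone relation in weight $q$, and more generally the behaviour near multiples of $q$ that governs the recursion --- is the delicate point, together with checking that the change-of-basis between the AMZV admissible family and its polylogarithm counterpart (used to feed in the $t$-motivic input) is invertible over $K$. The transcendence step, by contrast, should be comparatively routine modulo careful but by-now-standard $t$-motive constructions, since the pertinent initial coefficients can be exhibited as nonzero.
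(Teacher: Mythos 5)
Your overall scaffolding (pass to the alternating family, prove an algebraic upper bound plus a transcendental lower bound, then restrict to the $\epsilon_i=1$ slice) matches the paper's, but the specific route you propose contains a gap that the paper explicitly identifies as the central obstacle, and that it resolves by a change of objects, not by ``careful bookkeeping.''

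\textbf{The algebraic step does not produce a family of the right size if you stay with AMZV's.} Running the sum-shuffle / Todd-operator reduction on AMZV's (as in \cite[\S 2--3]{ND21}) does yield a spanning set for $\mathcal{AZ}_w$, but the set it produces is $\mathcal{AT}_w$ (entries $\le q$, last entry $<q$), which for $q>2$ and $w\ge q$ has cardinality $t(w)$ \emph{strictly larger} than the conjectural dimension $s(w)$. The paper states this plainly: $\mathcal{AS}_w$ is much smaller than $\mathcal{AT}_w$, and the direct AMZV approach gives only this ``weak Brown'' spanning set, not a putative basis. So the part of your plan that says ``careful bookkeeping shows the resulting family has exactly the predicted cardinality'' is precisely what fails. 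The paper's fix is to abandon AMZV's for \emph{alternating Carlitz multiple polylogarithms} (ACMPL's): the stuffle product of ACMPL's has much tamer arithmetic (coefficients divisible by $D_1$ except on a diagonal), which lets one show the transition matrix from $\mathcal{AT}_w$ to the smaller family $\mathcal{AS}_w$ (indexed by tuples with $q\nmid s_i$) is invertible --- this is Theorem~\ref{thm: strong Brown}, the ``strong Brown'' theorem, and it has no AMZV analogue in this framework. Only after establishing $\mathcal{AS}_w$ as a basis of $\mathcal{AL}_w$ does the paper prove $\mathcal{AZ}_w=\mathcal{AL}_w$ (Theorem~\ref{thm:bridge}, because both share the generating set $\mathcal{AT}_w$) and read off the basis.

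\textbf{The transcendental step is not ``rule out by inspecting lowest-order terms.''} When $(q-1)\mid w$, the ABP machinery applied to the relevant dual $t$-motive does \emph{not} yield a contradiction: there is a genuine, unique (up to scalar) nontrivial $K$-linear relation among the ACMPL's in $\mathcal{AS}_w$ together with $\widetilde\pi^w$, discovered by solving a $\sigma$-linear system (the paper's variables $f_k$, $\delta_{\mathfrak t,\fe}$). Independence of $\mathcal{AS}_w$ then rests on showing that in this unique relation the coefficient of $\widetilde\pi^w$ is nonzero --- and the only proof given for that is to exhibit the relation explicitly by expanding $\Li(q-1)^m=(-D_1^{-1}\widetilde\pi^{q-1})^m$ against the $\mathcal{AS}_w$-basis, which is itself an application of the strong Brown theorem (Remark~\ref{rem:remark on Brown}). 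So the algebraic and transcendental halves are intertwined in a way your outline treats as independent, and the transcendental step is not ``comparatively routine.'' You would need both the ACMPL change of objects and the strong Brown theorem to close the argument.
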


In \cite{ND21} one of the authors succeeded in proving the algebraic part of these conjectures (see \cite[Theorem A]{ND21}): for all $w \in \N$, we have
	\[ \dim_K \mathcal Z_w \leq d(w). \]
This part is based on shuffle relations for MZV's due to Chen and Thakur and some operations introduced by Todd. For the transcendental part, he used the Anderson-Brownawell-Papanikolas criterion in \cite{ABP04} and proved sharp lower bounds for small weights $w \leq 2q-2$ (see \cite[Theorem D]{ND21}). It has already been noted that it is very difficult to extend his method to general weights (see \cite{ND21} for more details).

\subsubsection{AMZV's in positive characteristic}

Recently, Harada \cite{Har21} introduced the alternating multiple zeta values in positive characteristic (AMZV's) as follows. Letting $\fs=(s_1,\dots,s_r) \in \N^n$ and $\fve=(\varepsilon_1,\dots,\varepsilon_r) \in (\Fq^\times)^n$, we define
\begin{equation*}
    \zeta_A \begin{pmatrix}
 \fve  \\
\fs  \end{pmatrix}  := \sum \dfrac{\varepsilon_1^{\deg a_1} \dots \varepsilon_r^{\deg a_r }}{a_1^{s_1} \dots a_r^{s_r}}  \in K_{\infty}
\end{equation*}
where the sum runs through the set of tuples $(a_1,\ldots,a_r) \in A_+^r$ with $\deg a_1>\cdots>\deg a_r$. The numbers $r$ and $w(\fs)=s_1+\dots+s_r$ are called the depth and the weight of $\zeta_A \begin{pmatrix}
 \fve  \\
\fs  \end{pmatrix}$, respectively. We set $\zeta_A \begin{pmatrix}
 \emptyset  \\
 \emptyset  \end{pmatrix}  = 1$. Harada \cite{Har21} extended basic properties of MZV's to AMZV's, i.e., non-vanishing, shuffle relations, period interpretation and linear independence. Again the main goal of this theory is to determine all linear relations over $K$ between AMZV's. It is natural to ask whether the previous work on analogues of the Zagier-Hoffman conjectures can be extended to this setting. More precisely, if for $w \in \N$ we denote by $\mathcal{AZ}_w$ the $K$ vector space spanned by the AMZV's of weight $w$, then we would like to determine the dimensions of $\mathcal{AZ}_w$ and show some nice bases of these vector spaces.

\subsection{Main results}

\subsubsection{Statements of the main results}

In this manuscript we present complete answers to all the previous conjectures and problems raised in \S \ref{sec:function fields}.

First, for all $w$ we calculate the dimension of $\mathcal{AZ}_w$ and give an explicit basis in the spirit of Hoffman.
\begin{theoremx} \label{thm: ZagierHoffman AMZV}
We define a Fibonacci-like sequence $s(w)$ as follows. We put
\begin{equation*}
    s(w) = \begin{cases}
(q - 1) q^{w-1}& \text{if } 1 \leq w < q, \\
            (q - 1) (q^{w-1} - 1) &\text{if } w = q,
		 \end{cases}
\end{equation*}
and for $w>q$, $s(w)=(q-1)\sum \limits_{i = 1}^{q-1} s(w-i) + s(w - q)$. Then for all $w \in \N$,
	\[ \dim_K \mathcal{AZ}_w = s(w). \]
	
Further, we can exhibit a Hoffman-like basis of $\mathcal{AZ}_w$.
\end{theoremx}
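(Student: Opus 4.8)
The plan is to follow the two-step strategy that has become standard in this area: prove an upper bound $\dim_K \mathcal{AZ}_w \leq s(w)$ by exhibiting enough $K$-linear relations among AMZV's, and then prove the matching lower bound by a period interpretation together with the Anderson--Brownawell--Papanikolas (ABP) criterion. For the upper bound, I would first establish that the candidate Hoffman-like set, namely those $\zeta_A\begin{pmatrix}\fve\\\fs\end{pmatrix}$ with $1 \leq s_i \leq q$ for $i<r$ and $s_r<q$ (and arbitrary signs $\varepsilon_i \in \Fq^\times$), spans $\mathcal{AZ}_w$. This should follow from Harada's shuffle relations combined with suitable analogues of Todd's "fall-of-weight" and "stuffle-like" operations that rewrite any AMZV of weight $w$ in terms of AMZV's whose tuple entries are bounded by $q$; one counts that the number of such generators satisfies exactly the recursion defining $s(w)$ (the $(q-1)$ factors per position coming from the sign choices, and the recursion boundary at $w=q$ accounting for the forbidden $(s_r,\varepsilon_r)=(1,1)$-type constraint). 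Then one must show these generators are already independent — or at least that no further relations force a smaller dimension — which is where the transcendence input enters.

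For the lower bound, the plan is to attach to each AMZV a period of an explicit Anderson $t$-motive (following Anderson--Thakur, Papanikolas, and Harada's period interpretation for AMZV's) and to apply the ABP criterion: any $K$-linear relation among the periods lifts to a relation among the corresponding rigid analytic trivializations, which one then analyzes via the associated difference equations. Concretely, I would assemble, for fixed weight $w$, a single large $t$-motive whose periods include all the Hoffman-like generators, compute its "$t$-motivic Galois group" or at least enough of the structure of the ABP-relation space, and show that the only relations are the ones already produced in the upper-bound step. This forces $\dim_K \mathcal{AZ}_w \geq s(w)$, hence equality, and simultaneously identifies the Hoffman-like set as a basis.

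The main obstacle — and the reason this goes beyond \cite{ND21}, which only handled $w \leq 2q-2$ — is the transcendental (lower bound) part for general weight. The difficulty is twofold: first, one must organize the ABP/difference-equation analysis uniformly in $w$ rather than case-by-case, which requires a clean inductive structure on the $t$-motives (likely building the weight-$w$ object from lower-weight ones in a way mirroring the recursion for $s(w)$); second, one must rule out "sporadic" linear relations that could appear at large weight, which means the linear-algebra bookkeeping of the relation space has to be shown to collapse exactly onto the span of shuffle/Todd-type relations. I expect the heart of the paper to be a careful induction that propagates both the explicit basis and the non-existence of extra relations from weights $<w$ to weight $w$, using the AMZV setting (with its extra sign freedom) as an auxiliary tool that is in fact better-behaved than the MZV setting — and then deducing Conjectures~\ref{conj: dimension} and~\ref{conj: basis} for $\mathcal Z_w$ as the specialization to all signs equal to $1$.
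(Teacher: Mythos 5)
Your two-step outline (algebraic upper bound, then ABP-based lower bound) matches the skeleton the authors use, but there is a fundamental gap in the upper-bound step that the proposal glosses over, and filling it is in fact the main new contribution of the paper. The set you propose to span $\mathcal{AZ}_w$ — namely $\zeta_A\begin{pmatrix}\fve\\\fs\end{pmatrix}$ with $s_1,\dots,s_{r-1}\leq q$, $s_r<q$, and arbitrary signs $\varepsilon_i\in\F_q^\times$ — is the set $\mathcal{AT}_w$ of the paper, and your claim that its cardinality ``satisfies exactly the recursion defining $s(w)$'' is false. The count $t(w)=|\mathcal{AT}_w|$ satisfies $t(w)=(q-1)\sum_{i=1}^q t(w-i)$ for $w>q$, whereas $s(w)=(q-1)\sum_{i=1}^{q-1}s(w-i)+s(w-q)$; the discrepancy is precisely the factor $(q-1)$ on the $w-q$ term (intuitively, the recursion for $s(w)$ records that a part equal to $q$ must carry the trivial sign $1$, contributing no $(q-1)$-fold choice). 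Thus $t(w)>s(w)$ in general, the algebraic part applied naively inside the AMZV world only produces a generating set that is strictly too large to be a basis, and the dimension bound you would get is $t(w)$, not $s(w)$.

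The paper's route around this obstruction is the detour through alternating Carlitz multiple polylogarithms. The stuffle relations for ACMPL's are clean enough to prove a \emph{strong} Brown's theorem (Theorem~\ref{thm: strong Brown}): the subset $\mathcal{AS}_w$ indexed by tuples $\fs$ with $q\nmid s_i$ for all $i$ already spans, and $|\mathcal{AS}_w|=s(w)$ — this is shown by exhibiting an invertible transition matrix, a piece of bookkeeping that does \emph{not} go through for the shuffle relations satisfied by AMZV's. One then proves $\mathcal{AZ}_w=\mathcal{AL}_w$ (Theorem~\ref{thm:bridge}) via a weak Brown's theorem on both sides, and transports everything to the ACMPL setting. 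There is also a second subtlety your sketch misses on the transcendental side: the ABP analysis in Theorem~\ref{thm: trans ACMPL} must treat $K$-linear relations among the ACMPL's in $\mathcal{AS}_w$ \emph{together with} $\widetilde\pi^w$, and when $q-1\mid w$ the associated system of $\sigma$-difference equations has a one-dimensional (not zero-dimensional) solution space. Ruling out the bad case — that this unique relation might have zero coefficient on $\widetilde\pi^w$ and hence give a genuine linear dependence among the basis candidates — uses the strong Brown's theorem again, so the algebraic and transcendental halves are intertwined in a way your independent two-step plan does not anticipate.
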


Second, we give a proof of both Conjectures \ref{conj: dimension} and \ref{conj: basis} which generalizes the previous work of the fourth author \cite{ND21}.
\begin{theoremx} \label{thm: ZagierHoffman}
For all $w \in \N$,  $\mathcal T_w$ is a $K$-basis for $\mathcal Z_w$. In particular,
	\[ \dim_K \mathcal Z_w = d(w). \]
\end{theoremx}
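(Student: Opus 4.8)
The plan is to deduce the theorem from Theorem~\ref{thm: ZagierHoffman AMZV} together with the algebraic part of \cite{ND21}. Observe that $\mathcal T_w \subseteq \mathcal Z_w \subseteq \mathcal{AZ}_w$, the last inclusion holding because any MZV is the AMZV all of whose signs equal $1$. Since a $K$-linearly independent subset of a finite-dimensional $K$-vector space whose cardinality equals the dimension is a basis, it is enough to establish three things: (i) $\dim_K \mathcal Z_w \leq d(w)$; (ii) $\#\mathcal T_w = d(w)$; and (iii) $\mathcal T_w$ is $K$-linearly independent. Granting these, $\mathcal T_w$ is a $K$-linearly independent subset of $\mathcal Z_w$ of cardinality $d(w) \geq \dim_K \mathcal Z_w$, hence $\dim_K \mathcal Z_w = d(w) = \#\mathcal T_w$ and $\mathcal T_w$ is a $K$-basis.

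Part (i) is exactly \cite[Theorem~A]{ND21}, where, via the shuffle relations of Chen and Thakur and Todd's operations, one in fact shows that $\mathcal T_w$ generates $\mathcal Z_w$. Part (ii) is an elementary count: for $1 \leq w \leq q-1$ every tuple in $\mathcal T_w$ is an arbitrary composition of $w$, so $\#\mathcal T_w = 2^{w-1} = d(w)$; for $w = q$ it is any composition of $q$ other than $(q)$, so $\#\mathcal T_w = 2^{q-1} - 1 = d(q)$; and for $w > q$, deleting the first part $s_1 \in \{1, \dots, q\}$ gives a bijection between $\mathcal T_w$ and $\bigsqcup_{i=1}^{q} \mathcal T_{w-i}$, whence $\#\mathcal T_w = \sum_{i=1}^{q} \#\mathcal T_{w-i}$, which is the defining recursion of $d(w)$; the identity follows by induction.

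Part (iii) is the crucial point, and it is precisely here that Theorem~\ref{thm: ZagierHoffman AMZV} — more precisely, the explicit Hoffman-like $K$-basis $\mathcal B_w$ of $\mathcal{AZ}_w$ it provides, together with the full determination of the $K$-linear relations among AMZV's — is used. One checks that $\mathcal T_w$ coincides with the sub-collection of $\mathcal B_w$ consisting of those basis elements all of whose signs equal $1$; being a subset of a $K$-basis, $\mathcal T_w$ is then $K$-linearly independent. (Equivalently, the classification of all $K$-linear relations among AMZV's of weight $w$ shows that no non-trivial such relation is supported on $\mathcal T_w$.) Combining (i)--(iii) finishes the proof modulo Theorem~\ref{thm: ZagierHoffman AMZV}.

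The one genuinely hard ingredient is thus Part (iii), which rests on the entire machinery behind Theorem~\ref{thm: ZagierHoffman AMZV}: one constructs suitable Anderson $t$-motives (equivalently pre-$t$-motives) whose periods produce the AMZV's appearing in the Hoffman-like basis, verifies the hypotheses of the Anderson-Brownawell-Papanikolas linear independence criterion \cite{ABP04}, and carries out the inductive and combinatorial analysis that makes the argument work for \emph{every} weight $w$ — the essential advance over \cite{ND21}, whose transcendence method reached only the range $w \leq 2q-2$.
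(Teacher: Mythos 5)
Your overall strategy --- exhibit a $K$-linearly independent subset of $\mathcal Z_w$ of cardinality $d(w)$ and combine with the upper bound of \cite[Theorem A]{ND21} --- is exactly the paper's strategy, and your steps (i) and (ii) are correct. But step (iii) has a genuine gap: $\mathcal T_w$ is \emph{not} a sub-collection of the Hoffman-like basis of $\mathcal{AZ}_w$. That basis is $\mathcal{AS}_w$ (Theorem \ref{thm:ACMPL}), consisting of ACMPL's $\Li\begin{pmatrix} \fve \\ \fs \end{pmatrix}$ indexed by tuples $\fs \in \mathcal{J}'_w$, i.e.\ with $q \nmid s_i$ for all $i$ --- these tuples may have entries $s_i > q$ and never have an entry equal to $q$. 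By contrast $\mathcal T_w$ is indexed by $\mathcal{J}_w$: tuples with $s_i \leq q$ for $i<r$ and $s_r<q$, so entries equal to $q$ are allowed. These index sets differ. For instance $\zeta_A(q,1) \in \mathcal T_{q+1}$ has $q \mid s_1$, so $(q,1)\notin \mathcal{J}'_{q+1}$; under the bijection $\iota$ it corresponds to the basis element $\Li(q+1)\in\mathcal{AS}_{q+1}$, which is a polylogarithm with an index exceeding $q$, not an AMZV. So the sentence ``being a subset of a $K$-basis, $\mathcal T_w$ is then $K$-linearly independent'' does not apply, and the parenthetical fallback (``no non-trivial relation is supported on $\mathcal T_w$'') is not something that follows by inspection --- it is essentially equivalent to the theorem one is trying to prove.

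The paper avoids this by choosing a different independent set. It takes $\mathcal S_w := \{\Li(\fs) : \fs \in \mathcal{J}'_w\}$, the trivial-character part of $\mathcal{AS}_w$, which \emph{is} a subset of the basis and hence $K$-linearly independent by Theorem \ref{thm: trans ACMPL}. It then verifies $\mathcal S_w \subseteq \mathcal Z_w$: by the algebraic theory (the weak Brown theorem restricted to trivial characters), each such $\Li(\fs)$ expands as a $K$-linear combination of $\Li(\mathfrak t)$ with $\mathfrak t\in\mathcal{J}_w$, and by Lemma \ref{agree} these equal $\zeta_A(\mathfrak t)\in\mathcal Z_w$. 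Since $|\mathcal S_w|=|\mathcal{J}'_w|=|\mathcal{J}_w|=d(w)$, one gets $\dim_K\mathcal Z_w\geq d(w)$, hence $\dim_K\mathcal Z_w=d(w)$; and since $\mathcal T_w$ spans $\mathcal Z_w$ (by \cite[Theorem A]{ND21}) with $|\mathcal T_w|=d(w)$, it is a basis. Note the order of logic: the linear independence of $\mathcal T_w$ is a \emph{consequence} of the dimension count, not an input to it. If you replace your step (iii) by the choice of $\mathcal S_w$ together with the (non-trivial) inclusion $\mathcal S_w\subseteq\mathcal Z_w$, your argument becomes the paper's.
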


We recall that analogues of Goncharov's conjectures in positive characteristic were proved in \cite{Cha14}. As a consequence, we give a framework for understanding all linear relations over $K$ between MZV's and AMZV's and settle the main goals of these theories.

\subsubsection{Ingredients of the proofs}

Let us emphasize here that Theorem \ref{thm: ZagierHoffman AMZV} is much harder than Theorem \ref{thm: ZagierHoffman} and that it is not enough to work within the setting of AMZV's. On the one hand, although it is straightforward to extend the algebraic part for AMZV's following the same line in \cite[\S 2 and \S 3]{ND21}, we only obtain a weak version of Brown's theorem in this setting. More precisely, we get a set of generators for $\mathcal{AZ}_w$ but it is too large to be a basis of this vector space. For small weights, we find ad hoc arguments to produce a smaller set of generators but it does not work for arbitrary weights (see \S \ref{sec:without ACMPL}). Roughly speaking, in \cite[\S 2 and \S 3]{ND21} we have an algorithm which moves forward so that we can express any AMZV's as a linear combination of generators. But we lack some precise controls on the coefficients in these expressions so that we cannot go backward and change bases. On the other hand, the transcendental part for AMZV's shares the same difficulties with the case of MZV's as noted above.

In this paper we use a completely new approach which is based on the study of alternating Carlitz multiple polylogarithms (ACMPL's for short) defined as follows. We put $\ell_0:=1$ and $\ell_d:=\prod_{i=1}^d (\theta-\theta^{q^i})$ for all $d \in \mathbb N$. For any tuple $\fs=(s_1,\ldots,s_r) \in \N^r$ and $\fve=(\varepsilon_1,\dots,\varepsilon_r) \in (\Fq^\times)^r$, we introduce the corresponding alternating Carlitz multiple polylogarithm by
\begin{equation*}
\Li\begin{pmatrix}
 \fve  \\
\fs  \end{pmatrix} := \sum\limits_{d_1> \dots > d_r\geq 0} \dfrac{\varepsilon_1^{d_1} \dots \varepsilon_r^{d_r} }{\ell_{d_1}^{s_1} \dots \ell_{d_r}^{s_r}}   \in K_{\infty}.
\end{equation*}
We also set $\Li \begin{pmatrix}
 \emptyset  \\
 \emptyset  \end{pmatrix}  = 1$.

The key result is to establish a non-trivial connection between AMZV's and ACMPL's which allows us to go back and forth between these objects (see Theorem \ref{thm:bridge}). To do this, following \cite[\S 2 and \S 3]{ND21} we use stuffle relations to develop an algebraic theory for ACMPL's and obtain a weak version of Brown's theorem, i.e., a set of generators for the $K$-vector space $\mathcal{AL}_w$ spanned by ACMPL's of weight $w$. We observe that this set of generators is exactly the same as that for AMZV's. Thus $\mathcal{AL}_w=\mathcal{AZ}_w$, which provides a dictionary between AMZV's and ACMPL's.

We then determine all $K$-linear relations between ACMPL's (see Theorem \ref{thm:ACMPL}). The proof we give here, while using similar tools as in \cite{ND21}, differs in some crucial points and requires three new ingredients.

The first new ingredient is the construction of an appropriate Hoffman-like basis $\mathcal{AS}_w$ of $\mathcal{AL}_w$. In fact, our transcendental method dictates that we must find a complete system of bases $\mathcal{AS}_w$ of $\mathcal{AL}_w$ indexed by weights $w$ with strong constraints as given in Theorem \ref{theorem: linear independence}. The failure to find such a system of bases is the main obstacle to generalizing \cite[Theorem D]{ND21} (see \S \ref{sec: application AMZV} and \cite[Remark 6.3]{ND21} for more details).

The second new ingredient is formulating and proving (a strong version of) Brown's theorem for AMCPL's (see Theorem \ref{thm: strong Brown}). As mentioned before, the method in \cite{ND21} only gives a weak version of Brown's theorem for ACMPL's as the set of generators is not a basis. Roughly speaking, given any ACMPL's we can express it as a linear combination of generators. The fact that stuffle relations for ACMPL's are ``simpler" than shuffle relations for AMZV's gives more precise information about the coefficients of these expressions. Consequently, we show that a certain transition matrix is invertible and obtain Brown's theorem for ACMPL's. This completes the algebraic part for ACMPL's.

The last new ingredient is proving the transcendental part for ACMPL's in full generality, i.e., the ACMPL's in $\mathcal{AS}_w$ are linearly independent over $K$ (see Theorem \ref{thm: trans ACMPL}). We emphasize that we do need the full strength of the algebraic part to prove the transcendental part. The proof follows the same line in \cite[\S 4 and \S 5]{ND21} which is formulated in a more general setting in \S \ref{sec: dual motives}. First, we have to consider not only linear relations between ACMPL's in $\mathcal{AS}_w$ but also those between ACMPL's in $\mathcal{AS}_w$ and the suitable power $\widetilde \pi^w$ of the Carlitz period $\widetilde \pi$. Second, starting from such a non-trivial relation we apply the Anderson-Brownawell-Papanikolas criterion in \cite{ABP04} and reduce to solve a system of $\sigma$-linear equations. While in \cite[\S 4 and \S 5]{ND21} this system does not have a non-trivial solution which allows us to conclude, our system has a unique solution for even $w$ (i.e., $q-1$ divides $w$). This means that for such $w$ up to a scalar there is a unique linear relation between ACMPL's in $\mathcal{AS}_w$ and $\widetilde \pi^w$. The last step consists of showing that in this unique relation, the coefficient of $\widetilde \pi^w$ is nonzero. Unexpectedly, this is a consequence of Brown's theorem for AMCPL's mentioned above.

\subsubsection{Plan of the paper}

We will briefly explain the organization of the manuscript.
\begin{itemize}
\item In \S \ref{sec:algebraic part} we recall the definition and basic properties of ACMPL's. We then develop an algebraic theory for these objects and obtain weak and strong Brown's theorems (see Proposition \ref{prop: weak Brown} and Theorem \ref{thm: strong Brown}).

\item In \S \ref{sec: dual motives} we generalize some transcendental results in \cite{ND21} and give statements in a more general setting (see Theorem \ref{theorem: linear independence}).

\item In \S \ref{sec:transcendental part} we prove transcendental results for ACMPL's  and completely determine all linear relations between ACMPL's (see Theorems \ref{thm: trans ACMPL} and \ref{thm:ACMPL}).

\item Finally, in \S \ref{sec:applications} we present two applications and prove the main results, i.e., Theorems \ref{thm: ZagierHoffman AMZV} and \ref{thm: ZagierHoffman}. The first application is to prove the above connection between ACMPL's and AMZV's and then to determine all linear relations between AMZV's in positive characteristic (see \S \ref{sec: application AMZV}). The second application is a proof of Zagier-Hoffman's conjectures in positive characteristic which generalizes the main results of \cite{ND21} (see \S \ref{sec: application MZV}).
\end{itemize}

\subsection{Remark} When our work was released in arXiv:2205.07165, Chieh-Yu Chang informed us that Chen, Mishiba and he were working towards a proof of Theorem \ref{thm: ZagierHoffman} (e.g., the MZV version) by using a similar method, and their paper \cite{CCM22} is now available at arXiv:2205.09929.

\subsection{Acknowledgments}
The first named author (B.-H. Im) was supported by the National Research Foundation of Korea (NRF) grant funded by the Korea government (MSIT) (No.~2020R1A2B5B01001835). Two of the authors (KN. L. and T. ND.) were partially supported by the Excellence Research Chair ``$L$-functions in positive characteristic and applications'' funded by the Normandy Region. The fourth author (T. ND.) was partially supported by the ANR Grant COLOSS ANR-19-CE40-0015-02. The fifth author (LH. P.) was supported by the grant ICRTM.02-2021.05 funded by the International Center for Research and Postgraduate Training in Mathematics (VAST, Vietnam).


\section{Weak and strong Brown's theorems for ACMPL's} \label{sec:algebraic part}

In this section we first extend the work of \cite{ND21} and develop an algebraic theory for ACMPL's. Then we prove a weak version of Brown's theorem for ACMPL's (see Theorem \ref{prop: weak Brown}) which gives a set of generators for the $K$-vector space spanned by ACMPL's of weight $w$. The techniques of Sections \ref{sec: power sums}-\ref{sec:weak Brown} are similar to those of \cite{ND21} and the  reader may wish to skip the details.

Contrary to what happens in \cite{ND21}, it turns out that the previous set of generators is too large to be a basis. Consequently, in \S \ref{sec:strong Brown} we introduce another set of generators and prove a strong version of Brown's theorem for ACMPL's (see Theorem \ref{thm: strong Brown}).

\subsection{Analogues of power sums} \label{sec: power sums}

\subsubsection{}

We recall and introduce some notation in \cite{ND21}. A tuple $\fs$ is a sequence of the form $\fs=(s_1,\dots,s_n) \in \mathbb N^n$. We call $\depth(\fs) = n$ the depth and $w(\fs) = s_1 + \dots + s_n$ the weight of $\fs$. If $\fs$ is nonempty, we put $\fs_- := (s_2, \dotsc, s_n)$.

Let $\fs $ and $\mathfrak{t}$ be two tuples of positive integers. We set $s_i = 0$ (resp. $t_i = 0$) for all $i > \textup{depth}(\fs)$ (resp. $i > \textup{depth}(\mathfrak{t})$). We say that $\fs \leq \mathfrak{t}$ if $s_1 + \cdots + s_i \leq t_1 + \cdots + t_i$ for all $i \in \mathbb{N}$, and $w(\fs) = w(\mathfrak{t})$. This defines a partial order on tuples of positive integers.

For $i \in \mathbb{N}$, we define $T_i(\fs)$ to be the tuple $(s_1+\dots+s_i,s_{i+1},\dots,s_n)$. Further, for $i \in \mathbb N$, if $T_i(\fs) \leq T_i(\mathfrak t)$, then $T_k(\fs) \leq T_k(\mathfrak t)$ for all $k \geq i$.

Let $\fs=(s_1,\dots,s_n) \in \mathbb N^n$ be a tuple of positive integers. We denote by $0 \leq i \leq n$ the largest integer such that $s_j \leq q$ for all $1 \leq j \leq i$ and define the initial tuple $\Init(\fs)$ of $\fs$ to be the tuple
	\[ \Init(\fs):=(s_1,\dots,s_i). \]
In particular, if $s_1>q$, then $i=0$ and $\Init(\fs)$ is the empty tuple.

For two different tuples $\fs$ and $\mathfrak t$, we consider the lexicographic order for initial tuples and write $\Init(\mathfrak t) \preceq \Init(\fs)$ (resp. $\Init(\mathfrak t) \prec \Init(\fs)$, $\Init(\mathfrak t) \succeq \Init(\fs)$ and $\Init(\mathfrak t) \succ \Init(\fs)$).

\subsubsection{}

Letting $\fs = (s_1, \dotsc, s_n) \in \mathbb{N}^{n}$ and $\fve = (\varepsilon_1, \dotsc, \varepsilon_n) \in (\mathbb{F}_q^{\times})^{n}$, we set $\fs_- := (s_2, \dotsc, s_n)$ and $\fve_- := (\varepsilon_2, \dotsc, \varepsilon_n) $. By definition, an array $\begin{pmatrix}
 \fve  \\
\fs  \end{pmatrix} $ is an array of the form $$\begin{pmatrix}
 \fve  \\
\fs  \end{pmatrix}  = \begin{pmatrix}
 \varepsilon_1 & \dotsb & \varepsilon_n \\
s_1 & \dotsb & s_n \end{pmatrix}.$$
We call $\depth(\fs) = n$ the depth, $w(\fs) = s_1 + \dots + s_n$ the weight and $\chi(\fve) = \varepsilon_1  \dots  \varepsilon_n$ the character of $\begin{pmatrix}
 \fve  \\ \fs  \end{pmatrix}$.

We say that $\begin{pmatrix}
 \fve  \\
\fs  \end{pmatrix} \leq \begin{pmatrix}
 \fe  \\
\mathfrak{t}  \end{pmatrix} $
 if the following conditions are satisfied:
\begin{enumerate}
    \item $\chi(\fve) = \chi(\fe)$,
    \item $w(\fs) = w(\mathfrak{t})$,
    \item $s_1 + \dotsb + s_i \leq t_1 + \dotsb + t_i$ for all $i \in \N$.
\end{enumerate}
We note that this only defines a preorder on arrays.

\begin{remark} \label{rmk: compa depth}
We claim that if $\begin{pmatrix}
 \fve  \\
\fs  \end{pmatrix} \leq \begin{pmatrix}
 \fe  \\
\mathfrak{t}  \end{pmatrix} $, then $\depth(\fs) \geq \depth(\mathfrak{t})$. In fact, assume that $\depth(\fs) < \depth(\mathfrak{t})$. Thus
\begin{align*}
    w(\fs) = s_1 + \cdots + s_{\depth(\fs)} \leq t_1 + \cdots + t_{\depth(\fs)} < t_1 + \cdots + t_{\depth(\mathfrak{t})} = w(\mathfrak{t}),
\end{align*}
which contradicts the condition $w(\fs) = w(\mathfrak{t})$.
\end{remark}

\subsubsection{} \label{sec: definition Sd}
We recall the power sums and MZV's studied by Thakur \cite{Tha10}. For $d \in \mathbb{Z}$ and for $\fs=(s_1,\dots,s_n) \in \N^n$ we introduce
\begin{equation*}
    S_d(\fs) :=  \sum\limits_{\substack{a_1, \dots, a_n \in A_{+} \\ d = \deg a_1> \dots > \deg a_n\geq 0}} \dfrac{1}{a_1^{s_1} \dots a_n^{s_n}} \in K
\end{equation*}
and
\begin{equation*}
    S_{<d}(\mathfrak s) := \sum\limits_{\substack{a_1, \dots, a_n \in A_{+} \\ d > \deg a_1> \dots > \deg a_n\geq 0}} \dfrac{1}{a_1^{s_1} \dots a_n^{s_n}} \in K.
\end{equation*}
We define the multiple zeta value (MZV) by
\begin{equation*}
    \zeta_A(\fs) := \sum \limits_{d \geq 0} S_d(\fs) = \sum \limits_{d \geq 0} \sum\limits_{\substack{a_1, \dots, a_n \in A_{+} \\ d = \deg a_1> \dots > \deg a_n\geq 0}} \dfrac{1}{a_1^{s_1} \dots a_n^{s_n}} \in K_{\infty}.
\end{equation*}
We put
$\zeta_A(\emptyset)  = 1$. We call $\depth(\fs) = n$ the depth and $w(\fs) = s_1 + \dots + s_n$ the weight of  $\zeta_A(\fs)$.

We also recall that $\ell_0 := 1$ and $\ell_d := \prod^d_{i=1}(\theta - \theta^{q^i})$ for all $d \in \mathbb{N}$. Letting $\mathfrak s = (s_1 , \dots, s_n) \in \mathbb{N}^n$, for $d \in \mathbb{Z}$, we define analogues of power sums by
\begin{equation*}
        \Si_d(\mathfrak s) := \sum\limits_{d=d_1> \dots > d_n\geq 0} \dfrac{1}{\ell_{d_1}^{s_1} \dots \ell_{d_n}^{s_n}} \in K,
\end{equation*}
and
\begin{equation*}
        \Si_{<d}(\mathfrak s) := \sum\limits_{d>d_1> \dots > d_n\geq 0} \dfrac{1 }{\ell_{d_1}^{s_1} \dots \ell_{d_n}^{s_n}} \in K.
\end{equation*}
We introduce the Carlitz multiple polylogarithm (CMPL for short) by
\begin{equation*}
    \Li(\fs) := \sum \limits_{d \geq 0} \Si_d(\fs) = \sum \limits_{d \geq 0} \ \sum\limits_{d=d_1> \dots > d_n\geq 0} \dfrac{1}{\ell_{d_1}^{s_1} \dots \ell_{d_n}^{s_n}}   \in K_{\infty}.
\end{equation*}
We set
$\Li(\emptyset)  = 1$. We call $\depth(\fs) = n$ the depth and $w(\fs) = s_1 + \dots + s_n$ the weight of  $\Li(\fs)$.

\subsubsection{}
Let $\begin{pmatrix}
 \fve  \\
\fs  \end{pmatrix}  =  \begin{pmatrix}
 \varepsilon_1 & \dots & \varepsilon_n \\
s_1 & \dots & s_n \end{pmatrix} $ be an array. For $d \in \mathbb{Z}$, we define
\begin{equation*}
S_d \begin{pmatrix}
\fve \\ \fs
\end{pmatrix} := \sum\limits_{\substack{a_1, \dots, a_n \in A_{+} \\ d = \deg a_1> \dots > \deg a_n\geq 0}} \dfrac{\varepsilon_1^{\deg a_1} \dots \varepsilon_n^{\deg a_n }}{a_1^{s_1} \dots a_n^{s_n}} \in K
\end{equation*}
and
\begin{equation*}
    S_{<d} \begin{pmatrix}
 \fve  \\
\fs  \end{pmatrix} := \sum\limits_{\substack{a_1, \dots, a_n \in A_{+} \\ d > \deg a_1> \dots > \deg a_n\geq 0}} \dfrac{\varepsilon_1^{\deg a_1} \dots \varepsilon_n^{\deg a_n }}{a_1^{s_1} \dots a_n^{s_n}} \in K.
\end{equation*}
We also introduce
\begin{equation*}
        \Si_d \begin{pmatrix}
 \fve  \\
\fs  \end{pmatrix} := \sum\limits_{d=d_1> \dots > d_n\geq 0} \dfrac{\varepsilon_1^{d_1} \dots \varepsilon_n^{d_n} }{\ell_{d_1}^{s_1} \dots \ell_{d_n}^{s_n}} \in K,
\end{equation*}
and
\begin{equation*}
        \Si_{<d} \begin{pmatrix}
 \fve  \\
\fs  \end{pmatrix} := \sum\limits_{d>d_1> \dots > d_n\geq 0} \dfrac{\varepsilon_1^{d_1} \dots \varepsilon_n^{d_n} }{\ell_{d_1}^{s_1} \dots \ell_{d_n}^{s_n}} \in K.
\end{equation*}
One verifies easily the following formulas:
\begin{align*}
\Si_{d}
    \begin{pmatrix} \varepsilon \\ s  \end{pmatrix}
&= \varepsilon^d \Si_d(s),\\
\Si_d \begin{pmatrix}
 1& \dots & 1 \\
s_1 & \dots & s_n \end{pmatrix}  &= \Si_{d}(s_1, \dots, s_n),\\
\Si_{<d} \begin{pmatrix}
 1& \dots & 1 \\
s_1 & \dots & s_n \end{pmatrix}  &= \Si_{<d}(s_1, \dots, s_n),\\
\Si_{d} \begin{pmatrix}
 \fve  \\
\fs  \end{pmatrix}  &= \Si_{d} \begin{pmatrix}
 \varepsilon_1  \\
s_1  \end{pmatrix} \Si_{<d} \begin{pmatrix}
 \fve_{-}  \\
\fs_{-}  \end{pmatrix} .
\end{align*}
Then we define the alternating Carlitz multiple polylogarithm (ACMPL for short) by
\begin{equation*}
    \Li \begin{pmatrix}
 \fve  \\
\fs  \end{pmatrix} := \sum \limits_{d \geq 0} \Si_d \begin{pmatrix}
 \fve  \\
\fs  \end{pmatrix}  = \sum\limits_{d_1> \dots > d_n\geq 0} \dfrac{\varepsilon_1^{d_1} \dots \varepsilon_n^{d_n} }{\ell_{d_1}^{s_1} \dots \ell_{d_n}^{s_n}}   \in K_{\infty}.
\end{equation*}
Recall that
$\Li \begin{pmatrix}
 \emptyset  \\
 \emptyset  \end{pmatrix}  = 1$. We call $\depth(\fs) = n$ the depth, $w(\fs) = s_1 + \dots + s_n$ the weight and $\chi(\fve) = \varepsilon_1  \dots  \varepsilon_n$ the character of  $\Li \begin{pmatrix}
 \fve  \\ \fs  \end{pmatrix}$.

\begin{lemma} \label{agree}
For all $ \begin{pmatrix}
 \fve  \\
\fs  \end{pmatrix}$ as above such that $s_i \leq q$ for all $i$, we have
\begin{equation*}
   S_d \begin{pmatrix}
 \fve  \\
\fs  \end{pmatrix}  =  \Si_d \begin{pmatrix}
 \fve  \\
\fs  \end{pmatrix}  \quad \text{for all } d \in \mathbb{Z}.
\end{equation*}
Therefore,
\begin{equation*}
    \zeta_A  \begin{pmatrix}
 \fve  \\
\fs  \end{pmatrix}  = \Li \begin{pmatrix}
 \fve  \\
\fs  \end{pmatrix} .
\end{equation*}
\end{lemma}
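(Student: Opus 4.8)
The plan is to reduce the identity to the depth-one case and then invoke the classical evaluation of Carlitz; the reduction is purely formal. First I would record, for $S_d$ and $S_{<d}$, the same multiplicative recursions already listed in the excerpt for $\Si_d$ and $\Si_{<d}$: fixing the leading index $a_1$ of degree $d$ forces $\deg a_2,\dots,\deg a_n<d$, so
\[ S_d\begin{pmatrix} \fve \\ \fs \end{pmatrix}=\varepsilon_1^{\,d}\,S_d(s_1)\,S_{<d}\begin{pmatrix} \fve_- \\ \fs_- \end{pmatrix},\qquad S_{<d}\begin{pmatrix} \fve \\ \fs \end{pmatrix}=\sum_{0\le e<d}S_e\begin{pmatrix} \fve \\ \fs \end{pmatrix}, \]
exactly paralleling $\Si_d\begin{pmatrix} \fve \\ \fs \end{pmatrix}=\varepsilon_1^{\,d}\,\Si_d(s_1)\,\Si_{<d}\begin{pmatrix} \fve_- \\ \fs_- \end{pmatrix}$ and $\Si_{<d}=\sum_{0\le e<d}\Si_e$. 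Since the factors $\varepsilon_1^{\,d}$ coincide, an induction on the depth $n$ (base case $n=1$) reduces the claim to the single-index statement $S_d(s)=\Si_d(s)$ for $1\le s\le q$ and all $d\in\bZ$: in the inductive step one combines the depth-one case applied to $s_1$ with the induction hypothesis applied to $(\fve_-,\fs_-)$, which still satisfies $s_i\le q$, after summing over $e<d$.

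For the depth-one case, $d<0$ (empty sums) and $d=0$ (both sides equal $1$) are immediate, so let $d\ge1$; since $\Si_d(s)=\ell_d^{-s}$ by definition, I must show $\sum_{a\in A_+,\ \deg a=d}a^{-s}=\ell_d^{-s}$ for $1\le s\le q$. Here I would use the parametrization $\{a\in A_+:\deg a=d\}=\theta^d+\{b\in A:\deg b<d\}$ together with the $\Fq$-linear polynomial $E_d(X):=\prod_{\deg b<d}(X-b)=\sum_{i=0}^{d}c_iX^{q^i}$, whose derivative $E_d'(X)=c_0$ is a nonzero constant. Carlitz's classical identity $\sum_{\deg a=d}a^{-1}=\ell_d^{-1}$ (\cite{Car35}, see also \cite{Tha04}) is exactly the statement $E_d(\theta^d)=c_0\ell_d$. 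From $\sum_{\deg b<d}(X+b)^{-1}=E_d'(X)/E_d(X)=c_0/E_d(X)$, hyperdifferentiating in $X$ gives $\sum_{\deg b<d}(X+b)^{-s}=(-1)^{s-1}\partial^{(s-1)}\bigl(c_0/E_d(X)\bigr)$ for every $s\ge1$, with no factorial inverted.

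Now write $X=\theta^d+v$; by $\Fq$-linearity $E_d(\theta^d+v)=c_0\ell_d+c_0v+\sum_{i\ge1}c_iv^{q^i}$, hence modulo $v^{q}$ the geometric expansion gives
\[ \frac{c_0}{E_d(\theta^d+v)}\equiv\frac{1}{\ell_d}\sum_{k=0}^{q-1}\Bigl(\frac{-v}{\ell_d}\Bigr)^{k}\pmod{v^{q}}. \]
Applying $(-1)^{s-1}\partial^{(s-1)}$ and evaluating at $v=0$ picks out the $k=s-1$ term — legitimate precisely because $s-1<q$ — and yields $\ell_d^{-s}=\Si_d(s)$; in particular $S_d(q)=\ell_d^{-q}$ (this last case also follows at once from $s=1$ by the $q$-power Frobenius in characteristic $p$). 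Summing $S_d\begin{pmatrix} \fve \\ \fs \end{pmatrix}=\Si_d\begin{pmatrix} \fve \\ \fs \end{pmatrix}$ over $d\ge0$ then gives $\zeta_A\begin{pmatrix} \fve \\ \fs \end{pmatrix}=\Li\begin{pmatrix} \fve \\ \fs \end{pmatrix}$, as these are the respective defining sums.

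I expect the only genuine content to be this depth-one identity; the reduction to it and the final summation are formal. The delicate range is $2\le s\le q-1$: for $d>1$ the naive derivative $\partial_\theta$ produces the unwanted factor $a'\ne1$, and even after the substitution $X=\theta^d+v$ one cannot divide by $(s-1)!$ once $p\le s-1$. This is exactly why hyperderivatives, combined with the Frobenius ($\Fq$-linearity) expansion of $E_d$, are the right tool. If instead a self-contained reference for $S_d(s)=\ell_d^{-s}$ on the range $1\le s\le q$ is at hand (this is essentially contained in \cite{Tha04} and in \cite{ND21}), the entire proof collapses to the formal depth reduction above.
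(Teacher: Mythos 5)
Your proposal is correct and follows essentially the same route as the paper: both reduce to the depth-one identity $S_d(s)=\ell_d^{-s}$ for $1\le s\le q$ via induction on depth, using the factorization $S_d\begin{pmatrix}\fve \\ \fs\end{pmatrix}=S_d\begin{pmatrix}\varepsilon_1 \\ s_1\end{pmatrix}S_{<d}\begin{pmatrix}\fve_- \\ \fs_-\end{pmatrix}$ and its analogue for $\Si$. The only difference is that the paper simply cites the depth-one power-sum evaluation from Thakur's \cite[\S 3.3]{Tha09}, whereas you supply a self-contained derivation via Carlitz's additive polynomial $E_d$, the Frobenius expansion of $E_d(\theta^d+v)$, and hyperderivatives to bypass the $1/(s-1)!$ obstruction; that in-house argument is correct (the restriction $s-1<q$ is indeed exactly what makes the $k=s-1$ coefficient of the geometric series visible modulo $v^q$, and the $s=q$ case is covered directly as well as by Frobenius), but it is recovering a known fact rather than supplying a new decomposition or lemma.
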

\begin{proof}
We denote by $\mathcal{J}$ the set of all arrays $ \begin{pmatrix}
 \fve  \\
\fs  \end{pmatrix}  =  \begin{pmatrix}
 \varepsilon_1 & \dots & \varepsilon_n \\
s_1 & \dots & s_n \end{pmatrix} $ for some $n$ such that $s_1, \dots, s_n \leq q$.

The second statement follows at once from the first statement. We prove the first statement by induction on $\depth(\fs)$. For $\depth(\fs) = 1$, we let $ \begin{pmatrix}
 \fve  \\
\fs  \end{pmatrix}  =  \begin{pmatrix}
 \varepsilon \\
s  \end{pmatrix} $ with $s \le q$. It follows from  special cases of power sums in \cite[\S 3.3]{Tha09} that for all $d \in \mathbb{Z}$,
$
    S_{d} \begin{pmatrix}
 \varepsilon \\
s  \end{pmatrix}  =  \dfrac{\varepsilon^d}{\ell^s_d} = \Si_{d} \begin{pmatrix}
 \varepsilon \\
s  \end{pmatrix} .
$ Suppose that the first statement holds for all arrays $ \begin{pmatrix}
 \fve  \\
\fs  \end{pmatrix}  \in \mathcal{J}$ with $\depth(\fs) = n - 1$ and for all $ d \in \mathbb{Z}$. Let $ \begin{pmatrix}
 \fve  \\
\fs  \end{pmatrix}  =  \begin{pmatrix}
 \varepsilon_1 & \dots & \varepsilon_n \\
s_1 & \dots & s_n \end{pmatrix} $ be an element of $\mathcal{J}$. Note that if $ \begin{pmatrix}
 \fve  \\
\fs  \end{pmatrix}  \in \mathcal{J}$, then $ \begin{pmatrix}
 \fve_{-}  \\
\fs_{-}  \end{pmatrix}  \in \mathcal{J}$. It follows from induction hypothesis and the fact $s_1 \leq q$ that for all $d \in \mathbb{Z}$
\begin{equation*}
    S_d \begin{pmatrix}
 \fve  \\
\fs  \end{pmatrix}  = S_d \begin{pmatrix}
 \varepsilon_1  \\
s_1  \end{pmatrix}  S_{<d} \begin{pmatrix}
 \fve_{-}  \\
\fs_{-}  \end{pmatrix}
    = \Si_d \begin{pmatrix}
 \varepsilon_1  \\
s_1  \end{pmatrix}  \Si_{<d} \begin{pmatrix}
 \fve_{-}  \\
\fs_{-}  \end{pmatrix}  = \Si_d \begin{pmatrix}
 \fve  \\
\fs  \end{pmatrix} .
\end{equation*}
This proves the lemma.
\end{proof}

\subsubsection{}

Let $\begin{pmatrix}
 \fve  \\
\fs  \end{pmatrix}$, $\begin{pmatrix}
 \fe  \\
\mathfrak{t}  \end{pmatrix}$ be two arrays. We recall $s_i = 0$ and $\varepsilon_i = 1$ for all $i > \depth(\fs)$; $t_i = 0$ and $\epsilon_i = 1$ for all $i > \depth(\mathfrak{t})$. We define the following operation
\begin{equation*}
    \begin{pmatrix}
 \fve  \\
\fs  \end{pmatrix} + \begin{pmatrix}
 \fe  \\
\mathfrak{t}  \end{pmatrix} := \begin{pmatrix}
 \fve \fe  \\
\fs + \mathfrak{t}  \end{pmatrix},
\end{equation*}
where $\fve \fe$ and $\fs + \mathfrak{t}$ are defined by component multiplication and component addition, respectively. 

We now consider some formulas related to analogues of power sums. It is easily seen that
\begin{equation}\label{eq: redsum}
  \Si_d \begin{pmatrix}
 \varepsilon \\
s  \end{pmatrix}   \Si_d \begin{pmatrix}
 \epsilon  \\
t  \end{pmatrix}   = \Si_d \begin{pmatrix}
 \varepsilon \epsilon \\
s + t  \end{pmatrix} ,
\end{equation}
hence, for  $\mathfrak{t} = (t_1, \dots, t_n)$,
\begin{equation}\label{redsum}
  \Si_d \begin{pmatrix}
 \varepsilon \\
s  \end{pmatrix}   \Si_d \begin{pmatrix}
 \fe  \\
\mathfrak{t}  \end{pmatrix}   = \Si_d \begin{pmatrix}
 \varepsilon \epsilon_1 & \fe_{-}  \\
s + t_1  & \mathfrak{t}_{-} \end{pmatrix}.
\end{equation}
More generally, we deduce the following proposition which will be used frequently later.
\begin{proposition} \label{polysums}
Let $ \begin{pmatrix}
 \fve  \\
\fs  \end{pmatrix} $, $ \begin{pmatrix}
 \fe  \\
\mathfrak{t}  \end{pmatrix} $ be two arrays. Then we have the following:
\begin{enumerate}
    \item There exist $f_i \in \mathbb{F}_q$ and arrays $ \begin{pmatrix}
 \fm_i  \\
\mathfrak{u}_i  \end{pmatrix} $ with $ \begin{pmatrix}
 \fm_i  \\
\mathfrak{u}_i  \end{pmatrix}  \leq  \begin{pmatrix}
 \fve  \\
\fs  \end{pmatrix}  +  \begin{pmatrix}
 \fe  \\
\mathfrak{t}  \end{pmatrix}  $ and $\depth(\mathfrak{u}_i) \leq \depth(\fs) + \depth(\mathfrak{t})$ for all $i$  such that
    \begin{equation*}
        \Si_d \begin{pmatrix}
 \fve  \\
\fs  \end{pmatrix} \Si_d \begin{pmatrix}
 \fe  \\
\mathfrak{t}  \end{pmatrix}  = \sum \limits_i f_i \Si_d \begin{pmatrix}
 \fm_i  \\
\mathfrak{u}_i  \end{pmatrix}  \quad \text{for all } d \in \mathbb{Z}.
    \end{equation*}
    \item There exist $f'_i \in \mathbb{F}_q$ and arrays $ \begin{pmatrix}
 \fm'_i  \\
\mathfrak{u}'_i  \end{pmatrix} $ with $ \begin{pmatrix}
 \fm'_i  \\
\mathfrak{u}'_i  \end{pmatrix}  \leq  \begin{pmatrix}
 \fve  \\
\fs  \end{pmatrix}  +  \begin{pmatrix}
 \fe  \\
\mathfrak{t}  \end{pmatrix}  $ and $\depth(\mathfrak{u}'_i) \leq \depth(\fs) + \depth(\mathfrak{t})$ for all $i$  such that
    \begin{equation*}
        \Si_{<d} \begin{pmatrix}
 \fve  \\
\fs  \end{pmatrix} \Si_{<d} \begin{pmatrix}
 \fe  \\
\mathfrak{t}  \end{pmatrix}  = \sum \limits_i f'_i \Si_{<d} \begin{pmatrix}
 \fm'_i  \\
\mathfrak{u}'_i  \end{pmatrix}  \quad \text{for all } d \in \mathbb{Z}.
    \end{equation*}
        \item There exist $f''_i \in \mathbb{F}_q$ and arrays $ \begin{pmatrix}
 \fm''_i  \\
\mathfrak{u}''_i  \end{pmatrix} $ with $ \begin{pmatrix}
 \fm''_i  \\
\mathfrak{u}''_i  \end{pmatrix}  \leq  \begin{pmatrix}
 \fve  \\
\fs  \end{pmatrix}  +  \begin{pmatrix}
 \fe  \\
\mathfrak{t}  \end{pmatrix}  $ and $\depth(\mathfrak{u}''_i) \leq \depth(\fs) + \depth(\mathfrak{t})$ for all $i$  such that
    \begin{equation*}
        \Si_d \begin{pmatrix}
 \fve  \\
\fs  \end{pmatrix} \Si_{<d} \begin{pmatrix}
 \fe  \\
\mathfrak{t}  \end{pmatrix}  = \sum \limits_i f''_i \Si_d \begin{pmatrix}
 \fm''_i  \\
\mathfrak{u}''_i  \end{pmatrix}  \quad \text{for all } d \in \mathbb{Z}.
    \end{equation*}
\end{enumerate}
\end{proposition}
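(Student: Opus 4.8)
The plan is to prove parts (1), (2) and (3) simultaneously by induction on the total depth $N:=\depth(\fs)+\depth(\mathfrak t)$, using three elementary manipulations of the sums $\Si_d$ and $\Si_{<d}$ already recorded above: the depth-one product rule $\Si_d\begin{pmatrix}\varepsilon\\ s\end{pmatrix}\Si_d\begin{pmatrix}\epsilon\\ t\end{pmatrix}=\Si_d\begin{pmatrix}\varepsilon\epsilon\\ s+t\end{pmatrix}$; the identity $\Si_d\begin{pmatrix}\fve\\ \fs\end{pmatrix}=\Si_d\begin{pmatrix}\varepsilon_1\\ s_1\end{pmatrix}\Si_{<d}\begin{pmatrix}\fve_-\\ \fs_-\end{pmatrix}$ splitting off the top index, together with its converse, the concatenation identity $\Si_d\begin{pmatrix}\varepsilon\\ s\end{pmatrix}\Si_{<d}\begin{pmatrix}\fm\\ \mathfrak u\end{pmatrix}=\Si_d\begin{pmatrix}\varepsilon&\fm\\ s&\mathfrak u\end{pmatrix}$; and the telescoping decomposition
\[
\Si_{<d}\begin{pmatrix}\fve\\ \fs\end{pmatrix}\Si_{<d}\begin{pmatrix}\fe\\ \mathfrak t\end{pmatrix}=\sum_{e<d}\left(\Si_e\begin{pmatrix}\fve\\ \fs\end{pmatrix}\Si_e\begin{pmatrix}\fe\\ \mathfrak t\end{pmatrix}+\Si_e\begin{pmatrix}\fve\\ \fs\end{pmatrix}\Si_{<e}\begin{pmatrix}\fe\\ \mathfrak t\end{pmatrix}+\Si_e\begin{pmatrix}\fe\\ \mathfrak t\end{pmatrix}\Si_{<e}\begin{pmatrix}\fve\\ \fs\end{pmatrix}\right),
\]
obtained by partitioning the index set $\{(e,e'):e,e'<d\}$ according to the sign of $e-e'$.

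Within a fixed level $N$ I would prove the three parts in the order (1), (3), (2), so that the mutual induction stays well founded. For (1): if one of the two arrays has depth $\leq 1$ the assertion is \eqref{redsum} (after commuting the factors if necessary), which in fact produces a single array of depth $N-1$; if both depths are $\geq 2$, I split off the top index of each factor, merge the two depth-one pieces into $\Si_d\begin{pmatrix}\varepsilon_1\epsilon_1\\ s_1+t_1\end{pmatrix}$ by the product rule, invoke part (2) at level $N-2$ for the remaining product $\Si_{<d}\begin{pmatrix}\fve_-\\ \fs_-\end{pmatrix}\Si_{<d}\begin{pmatrix}\fe_-\\ \mathfrak t_-\end{pmatrix}$, and recombine by the concatenation identity. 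For (3): splitting off the top index of $\Si_d\begin{pmatrix}\fve\\ \fs\end{pmatrix}$ turns the product into $\Si_d\begin{pmatrix}\varepsilon_1\\ s_1\end{pmatrix}\bigl(\Si_{<d}\begin{pmatrix}\fve_-\\ \fs_-\end{pmatrix}\Si_{<d}\begin{pmatrix}\fe\\ \mathfrak t\end{pmatrix}\bigr)$, where the inner product is handled by part (2) at level $N-1$ and the concatenation identity finishes the computation. For (2): in the telescoping decomposition above the first inner summand is treated by part (1) at level $N$ and the other two by part (3) at level $N$ (with the roles of the two arrays swapped in the last one), followed in all cases by $\sum_{e<d}\Si_e(\cdot)=\Si_{<d}(\cdot)$. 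Since (1) at level $N$ calls only (2) at level $N-2$, (3) at level $N$ calls only (2) at level $N-1$, and (2) at level $N$ calls only the already-proven (1) and (3) at level $N$, there is no circularity.

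The remaining point, and the one demanding genuine care, is that the side conditions propagate through every recombination. The key is a monotonicity property of the partial order on arrays: prepending an entry $(\varepsilon_1,s_1)$ to an array bounded by $\begin{pmatrix}\fve_-\\ \fs_-\end{pmatrix}+\begin{pmatrix}\fe\\ \mathfrak t\end{pmatrix}$ yields an array bounded by $\begin{pmatrix}\fve\\ \fs\end{pmatrix}+\begin{pmatrix}\fe\\ \mathfrak t\end{pmatrix}$, and prepending $(\varepsilon_1\epsilon_1,s_1+t_1)$ to an array bounded by $\begin{pmatrix}\fve_-\\ \fs_-\end{pmatrix}+\begin{pmatrix}\fe_-\\ \mathfrak t_-\end{pmatrix}$ yields an array bounded by $\begin{pmatrix}\fve\\ \fs\end{pmatrix}+\begin{pmatrix}\fe\\ \mathfrak t\end{pmatrix}$. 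Each of the three defining conditions of $\leq$ is then a one-line check: the character condition is multiplicativity of $\chi$, the weight condition is additivity of $w$, and for the partial sums one observes that prepending an entry shifts the partial sums of the tail so that the $i$-th partial sum of the result differs from that of the relevant componentwise sum by exactly the nonnegative quantity $t_i$ (respectively $0$); the depth bound is immediate because one entry is added to an array of depth $\leq N-1$ or $\leq N-2$, and the coefficients stay in $\mathbb{F}_p$ since they are produced only by the (coefficient-$1$) product, concatenation and telescoping identities. Thus the main obstacle is not any single difficult estimate but the combinatorial bookkeeping: organizing the three-fold mutual induction so that it remains non-circular while simultaneously carrying the ordering constraint $\leq$, the depth bound, and the $\mathbb{F}_p$-linearity through each application of the product, splitting, concatenation and telescoping identities.
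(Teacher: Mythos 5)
Your argument is correct and takes the standard stuffle approach that the paper itself delegates to \cite[Proposition~2.1]{ND21}: a mutual induction on $N=\depth(\fs)+\depth(\mathfrak t)$ driven by the telescoping decomposition of $\Si_{<d}\cdot\Si_{<d}$, with the dependency chain (1) at level $N$ needing (2) at level $N-2$, (3) at level $N$ needing (2) at level $N-1$, and (2) at level $N$ needing (1),(3) at level $N$, which is well founded when the parts are proved in the order (1),(3),(2) within each level. The verification that prepending $(\varepsilon_1\epsilon_1,\,s_1+t_1)$ or $(\varepsilon_1,\,s_1)$ preserves the order $\leq$ (with slack $0$ or $t_i$ in the $i$-th partial sum respectively), the depth bound, and $\mathbb F_p$-coefficients is also accurate, so your write-up supplies exactly the details the paper omits.
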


\begin{proof}
The proof follows the same line as in \cite[Proposition 2.1]{ND21}. We write down the proof for the reader's convenience. 

We first prove Part 1 and Part 2 by induction on $\depth(\fs) + \depth(\mathfrak{t})$. For $\depth(\fs) + \depth(\mathfrak{t}) = 2$, we have $\depth(\fs) = \depth(\mathfrak{t}) = 1$, hence we may assume that $ \begin{pmatrix}
 \fve  \\
\fs  \end{pmatrix} = \begin{pmatrix}
 \varepsilon  \\
s  \end{pmatrix}$ and $ \begin{pmatrix}
 \fe  \\
\mathfrak{t}  \end{pmatrix} = \begin{pmatrix}
 \epsilon  \\
t  \end{pmatrix}$, where $s, t \in \mathbb{N}$ and $\varepsilon, \epsilon  \in \mathbb{F}_q^{\times}$. For Part 1, it follows from \eqref{eq: redsum} that
\begin{align*}
    \Si_d \begin{pmatrix}
 \varepsilon \\
s  \end{pmatrix}   \Si_d \begin{pmatrix}
 \epsilon  \\
t  \end{pmatrix}   = \Si_d \begin{pmatrix}
 \varepsilon \epsilon \\
s + t  \end{pmatrix}.
\end{align*}
We have $\begin{pmatrix}
 \varepsilon \epsilon \\
s + t  \end{pmatrix} = \begin{pmatrix}
 \varepsilon \\
s  \end{pmatrix} + \begin{pmatrix}
 \epsilon  \\
t  \end{pmatrix}$ and $\depth(s + t) = 1 < \depth(s) + \depth(t) = 2$, which shows that Part 1 holds in this case. For Part 2, it follows from \eqref{eq: redsum} that
\begin{align*}
    \Si_{<d} \begin{pmatrix}
 \varepsilon \\
s  \end{pmatrix}   \Si_{<d} \begin{pmatrix}
 \epsilon  \\
t  \end{pmatrix} =& \ \left(\sum_{m < d}\Si_{m} \begin{pmatrix}
 \varepsilon \\
s  \end{pmatrix}\right)\left(\sum_{n < d}\Si_{n} \begin{pmatrix}
 \epsilon  \\
t  \end{pmatrix}\right)\\
=& \ \sum_{m < d}\Si_{m} \begin{pmatrix}
 \varepsilon \\
s  \end{pmatrix}\Si_{<m} \begin{pmatrix}
 \epsilon  \\
t  \end{pmatrix} + \sum_{n < d}\Si_{n} \begin{pmatrix}
 \epsilon  \\
t  \end{pmatrix}\Si_{<n} \begin{pmatrix}
 \varepsilon \\
s  \end{pmatrix}\\
&+ \sum_{m = n < d}\Si_{m} \begin{pmatrix}
 \varepsilon \\
s  \end{pmatrix}\Si_{n} \begin{pmatrix}
 \epsilon  \\
t  \end{pmatrix}\\
=& \ \sum_{m < d}\Si_{m} \begin{pmatrix}
 \varepsilon & \epsilon \\
s & t \end{pmatrix} + \sum_{n < d}\Si_{n} \begin{pmatrix}
 \epsilon &  \varepsilon\\
t  & s\end{pmatrix} + \sum_{m < d}\Si_{m} \begin{pmatrix}
 \varepsilon\epsilon \\
s + t \end{pmatrix}\\
=& \ \Si_{<d} \begin{pmatrix}
 \varepsilon & \epsilon \\
s & t \end{pmatrix} + \Si_{<d} \begin{pmatrix}
 \epsilon &  \varepsilon\\
t  & s\end{pmatrix} + \Si_{<d} \begin{pmatrix}
 \varepsilon\epsilon \\
s + t \end{pmatrix}.
\end{align*}
One verifies that $\begin{pmatrix}
 \varepsilon & \epsilon \\
s & t \end{pmatrix} \leq \begin{pmatrix}
 \varepsilon \\
s  \end{pmatrix} + \begin{pmatrix}
 \epsilon  \\
t  \end{pmatrix}, \begin{pmatrix}
 \epsilon &  \varepsilon\\
t  & s\end{pmatrix} \leq \begin{pmatrix}
 \varepsilon \\
s  \end{pmatrix} + \begin{pmatrix}
 \epsilon  \\
t  \end{pmatrix}$ and $\begin{pmatrix}
 \varepsilon \epsilon \\
s + t  \end{pmatrix} = \begin{pmatrix}
 \varepsilon \\
s  \end{pmatrix} + \begin{pmatrix}
 \epsilon  \\
t  \end{pmatrix}$. Moreover, we have $\depth(s, t) = \depth(t, s) = \depth(s) + \depth(t) = 2$ and $\depth(s + t) = 1 < \depth(s) + \depth(t) = 2$, which shows that Part 2 holds in this case. 

We assume that Part 1 and Part 2 hold when $\depth(\fs) + \depth(\mathfrak{t}) < d$. We need to show that Part 1 and Part 2 hold when $\depth(\fs) + \depth(\mathfrak{t}) = d$. 

For Part 1, we deduce from \eqref{eq: redsum} that
\begin{align} \label{eq: Part 1 Si}
    \Si_d \begin{pmatrix}
 \fve  \\
\fs  \end{pmatrix} \Si_d \begin{pmatrix}
 \fe  \\
\mathfrak{t}  \end{pmatrix} &= \Si_d \begin{pmatrix}
 \varepsilon_1  \\
s_1  \end{pmatrix}\Si_{<d} \begin{pmatrix}
 \fve_- \\
\fs_-  \end{pmatrix} \Si_d \begin{pmatrix}
 \epsilon_1  \\
t_1  \end{pmatrix}\Si_{<d} \begin{pmatrix}
 \fe_-  \\
\mathfrak{t}_-  \end{pmatrix}\\ \notag
&= \Si_d \begin{pmatrix}
 \varepsilon_1  \\
s_1  \end{pmatrix}\Si_d \begin{pmatrix}
 \epsilon_1  \\
t_1  \end{pmatrix}\Si_{<d} \begin{pmatrix}
 \fve_- \\
\fs_-  \end{pmatrix} \Si_{<d} \begin{pmatrix}
 \fe_-  \\
\mathfrak{t}_-  \end{pmatrix}\\ \notag
&= \Si_d \begin{pmatrix}
 \varepsilon_1\epsilon_1  \\
s_1 + t_1 \end{pmatrix}\Si_{<d} \begin{pmatrix}
 \fve_- \\
\fs_-  \end{pmatrix} \Si_{<d} \begin{pmatrix}
 \fe_-  \\
\mathfrak{t}_-  \end{pmatrix}.
\end{align}
Since $\depth(\fs_- ) + \depth(\mathfrak{t}_-) = \depth(\fs) + \depth(\mathfrak{t}) - 2 < \depth(\fs) + \depth(\mathfrak{t}) = d$, it follows from the induction hypothesis that there exist $f'_j \in \mathbb{F}_q$ and arrays $ \begin{pmatrix}
 \fm'_j  \\
\mathfrak{u}'_j  \end{pmatrix} $ with $ \begin{pmatrix}
 \fm'_j  \\
\mathfrak{u}'_j  \end{pmatrix}  \leq  \begin{pmatrix}
 \fve_-  \\
\fs_-  \end{pmatrix}  +  \begin{pmatrix}
 \fe_-  \\
\mathfrak{t}_-  \end{pmatrix}  $ and $\depth(\mathfrak{u}'_j) \leq \depth(\fs_-) + \depth(\mathfrak{t}_-)$ for all $j$  such that
    \begin{equation*}
        \Si_{<d} \begin{pmatrix}
 \fve_-  \\
\fs_-  \end{pmatrix} \Si_{<d} \begin{pmatrix}
 \fe_-  \\
\mathfrak{t}_-  \end{pmatrix}  = \sum \limits_j f'_j \Si_{<d} \begin{pmatrix}
 \fm'_j  \\
\mathfrak{u}'_j  \end{pmatrix}  \quad \text{for all } d \in \mathbb{Z}.
    \end{equation*}
Thus we deduce from \eqref{eq: Part 1 Si} that 
\begin{align*}
     \Si_d \begin{pmatrix}
 \fve  \\
\fs  \end{pmatrix} \Si_d \begin{pmatrix}
 \fe  \\
\mathfrak{t}  \end{pmatrix} = \sum \limits_j f'_j\Si_d \begin{pmatrix}
 \varepsilon_1\epsilon_1  \\
s_1 + t_1 \end{pmatrix} \Si_{<d} \begin{pmatrix}
 \fm'_j  \\
\mathfrak{u}'_j  \end{pmatrix} = \sum \limits_j f'_j \Si_{d} \begin{pmatrix}
 \varepsilon_1\epsilon_1 & \fm'_j  \\
s_1 + t_1 & \mathfrak{u}'_j  \end{pmatrix}.
\end{align*}
One verifies that $\begin{pmatrix}
 \varepsilon_1\epsilon_1 & \fm'_j  \\
s_1 + t_1 & \mathfrak{u}'_j  \end{pmatrix} \leq \begin{pmatrix}
 \varepsilon_1 & \fve_-  \\
s_1 & \fs_-  \end{pmatrix}  +  \begin{pmatrix}
\epsilon_1 &  \fe_-  \\
t_1 & \mathfrak{t}_-  \end{pmatrix} = \begin{pmatrix}
 \fve  \\
\fs\end{pmatrix}  +  \begin{pmatrix}
 \fe  \\
\mathfrak{t}  \end{pmatrix}$, and 
$\depth(s_1 + t_1 , \mathfrak{u}'_j) = 1 + \depth(\mathfrak{u}'_j) \leq 1 + \depth(\fs_-) + \depth(\mathfrak{t}_-) < \depth(\fs) + \depth(\mathfrak{t})$, which proves Part 1.

For Part 2, we have
\begin{align} \label{eq: Part 2 Si}
    \Si_{<d} \begin{pmatrix}
 \fve \\
\fs  \end{pmatrix} &  \Si_{<d} \begin{pmatrix}
 \fe  \\
\mathfrak{t}  \end{pmatrix}\\ \notag
=& \ \left(\sum_{m < d}\Si_{m} \begin{pmatrix}
 \fve \\
\fs  \end{pmatrix}\right)\left(\sum_{n < d}\Si_{n} \begin{pmatrix}
 \fe  \\
\mathfrak{t}  \end{pmatrix}\right)\\ \notag
=& \ \sum_{m < d}\Si_{m} \begin{pmatrix}
 \fve \\
\fs  \end{pmatrix}\Si_{<m} \begin{pmatrix}
 \fe  \\
\mathfrak{t}  \end{pmatrix} + \sum_{n < d}\Si_{n} \begin{pmatrix}
 \fe  \\
\mathfrak{t}  \end{pmatrix}\Si_{<n} \begin{pmatrix}
 \fve \\
\fs  \end{pmatrix} + \sum_{m = n < d}\Si_{m} \begin{pmatrix}
 \fve \\
\fs  \end{pmatrix}\Si_{n} \begin{pmatrix}
 \fe  \\
\mathfrak{t}  \end{pmatrix}\\ \notag
=& \ \sum_{m < d}\Si_{m} \begin{pmatrix}
 \varepsilon_1 \\
s_1  \end{pmatrix}\Si_{<m} \begin{pmatrix}
 \fve_- \\
\fs_-  \end{pmatrix}\Si_{<m} \begin{pmatrix}
 \fe  \\
\mathfrak{t}  \end{pmatrix} + \sum_{n < d}\Si_{n} \begin{pmatrix}
 \epsilon_1 \\
t_1  \end{pmatrix}\Si_{<n} \begin{pmatrix}
 \fe_-  \\
\mathfrak{t}_-  \end{pmatrix}\Si_{<n} \begin{pmatrix}
 \fve \\
\fs  \end{pmatrix}\\ \notag
&+ \sum_{m < d}\Si_{m} \begin{pmatrix}
 \fve \\
\fs  \end{pmatrix}\Si_{m} \begin{pmatrix}
 \fe  \\
\mathfrak{t}  \end{pmatrix}.
\end{align}
Since $\depth(\fs_- ) + \depth(\mathfrak{t}) = \depth(\fs) + \depth(\mathfrak{t}_-) = \depth(\fs) + \depth(\mathfrak{t}) - 1 < \depth(\fs) + \depth(\mathfrak{t}) = d$, it follows from the induction hypothesis that there exist $g_j, g_j' \in \mathbb{F}_q$ and 
\begin{itemize}
    \item arrays $ \begin{pmatrix}
 \boldsymbol{\eta}_j  \\
\mathfrak{v}_j  \end{pmatrix} $ with $ \begin{pmatrix}
 \boldsymbol{\eta}_j  \\
\mathfrak{v}_j  \end{pmatrix}\leq  \begin{pmatrix}
 \fve_-  \\
\fs_-  \end{pmatrix}  +  \begin{pmatrix}
 \fe  \\
\mathfrak{t}  \end{pmatrix}  $ and $\depth(\mathfrak{v}_j) \leq \depth(\fs_-) + \depth(\mathfrak{t})$ for all $j$,
\item arrays $ \begin{pmatrix}
 \boldsymbol{\eta}'_j  \\
\mathfrak{v}'_j  \end{pmatrix} $ with $ \begin{pmatrix}
 \boldsymbol{\eta}'_j  \\
\mathfrak{v}'_j  \end{pmatrix} \leq  \begin{pmatrix}
 \fe_-  \\
\mathfrak{t}_-  \end{pmatrix} + \begin{pmatrix}
 \fve  \\
\fs  \end{pmatrix} $ and $\depth(\mathfrak{v}'_j) \leq \depth(\mathfrak{t}_-) + \depth(\fs)$ for all $j$,
\end{itemize}
  such that
    \begin{align*}
        \Si_{<m} \begin{pmatrix}
 \fve_- \\
\fs_-  \end{pmatrix}\Si_{<m} \begin{pmatrix}
 \fe  \\
\mathfrak{t}  \end{pmatrix}  &= \sum \limits_j g_j \Si_{<m} \begin{pmatrix}
 \boldsymbol{\eta}_j  \\
\mathfrak{v}_j  \end{pmatrix}  \quad \text{for all } m \in \mathbb{Z},\\
\Si_{<n} \begin{pmatrix}
 \fe_-  \\
\mathfrak{t}_-  \end{pmatrix}\Si_{<n} \begin{pmatrix}
 \fve \\
\fs  \end{pmatrix} &= \sum \limits_j g_j' \Si_{<n} \begin{pmatrix}
 \boldsymbol{\eta}'_j  \\
\mathfrak{v}'_j  \end{pmatrix}  \quad \text{for all } n \in \mathbb{Z}.
    \end{align*}
Moreover, from Part 1, there exist $f_j \in \mathbb{F}_q$ and arrays $ \begin{pmatrix}
 \fm_j  \\
\mathfrak{u}_j  \end{pmatrix} $ with $ \begin{pmatrix}
 \fm_j  \\
\mathfrak{u}_j  \end{pmatrix}  \leq  \begin{pmatrix}
 \fve  \\
\fs  \end{pmatrix}  +  \begin{pmatrix}
 \fe  \\
\mathfrak{t}  \end{pmatrix}  $ and $\depth(\mathfrak{u}_j) \leq \depth(\fs) + \depth(\mathfrak{t})$ for all $j$  such that
    \begin{equation*}
        \Si_m \begin{pmatrix}
 \fve  \\
\fs  \end{pmatrix} \Si_m \begin{pmatrix}
 \fe  \\
\mathfrak{t}  \end{pmatrix}  = \sum \limits_j f_j \Si_m \begin{pmatrix}
 \fm_j  \\
\mathfrak{u}_j  \end{pmatrix}  \quad \text{for all } m \in \mathbb{Z}.
    \end{equation*}
We deduce from \eqref{eq: Part 2 Si} that 
\begin{align*}
    \Si_{<d} \begin{pmatrix}
 \fve \\
\fs  \end{pmatrix} &  \Si_{<d} \begin{pmatrix}
 \fe  \\
\mathfrak{t}  \end{pmatrix}\\
=& \ \sum_{m < d}\Si_{m} \begin{pmatrix}
 \varepsilon_1 \\
s_1  \end{pmatrix}\Si_{<m} \begin{pmatrix}
 \fve_- \\
\fs_-  \end{pmatrix}\Si_{<m} \begin{pmatrix}
 \fe  \\
\mathfrak{t}  \end{pmatrix} + \sum_{n < d}\Si_{n} \begin{pmatrix}
 \epsilon_1 \\
t_1  \end{pmatrix}\Si_{<n} \begin{pmatrix}
 \fe_-  \\
\mathfrak{t}_-  \end{pmatrix}\Si_{<n} \begin{pmatrix}
 \fve \\
\fs  \end{pmatrix}\\ \notag
&+ \sum_{m < d}\Si_{m} \begin{pmatrix}
 \fve \\
\fs  \end{pmatrix}\Si_{m} \begin{pmatrix}
 \fe  \\
\mathfrak{t}  \end{pmatrix} \\
=& \ \sum \limits_j g_j \sum_{m < d}\Si_{m} \begin{pmatrix}
 \varepsilon_1 \\
s_1  \end{pmatrix}\Si_{<m} \begin{pmatrix}
 \boldsymbol{\eta}_j  \\
\mathfrak{v}_j  \end{pmatrix} + \sum \limits_j g_j' \sum_{n < d}\Si_{n} \begin{pmatrix}
 \epsilon_1 \\
t_1  \end{pmatrix}\Si_{<n} \begin{pmatrix}
 \boldsymbol{\eta}'_j  \\
\mathfrak{v}'_j  \end{pmatrix}\\ \notag
&+ \sum \limits_j f_j \sum_{m < d}\Si_m \begin{pmatrix}
 \fm_j  \\
\mathfrak{u}_j  \end{pmatrix} \\
=& \ \sum \limits_j g_j \sum_{m < d}\Si_{m} \begin{pmatrix}
 \varepsilon_1 & \boldsymbol{\eta}_j\\
s_1 & \mathfrak{v}_j \end{pmatrix} + \sum \limits_j g_j' \sum_{n < d}\Si_{n} \begin{pmatrix}
 \epsilon_1 & \boldsymbol{\eta}'_j\\
t_1 & \mathfrak{v}'_j \end{pmatrix} + \sum \limits_j f_j \Si_{<d} \begin{pmatrix}
 \fm_j  \\
\mathfrak{u}_j  \end{pmatrix} \\
=& \ \sum \limits_j g_j \Si_{<d} \begin{pmatrix}
 \varepsilon_1 & \boldsymbol{\eta}_j\\
s_1 & \mathfrak{v}_j \end{pmatrix} + \sum \limits_j g_j' \Si_{<d} \begin{pmatrix}
 \epsilon_1 & \boldsymbol{\eta}'_j\\
t_1 & \mathfrak{v}'_j \end{pmatrix} + \sum \limits_j f_j \Si_{<d} \begin{pmatrix}
 \fm_j  \\
\mathfrak{u}_j  \end{pmatrix}.
\end{align*}
One verifies that
\begin{align*}
    \begin{pmatrix}
 \varepsilon_1 & \boldsymbol{\eta}_j\\
s_1 & \mathfrak{v}_j \end{pmatrix} &\leq \bigg( \begin{pmatrix}
 \varepsilon_1 \\
s_1\end{pmatrix} , \begin{pmatrix}
 \fve_-\\
\fs_- \end{pmatrix}   +  \begin{pmatrix}
 \fe  \\
\mathfrak{t}  \end{pmatrix} \bigg) \leq  \begin{pmatrix}
 \fve\\
\fs \end{pmatrix}   +  \begin{pmatrix}
 \fe  \\
\mathfrak{t}  \end{pmatrix},\\
\begin{pmatrix}
 \epsilon_1 & \boldsymbol{\eta}'_j\\
t_1 & \mathfrak{v}'_j \end{pmatrix} & \leq \bigg(\begin{pmatrix}
 \epsilon_1\\
t_1\end{pmatrix}, \begin{pmatrix}
 \fe_-\\
\mathfrak{t}_- \end{pmatrix} + \begin{pmatrix}
 \fve\\
\fs \end{pmatrix}\bigg) \leq \begin{pmatrix}
 \fe  \\
\mathfrak{t}  \end{pmatrix} + \begin{pmatrix}
 \fve\\
\fs \end{pmatrix},
\end{align*}
and 
\begin{align*}
    \depth(s_1, \mathfrak{v}_j) &= 1 + \depth(\mathfrak{v}_j) \leq 1 + \depth(\fs_-) + \depth(\mathfrak{t}) = \depth(\fs) + \depth(\mathfrak{t}),\\
    \depth(t_1, \mathfrak{v}'_j)&= 1 + \depth(\mathfrak{v}'_j) \leq 1 + \depth(\mathfrak{t}_-) + \depth(\fs) = \depth(\mathfrak{t}) + \depth(\fs),
\end{align*}
which proves Part 2. The proof of Part 1 and Part 2 is finished.

For Part 3, we have
\begin{align*}
    \Si_d \begin{pmatrix}
 \fve  \\
\fs  \end{pmatrix} \Si_{<d} \begin{pmatrix}
 \fe  \\
\mathfrak{t}  \end{pmatrix} = \Si_d \begin{pmatrix}
 \varepsilon_1  \\
s_1  \end{pmatrix}\Si_{<d} \begin{pmatrix}
 \fve_- \\
\fs_-  \end{pmatrix} \Si_{<d} \begin{pmatrix}
 \fe  \\
\mathfrak{t}  \end{pmatrix}.
\end{align*}
From Part 2, there exist $f'_k \in \mathbb{F}_q$ and arrays $ \begin{pmatrix}
 \fm'_k  \\
\mathfrak{u}'_k  \end{pmatrix} $ with $ \begin{pmatrix}
 \fm'_k  \\
\mathfrak{u}'_k  \end{pmatrix}  \leq  \begin{pmatrix}
 \fve_-  \\
\fs_-  \end{pmatrix}  +  \begin{pmatrix}
 \fe  \\
\mathfrak{t} \end{pmatrix}  $ and $\depth(\mathfrak{u}'_k) \leq \depth(\fs_-) + \depth(\mathfrak{t})$ for all $k$  such that
    \begin{equation*}
        \Si_{<d} \begin{pmatrix}
 \fve_-  \\
\fs_-  \end{pmatrix} \Si_{<d} \begin{pmatrix}
 \fe  \\
\mathfrak{t}  \end{pmatrix}  = \sum \limits_k f'_k \Si_{<d} \begin{pmatrix}
 \fm'_k  \\
\mathfrak{u}'_k  \end{pmatrix}  \quad \text{for all } d \in \mathbb{Z}.
    \end{equation*}
We deduce that 
\begin{align*}
     \Si_d \begin{pmatrix}
 \fve  \\
\fs  \end{pmatrix} \Si_{<d} \begin{pmatrix}
 \fe  \\
\mathfrak{t}  \end{pmatrix} &= \sum \limits_k f'_k \Si_d \begin{pmatrix}
 \varepsilon_1  \\
s_1  \end{pmatrix}
\Si_{<d} \begin{pmatrix}
 \fm'_k  \\
\mathfrak{u}'_k  \end{pmatrix}= \sum \limits_k f'_k \Si_d \begin{pmatrix}
 \varepsilon_1 & \fm'_k \\
s_1 & \mathfrak{u}'_k \end{pmatrix}.
\end{align*}
One verifies that
\begin{align*}
    \begin{pmatrix}
 \varepsilon_1 & \fm'_k\\
s_1 & \mathfrak{u}'_k \end{pmatrix} \leq \bigg( \begin{pmatrix}
 \varepsilon_1 \\
s_1\end{pmatrix} , \begin{pmatrix}
 \fve_-\\
\fs_- \end{pmatrix}   +  \begin{pmatrix}
 \fe  \\
\mathfrak{t}  \end{pmatrix} \bigg) \leq  \begin{pmatrix}
 \fve\\
\fs \end{pmatrix}   +  \begin{pmatrix}
 \fe  \\
\mathfrak{t}  \end{pmatrix}
\end{align*}
and $\depth(s_1, \mathfrak{u}'_k) = 1 + \depth(\mathfrak{u}'_k) \leq 1 + \depth(\fs_-) + \depth(\mathfrak{t}) = \depth(\fs) + \depth(\mathfrak{t})$, which proves Part 3.
\end{proof}

We denote by $\mathcal{AL}$ (resp. $\mathcal{L}$)  the $K$-vector space generated by the ACMPL's (resp. by the CMPL's) and by $\mathcal{AL}_w$ (resp. $\mathcal{L}_w$) the $K$-vector space generated by the ACMPL's of weight $w$ (resp. by the CMPL's of weight $w$). It follows from Proposition~\ref{polysums} that $\mathcal{AL}$ is a $K$-algebra. By considering only arrays with trivial characters, Proposition~\ref{polysums} implies that $\mathcal{L}$ is also a $K$-algebra. 

\subsection{Operators $\mathcal B^*$, $\mathcal C$ and $\mathcal{BC}$} \label{sec:Todd} 

In this section we extend operators $\mathcal B^*$ and $\mathcal C$ of Todd \cite{Tod18} and the operator $\mathcal{BC}$ of Ngo Dac  \cite{ND21} in the case of ACMPL's.

\begin{definition}
A binary relation is a $K$-linear combination of the form
\begin{equation*}
    \sum \limits_i a_i \Si_d \begin{pmatrix}
 \fve_i  \\
\fs_i  \end{pmatrix}  + \sum \limits_i b_i \Si_{d+1} \begin{pmatrix}
 \fe_i  \\
\mathfrak{t}_i  \end{pmatrix}  =0 \quad \text{for all } d \in \mathbb{Z},
\end{equation*}
where $a_i,b_i \in K$ and $ \begin{pmatrix}
 \fve_i  \\
\fs_i  \end{pmatrix} ,  \begin{pmatrix}
 \fe_i  \\
\mathfrak{t}_i  \end{pmatrix} $ are arrays of the same weight.

A binary relation is called a fixed relation if $b_i = 0$ for all $i$.
\end{definition}

We denote by $\mathfrak{BR}_{w}$ the set of all binary relations of weight $w$. One verifies at once that $\mathfrak{BR}_{w}$ is a $K$-vector space. It follows from the fundamental relation in \cite[\S 3.4.6]{Tha09} and Lemma \ref{agree}, an important example of binary relations
\begin{equation*}
 R_{\varepsilon} \colon \quad  \Si_d \begin{pmatrix}
 \varepsilon\\
q  \end{pmatrix}  + \varepsilon^{-1}D_1 \Si_{d+1} \begin{pmatrix}
 \varepsilon& 1 \\
1 & q-1  \end{pmatrix}  =0,
\end{equation*}
where $D_1 = \theta^q - \theta \in A$.

For later definitions, let $R \in \mathfrak{BR}_w$ be a binary relation of the form
\begin{equation} \label{eq: Rd0}
    R(d) \colon \quad \sum \limits_i a_i \Si_d \begin{pmatrix}
 \fve_i  \\
\fs_i  \end{pmatrix}  + \sum \limits_i b_i \Si_{d+1} \begin{pmatrix}
 \fe_i  \\
\mathfrak{t}_i  \end{pmatrix}  =0,
\end{equation}
where $a_i,b_i \in K$ and $ \begin{pmatrix}
 \fve_i  \\
\fs_i  \end{pmatrix} ,  \begin{pmatrix}
 \fe_i  \\
\mathfrak{t}_i  \end{pmatrix} $ are arrays of the same weight. We now define some operators on $K$-vector spaces of binary relations.

\subsubsection{Operators $\mathcal B^*$}

Let $ \begin{pmatrix}
 \sigma  \\
v  \end{pmatrix} $ be an array. We define an operator
\begin{equation*}
    \mathcal B^*_{\sigma,v} \colon \mathfrak{BR}_{w} \longrightarrow \mathfrak{BR}_{w+v}
\end{equation*}
as follows: for each $R \in \mathfrak{BR}_{w}$ given as in \eqref{eq: Rd0},
the image $\mathcal B^*_{\sigma,v}(R) = \Si_d \begin{pmatrix}
 \sigma  \\
v  \end{pmatrix} \sum_{j < d} R(j)$ is a fixed relation of the form
\begin{align*}
    0 &= \Si_d \begin{pmatrix}
 \sigma  \\
v  \end{pmatrix}   \left(\sum \limits_ia_i \Si_{<d} \begin{pmatrix}
 \fve_i  \\
\fs_i  \end{pmatrix}  + \sum \limits_i  b_i \Si_{<d+1} \begin{pmatrix}
 \fe_i  \\
\mathfrak{t}_i  \end{pmatrix} \right)  \\
    &= \sum \limits_i a_i \Si_d \begin{pmatrix}
 \sigma  \\
v  \end{pmatrix} \Si_{<d} \begin{pmatrix}
 \fve_i  \\
\fs_i  \end{pmatrix}  + \sum \limits_i  b_i \Si_d \begin{pmatrix}
 \sigma  \\
v  \end{pmatrix}  \Si_{<d} \begin{pmatrix}
 \fe_i  \\
\mathfrak{t}_i  \end{pmatrix}  + \sum \limits_i  b_i \Si_d \begin{pmatrix}
 \sigma  \\
v  \end{pmatrix}  \Si_{d} \begin{pmatrix}
 \fe_i  \\
\mathfrak{t}_i  \end{pmatrix} \\
    &= \sum \limits_i a_i \Si_d \begin{pmatrix}
\sigma & \fve_i  \\
v& \fs_i  \end{pmatrix}  + \sum \limits_i  b_i \Si_d \begin{pmatrix}
\sigma & \fe_i  \\
v& \mathfrak{t}_i  \end{pmatrix}  + \sum \limits_i  b_i  \Si_d \begin{pmatrix}
 \sigma \epsilon_{i1}  & \fe_{i-} \\
v + t_{i1} & \mathfrak{t}_{i-} \end{pmatrix} .
\end{align*}
The last equality follows from \eqref{redsum}.

Let $ \begin{pmatrix}
 \Sigma  \\
V  \end{pmatrix}  =  \begin{pmatrix}
 \sigma_1 & \dots & \sigma_n \\
v_1 & \dots & v_n \end{pmatrix} $ be an array. We define an operator $\mathcal{B}^*_{\Sigma,V}(R) $ by
\begin{equation*}
    \mathcal B^*_{\Sigma,V}(R) := \mathcal B^*_{\sigma_1,v_1} \circ \dots \circ \mathcal B^*_{\sigma_n,v_n}(R).
\end{equation*}

\begin{lemma} \label{polybesao}
Let $ \begin{pmatrix}
 \Sigma  \\
V  \end{pmatrix}  =  \begin{pmatrix}
 \sigma_1 & \dots & \sigma_n \\
v_1 & \dots & v_n \end{pmatrix} $ be an array. Then $\mathcal B^*_{\Sigma,V}(R)$ is of the form
\begin{align*}
 \sum \limits_i a_i \Si_d \begin{pmatrix}
\Sigma & \fve_i  \\
V& \fs_i  \end{pmatrix} & + \sum \limits_i  b_i \Si_d \begin{pmatrix}
\Sigma & \fe_i  \\
V& \mathfrak{t}_i  \end{pmatrix}
 + \sum \limits_i  b_i \Si_d  \begin{pmatrix}
 \sigma_1 & \dots & \sigma_{n-1} & \sigma_n \epsilon_{i1} &\fe_{i-}  \\
v_1 & \dots & v_{n-1} & v_n+ t_{i1} & \mathfrak{t}_{i-}  \end{pmatrix}  = 0 .
\end{align*}
\end{lemma}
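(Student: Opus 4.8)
The plan is to argue by induction on the number $n$ of columns of $\begin{pmatrix}\Sigma\\V\end{pmatrix}$. The base case $n=1$ is nothing but the explicit description of $\mathcal B^*_{\sigma,v}(R)$ recorded immediately before the statement, so there is nothing to do there.

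For the inductive step the crucial remark is that $\mathcal B^*_{\sigma,v}$ acts in a trivial way on a \emph{fixed} relation. Indeed, suppose $R'(d)\colon\ \sum_j c_j\,\Si_d\begin{pmatrix}\fm_j\\\mathfrak u_j\end{pmatrix}=0$ has no $\Si_{d+1}$-terms. Then $\sum_{j<d}R'(j)=\sum_j c_j\,\Si_{<d}\begin{pmatrix}\fm_j\\\mathfrak u_j\end{pmatrix}$, and multiplying by $\Si_d\begin{pmatrix}\sigma\\v\end{pmatrix}$ together with the identity $\Si_d\begin{pmatrix}\sigma\\v\end{pmatrix}\Si_{<d}\begin{pmatrix}\fm_j\\\mathfrak u_j\end{pmatrix}=\Si_d\begin{pmatrix}\sigma&\fm_j\\v&\mathfrak u_j\end{pmatrix}$ (the last of the four displayed formulas in \S\ref{sec: power sums}) shows that $\mathcal B^*_{\sigma,v}(R')$ is obtained from $R'$ simply by prepending the column $\begin{pmatrix}\sigma\\v\end{pmatrix}$ to each term. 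In short, once all the $\Si_{d+1}$-terms have been eliminated, every further application of a $\mathcal B^*$ just prepends one column.

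Now write $\mathcal B^*_{\Sigma,V}(R)=\mathcal B^*_{\sigma_1,v_1}\bigl(R'\bigr)$, where $R':=\mathcal B^*_{(\sigma_2,\dots,\sigma_n),(v_2,\dots,v_n)}(R)$ is the image of $R$ under the $(n-1)$-column operator attached to $\begin{pmatrix}\sigma_2&\dots&\sigma_n\\v_2&\dots&v_n\end{pmatrix}$. By the induction hypothesis, $R'$ is a fixed relation having exactly the form in the statement, but with prefix columns $\sigma_2,\dots,\sigma_n$ over $v_2,\dots,v_n$, the modification $\sigma_n\mapsto\sigma_n\epsilon_{i1}$, $v_n\mapsto v_n+t_{i1}$ occurring in the last of these columns. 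Applying $\mathcal B^*_{\sigma_1,v_1}$ and using the previous paragraph, one prepends the column $\begin{pmatrix}\sigma_1\\v_1\end{pmatrix}$ to each of the three families of terms, which is precisely the asserted shape of $\mathcal B^*_{\Sigma,V}(R)$; this completes the induction.

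There is no genuine difficulty here: the only computation with real content is the single-step formula, which is already in hand, and the remainder is careful bookkeeping of indices. The one point requiring attention is to check that the ``extra'' term — created at position $n$ by the innermost operator $\mathcal B^*_{\sigma_n,v_n}$ when it eliminates the $\Si_{d+1}$-terms — is carried along unchanged by the subsequent prependings, so that in the end it is $\begin{pmatrix}\sigma_1&\dots&\sigma_{n-1}&\sigma_n\epsilon_{i1}&\fe_{i-}\\v_1&\dots&v_{n-1}&v_n+t_{i1}&\mathfrak t_{i-}\end{pmatrix}$ with the modification in the $n$-th column rather than elsewhere.
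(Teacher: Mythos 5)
Your proof is correct. The paper omits the proof of this lemma entirely (``The proof is straightforward. We omit this proof.''), so there is no competing argument to compare against; your induction on the number of columns, with the key observation that $\mathcal B^*_{\sigma,v}$ applied to a \emph{fixed} relation merely prepends the column $\begin{pmatrix}\sigma\\v\end{pmatrix}$, is exactly the natural way to fill that gap. Your handling of the third family of terms — correctly noting that the $\sigma_n\mapsto\sigma_n\epsilon_{i1}$, $v_n\mapsto v_n+t_{i1}$ modification is created once, at the innermost position $n$ by $\mathcal B^*_{\sigma_n,v_n}$, and then carried along untouched by the outer prependings — is the one place where a careless reader might slip, and you flagged and resolved it.
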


\begin{proof}
From the definition and \eqref{redsum}, we have $\mathcal{B}^*_{\sigma_n,v_n}(R)$ is of the form
\begin{align*}
    \sum \limits_i a_i \Si_d \begin{pmatrix}
\sigma_n & \fve_i  \\
v_n& \fs_i  \end{pmatrix}  + \sum \limits_i  b_i \Si_d \begin{pmatrix}
\sigma_n & \fe_i  \\
v_n& \mathfrak{t}_i  \end{pmatrix}  + \sum \limits_i  b_i \Si_d \begin{pmatrix}
 \sigma_n \epsilon_{i1} & {\fe_i}_- \\
v_n +  t_{i1} & {\mathfrak{t}_i}_- \end{pmatrix} = 0.
\end{align*}
Apply the operator $\mathcal B^*_{\sigma_1,v_1} \circ \dots \circ \mathcal B^*_{\sigma_{n - 1},v_{n - 1}}$ to $\mathcal{B}^*_{\sigma_n,v_n}(R)$, the result then follows from the definition.
\end{proof}

\subsubsection{Operators $\mathcal C$}

Let $ \begin{pmatrix}
 \Sigma  \\
V  \end{pmatrix} $ be an array of weight $v$. We define an operator
\begin{equation*}
     \mathcal C_{\Sigma,V}(R) \colon \mathfrak{BR}_{w} \longrightarrow \mathfrak{BR}_{w+v}
\end{equation*}
as follows: for each $R \in \mathfrak{BR}_{w}$ given as in \eqref{eq: Rd0},
the image $\mathcal C_{\Sigma,V}(R) = R(d) \Si_{<d+1} \begin{pmatrix}
 \Sigma  \\
V  \end{pmatrix} $ is a binary relation of the form
\begin{align*}
    0 &= \left( \sum \limits_i a_i \Si_d \begin{pmatrix}
 \fve_i  \\
\fs_i  \end{pmatrix}  + \sum \limits_i b_i \Si_{d+1} \begin{pmatrix}
 \fe_i  \\
\mathfrak{t}_i  \end{pmatrix} \right) \Si_{<d+1} \begin{pmatrix}
 \Sigma  \\
V  \end{pmatrix}   \\
    &= \sum \limits_i a_i \Si_d \begin{pmatrix}
 \fve_i  \\
\fs_i  \end{pmatrix} \Si_{d} \begin{pmatrix}
 \Sigma  \\
V  \end{pmatrix}  + \sum \limits_i a_i \Si_d \begin{pmatrix}
 \fve_i  \\
\fs_i  \end{pmatrix} \Si_{<d} \begin{pmatrix}
 \Sigma  \\
V  \end{pmatrix}  + \sum \limits_i b_i \Si_{d+1} \begin{pmatrix}
 \fe_i  \\
\mathfrak{t}_i  \end{pmatrix} \Si_{<d+1} \begin{pmatrix}
 \Sigma  \\
V  \end{pmatrix} \\
    &= \sum \limits_i c_i \Si_d \begin{pmatrix}
 \fm_i  \\
\mathfrak{u}_i  \end{pmatrix}  + \sum \limits_i c'_i \Si_{d+1} \begin{pmatrix}
 \fm'_i  \\
\mathfrak{u}'_i  \end{pmatrix} .
\end{align*}
The last equality follows from Proposition \ref{polysums}.

In particular, the following proposition gives the form of $\mathcal C_{\Sigma,V}(R_{\varepsilon})$.

\begin{proposition} \label{polycer1}
Let $ \begin{pmatrix}
 \Sigma  \\ V  \end{pmatrix}$ be an array with $V = (v_1,V_{-})$ and $\Sigma = (\sigma_1, \Sigma_{-})$. Then $\mathcal C_{\Sigma,V}(R_{\varepsilon})$ is of the form
\begin{equation*}
  \Si_d \begin{pmatrix}
 \varepsilon\sigma_1 & \Sigma_{-} \\
q + v_1 & V_{-} \end{pmatrix}  +  \Si_d \begin{pmatrix}
 \varepsilon& \Sigma \\
q & V \end{pmatrix}  + \sum \limits_i b_i \Si_{d+1} \begin{pmatrix}
\varepsilon& \fe_i  \\
1 & \mathfrak{t}_i  \end{pmatrix}  =0,
\end{equation*}
where $b_i \in A$ are divisible by $D_1$ and $ \begin{pmatrix}
 \fe_i  \\
\mathfrak{t}_i  \end{pmatrix} $ are arrays satisfying $ \begin{pmatrix}
 \fe_i  \\
\mathfrak{t}_i  \end{pmatrix}  \leq  \begin{pmatrix}
 1  \\
q - 1 \end{pmatrix}  +  \begin{pmatrix}
 \Sigma  \\
V  \end{pmatrix} $ for all $i$.
\end{proposition}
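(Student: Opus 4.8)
The plan is simply to unwind the definition of $\mathcal C_{\Sigma,V}$ applied to the particular binary relation $R_{\varepsilon}$ and to track each term that appears. Writing $R_{\varepsilon}(d)$ as in its definition and multiplying through by $\Si_{<d+1}\begin{pmatrix}\Sigma\\V\end{pmatrix}$, I would separate the computation into the contribution coming from the ``$d$-part'' $\Si_d\begin{pmatrix}\varepsilon\\q\end{pmatrix}$ and the contribution coming from the ``$(d+1)$-part'' $\varepsilon^{-1}D_1\,\Si_{d+1}\begin{pmatrix}\varepsilon & 1\\ 1 & q-1\end{pmatrix}$, and then reassemble.

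For the first contribution I would use the decomposition $\Si_{<d+1}\begin{pmatrix}\Sigma\\V\end{pmatrix}=\Si_d\begin{pmatrix}\Sigma\\V\end{pmatrix}+\Si_{<d}\begin{pmatrix}\Sigma\\V\end{pmatrix}$. Multiplying $\Si_d\begin{pmatrix}\varepsilon\\q\end{pmatrix}$ by the first summand and applying \eqref{redsum} produces $\Si_d\begin{pmatrix}\varepsilon\sigma_1 & \Sigma_{-}\\ q+v_1 & V_{-}\end{pmatrix}$, while multiplying it by the second summand and using the concatenation identity $\Si_d\begin{pmatrix}\varepsilon_1\\s_1\end{pmatrix}\Si_{<d}\begin{pmatrix}\fve_{-}\\\fs_{-}\end{pmatrix}=\Si_d\begin{pmatrix}\fve\\\fs\end{pmatrix}$ produces $\Si_d\begin{pmatrix}\varepsilon & \Sigma\\ q & V\end{pmatrix}$; these are exactly the two distinguished leading terms in the claimed expression. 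For the second contribution I would first factor $\Si_{d+1}\begin{pmatrix}\varepsilon & 1\\ 1 & q-1\end{pmatrix}=\Si_{d+1}\begin{pmatrix}\varepsilon\\1\end{pmatrix}\Si_{<d+1}\begin{pmatrix}1\\q-1\end{pmatrix}$, so that the term becomes $\varepsilon^{-1}D_1\,\Si_{d+1}\begin{pmatrix}\varepsilon\\1\end{pmatrix}\cdot\Si_{<d+1}\begin{pmatrix}1\\q-1\end{pmatrix}\Si_{<d+1}\begin{pmatrix}\Sigma\\V\end{pmatrix}$. Applying Proposition~\ref{polysums}(2) to the product of the two ``$<d+1$'' sums rewrites it as $\sum_i f'_i\,\Si_{<d+1}\begin{pmatrix}\fm'_i\\\mathfrak u'_i\end{pmatrix}$ with $\begin{pmatrix}\fm'_i\\\mathfrak u'_i\end{pmatrix}\le\begin{pmatrix}1\\q-1\end{pmatrix}+\begin{pmatrix}\Sigma\\V\end{pmatrix}$, and reabsorbing the factor $\Si_{d+1}\begin{pmatrix}\varepsilon\\1\end{pmatrix}$ via the concatenation identity yields $\sum_i b_i\,\Si_{d+1}\begin{pmatrix}\varepsilon & \fe_i\\ 1 & \mathfrak t_i\end{pmatrix}$ with $b_i=\varepsilon^{-1}D_1 f'_i$ and $\begin{pmatrix}\fe_i\\\mathfrak t_i\end{pmatrix}=\begin{pmatrix}\fm'_i\\\mathfrak u'_i\end{pmatrix}$. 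Adding the two contributions gives precisely the stated relation.

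The computation is essentially bookkeeping, so there is no genuine obstacle; the only point that needs a moment of care is checking that the bound $\begin{pmatrix}\fe_i\\\mathfrak t_i\end{pmatrix}\le\begin{pmatrix}1\\q-1\end{pmatrix}+\begin{pmatrix}\Sigma\\V\end{pmatrix}$ is inherited verbatim from Proposition~\ref{polysums}(2) — in particular that prepending the column $\begin{pmatrix}\varepsilon\\1\end{pmatrix}$ does not disturb the character and partial-sum conditions defining $\le$. The role of this proposition is purely preparatory: it records, for repeated use in the sequel, the exact shape of $\mathcal C_{\Sigma,V}(R_{\varepsilon})$, namely two explicit leading $\Si_d$-terms together with an error term built from $\Si_{d+1}$-arrays all dominated by $\begin{pmatrix}1\\q-1\end{pmatrix}+\begin{pmatrix}\Sigma\\V\end{pmatrix}$.
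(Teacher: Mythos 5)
Your proposal is correct and follows the same route as the paper: expand $\mathcal C_{\Sigma,V}(R_\varepsilon) = R_\varepsilon(d)\,\Si_{<d+1}\begin{pmatrix}\Sigma\\V\end{pmatrix}$, split $\Si_{<d+1}\begin{pmatrix}\Sigma\\V\end{pmatrix}$ into $\Si_d + \Si_{<d}$ against the $\Si_d\begin{pmatrix}\varepsilon\\q\end{pmatrix}$ term to get the two leading $\Si_d$-terms via \eqref{redsum}, and handle the $\Si_{d+1}$-term by factoring off $\Si_{d+1}\begin{pmatrix}\varepsilon\\1\end{pmatrix}$ and applying Proposition~\ref{polysums}(2) to the remaining product of $\Si_{<d+1}$-sums. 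The paper leaves the second half to ``follows from \eqref{redsum} and Proposition~\ref{polysums}''; you supply the explicit factorization and reabsorption step, which is exactly the intended argument — the only slight overstatement is the closing caveat, since the bound is on $\begin{pmatrix}\fe_i\\\mathfrak t_i\end{pmatrix}=\begin{pmatrix}\fm'_i\\\mathfrak u'_i\end{pmatrix}$ alone and is inherited directly from Proposition~\ref{polysums}(2) before you prepend the column $\begin{pmatrix}\varepsilon\\1\end{pmatrix}$.
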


\begin{proof}
We see that $\mathcal C_{\Sigma,V}(R_{\varepsilon})$ is of the form
\begin{equation*}
    \Si_d \begin{pmatrix}
\varepsilon \\
q  \end{pmatrix} \Si_{d} \begin{pmatrix}
 \Sigma  \\
V  \end{pmatrix}  + \Si_d \begin{pmatrix}
 \varepsilon \\
q  \end{pmatrix} \Si_{<d} \begin{pmatrix}
 \Sigma  \\
V  \end{pmatrix}  + \varepsilon^{-1} D_1 \Si_{d+1} \begin{pmatrix}
\varepsilon&  1  \\
1 & q-1 \end{pmatrix} \Si_{<d+1} \begin{pmatrix}
 \Sigma  \\
V  \end{pmatrix}  = 0.
\end{equation*}
It follows from \eqref{redsum} and Proposition \ref{polysums} that
\begin{align*}
        \Si_d \begin{pmatrix}
\varepsilon \\
q  \end{pmatrix} \Si_{d} \begin{pmatrix}
 \Sigma  \\
V  \end{pmatrix}  + \Si_d \begin{pmatrix}
 \varepsilon \\
q  \end{pmatrix} \Si_{<d} \begin{pmatrix}
 \Sigma  \\
V  \end{pmatrix}  &=   \Si_d \begin{pmatrix}
 \varepsilon\sigma_1 & \Sigma_{-} \\
q + v_1 & V_{-} \end{pmatrix}  +  \Si_d \begin{pmatrix}
 \varepsilon& \Sigma \\
q & V \end{pmatrix},  \\
  \varepsilon^{-1}D_1 \Si_{d+1} \begin{pmatrix}
\varepsilon&  1  \\
1 & q-1 \end{pmatrix} \Si_{<d+1} \begin{pmatrix}
 \Sigma  \\
V  \end{pmatrix}   &= \sum \limits_i b_i \Si_{d+1} \begin{pmatrix}
 \varepsilon& \fe_i  \\
1 & \mathfrak{t}_i  \end{pmatrix} ,
\end{align*}
where $b_i \in A$ are divisible by $D_1$ and $ \begin{pmatrix}
 \fe_i  \\
\mathfrak{t}_i  \end{pmatrix} $ are arrays satisfying $ \begin{pmatrix}
 \fe_i  \\
\mathfrak{t}_i  \end{pmatrix}  \leq  \begin{pmatrix}
 1  \\
q - 1 \end{pmatrix}  +  \begin{pmatrix}
 \Sigma  \\
V  \end{pmatrix} $ for all $i$.
This proves the proposition.
\end{proof}

\subsubsection{Operators $\mathcal{BC}$}

Let $\varepsilon \in \mathbb{F}_q^{\times}$. We define an operator
\begin{equation*}
   \mathcal{BC}_{\varepsilon,q} \colon \mathfrak{BR}_{w} \longrightarrow \mathfrak{BR}_{w+q}
\end{equation*}
as follows: for each $R \in \mathfrak{BR}_{w}$ given as in \eqref{eq: Rd0},
the image $\mathcal{BC}_{\varepsilon,q}(R)$ is a binary relation given by
\begin{align*}
    \mathcal{BC}_{\varepsilon,q}(R) = \mathcal B^*_{\varepsilon,q}(R) - \sum\limits_i b_i \mathcal C_{\fe_i,\mathfrak{t}_i} (R_{\varepsilon}).
\end{align*}

Let us clarify the definition of $\mathcal{BC}_{\varepsilon,q}$. We know that $\mathcal B^*_{\varepsilon,q}(R)$ is of the form
\begin{equation*}
    \sum \limits_i a_i \Si_d \begin{pmatrix}
\varepsilon& \fve_i  \\
q& \fs_i  \end{pmatrix}  + \sum \limits_i  b_i \Si_d \begin{pmatrix}
\varepsilon& \fe_i  \\
q& \mathfrak{t}_i  \end{pmatrix}  + \sum \limits_i  b_i \Si_d \begin{pmatrix}
 \varepsilon\epsilon_{i1} &  \fe_{i-} \\
q + t_{i1} & \mathfrak{t}_{i-} \end{pmatrix} = 0.
\end{equation*}
Moreover, $\mathcal C_{\fe_i,\mathfrak{t}_i} (R_{\varepsilon})$ is of the form
\begin{equation*}
\Si_d \begin{pmatrix}
\varepsilon& \fe_i  \\
q& \mathfrak{t}_i  \end{pmatrix}  +  \Si_d \begin{pmatrix}
 \varepsilon\epsilon_{i1} &  \fe_{i-} \\
q + t_{i1} & \mathfrak{t}_{i-} \end{pmatrix}  + \varepsilon^{-1}D_1 \Si_{d+1} \begin{pmatrix}
 \varepsilon \\
1  \end{pmatrix} \Si_{<d+1} \begin{pmatrix}
1  \\
q-1  \end{pmatrix} \Si_{<d+1} \begin{pmatrix}
\fe_i  \\
\mathfrak{t}_i  \end{pmatrix} = 0.
\end{equation*}
Combining with Proposition \ref{polysums}, Part 2, we have that $\mathcal{BC}_{\varepsilon,q}(R)$ is of the form
\begin{equation*}
   \sum \limits_i a_i \Si_d \begin{pmatrix}
\varepsilon& \fve_i  \\
q& \fs_i  \end{pmatrix}  + \sum \limits_{i,j} b_{ij} \Si_{d+1} \begin{pmatrix}
\varepsilon& \fe_{ij}  \\
1& \mathfrak{t}_{ij}  \end{pmatrix} =0,
\end{equation*}
where $b_{ij} \in K$ and $ \begin{pmatrix}
\fe_{ij}  \\
\mathfrak{t}_{ij}  \end{pmatrix} $ are arrays satisfying $ \begin{pmatrix}
\fe_{ij}  \\
\mathfrak{t}_{ij}  \end{pmatrix}  \leq  \begin{pmatrix}
1  \\
q-1  \end{pmatrix}  +  \begin{pmatrix}
\fe_{i}  \\
\mathfrak{t}_{i}  \end{pmatrix} $ for all $j$.

\subsection{A weak version of Brown's theorem for ACMP's} \label{sec:weak Brown}

\subsubsection{Preparatory results}

\begin{proposition}\label{polydecom}
1) Let $ \begin{pmatrix}
 \fve  \\
\fs  \end{pmatrix}  =  \begin{pmatrix}
 \varepsilon_1 & \dots & \varepsilon_n \\
s_1 & \dots & s_n \end{pmatrix} $ be an array such that $\Init(\fs) = (s_1, \dots, s_{k-1})$ for some $1 \leq k \leq n$, and let $\varepsilon$ be an element in $\mathbb{F}_q^{\times}$.  Then $\Li \begin{pmatrix}
 \fve  \\
\fs  \end{pmatrix} $ can be decomposed as follows:
    \begin{equation*}
    \Li \begin{pmatrix}
 \fve  \\
\fs  \end{pmatrix}
    = \underbrace{ - \Li \begin{pmatrix}
 \fve'  \\
\fs'  \end{pmatrix} }_\text{type 1} + \underbrace{\sum\limits_i b_i\Li \begin{pmatrix}
 \fe_i'  \\
\mathfrak{t}'_i  \end{pmatrix} }_\text{type 2} + \underbrace{\sum\limits_i c_i\Li \begin{pmatrix}
 \fm_i  \\
\mathfrak{u}_i  \end{pmatrix} }_\text{type 3}  ,
    \end{equation*}
    where $ b_i, c_i \in A$ are divisible by $D_1$ such that for all $i$, the following properties are satisfied:
    \begin{itemize}
        \item For all arrays $ \begin{pmatrix}
 \fe  \\
\mathfrak{t}  \end{pmatrix} $ appearing on the right hand side,
        \begin{equation*}
            \depth(\mathfrak{t}) \geq \depth(\fs) \quad \text{and} \quad T_k(\mathfrak{t}) \leq T_k(\fs).
     \end{equation*}
     \item For the array $ \begin{pmatrix}
 \fve'  \\
\fs'  \end{pmatrix} $ of type $1$ with respect to $ \begin{pmatrix}
 \fve  \\
\fs  \end{pmatrix} $, we have
\begin{align*}
\begin{pmatrix}
 \fve'  \\
\fs'  \end{pmatrix} =
\begin{pmatrix}
\varepsilon_1 & \dots & \varepsilon_{k-1} & \varepsilon & \varepsilon^{-1}\varepsilon_{k} & \varepsilon_{k+1} & \dots & \varepsilon_n \\
s_1 & \dots & s_{k-1} & q &  s_k- q & s_{k+1} & \dots & s_n \end{pmatrix}.
\end{align*}
Moreover, for all $k \leq \ell \leq n$,
        \begin{equation*}
s'_{1} +  \dots + s'_\ell < s_1 +  \dots + s_\ell.
     \end{equation*}

\item For the array $ \begin{pmatrix}
 \fe'  \\
\mathfrak{t}'  \end{pmatrix} $ of type $2$ with respect to $ \begin{pmatrix}
 \fve  \\
\fs  \end{pmatrix} $, for all $k \leq \ell \leq n$,
        \begin{equation*}
t'_{1} +  \dots + t'_\ell < s_1 +  \dots + s_\ell.
     \end{equation*}

        \item For the array $ \begin{pmatrix}
 \fm  \\
\mathfrak{u}  \end{pmatrix} $ of type $3$ with respect to $ \begin{pmatrix}
 \fve  \\
\fs  \end{pmatrix} $, we have $\Init(\fs) \prec\Init(\mathfrak{u})$.
    \end{itemize}

\noindent 2) Let $ \begin{pmatrix}
 \fve  \\
\fs  \end{pmatrix}  =  \begin{pmatrix}
 \varepsilon_1 & \dots & \varepsilon_k \\
s_1 & \dots & s_k \end{pmatrix} $ be an array such that $\Init(\fs) = \fs$ and $s_k = q$. Then $\Li \begin{pmatrix}
 \fve  \\
\fs  \end{pmatrix} $ can be decomposed as follows:
    \begin{equation*}
    \Li \begin{pmatrix}
 \fve  \\
\fs  \end{pmatrix}
    =   \underbrace{\sum\limits_i b_i\Li \begin{pmatrix}
 \fe'_i  \\
\mathfrak{t}'_i  \end{pmatrix} }_\text{type 2} + \underbrace{\sum\limits_i c_i\Li \begin{pmatrix}
 \fm_i  \\
\mathfrak{u}_i  \end{pmatrix} }_\text{type 3}  ,
    \end{equation*}
    where $ b_i, c_i \in A$ divisible by $D_1$ such that for all $i$, the following properties are satisfied:
    \begin{itemize}
        \item For all arrays $ \begin{pmatrix}
 \fe  \\
\mathfrak{t}  \end{pmatrix} $ appearing on the right hand side,
        \begin{equation*}
            \depth(\mathfrak{t}) \geq \depth(\fs) \quad \text{and} \quad T_k(\mathfrak{t}) \leq T_k(\fs).
     \end{equation*}

\item For the array $ \begin{pmatrix}
 \fe'  \\
\mathfrak{t}'  \end{pmatrix} $ of type $2$ with respect to $ \begin{pmatrix}
 \fve  \\
\fs  \end{pmatrix} $,
        \begin{equation*}
t'_{1} +  \dots + t'_k < s_1 +  \dots + s_k.
     \end{equation*}

        \item For the array $ \begin{pmatrix}
 \fm  \\
\mathfrak{u}  \end{pmatrix} $ of type $3$ with respect to $ \begin{pmatrix}
 \fve  \\
\fs  \end{pmatrix} $, we have $\Init(\fs) \prec\Init(\mathfrak{u})$.
\end{itemize}
\end{proposition}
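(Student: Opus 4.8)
The plan is to establish the decomposition via the binary relation $R_\varepsilon$ together with the operators $\mathcal B^*$, $\mathcal C$ and $\mathcal{BC}$ constructed in \S\ref{sec:Todd}. The starting point is the fundamental relation
\[
R_\varepsilon \colon \quad \Si_d \begin{pmatrix} \varepsilon \\ q \end{pmatrix} + \varepsilon^{-1} D_1 \Si_{d+1} \begin{pmatrix} \varepsilon & 1 \\ 1 & q-1 \end{pmatrix} = 0,
\]
which already exhibits a weight-$q$ ACMPL as an $A$-linear combination (with coefficient divisible by $D_1$) of ACMPL's with strictly larger initial tuple. For part 2), where $\Init(\fs) = \fs$ and $s_k = q$, I would apply the operator $\mathcal B^*_{\Sigma, V}$ with $\begin{pmatrix} \Sigma \\ V \end{pmatrix} = \begin{pmatrix} \varepsilon_1 & \dots & \varepsilon_{k-1} \\ s_1 & \dots & s_{k-1} \end{pmatrix}$ to $R_{\varepsilon_k}$, using Lemma \ref{polybesao} to read off the resulting fixed relation; summing over $d \geq 0$ converts $\Si_d$'s into $\Li$'s. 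This directly produces $\Li\begin{pmatrix} \fve \\ \fs \end{pmatrix}$ on one side and, on the other, the arrays of type 2 (which keep the same head $(s_1,\dots,s_{k-1},1)$ but have a tail of strictly smaller weight, hence $t'_1 + \dots + t'_k < s_1 + \dots + s_k$) and type 3 (coming from the collision term $\Si_d\begin{pmatrix}\sigma_{k-1}\varepsilon_k & \dots \\ s_{k-1}+1 & \dots\end{pmatrix}$, whose initial tuple strictly dominates $\Init(\fs)$ in lexicographic order since its $(k-1)$-st entry is $s_{k-1}+1 > s_{k-1}$ — note $s_{k-1} \leq q$ so $s_{k-1}+1 \leq q+1$, and one checks the first $k-2$ entries agree). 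The depth and $T_k$-monotonicity claims follow because $\mathcal B^*$ only prepends entries and the collision operation merges two adjacent entries, both of which are compatible with $T_k(\mathfrak t) \leq T_k(\fs)$.

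For part 1), where $\Init(\fs) = (s_1,\dots,s_{k-1})$ with $s_k > q$, the idea is to peel off the first ``bad'' entry $s_k$. One writes $s_k = q + (s_k - q)$ and applies $\mathcal{BC}_{\varepsilon, q}$-type manipulations: concretely, I would first use $\mathcal B^*_{\Sigma, V}$ applied to an appropriate binary relation built from $R_\varepsilon$ and $\mathcal C_{\fe, \mathfrak t}(R_\varepsilon)$ (as in the definition of $\mathcal{BC}_{\varepsilon,q}$) with $\begin{pmatrix}\Sigma \\ V\end{pmatrix}$ recording the part of $\fs$ to the left of position $k$, and with the ``tail'' of length $\geq 1$ recording $\begin{pmatrix} \varepsilon^{-1}\varepsilon_k & \varepsilon_{k+1} & \dots \\ s_k - q & s_{k+1} & \dots\end{pmatrix}$. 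The type-1 array $\begin{pmatrix} \fve' \\ \fs' \end{pmatrix}$ is exactly the one displayed: it replaces $s_k$ by the pair $(q, s_k - q)$ and adjusts the $k$-th $\varepsilon$ into $\varepsilon$ and $\varepsilon^{-1}\varepsilon_k$, so it has strictly smaller partial sums at every position $\ell \geq k$ (the partial sum up to the new $q$-entry is $s_1 + \dots + s_{k-1} + q < s_1 + \dots + s_k$, and all later ones decrease by the same token). The type-2 and type-3 arrays arise from the $\mathcal C_{\fe_i, \mathfrak t_i}(R_\varepsilon)$ corrections via Proposition \ref{polycer1} and Proposition \ref{polysums}(2); Proposition \ref{polycer1} guarantees the coefficients are in $A$ and divisible by $D_1$, and the constraint $\begin{pmatrix}\fe_i \\ \mathfrak t_i\end{pmatrix} \leq \begin{pmatrix} 1 \\ q-1 \end{pmatrix} + \begin{pmatrix}\Sigma \\ V\end{pmatrix}$ translates into the stated partial-sum inequalities for type 2, while the merged (collision) terms give type 3 with strictly larger initial tuple.

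The bookkeeping — matching each term produced by the operator calculus to exactly one of the three types and checking all the inequalities simultaneously — is the main obstacle, but it is a finite and essentially mechanical verification once the operators are in place; this is precisely the analogue of \cite[Proposition 3.4 or its siblings]{ND21} in the ACMPL setting, and the authors indicate the expert reader may skip such details. The one genuinely new point to be careful about is that the $\varepsilon$'s interact multiplicatively under the collision operation $\begin{pmatrix}\varepsilon \\ s\end{pmatrix} + \begin{pmatrix}\fe \\ \mathfrak t\end{pmatrix}$, so one must track that the character $\chi$ is preserved throughout (it is, since $\varepsilon \cdot \varepsilon^{-1}\varepsilon_k = \varepsilon_k$ and component multiplication preserves the product), which is what makes the $\leq$ relation among arrays behave well. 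After assembling all pieces one sums over $d$ to pass from $\Si_d$ to $\Li$, obtaining both decompositions.
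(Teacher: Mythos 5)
Your overall strategy matches the paper's: apply $R_\varepsilon$, combine with the $\mathcal B^*$, $\mathcal C$, $\mathcal{BC}$ operators, and sum over $d$ to convert $\Si_d$ into $\Li$. But there is a genuine gap in how you choose which positions are handled by $\mathcal B^*$ versus $\mathcal{BC}$, and it shows most clearly in your Part 2.

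In the paper's Part 1 proof, the authors set $j$ to be the \emph{maximal} index with $s_j < q$ (so $s_{j+1}=\dots=s_{k-1}=q$), handle positions $j+1,\dots,k-1$ by the chain $\mathcal{BC}_{\varepsilon_{j+1},q}\circ\dots\circ\mathcal{BC}_{\varepsilon_{k-1},q}$, and apply $\mathcal B^*_{\Sigma',V'}$ only across positions $1,\dots,j$. This split is not cosmetic: the final $\mathcal B^*$ produces a collision term with entry $s_j+1$ in position $j$, and because $s_j<q$ we get $s_j+1\leq q$, so $\Init(\mathfrak u)$ still has length at least $j$ and its $j$-th entry increased; hence $\Init(\fs)\prec\Init(\mathfrak u)$, the type-3 property. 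You instead apply $\mathcal B^*_{\Sigma,V}$ across \emph{all} of positions $1,\dots,k-1$. In Part 2 this means: if $s_{k-1}=q$, the collision term from Lemma~\ref{polybesao} carries an entry $s_{k-1}+1=q+1>q$, so $\Init(\mathfrak u)$ truncates at position $k-2$. With the convention that makes $q^{\{k\}}$ the top (shorter prefixes are smaller, as required for the termination argument in Proposition~\ref{polyalgpartlem}), we then get $\Init(\mathfrak u)=(s_1,\dots,s_{k-2})\prec(s_1,\dots,s_{k-2},q,q)=\Init(\fs)$ --- the \emph{wrong} inequality, so that term is not of type 3. Your parenthetical ``note $s_{k-1}\leq q$ so $s_{k-1}+1\leq q+1$'' is exactly where the issue is glossed over: $q+1$ is not $\leq q$, and that destroys the lex-order conclusion.

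The same imprecision infects your Part 1 ("$\Sigma,V$ recording the part of $\fs$ to the left of position $k$"), which as written also pushes $\mathcal B^*$ through a position where $s_i=q$. To repair both parts you must explicitly locate $j$, run the $\mathcal{BC}$ operators through the stretch of $q$'s (these preserve the binary-relation form and keep the partial sums under control without producing a $+1$ collision at a $q$-entry), and only then apply the single $\mathcal B^*_{\Sigma',V'}$ across the genuinely sub-$q$ head $(s_1,\dots,s_j)$. With that correction the rest of your bookkeeping (divisibility by $D_1$, the $T_k$-inequalities, the character computation) goes through as you describe.
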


\begin{proof}
For Part 1, since $\Init(\fs) = (s_1, \dots, s_{k-1})$, we get $s_k > q$. Set $ \begin{pmatrix}
\Sigma  \\
V  \end{pmatrix}  =  \begin{pmatrix}
\varepsilon^{-1} \varepsilon_{k} & \varepsilon_{k+1} &\dots & \varepsilon_n \\
s_k - q & s_{k+1} &\dots & s_n \end{pmatrix} $. By Proposition \ref{polycer1}, $\mathcal{C}_{\Sigma,V}(R_{\varepsilon})$ is of the form
\begin{equation} \label{polyr1}
  \Si_d \begin{pmatrix}
 \varepsilon_{k} & \dots & \varepsilon_n \\
s_{k} & \dots & s_n \end{pmatrix}  +  \Si_d \begin{pmatrix}
 \varepsilon & \varepsilon^{-1}\varepsilon_{k} & \varepsilon_{k+1} & \dots & \varepsilon_n \\
q &  s_k- q & s_{k+1} & \dots & s_n \end{pmatrix}  + \sum \limits_i b_i \Si_{d+1} \begin{pmatrix}
\varepsilon & \fe_i  \\
1 & \mathfrak{t}_i  \end{pmatrix}  =0,
\end{equation}
where $b_i \in A$ divisible by $D_1$ and $ \begin{pmatrix}
 \fe_i  \\
\mathfrak{t}_i  \end{pmatrix} $ are arrays satisfying for all $i$,
\begin{equation*}
     \begin{pmatrix}
 \fe_i  \\
\mathfrak{t}_i  \end{pmatrix}  \leq  \begin{pmatrix}
 1  \\
q - 1 \end{pmatrix}  +  \begin{pmatrix}
 \Sigma  \\
V  \end{pmatrix}  =  \begin{pmatrix}
\varepsilon^{-1} \varepsilon_{k} & \varepsilon_{k+1} &\dots & \varepsilon_n \\
s_k - 1 & s_{k+1} &\dots & s_n \end{pmatrix} .
\end{equation*}

For $m \in \mathbb{N}$, we denote by $q^{\{m\}}$ the sequence of length $m$ with all terms equal to $q$. We agree by convention that $q^{\{0\}}$ is the empty sequence. Setting $s_0 = 0$, we may assume that there exists a maximal index $j$ with $0 \leq j \leq k-1$ such that $s_j < q$, hence $\Init(\fs) = (s_1, \dots, s_j, q^{\{k-j-1\}})$.

Then the operator $ \mathcal{BC}_{\varepsilon_{j+1},q} \circ \dots \circ \mathcal{BC}_{\varepsilon_{k-1},q}$ applied to the relation \eqref{polyr1} gives
\begin{align*}
 \Si_d &\begin{pmatrix}
 \varepsilon_{j+1} & \dots & \varepsilon_{k-1} & \varepsilon_{k} & \dots & \varepsilon_n \\
q & \dots & q & s_{k} & \dots & s_n \end{pmatrix}  \\
& +  \Si_d \begin{pmatrix}
\varepsilon_{j+1} & \dots & \varepsilon_{k-1} & \varepsilon & \varepsilon^{-1}\varepsilon_{k} & \epsilon_{k+1} & \dots & \epsilon_n \\
q & \dots & q & q &  s_k- q & s_{k+1} & \dots & s_n \end{pmatrix}  \\
&
+ \sum \limits_i b_{i_1 \dots i_{k-j}} \Si_{d+1} \begin{pmatrix}
\varepsilon_{j+1} & \fe_{i_1 \dots i_{k-j}}  \\
1 & \mathfrak{t}_{i_1 \dots i_{k-j}}  \end{pmatrix}  =0,
\end{align*}
where $b_{i_1 \dots i_{k-j}} \in A$ are divisible by $D_1$ and for $2 \leq l \leq k-j$, $ \begin{pmatrix}
 \fe_{i_1 \dots i_l}  \\
 \mathfrak{t}_{i_1 \dots i_l}  \end{pmatrix} $ are arrays satisfying
\begin{equation*}
     \begin{pmatrix}
 \fe_{i_1 \dots i_l}  \\
 \mathfrak{t}_{i_1 \dots i_l}  \end{pmatrix}  \leq  \begin{pmatrix}
 1  \\
q - 1 \end{pmatrix}  +  \begin{pmatrix}
\varepsilon_{k-l+2} & \fe_{i_1 \dots i_{l-1}}  \\
1 & \mathfrak{t}_{i_1 \dots i_{l-1}}  \end{pmatrix}  =  \begin{pmatrix}
\varepsilon_{k-l+2} & \fe_{i_1 \dots i_{l-1}}  \\
q & \mathfrak{t}_{i_1 \dots i_{l-1}}  \end{pmatrix} .
\end{equation*}

Thus 
\begin{equation} \label{eq: part 1 compa}
     \begin{pmatrix}
 \fe_{i_1 \dots i_{k-j}}  \\
 \mathfrak{t}_{i_1 \dots i_{k-j}}  \end{pmatrix}  \leq  \begin{pmatrix}
 \varepsilon_{j+2}& \dots & \varepsilon_{k-1} & \varepsilon & \varepsilon^{-1}\varepsilon_{k} & \varepsilon_{k+1}& \dots & \varepsilon_n \\
q& \dots & q &  q & s_{k} - 1 & s_{k+1} &\dots & s_n \end{pmatrix} .
\end{equation}

Letting $ \begin{pmatrix}
\Sigma'  \\
V'  \end{pmatrix}  =  \begin{pmatrix}
 \varepsilon_{1} &\dots & \varepsilon_j \\
 s_1 &\dots & s_j \end{pmatrix} $, by Lemma \ref{polybesao}, we continue to apply $\mathcal{B}^*_{\Sigma',V'}$ to the above relation and get
\begin{align*}
  \Si_d \begin{pmatrix}
\fve \\ \fs \end{pmatrix}  &+  \Si_d \begin{pmatrix}
\varepsilon_1 & \dots & \varepsilon_{k-1} & \varepsilon & \varepsilon^{-1}\varepsilon_{k} & \varepsilon_{k+1} & \dots & \varepsilon_n \\
s_1 & \dots & s_{k-1} & q &  s_k- q & s_{k+1} & \dots & s_n \end{pmatrix}  \\
& + \sum \limits_i b_{i_1 \dots i_{k-j}} \Si_d \begin{pmatrix}
\varepsilon_1 & \dots & \varepsilon_j & \varepsilon_{j+1} & \fe_{i_1 \dots i_{k-j}}  \\
s_1 & \dots & s_j & 1 & \mathfrak{t}_{i_1 \dots i_{k-j}}  \end{pmatrix}  \\
& + \sum \limits_i b_{i_1 \dots i_{k-j}} \Si_d \begin{pmatrix}
\varepsilon_1 & \dots & \varepsilon_{j-1} & \varepsilon_j \varepsilon_{j+1} & \fe_{i_1 \dots i_{k-j}}  \\
s_1 & \dots & s_{j-1} & s_j+1 & \mathfrak{t}_{i_1 \dots i_{k-j}}  \end{pmatrix}  =0.
\end{align*}
Hence
\begin{align*}
  \Li \begin{pmatrix}
\fve \\ \fs \end{pmatrix}  &+  \Li \begin{pmatrix}
\varepsilon_1 & \dots & \varepsilon_{k-1} & \varepsilon & \varepsilon^{-1}\varepsilon_{k} & \varepsilon_{k+1} & \dots & \varepsilon_n \\
s_1 & \dots & s_{k-1} & q &  s_k- q & s_{k+1} & \dots & s_n \end{pmatrix}  \\
& + \sum \limits_i b_{i_1 \dots i_{k-j}} \Li \begin{pmatrix}
\varepsilon_1 & \dots & \varepsilon_j & \varepsilon_{j+1} & \fe_{i_1 \dots i_{k-j}}  \\
s_1 & \dots & s_j & 1 & \mathfrak{t}_{i_1 \dots i_{k-j}}  \end{pmatrix}  \\
& + \sum \limits_i b_{i_1 \dots i_{k-j}} \Li \begin{pmatrix}
\varepsilon_1 & \dots & \varepsilon_{j-1} & \varepsilon_j \varepsilon_{j+1} & \fe_{i_1 \dots i_{k-j}}  \\
s_1 & \dots & s_{j-1} & s_j+1 & \mathfrak{t}_{i_1 \dots i_{k-j}}  \end{pmatrix}  =0.
\end{align*}

In other words, we have
\begin{align} \label{eq: part 1 Li}
  \Li \begin{pmatrix}
\fve \\ \fs \end{pmatrix} =&  -  \Li \begin{pmatrix}
\varepsilon_1 & \dots & \varepsilon_{k-1} & \varepsilon & \varepsilon^{-1}\varepsilon_{k} & \varepsilon_{k+1} & \dots & \varepsilon_n \\
s_1 & \dots & s_{k-1} & q &  s_k- q & s_{k+1} & \dots & s_n \end{pmatrix}  \\ \notag
&  - \sum \limits_i b_{i_1 \dots i_{k-j}} \Li \begin{pmatrix}
\varepsilon_1 & \dots & \varepsilon_j & \varepsilon_{j+1} & \fe_{i_1 \dots i_{k-j}}  \\
s_1 & \dots & s_j & 1 & \mathfrak{t}_{i_1 \dots i_{k-j}}  \end{pmatrix}  \\\notag
&  - \sum \limits_i b_{i_1 \dots i_{k-j}} \Li \begin{pmatrix}
\varepsilon_1 & \dots & \varepsilon_{j-1} & \varepsilon_j \varepsilon_{j+1} & \fe_{i_1 \dots i_{k-j}}  \\
s_1 & \dots & s_{j-1} & s_j+1 & \mathfrak{t}_{i_1 \dots i_{k-j}}  \end{pmatrix}.
\end{align}
The first term, the second term, and the third term on the right hand side of \eqref{eq: part 1 Li} are referred to as type 1, type 2, and type 3 respectively.

We now verify the conditions of arrays of type 1, type 2, and type 3 with respect to $ \begin{pmatrix}
\fve  \\
\fs  \end{pmatrix} $. We first note that $\fs = (s_1, \dots, s_j, q^{\{k-j-1\}}, s_k, \dotsc, s_n)$.

\textit{Type 1: } For $\begin{pmatrix}
 \fve'  \\
\fs'  \end{pmatrix} =
\begin{pmatrix}
\varepsilon_1 & \dots & \varepsilon_{k-1} & \varepsilon & \varepsilon^{-1}\varepsilon_{k} & \varepsilon_{k+1} & \dots & \varepsilon_n \\
s_1 & \dots & s_{k-1} & q &  s_k- q & s_{k+1} & \dots & s_n \end{pmatrix}$, we have
\begin{align*}
    \depth(\fs') = (k - 1) + 1 + (n - k + 1) = n + 1 > \depth(\fs).
\end{align*}
For $\ell = k$, since $s_k > q$, we have 
\begin{align*}
    s'_1 + \cdots + s'_k = s_1 + \cdots + s_{k - 1} + q < s_1 + \cdots + s_{k}.
\end{align*}
For $k < \ell \leq n$, one verifies that
\begin{align*}
    s'_1 + \cdots + s'_{\ell} = s_1 + \cdots + s_{\ell - 1} < s_1 + \cdots + s_{\ell}.
\end{align*}
Since $w(\fs') = w(\fs)$, one deduces that $T_k(\fs') \leq T_k(\fs)$.

\textit{Type 2:} For $ \begin{pmatrix}
 \fe'  \\
\mathfrak{t}'  \end{pmatrix} = \begin{pmatrix}
\varepsilon_1 & \dots & \varepsilon_j & \varepsilon_{j+1} & \fe_{i_1 \dots i_{k-j}}  \\
s_1 & \dots & s_j & 1 & \mathfrak{t}_{i_1 \dots i_{k-j}}  \end{pmatrix}$, it follows from \eqref{eq: part 1 compa} and Remark \ref{rmk: compa depth} that 
\begin{align*}
    \depth(\mathfrak{t}')  &= j + 1 + \depth(\mathfrak{t}_{i_1 \dots i_{k-j}})\\
    &\geq j + 1 + (k - j - 2) + 1 + (n - k + 1) = n + 1  > \depth(\fs).
\end{align*}
For $\ell = k$, since $s_k > q$, it follows from \eqref{eq: part 1 compa} that
\begin{align*}
    t'_1 + \cdots + t'_k \leq  s_1 + \cdots + s_j + 1 + q(k - j - 1) < s_1 + \cdots + s_{k}.
\end{align*}
For $k < \ell \leq n$, it follows from \eqref{eq: part 1 compa} that
\begin{align*}
    t'_1 + \cdots + t'_{\ell} \leq s_1 + \cdots + s_{\ell - 1} < s_1 + \cdots + s_{\ell}.
\end{align*}
Since $w(\mathfrak{t}') = w(\fs)$, one deduces that $T_k(\mathfrak{t}') \leq T_k(\fs)$.

\textit{Type 3:} For $ \begin{pmatrix}
 \fm  \\
\mathfrak{u}  \end{pmatrix} = \begin{pmatrix}
\varepsilon_1 & \dots & \varepsilon_{j-1} & \varepsilon_j \varepsilon_{j+1} & \fe_{i_1 \dots i_{k-j}}  \\
s_1 & \dots & s_{j-1} & s_j+1 & \mathfrak{t}_{i_1 \dots i_{k-j}}  \end{pmatrix}$, it follows from \eqref{eq: part 1 compa} and Remark \ref{rmk: compa depth} that 
\begin{align*}
    \depth(\mathfrak{u})  &= j + \depth(\mathfrak{t}_{i_1 \dots i_{k-j}}) \geq j + (k - j - 2) + 1 + (n - k + 1)  = \depth(\fs).
\end{align*}
For $k \leq \ell \leq n$, it follows from \eqref{eq: part 1 compa} that
\begin{align*}
    u_1 + \cdots + u_{\ell} \leq s_1 + \cdots + s_{\ell}.
\end{align*}
Since $w(\mathfrak{u}) = w(\fs)$, one deduces that $T_k(\mathfrak{u}) \leq T_k(\fs)$. Moreover, we have $\Init(\fs) \prec (s_1, \dotsc, s_{j - 1}, s_j + 1) \preceq \Init(\mathfrak{u})$. We have proved Part 1.

For Part $2$, we recall the relation $R_{\varepsilon_k}$ given by
\begin{equation*}
 R_{\varepsilon_k} \colon \quad  \Si_d \begin{pmatrix}
 \varepsilon_k\\
q  \end{pmatrix}  + \varepsilon_k^{-1}D_1 \Si_{d+1} \begin{pmatrix}
 \varepsilon_k& 1 \\
1 & q-1  \end{pmatrix}  =0.
\end{equation*}
Setting $s_0 = 0$, we may assume that there exists a maximal index $j$ with $0 \leq j \leq k-1$ such that $s_j < q$. We note that $s_k = q$, hence $\Init(\fs) = \fs = (s_1, \dots, s_j, q^{\{k-j-1\}}, q)$. Then  $ \mathcal{BC}_{\varepsilon_{j+1},q} \circ \dots \circ \mathcal{BC}_{\varepsilon_{k-1},q}(R_{\varepsilon_k})$ is of the form
\begin{align} \label{eq: Part 2 Li}
 \Si_d &\begin{pmatrix}
 \varepsilon_{j+1} & \dots & \varepsilon_{k} \\
q & \dots & q \end{pmatrix} + \sum \limits_i b_{i_1 \dots i_{k-j}} \Si_{d+1} \begin{pmatrix}
\varepsilon_{j+1} & \fe_{i_1 \dots i_{k-j}}  \\
1 & \mathfrak{t}_{i_1 \dots i_{k-j}}  \end{pmatrix}  =0,
\end{align}
where $b_{i_1 \dots i_{k-j}} \in A$ are divisible by $D_1$ and for $2 \leq l \leq k-j$, $ \begin{pmatrix}
 \fe_{i_1 \dots i_l}  \\
 \mathfrak{t}_{i_1 \dots i_l}  \end{pmatrix} $ are arrays satisfying
\begin{equation*}
     \begin{pmatrix}
 \fe_{i_1 \dots i_l}  \\
 \mathfrak{t}_{i_1 \dots i_l}  \end{pmatrix}  \leq  \begin{pmatrix}
 1  \\
q - 1 \end{pmatrix}  +  \begin{pmatrix}
\varepsilon_{k-l+2} & \fe_{i_1 \dots i_{l-1}}  \\
1 & \mathfrak{t}_{i_1 \dots i_{l-1}}  \end{pmatrix}  =  \begin{pmatrix}
\varepsilon_{k-l+2} & \fe_{i_1 \dots i_{l-1}}  \\
q & \mathfrak{t}_{i_1 \dots i_{l-1}}  \end{pmatrix} .
\end{equation*}
Thus 
\begin{equation} \label{eq: part 2 compa}
     \begin{pmatrix}
 \fe_{i_1 \dots i_{k-j}}  \\
 \mathfrak{t}_{i_1 \dots i_{k-j}}  \end{pmatrix}  \leq  \begin{pmatrix}
 \varepsilon_{j+2}& \dots & \varepsilon_{k} & 1 \\
q& \dots & q &  q - 1 \end{pmatrix} .
\end{equation}

Letting $ \begin{pmatrix}
\Sigma'  \\
V'  \end{pmatrix}  =  \begin{pmatrix}
 \varepsilon_{1} &\dots & \varepsilon_j \\
 s_1 &\dots & s_j \end{pmatrix} $, by Lemma \ref{polybesao}, we continue to apply $\mathcal{B}^*_{\Sigma',V'}$ to \eqref{eq: Part 2 Li} and get
\begin{align*}
  \Si_d \begin{pmatrix}
\fve \\ \fs \end{pmatrix}  & + \sum \limits_i b_{i_1 \dots i_{k-j}} \Si_d \begin{pmatrix}
\varepsilon_1 & \dots & \varepsilon_j & \varepsilon_{j+1} & \fe_{i_1 \dots i_{k-j}}  \\
s_1 & \dots & s_j & 1 & \mathfrak{t}_{i_1 \dots i_{k-j}}  \end{pmatrix}  \\
& + \sum \limits_i b_{i_1 \dots i_{k-j}} \Si_d \begin{pmatrix}
\varepsilon_1 & \dots & \varepsilon_{j-1} & \varepsilon_j \varepsilon_{j+1} & \fe_{i_1 \dots i_{k-j}}  \\
s_1 & \dots & s_{j-1} & s_j+1 & \mathfrak{t}_{i_1 \dots i_{k-j}}  \end{pmatrix}  =0,
\end{align*}
hence 
\begin{align*}
  \Li \begin{pmatrix}
\fve \\ \fs \end{pmatrix}  & + \sum \limits_i b_{i_1 \dots i_{k-j}} \Li \begin{pmatrix}
\varepsilon_1 & \dots & \varepsilon_j & \varepsilon_{j+1} & \fe_{i_1 \dots i_{k-j}}  \\
s_1 & \dots & s_j & 1 & \mathfrak{t}_{i_1 \dots i_{k-j}}  \end{pmatrix}  \\
& + \sum \limits_i b_{i_1 \dots i_{k-j}} \Li \begin{pmatrix}
\varepsilon_1 & \dots & \varepsilon_{j-1} & \varepsilon_j \varepsilon_{j+1} & \fe_{i_1 \dots i_{k-j}}  \\
s_1 & \dots & s_{j-1} & s_j+1 & \mathfrak{t}_{i_1 \dots i_{k-j}}  \end{pmatrix}  =0.
\end{align*}
In other words, we have
\begin{align} \label{eq: part 2 Li 2}
  \Li \begin{pmatrix}
\fve \\ \fs \end{pmatrix}  =& - \sum \limits_i b_{i_1 \dots i_{k-j}} \Li \begin{pmatrix}
\varepsilon_1 & \dots & \varepsilon_j & \varepsilon_{j+1} & \fe_{i_1 \dots i_{k-j}}  \\
s_1 & \dots & s_j & 1 & \mathfrak{t}_{i_1 \dots i_{k-j}}  \end{pmatrix}  \\ \notag
& - \sum \limits_i b_{i_1 \dots i_{k-j}} \Li \begin{pmatrix}
\varepsilon_1 & \dots & \varepsilon_{j-1} & \varepsilon_j \varepsilon_{j+1} & \fe_{i_1 \dots i_{k-j}}  \\
s_1 & \dots & s_{j-1} & s_j+1 & \mathfrak{t}_{i_1 \dots i_{k-j}}  \end{pmatrix}.
\end{align}
The first term and the second term on the right hand side of \eqref{eq: part 2 Li 2} are referred to as type 2 and type 3 respectively.

We now verify the conditions of arrays of type 2 and type 3 with respect to $ \begin{pmatrix}
\fve  \\
\fs  \end{pmatrix} $.

\textit{Type 2:} For $ \begin{pmatrix}
 \fe'  \\
\mathfrak{t}'  \end{pmatrix} = \begin{pmatrix}
\varepsilon_1 & \dots & \varepsilon_j & \varepsilon_{j+1} & \fe_{i_1 \dots i_{k-j}}  \\
s_1 & \dots & s_j & 1 & \mathfrak{t}_{i_1 \dots i_{k-j}}  \end{pmatrix}$, it follows from \eqref{eq: part 2 compa} and Remark \ref{rmk: compa depth} that 
\begin{align*}
    \depth(\mathfrak{t}')  = j + 1 + \depth(\mathfrak{t}_{i_1 \dots i_{k-j}}) \geq j + 1 + (k - j)= k + 1  > \depth(\fs).
\end{align*}
It follows from \eqref{eq: part 2 compa} that
\begin{align*}
    t'_1 + \cdots + t'_k \leq  s_1 + \cdots + s_j + 1 + q(k - j - 1) < s_1 + \cdots + s_{k}.
\end{align*}
Since $w(\mathfrak{t}') = w(\fs)$, one deduces that $T_k(\mathfrak{t}') \leq T_k(\fs)$.

\textit{Type 3:} For $ \begin{pmatrix}
 \fm  \\
\mathfrak{u}  \end{pmatrix} = \begin{pmatrix}
\varepsilon_1 & \dots & \varepsilon_{j-1} & \varepsilon_j \varepsilon_{j+1} & \fe_{i_1 \dots i_{k-j}}  \\
s_1 & \dots & s_{j-1} & s_j+1 & \mathfrak{t}_{i_1 \dots i_{k-j}}  \end{pmatrix}$, it follows from \eqref{eq: part 2 compa} and Remark \ref{rmk: compa depth} that 
\begin{align*}
    \depth(\mathfrak{u})  = j + \depth(\mathfrak{t}_{i_1 \dots i_{k-j}}) \geq j + (k - j) = \depth(\fs).
\end{align*}
It follows from \eqref{eq: part 2 compa} that
\begin{align*}
    u_1 + \cdots + u_k \leq s_1 + \cdots + s_{j - 1} + (s_{j} + 1) + q(k - j - 1) + (q - 1) = s_1 + \cdots + s_k.
\end{align*}
Since $w(\mathfrak{u}) = w(\fs)$, one deduces that $T_k(\mathfrak{u}) \leq T_k(\fs)$. Moreover, we have $\Init(\fs) \prec (s_1, \dotsc, s_{j - 1}, s_j + 1) \preceq \Init(\mathfrak{u})$. We finish the proof.
\end{proof}

We recall the following definition of \cite{ND21} (see \cite[Definition 3.1]{ND21}):
\begin{definition}
Let $k \in \mathbb N$ and $\fs$ be a tuple of positive integers. We say that $\fs$ is $k$-admissible if it satisfies the following two conditions:
\begin{itemize}
\item[1)] $s_1,\dots,s_k \leq q$.

\item[2)] $\fs$ is not of the form $(s_1,\dots,s_r)$ with $r \leq k$, $s_1,\dots,s_{r-1} \leq q$, and $s_r=q$.
\end{itemize}
Here we recall $s_i=0$ for $i > \depth(\fs)$. By convention the empty array $\begin{pmatrix}
 \emptyset  \\
 \emptyset  \end{pmatrix}$ is always $k$-admissible.

An array is $k$-admissible if the corresponding tuple is $k$-admissible.
\end{definition}

\begin{proposition} \label{polyalgpartlem}
For all $k \in \mathbb{N}$ and for all arrays $ \begin{pmatrix}
 \fve  \\
\fs  \end{pmatrix} $, $\Li \begin{pmatrix}
 \fve  \\
\fs  \end{pmatrix} $ can be expressed as a $K$-linear combination of $\Li \begin{pmatrix}
 \fe  \\
\mathfrak{t}  \end{pmatrix} $'s of the same weight such that $\mathfrak{t}$ is $k$-admissible.
\end{proposition}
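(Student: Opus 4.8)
The plan is to prove Proposition \ref{polyalgpartlem} by induction, using Proposition \ref{polydecom} as the main engine to reduce any array to a $K$-linear combination of $k$-admissible ones. First I would fix $k$ and set up a well-ordering on the set of positive arrays $\begin{pmatrix}\fve\\\fs\end{pmatrix}$ of a given weight $w$, so that the decompositions furnished by Proposition \ref{polydecom} always produce arrays that are strictly smaller. The natural ordering mimics the one in \cite[\S 3]{ND21}: order first by $\Init(\fs)$ under the order $\prec$ (bigger initial tuple is ``better''), and among arrays with the same initial tuple, order by the tuple $(T_k(\fs), \text{depth})$ or more precisely by reverse-lexicographic comparison of the partial sums $s_1, s_1+s_2, \dots$ beyond the $k$-th spot, with shorter depth being larger. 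Since weight is fixed, there are only finitely many arrays involved, so any such ordering is automatically a well-ordering and a descent argument terminates.

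Next I would run the induction. Given $\begin{pmatrix}\fve\\\fs\end{pmatrix}$, if $\fs$ is already $k$-admissible there is nothing to do. Otherwise one of two things happens. Either $\Init(\fs) = (s_1,\dots,s_{k-1})$ is a proper initial segment (i.e.\ some $s_\ell > q$ with $\ell \le k$, equivalently $s_k > q$ after relabelling, since $s_1,\dots,s_{k-1}\le q$), in which case I apply Part 1 of Proposition \ref{polydecom} with an arbitrary choice of $\varepsilon \in \Fq^\times$; or $\Init(\fs) = \fs$ with $\depth(\fs) = k' \le k$ and $s_{k'} = q$, i.e.\ $\fs$ fails the second admissibility condition, in which case I apply Part 2 of Proposition \ref{polydecom}. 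In both cases the proposition writes $\Li\begin{pmatrix}\fve\\\fs\end{pmatrix}$ as a $K$-linear combination of $\Li$'s of the same weight, where each array on the right is strictly smaller in the ordering: type 3 arrays have $\Init(\fs) \prec \Init(\mathfrak u)$ so they drop (improve) the first coordinate of the ordering; type 1 and type 2 arrays keep $T_k$ bounded by $T_k(\fs)$ (since $\depth \ge \depth(\fs)$ and the partial sums up to $k$ only decrease) and the strict inequalities $s'_1+\dots+s'_\ell < s_1+\dots+s_\ell$ (resp.\ $t'_1+\dots+t'_\ell < \dots$) for $k \le \ell$ force a strict decrease in the second coordinate of the ordering. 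By the induction hypothesis each term on the right is a $K$-linear combination of $k$-admissible $\Li$'s, hence so is $\Li\begin{pmatrix}\fve\\\fs\end{pmatrix}$.

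The point requiring care — and the step I expect to be the main obstacle — is verifying that the ordering genuinely decreases in \emph{every} branch, in particular that type 1 and type 2 arrays, while possibly increasing the depth and not improving $\Init$, do strictly decrease the chosen secondary invariant, and that type 3 arrays (which may have smaller depth or larger entries elsewhere) are nonetheless handled because $\Init$ strictly increases and $\Init$ is the dominant term of the ordering. One must also check a subtle compatibility: applying Proposition \ref{polydecom} to a non-$k$-admissible array indeed lands one in one of its two hypothesis cases and never leaves the fixed-weight, finite universe. This is exactly the bookkeeping carried out in \cite[\S 3]{ND21}, and since the structure of the decomposition in Proposition \ref{polydecom} is parallel to the one there, the argument transfers; I would state the ordering explicitly, check the three bullet conditions of Proposition \ref{polydecom} against it case by case, and invoke finiteness of the weight-$w$ index set to conclude termination. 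The rest is routine, so I would only sketch it and refer to \cite{ND21} for the identical details.
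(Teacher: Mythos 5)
Your strategy --- iterate Proposition~\ref{polydecom} and close with a descent argument over a finite set --- is the right one and matches the paper's, but the ordering you set up is the wrong way round, and this is not a detail to be checked later: it breaks termination. You make $\Init$ the dominant coordinate and the partial sums $s_1+\cdots+s_k$ the secondary one, and you claim that type~1 and type~2 arrays, ``while possibly\ldots not improving $\Init$,'' strictly improve the secondary. The problem is that type-2 arrays can make $\Init$ strictly \emph{worse}. Tracing the proof of Proposition~\ref{polydecom}(1): writing $\ell$ for the local index with $\Init(\fs)=(s_1,\dots,s_{\ell-1})$ and $j$ for the largest index $<\ell$ with $s_j<q$, the type-2 tuple begins $(s_1,\dots,s_j,1,\dots)$, whereas $\Init(\fs)=(s_1,\dots,s_j,q,\dots,q)$; so its initial tuple is $\prec\Init(\fs)$ whenever $j<\ell-1$. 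In a lexicographic order with $\Init$ in front, such a step goes \emph{up}, not down, and the fact that the secondary coordinate shrinks at the same time is then irrelevant. The algorithm could loop.

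The ordering that actually works (and is what the paper, following \cite[\S 3]{ND21}, uses) is the reverse: an outer induction on $k$; for fixed $k$, an induction on the partial sum $s_1+\cdots+s_k$; and for a fixed value $s$ of that sum, an ascent on $\Init$. Type~1 and type~2 arrays strictly decrease $s_1+\cdots+s_k$ (so the inner induction hypothesis applies); type~3 arrays may leave that sum unchanged but strictly increase $\Init$, and a non-$k$-admissible tuple has $\depth(\Init)\le k$ with entries $\le q$, so a $\prec$-increasing chain of such initial tuples is bounded above by $q^{\{k\}}$ and must terminate. Two further points: your fixed-$k$ setup omits the outer induction on $k$, which is needed because when $\depth(\fs)<k$ the sum $s_1+\cdots+s_k$ already equals the full weight $w$ and cannot strictly decrease --- one must reduce to $(k-1)$-admissibility first. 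And you are overloading the letter $k$: the admissibility parameter here is not the local index in Proposition~\ref{polydecom}; the bounds $T_k(\mathfrak t)\le T_k(\fs)$ and the strict inequalities for indices $\ge k$ have to be pushed from the local index to the admissibility parameter via the monotonicity of the $T_i$'s, which is exactly where the hypothesis $\depth(\fs)\ge k$ (hence the outer induction) is used.
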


\begin{proof}
The proof follows the same line as that of \cite[Proposition 3.2]{ND21}. We write down the proof for the reader's convenience. 

We consider the following statement:\\
$(H_k) \quad$ For all arrays $ \begin{pmatrix}
 \fve  \\
\fs  \end{pmatrix} $, we can express $\Li \begin{pmatrix}
 \fve  \\ 
\fs  \end{pmatrix} $  as a $K$-linear combination of $\Li \begin{pmatrix}
 \fe  \\
\mathfrak{t}  \end{pmatrix} $'s of the same weight such that $\mathfrak{t}$ is $k$-admissible.\\

We need to show that $(H_k)$ holds for all $k \in \mathbb{N}$. We proceed the proof by induction on $k$. For $k = 1$, we consider all the cases for the first component $s_1$ of $\fs$. If $s_1 \leq q$, then either $\fs$ is  $1$-admissible,  or $\begin{pmatrix}
 \fve  \\ 
\fs  \end{pmatrix} = \begin{pmatrix}
 \varepsilon \\ 
q \end{pmatrix}$. For the case $\begin{pmatrix}
 \fve  \\ 
\fs  \end{pmatrix} = \begin{pmatrix}
 \varepsilon \\ 
q \end{pmatrix}$, we deduce from the relation $R_{\varepsilon}$ that 
\begin{equation*}
\Li \begin{pmatrix}
 \varepsilon\\
q  \end{pmatrix}  = - \varepsilon^{-1}D_1 \Li \begin{pmatrix}
 \varepsilon& 1 \\
1 & q-1  \end{pmatrix},
\end{equation*}
hence $(H_1)$ holds in this case since $(1, q - 1)$ is $1$-admissible. If $s_1 > q$, we assume that $ \begin{pmatrix}
 \fve  \\ 
\fs  \end{pmatrix} = \begin{pmatrix}
 \varepsilon_1 & \cdots & \varepsilon_n  \\ 
s_1 & \cdots & s_n  \end{pmatrix}$. Set $\begin{pmatrix}
 \Sigma \\ 
V  \end{pmatrix} = \begin{pmatrix}
 \varepsilon_1 & \varepsilon_2 & \cdots & \varepsilon_n  \\ 
s_1 - q & s_2 & \cdots & s_n  \end{pmatrix}$. From Proposition \ref{polycer1}, we have $\mathcal C_{\Sigma,V}(R_{1})$ is of the form
\begin{equation*}
  \Si_d \begin{pmatrix}
 \varepsilon_1 & \cdots & \varepsilon_n \\
s_1 & \cdots & s_n \end{pmatrix}  +  \Si_d \begin{pmatrix}
 1& \varepsilon_1 & \varepsilon_2 & \cdots & \varepsilon_n  \\
q & s_1 - q & s_2 & \cdots & s_n \end{pmatrix}  + \sum \limits_i b_i \Si_{d+1} \begin{pmatrix}
1 & \fe_i  \\
1 & \mathfrak{t}_i  \end{pmatrix}  =0,
\end{equation*}
where $b_i \in K$ for all $i$. Thus we deduce that 
\begin{align*}
     \Li \begin{pmatrix}
 \fve  \\ 
\fs  \end{pmatrix} =  -  \Li \begin{pmatrix}
 1& \varepsilon_1 & \varepsilon_2 & \cdots & \varepsilon_n  \\
q & s_1 - q & s_2 & \cdots & s_n \end{pmatrix}  -  \sum \limits_i b_i \Li \begin{pmatrix}
1 & \fe_i  \\
1 & \mathfrak{t}_i  \end{pmatrix},
\end{align*}
which proves that $(H_1)$ holds in this case since $(q, s_1 - q, s_2, \dotsc, s_n)$ and $(1, \mathfrak{t}_i)$ are $1$-admissible. 

We next assume that $(H_{k - 1})$ holds. We need to show that $(H_k)$ holds. It suffices to consider the array $\begin{pmatrix}
 \fve  \\ 
\fs  \end{pmatrix}$ where $\fs$ is not $k$-admissible. Moreover, from the induction hypothesis of $(H_{k - 1})$, we may assume that $\fs$ is $(k - 1)$-admissible. For such array $\begin{pmatrix}
 \fve  \\ 
\fs  \end{pmatrix}$, we claim that $\depth(\fs) \geq k$. Indeed, assume that $\depth(\fs) < k$. Since $\fs$ is $(k - 1)$-admissible, one verifies that $\fs$ is $k$-admissible, which is a contradiction.

Assume that $ \begin{pmatrix}
 \fve  \\ 
\fs  \end{pmatrix} = \begin{pmatrix}
 \varepsilon_1 & \cdots & \varepsilon_n  \\ 
s_1 & \cdots & s_n  \end{pmatrix}$ where $\fs$ is not $k$-admissible and $\depth(\fs) \geq k$. In order to prove that $(H_k)$ holds for the array $\begin{pmatrix}
 \fve  \\ 
\fs  \end{pmatrix}$, we proceed by induction on $s_1 + \cdots + s_k$. The case $s_1 + \cdots + s_k = 1$ is a simple check.  Assume that $(H_k)$ holds when $s_1 + \cdots + s_k < s$. We need to show that $(H_k)$ holds when $s_1 + \cdots + s_k = s$. To do so, we give the following algorithm:

\textit{Algorithm: } We begin with an array $\begin{pmatrix}
 \fve  \\ 
\fs  \end{pmatrix}$ where $\fs$ is not $k$-admissible, $\depth(\fs) \geq k$ and $s_1 + \cdots + s_k = s$.

\textit{Step 1:} From Proposition \ref{polydecom}, we can decompose $\Li \begin{pmatrix}
 \fve  \\
 \fs  \end{pmatrix} $ as follows:
    \begin{equation} \label{eq: polyalgpartlem}
    \Li \begin{pmatrix}
 \fve  \\
\fs  \end{pmatrix}
    = \underbrace{ - \Li \begin{pmatrix}
 \fve'  \\
\fs'  \end{pmatrix} }_\text{type 1} + \underbrace{\sum\limits_i b_i\Li \begin{pmatrix}
 \fe_i'  \\
\mathfrak{t}'_i  \end{pmatrix} }_\text{type 2} + \underbrace{\sum\limits_i c_i\Li \begin{pmatrix}
 \fm_i  \\
\mathfrak{u}_i  \end{pmatrix} }_\text{type 3}  ,
    \end{equation}
    where $ b_i, c_i \in A$. Note that the term of type $1$ disappears when $\Init(\fs) = \fs$ and $s_n = q$. Moreover, for all arrays $ \begin{pmatrix}
 \fe  \\
\mathfrak{t}  \end{pmatrix} $ appearing on the right hand side of \eqref{eq: polyalgpartlem}, we have $\depth(\mathfrak{t}) \geq \depth(\fs) \geq k$ and $t_1 + \cdots + t_k \leq s_1 + \cdots + s_k = s$.

\textit{Step 2:} For all arrays $ \begin{pmatrix}
 \fe  \\
\mathfrak{t}  \end{pmatrix} $ appearing on the right hand side of \eqref{eq: polyalgpartlem}, if $\mathfrak{t}$ is either $k$-admissible or $\mathfrak{t}$ satisfies the condition $t_1 + \cdots + t_k < s$, then we deduce from the induction hypothesis that $(H_k)$ holds for the array $\begin{pmatrix}
 \fve  \\
\fs  \end{pmatrix}$, and hence we stop the algorithm. Otherwise, there exists an array $\begin{pmatrix}
 \fve_1  \\
\fs_1  \end{pmatrix}$ where $\fs_1$ is not $k$-admissible, $\depth(\fs_1) \geq k$ and $s_{11} + \cdots + s_{1k} = s$. For such an array, we repeat the algorithm for $\begin{pmatrix}
 \fve_1  \\
\fs_1  \end{pmatrix}$. It should be remarked that the array $\begin{pmatrix}
 \fve_1  \\
\fs_1  \end{pmatrix}$ is of type $3$ with respect to $\begin{pmatrix}
 \fve  \\
\fs  \end{pmatrix}$. Indeed, if the array $\begin{pmatrix}
 \fve_1  \\
\fs_1  \end{pmatrix}$ is of type $1$ or type $2$, then we deduce from Proposition \ref{polydecom} that $s_{11} + \cdots + s_{1k} < s_{1} + \cdots + s_{k} = s$, which is a contradiction.

It remains to show that the above algorithm stops after a finite number of steps. Set $\begin{pmatrix}
 \fve_0 \\
\fs_0  \end{pmatrix} = \begin{pmatrix}
 \fve  \\
\fs  \end{pmatrix}$. Assume that the above algorithm does not stop. Then there exists a sequence of arrays $\begin{pmatrix}
 \fve_0 \\
\fs_0  \end{pmatrix}, \begin{pmatrix}
 \fve_1  \\
\fs_1  \end{pmatrix}, \begin{pmatrix}
 \fve_2  \\
\fs_2  \end{pmatrix}, \dotsc$ such that for all $i \geq 0$, $\fs_i$ is not $k$-admissible, $\depth(\fs_i) \geq k$ and $\begin{pmatrix}
 \fve_{i + 1}  \\
\fs_{i + 1}  \end{pmatrix}$ is of type $3$ with respect to $\begin{pmatrix}
 \fve_i  \\
\fs_i  \end{pmatrix}$. From Proposition \ref{polydecom}, we obtain an infinite sequence
\begin{align*}
    \Init(\fs_0) \prec \Init(\fs_1) \prec \Init(\fs_2) \prec \cdots
\end{align*}
which is strictly increasing. For all $i \geq 0$, since $\fs_i$ is not $k$-admissible and $\depth(\fs_i) \geq k$, we have $\depth(\Init(\fs_i)) \leq k$, hence $\Init(\fs_i) \preceq q^{\{k\}}$. This shows that $\Init(\fs_i) = \Init(\fs_{i + 1})$ for all $i$ sufficiently large, which is a contradiction. We deduce that the algorithm stops after a finite number of steps. We finish the proof. 
\end{proof}

\subsubsection{A set of generators $\mathcal{AT}_w$ for ACMPL's}

We recall that $\mathcal{AL}_w$ is the $K$-vector space generated by ACMPL's of weight $w$. We denote by $\mathcal{AT}_w$ the set of all ACMPL's $\Li \begin{pmatrix}
 \fve  \\
\fs  \end{pmatrix}  =  \Li \begin{pmatrix}
 \varepsilon_1 & \dots & \varepsilon_n \\
s_1 & \dots & s_n \end{pmatrix} $ of weight $w$ such that $s_1, \dots, s_{n-1} \leq q$ and $s_n < q$.

We put $t(w)=|\mathcal{AT}_w|$. Then one verifies that
\begin{equation*}
    t(w) = \begin{cases}
(q - 1) q^{w-1}& \text{if } 1 \leq w < q, \\
            (q - 1) (q^{w-1} - 1) &\text{if } w = q,
		 \end{cases}
\end{equation*}
and for $w>q$, $t(w)=(q-1)\sum \limits_{i = 1}^q t(w-i)$.

We are ready to state a weak version of Brown's theorem for ACMPL's.

\begin{proposition} \label{prop: weak Brown}
The set of all elements $\Li \begin{pmatrix}
 \fve  \\
\fs  \end{pmatrix} $ such that $\Li \begin{pmatrix}
 \fve  \\
\fs  \end{pmatrix}  \in \mathcal{AT}_w$ forms a set of generators for $\mathcal{AL}_w$.
\end{proposition}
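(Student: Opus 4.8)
The plan is to deduce Proposition~\ref{prop: weak Brown} almost immediately from Proposition~\ref{polyalgpartlem} together with a simple counting/spanning argument. By definition $\mathcal{AL}_w$ is spanned by all $\Li\begin{pmatrix}\fve\\\fs\end{pmatrix}$ of weight $w$, where $\fs$ ranges over \emph{all} tuples of positive integers of weight $w$ and $\fve$ over all tuples in $(\Fq^\times)^{\depth(\fs)}$. So it suffices to show that every such $\Li\begin{pmatrix}\fve\\\fs\end{pmatrix}$ lies in the $K$-span of $\mathcal{AT}_w$. First I would invoke Proposition~\ref{polyalgpartlem}: for a suitably large choice of $k$ — concretely $k=w$ will do, since any tuple of weight $w$ has depth at most $w$ — every $\Li\begin{pmatrix}\fve\\\fs\end{pmatrix}$ of weight $w$ is a $K$-linear combination of terms $\Li\begin{pmatrix}\fe\\\mathfrak t\end{pmatrix}$ of weight $w$ with $\mathfrak t$ being $k$-admissible.

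The second step is to check that a $k$-admissible tuple $\mathfrak t$ of weight $w$ with $k\ge w$ (hence $k\ge\depth(\mathfrak t)$) is exactly a tuple with the shape required for membership in $\mathcal{AT}_w$, i.e.\ writing $\mathfrak t=(t_1,\dots,t_n)$ one has $t_i\le q$ for all $i<n$ and $t_n<q$. Indeed, condition~(1) of $k$-admissibility with $k\ge n=\depth(\mathfrak t)$ forces $t_1,\dots,t_n\le q$; and condition~(2) — that $\mathfrak t$ is not of the form $(t_1,\dots,t_r)$ with $r\le k$, all $t_i\le q$ for $i<r$, and $t_r=q$ — then rules out $t_n=q$ (taking $r=n\le k$), so $t_n<q$. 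Conversely any $\Li\begin{pmatrix}\fe\\\mathfrak t\end{pmatrix}\in\mathcal{AT}_w$ has $\mathfrak t$ trivially $k$-admissible for every $k$. Hence the $k$-admissible tuples of weight $w$ with $k\ge w$ are precisely the index tuples of elements of $\mathcal{AT}_w$, and the combination produced by Proposition~\ref{polyalgpartlem} is a $K$-linear combination of elements of $\mathcal{AT}_w$. This shows $\mathcal{AT}_w$ spans $\mathcal{AL}_w$, which is the assertion.

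There is essentially no hard step here: the whole content has been pushed into Proposition~\ref{polyalgpartlem} (which in turn rests on Proposition~\ref{polydecom} and the termination argument via the strictly increasing chain of initial tuples). The only mild subtlety worth spelling out is the bookkeeping that $k$-admissibility ``stabilizes'' once $k\ge\depth$, so that one uniform value of $k$ handles all tuples of a fixed weight $w$ at once; one should state this explicitly rather than leaving the reader to re-derive it. I would also remark, as the paper does elsewhere, that the count $t(w)=|\mathcal{AT}_w|$ matches the recursion for $s(w)$ in Theorem~\ref{thm: ZagierHoffman AMZV} up to the weight-$q$ correction, which is why this set of generators is the natural candidate basis; but that observation is not needed for the proof of Proposition~\ref{prop: weak Brown} itself and can be deferred to the point where the strong Brown theorem (Theorem~\ref{thm: strong Brown}) upgrades ``generating set'' to ``basis''.
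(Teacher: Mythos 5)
Your proposal is correct and follows exactly the paper's route: the paper's proof is precisely the one-line invocation of Proposition~\ref{polyalgpartlem} with $k=w$, and your additional verification that $k$-admissibility for $k\ge\depth(\mathfrak t)$ coincides with the defining condition of $\mathcal{AT}_w$ is just the bookkeeping the paper leaves implicit.
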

\begin{proof}
The result follows immediately from Proposition \ref{polyalgpartlem} in the case of $k = w$.
\end{proof}

\subsection{A strong version of Brown's theorem for ACMPL's} \label{sec:strong Brown}

\subsubsection{Another set of generators $\mathcal{AS}_w$ for ACMPL's} \label{sec:AS_w}

We consider the set $\mathcal{J}_w$ consisting of positive tuples $\fs = (s_1, \dots, s_n)$ of weight $w$ such that $s_1, \dots, s_{n-1} \leq q$ and $s_n <q$, together with the set  $\mathcal{J}'_w$ consisting of positive tuples $\fs = (s_1, \dots, s_n)$ of weight $w$ such that $ q \nmid s_i$ for all $i$. Then there is a bijection
\begin{equation*}
    \iota \colon \mathcal{J}'_w \longrightarrow \mathcal{J}_w
\end{equation*}
given as follows: for each tuple $\fs = (s_1, \dots, s_n) \in \mathcal{J}'_w$, since $q \nmid s_i$, we can write $s_i = h_i q + r_i $ where $0 < r_i < q$ and $h_i \in \mathbb{Z}^{\ge0}$. The image $\iota(\mathfrak s)$ is the tuple
\begin{equation*}
    \iota(\mathfrak s) = (\underbrace{q, \dots, q}_{\text{$h_1$ times}}, r_1 , \dots, \underbrace{q, \dots, q}_{\text{$h_n$ times}}, r_n).
\end{equation*}
Let $\mathcal{AS}_w$ denote the set of ACMPL's $\Li \begin{pmatrix}
 \fve  \\
\fs  \end{pmatrix} $ such that $\mathfrak s \in \mathcal{J}'_w$. We note that in general, $\mathcal{AS}_w$ is strictly smaller than $\mathcal{AT}_w$. The only exceptions are when $q=2$ or $w \leq q$.

\subsubsection{Cardinality of $\mathcal{AS}_w$.}

We now compute $s(w)=|\mathcal{AS}_w|$. To do so we denote by $\mathcal{AJ}_w$ the set consisting of arrays $ \begin{pmatrix}
 \varepsilon_1 & \dots & \varepsilon_n \\
s_1 & \dots & s_n \end{pmatrix} $ of weight $w$ such that $q \nmid s_i$ for all $i$ and by $\mathcal{AJ}^1_w$ the set consisting of arrays $ \begin{pmatrix}
 \varepsilon_1 & \dots & \varepsilon_n \\
s_1 & \dots & s_n \end{pmatrix} $ of weight~$w$ such that $s_1, \dots, s_{n-1} \leq q, s_n < q$ and $\varepsilon_i = 1$ whenever $s_i = q$ for $1 \leq i \leq n$. We construct a map
\begin{equation*}
    \varphi \colon \mathcal{AJ}_w \longrightarrow \mathcal{AJ}^1_w
\end{equation*}
as follows: for each array $ \begin{pmatrix}
 \fve\\
\fs \end{pmatrix}  =  \begin{pmatrix}
 \varepsilon_1 & \dots & \varepsilon_n \\
s_1 & \dots & s_n \end{pmatrix}  \in \mathcal{AJ}_w$, since $q \nmid s_i$, we can write $s_i = (h_i-1) q + r_i $ where $0 < r_i < q$ and $h_i \in \mathbb N$. The image $\varphi  \begin{pmatrix}
 \fve\\
\fs \end{pmatrix} $ is the array
\begin{equation*}
    \varphi  \begin{pmatrix}
 \fve\\
\fs \end{pmatrix}  = \bigg( \underbrace{\begin{pmatrix}
     1 & \dots & 1\\
     q & \dots & q
     \end{pmatrix}}_\text{$h_1-1$ times}
     \begin{pmatrix}
     \varepsilon_1 \\
     r_1
     \end{pmatrix} \dots
   \underbrace{\begin{pmatrix}
     1 & \dots & 1\\
     q & \dots & q
     \end{pmatrix}}_\text{$h_n-1$ times}
      \begin{pmatrix}
     \varepsilon_n  \\
     r_n
     \end{pmatrix} \bigg) .
\end{equation*}
It is easily seen that $\varphi$ is a bijection, hence $|\mathcal{AS}_w| =|\mathcal{AJ}_w| = |\mathcal{AJ}^1_w|$.
Thus $s(w) = |\mathcal{AJ}^1_w|$. One verifies that
\begin{equation*}
    s(w) = \begin{cases}
(q - 1) q^{w-1}& \text{if } 1 \leq w < q, \\
            (q - 1) (q^{w-1} - 1) &\text{if } w = q,
		 \end{cases}
\end{equation*}
and for $w>q$,
	\[ s(w)=(q-1)\sum \limits_{i = 1}^{q-1} s(w-i) + s(w - q). \]

\subsubsection{} We state a strong version of Brown's theorem for ACMPL's.

\begin{theorem} \label{thm: strong Brown}
The set $\mathcal{AS}_w$ forms a set of generators for $\mathcal{AL}_w$. In particular,
	\[ \dim_K \mathcal{AL}_w \leq s(w). \]
\end{theorem}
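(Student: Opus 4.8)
The plan is to deduce Theorem \ref{thm: strong Brown} from the weak version (Proposition \ref{prop: weak Brown}), the structural decomposition of Proposition \ref{polydecom}, and the combinatorial bijections $\iota$ and $\varphi$ introduced in \S\ref{sec:AS_w}. Since Proposition \ref{prop: weak Brown} already shows that $\mathcal{AT}_w$ generates $\mathcal{AL}_w$, it suffices to prove that every $\Li\binom{\fve}{\fs}\in\mathcal{AT}_w$ lies in the $K$-span of $\mathcal{AS}_w$; the inequality $\dim_K\mathcal{AL}_w\le s(w)$ then follows at once from the equality $|\mathcal{AS}_w|=s(w)$ computed in \S\ref{sec:AS_w}. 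Equivalently, I want to show that the two spanning families $\{\Li_{\mathfrak b}:\mathfrak b\in\mathcal{AJ}^1_w\}$ and $\mathcal{AS}_w=\{\Li_{\mathfrak a}:\mathfrak a\in\mathcal{AJ}_w\}$, which have the same cardinality $s(w)$ and are matched up by the bijection $\varphi$, span the same subspace, namely all of $\mathcal{AL}_w$.

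The key step is to establish a relation converse to Proposition \ref{polydecom}. Given an array $\mathfrak a=\binom{\fve}{\fs}$ with $\fs\in\mathcal{J}'_w$, I would iterate Proposition \ref{polydecom}(1), at each stage choosing the free sign parameter $\varepsilon$ equal to $1$: this unfolds every entry $s_i=h_iq+r_i$ into the block $(\underbrace{q,\dots,q}_{h_i},r_i)$, so that the chain of ``type~$1$'' terms terminates precisely at $\pm\Li_{\varphi(\mathfrak a)}$, while every ``type~$2$'' term strictly decreases some partial sum $s_1+\dots+s_\ell$ and every ``type~$3$'' term strictly increases $\Init$ in the lexicographic order. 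Rearranging the resulting identity gives
\[
\Li_{\varphi(\mathfrak a)} = \pm\,\Li_{\mathfrak a} + (\text{strictly smaller }\Li\text{'s}),
\]
with coefficients in $A$ divisible by $D_1$, as recorded in Proposition \ref{polydecom}. To handle the $q$-divisible slots carrying a nontrivial sign and, more generally, the intermediate terms with entries $>q$, I would feed in the relation $R_\varepsilon$ and its images $\mathcal B^*_{\Sigma',V'}\circ\mathcal C_{\Sigma,V}(R_\varepsilon)$ (Propositions \ref{polycer1} and \ref{polybesao}), which trade a slot divisible by $q$ for the slots $1,q-1$ and for an absorbed slot $q+v_1$. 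Running all of this as a single induction on the well-founded order obtained by combining the lexicographic order $\preceq$ on initial tuples $\Init(-)$ with the partial-sum order then shows that $\Li_{\varphi(\mathfrak a)}\in\spn_K(\mathcal{AS}_w)$ for all $\mathfrak a\in\mathcal{AJ}_w$, and hence (again by Proposition \ref{prop: weak Brown}) that $\mathcal{AS}_w$ generates $\mathcal{AL}_w$. In fact the displayed identities exhibit the transition between the two equinumerous spanning families as block-triangular with invertible diagonal, once $\mathfrak a$ is matched with $\varphi(\mathfrak a)$.

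The main obstacle is the bookkeeping required to make this induction rigorous. One must verify that \emph{every} array occurring on the right-hand side of Proposition \ref{polydecom} and of $\mathcal B^*_{\Sigma',V'}\circ\mathcal C_{\Sigma,V}(R_\varepsilon)$ is genuinely smaller in the chosen well-founded order: in particular that absorbing a $q$-divisible slot into its neighbour — which produces a non-$q$-divisible entry $q+v_1>q$ — together with the auxiliary terms $\binom{\varepsilon\ \fe_i}{1\ \mathfrak t_i}$ subject to $\binom{\fe_i}{\mathfrak t_i}\le\binom{1}{q-1}+\binom{\Sigma}{V}$ never re-creates a configuration of equal or greater complexity; and that the block of the transition matrix attached to a fixed value of $\Init$ is, up to strictly lower-order corrections, the permutation induced by $\varphi$. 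This is precisely where the fact that the stuffle-type relations for ACMPL's are ``simpler'' than the shuffle relations for AMZV's is exploited: it furnishes enough control on the coefficients generated by Proposition \ref{polydecom} to guarantee that the diagonal entries are nonzero, hence that the change of basis between $\{\Li_{\mathfrak b}:\mathfrak b\in\mathcal{AJ}^1_w\}$ and $\mathcal{AS}_w$ is invertible. Carrying out this verification, mirroring the argument for Proposition \ref{polyalgpartlem} (i.e.\ \cite[Theorem A]{ND21}), is the technical heart of the proof.
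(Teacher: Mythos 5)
Your outline starts in the right place — express elements of $\mathcal{AS}_w$ as linear combinations of $\mathcal{AT}_w$ via iterated applications of Proposition \ref{polydecom}, isolate the ``type $1$'' term as the dominant one, and use the divisibility of the other coefficients by $D_1$ to argue for an invertible transition — but the decision to fix the free sign parameter $\varepsilon$ to $1$ at every stage creates a genuine gap.

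With $\varepsilon=1$ throughout, iterating Proposition \ref{polydecom}(1) on $\Li_{\mathfrak a}$ for $\mathfrak a\in\mathcal{AJ}_w$ indeed terminates at $\pm\Li_{\varphi(\mathfrak a)}$, but the arrays $\varphi(\mathfrak a)$ form exactly $\mathcal{AJ}^1_w$, i.e.\ those arrays $\binom{\fe}{\mathfrak t}$ with $\mathfrak t\in\mathcal J_w$ and with $\epsilon_i=1$ whenever $t_i=q$. This is a \emph{strict} subset of the index set of $\mathcal{AT}_w$ once $w>q$ and $q>2$: $|\mathcal{AJ}^1_w|=s(w)$ while $|\mathcal{AT}_w|=t(w)>s(w)$. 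So your induction delivers $\Li_{\mathfrak b}\in\operatorname{span}_K(\mathcal{AS}_w)$ only for $\mathfrak b\in\mathcal{AJ}^1_w$, and the final sentence ``hence (again by Proposition \ref{prop: weak Brown}) $\mathcal{AS}_w$ generates $\mathcal{AL}_w$'' does not follow: Proposition \ref{prop: weak Brown} asserts that $\mathcal{AT}_w$ spans, not that the smaller family indexed by $\mathcal{AJ}^1_w$ does. The phrase ``the two spanning families $\{\Li_{\mathfrak b}:\mathfrak b\in\mathcal{AJ}^1_w\}$ and $\mathcal{AS}_w$'' is circular at this stage — that $\mathcal{AJ}^1_w$ indexes a spanning family is a consequence of the theorem, not a hypothesis you may assume. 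Your remark about feeding in $R_\varepsilon$ and its images to ``handle the $q$-divisible slots carrying a nontrivial sign'' acknowledges the missing cases, but it is not developed and does not obviously close the gap.

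The paper's proof avoids this by \emph{varying} the free sign parameter rather than fixing it. For an arbitrary $\binom{\fve}{\fs}$ indexing $\mathcal{AT}_w$, one writes $\fs=\iota(\fs')$ and then chooses both $\fve'$ and the sign parameters $\varepsilon$ occurring in the repeated applications of Proposition \ref{polydecom} so that the chain of type-$1$ terms lands exactly on $\binom{\fve}{\fs}$ — including the cases where a $q$-slot carries a nontrivial sign, which is precisely what a nontrivial choice of $\varepsilon$ produces. This yields, for \emph{every} $\binom{\fve}{\fs}$ in the index set of $\mathcal{AT}_w$, a decomposition of some $\Li\binom{\fve'}{\fs'}\in\mathcal{AS}_w$ (the same $\Li\binom{\fve'}{\fs'}$ may be reused for several $\binom{\fve}{\fs}$, with different decompositions coming from different sign choices) whose coefficient on $\Li\binom{\fve}{\fs}$ is $\pm1\pmod{D_1}$ and whose other coefficients are $\equiv 0\pmod{D_1}$. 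The resulting $t(w)\times t(w)$ transition matrix is congruent modulo $D_1$ to a signed permutation matrix, hence has nonzero determinant, and inverting it gives $\mathcal{AT}_w\subseteq\operatorname{span}_K(\mathcal{AS}_w)$, which combined with Proposition \ref{prop: weak Brown} finishes the proof. So the missing idea in your proposal is exactly the exploitation of the free sign parameter: rather than reducing $\mathcal{AT}_w$ to $\mathcal{AJ}^1_w$ first (which would require a separate argument), one reaches all of $\mathcal{AT}_w$ directly by letting $\varepsilon$ range over $\Fq^\times$.
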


\begin{proof}
We recall that $\mathcal{AT}_w$ is the set of all ACMPL's $\Li \begin{pmatrix}
 \fve  \\
\fs  \end{pmatrix}$ with $\begin{pmatrix}
 \fve  \\
\fs  \end{pmatrix} \in \mathcal{AJ}_w$

Let $\textup{Li} \begin{pmatrix}
 \fve  \\
\fs  \end{pmatrix}  =  \textup{Li}\begin{pmatrix}
 \varepsilon_1 & \dots & \varepsilon_n \\
s_1 & \dots & s_n \end{pmatrix} \in \mathcal{AT}_w$. Then $\begin{pmatrix}
 \fve  \\
\fs  \end{pmatrix} \in \mathcal{AJ}_w$, which implies $s_1, \dotsc, s_{n - 1} \leq q$ and $s_n < q$. We express
$\begin{pmatrix}
 \fve  \\
\fs  \end{pmatrix}$ in the following form
\begin{align*}
     \bigg( \underbrace{\begin{pmatrix}
     \varepsilon_1 & \dots & \varepsilon_{h_1 - 1}\\
     q & \dots & q
     \end{pmatrix}}_\text{$h_1 - 1$ times}
     \begin{pmatrix}
     \varepsilon_{h_1} \\
     r_1
     \end{pmatrix} \dots
   \underbrace{\begin{pmatrix}
     \varepsilon_{h_1 + \cdots + h_{\ell - 1} + 1} & \dots & \varepsilon_{h_1 + \cdots + h_{\ell - 1} + (h_\ell - 1)}\\
     q & \dots & q
     \end{pmatrix}}_\text{$h_\ell - 1$  times}
      \begin{pmatrix}
     \varepsilon_{h_1 + \cdots + h_{\ell - 1} + h_\ell}  \\
     r_\ell
     \end{pmatrix} \bigg),
\end{align*}
where $h_1, \dotsc, h_\ell \geq 1$, $h_1 + \cdots  + h_\ell = n$ and $0 <  r_1, \dotsc, r_\ell < q$. Then we set 
\begin{align*}
    \begin{pmatrix}
 \fve'  \\
\fs'  \end{pmatrix}  =  \begin{pmatrix}
 \varepsilon'_1 & \dots & \varepsilon'_\ell \\
s'_1 & \dots & s'_\ell \end{pmatrix},
\end{align*}
where $\varepsilon'_i = \varepsilon_{h_1 + \cdots + h_{i - 1} + 1} \cdots \varepsilon_{h_1 + \cdots + h_{i - 1} + h_i}$ and $s'_i = (h_i - 1)q + r_i$ for $1 \leq i \leq \ell$. We note that $\iota(\fs')=\fs$. 

From Proposition \ref{polydecom} and Proposition \ref{prop: weak Brown}, we can decompose $\textup{Li}\begin{pmatrix}
 \fve'  \\
\fs'  \end{pmatrix}$ as follows:
\begin{equation*}
    \Li \begin{pmatrix}
 \fve'  \\
\fs'  \end{pmatrix}  = \sum  a_{\fe,\mathfrak t}^{\fve',\fs'} \Li \begin{pmatrix}
 \fe  \\
\mathfrak{t}  \end{pmatrix} ,
\end{equation*}
where $ \begin{pmatrix}
 \fe  \\
\mathfrak{t}  \end{pmatrix} $ ranges over all elements of $\mathcal{AJ}_w$ and $a_{\fe,\mathfrak t}^{\fve',\fs'} \in A$ satisfying
\begin{equation*}
 a_{\fe,\mathfrak t}^{\fve',\fs'} \equiv \begin{cases}
			\pm 1  \ (\text{mod } D_1) & \text{if }  \begin{pmatrix}
 \fe  \\
\mathfrak{t}  \end{pmatrix}  =  \begin{pmatrix}
 \fve  \\
\fs  \end{pmatrix},\\
            0 \ \ (\text{mod } D_1) & \text{otherwise}.
		 \end{cases}
\end{equation*}
Note that $\Li \begin{pmatrix}
 \fve'  \\
\fs'  \end{pmatrix}  \in \mathcal{AS}_w$. Thus the transition matrix from the set consisting of such $\Li \begin{pmatrix}
 \fve'  \\
\fs'  \end{pmatrix} $ as above (we allow repeated elements) to the set consisting of $\Li \begin{pmatrix}
 \fve  \\
\fs  \end{pmatrix} $ with $ \begin{pmatrix}
 \fve  \\
\fs  \end{pmatrix}  \in \mathcal{AJ}_w$ is invertible. It then follows again from Proposition \ref{prop: weak Brown} that $\mathcal{AS}_w$ is a set of generators for $\mathcal{AL}_w$, as desired. \end{proof}


\section{Dual $t$-motives and linear independence} \label{sec: dual motives}

We continue with the notation given in the Introduction. Further, letting $t$ be another independent variable, we denote by $\bT$ the Tate algebra in the variable $t$ with coefficients in $\C_\infty$ equipped with the Gauss norm $\lVert . \rVert_\infty$, and by $\bL$ the fraction field of $\bT$.

We  denote by $\mathcal E$ the ring of series $\sum_{n \geq 0} a_nt^n \in \overline K[[t]]$ such that $\lim_{n \to +\infty} \sqrt[n]{|a_n|_\infty}=0$ and $[K_\infty(a_0,a_1,\ldots):K_\infty]<\infty$. Then any $f \in \mathcal E$ is an entire function. 

For $a \in A=\Fq[\theta]$, we set $a(t):=a \rvert_{\theta=t} \in \Fq[t]$.

\subsection{Dual $t$-motives} 

We recall the notion of dual $t$-motives due to Anderson  (see \cite[\S 4]{BP20} and \cite[\S 5]{HJ20} for more details). We refer the reader to \cite{And86} for the related notion of $t$-motives.

For $i \in \mathbb Z$ we consider the $i$-fold twisting of $\C_\infty((t))$ defined by
\begin{align*}
\C_\infty((t)) & \rightarrow \C_\infty((t)) \\
f=\sum_j a_j t^j & \mapsto f^{(i)}:=\sum_j a_j^{q^i} t^j.
\end{align*}
We extend $i$-fold twisting to matrices with entries in $\C_\infty((t))$ by twisting entry-wise.

Let $\overline K[t,\sigma]$ be the non-commutative $\overline K[t]$-algebra generated by
the new variable $\sigma$ subject to the relation $\sigma f=f^{(-1)}\sigma$ for all $f \in \overline K[t]$.

\begin{definition}
An effective dual $t$-motive is a $\overline K[t,\sigma]$-module $\mathcal M'$ which is free and finitely generated over $\overline K[t]$ such that for $\ell\gg 0$ we have
	\[(t-\theta)^\ell(\mathcal M'/\sigma \mathcal M') = \{0\}.\]
\end{definition}

We mention that effective dual $t$-motives are called Frobenius modules in \cite{CPY19,CH21,Har21,KL16}. Note that Hartl and Juschka \cite[\S 4]{HJ20} introduced a more general notion of dual $t$-motives. In particular, effective dual $t$-motives are always dual $t$-motives.

Throughout this paper we will always work with effective dual $t$-motives. Therefore, we will sometimes drop the word ``effective" where there is no confusion.

Let $\mathcal M$ and $\mathcal M'$ be two effective dual $t$-motives. Then a morphism of effective dual $t$-motives $\mathcal M \to \mathcal M'$ is just a homomorphism of left $\overline K[t,\sigma]$-modules. We denote by $\cF$ the category of effective dual $t$-motives equipped with the trivial object $\mathbf{1}$.

We say that an object $\mathcal M$ of $\cF$ is given by a matrix $\Phi \in \Mat_r(\overline K[t])$ if $\mathcal M$ is a $\overline K[t]$-module free of rank $r$ and the action of $\sigma$ is represented by the matrix $\Phi$ on a given  $\overline K[t]$-basis for $\mathcal M$. We say that an object $\mathcal M$ of $\cF$ is uniformizable or rigid analytically trivial if there exists a matrix $\Psi \in \text{GL}_r(\bT)$ satisfying $\Psi^{(-1)}=\Phi \Psi$. The matrix $\Psi$ is called a rigid analytic trivialization of $\mathcal M$.

We now recall the Anderson-Brownawell-Papanikolas criterion which is crucial in the sequel (see \cite[Theorem 3.1.1]{ABP04}).

\begin{theorem}[Anderson-Brownawell-Papanikolas] \label{thm:ABP}
Let $\Phi \in \Mat_\ell(\overline K[t])$ be a matrix such that $\det \Phi=c(t-\theta)^s$ for some $c \in \overline K^\times$ and $s \in \mathbb Z^{\geq 0}$. Let $\psi \in \Mat_{\ell \times 1}(\mathcal E)$ be a vector  satisfying $\psi^{(-1)}=\Phi \psi$ and $\rho \in \Mat_{1 \times \ell}(\overline K)$ such that $\rho \psi(\theta)=0$. Then there exists a vector $P \in \Mat_{1 \times \ell}(\overline K[t])$ such that
	\[ P \psi=0 \quad \text{and} \quad P(\theta)=\rho. \]
\end{theorem}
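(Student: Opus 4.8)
Since the final statement is the Anderson--Brownawell--Papanikolas criterion (the argument itself is in \cite{ABP04}), I will only sketch how one would prove it; the proof plays the analytic constraint ``$\psi$ has entries in $\mathcal E$'' against the Frobenius functional equation $\psi^{(-1)}=\Phi\psi$ and the rigidity coming from $\det\Phi=c(t-\theta)^s$. First I would reformulate. Put $\mathcal K=\{P\in\Mat_{1\times\ell}(\overline K[t]) : P\psi=0\}$, which is a submodule of the free $\overline K[t]$-module $\Mat_{1\times\ell}(\overline K[t])$ and hence itself free, and set $V=\mathrm{span}_{\overline K}\{P(\theta) : P\in\mathcal K\}$ and $W=\{v\in\Mat_{1\times\ell}(\overline K) : v\psi(\theta)=0\}$. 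Since $P\psi=0$ forces $P(\theta)\psi(\theta)=0$, we have $V\subseteq W$, and the theorem is exactly the reverse inclusion $W\subseteq V$ (applied to $v=\rho$, using that $\mathcal K$ is a $\overline K$-vector space). I would also record the rank of $\mathcal K$ over $\overline K[t]$, so that $\dim_{\overline K}(\mathcal K\cap\{\deg<N\})$ has a known leading term in $N$; the argument ends with a comparison of dimensions.

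The engine is an auxiliary construction of Siegel/pigeonhole type. For $N\to\infty$, using linear algebra over $\overline K$ --- which, carrying a product formula, simultaneously supplies height bounds for the solutions --- one produces a nonzero $P_N\in\Mat_{1\times\ell}(\overline K[t])$ with $\deg P_N<N$ and normalized coefficients such that the entire function $g_N:=P_N\psi\in\mathcal E$ vanishes to high order at $t=\theta$; balancing the number of unknowns ($\sim\ell N$) against the number of imposed vanishing conditions lets one demand order of vanishing of size roughly $(\ell-1)N$. Next one spreads the zeros: twisting a factorization $g_N=(t-\theta)^{e}h$ by $(-1)$ and substituting $\psi^{(-1)}=\Phi\psi$ --- together with $\mathrm{adj}(\Phi)$, whose denominator is exactly $(t-\theta)^{s}$ by the hypothesis on $\det\Phi$ --- produces from $P_N$ polynomial vectors of controlled degree whose products with $\psi$ vanish to comparable order at $\theta^{1/q},\theta^{1/q^{2}},\dots$, all of which lie in the fixed disk $\{|t|_\infty\le|\theta|_\infty\}$. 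Stacking these gives a single $\mathcal E$-function with many zeros, counted with multiplicity, in a bounded disk.

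Then comes the Liouville step: a growth estimate for elements of $\mathcal E$ subject to the difference equation $\psi^{(-1)}=\Phi\psi$ controls $\|\psi\|$ on that disk, and the inequality ``number of zeros in the disk $>$ degree $+$ growth contribution'' forces the stacked function to be identically zero; unwinding the twists shows $P_N\psi=0$, i.e.\ $P_N\in\mathcal K$. Finally, since the $P_N$ span a $\overline K$-subspace of $\mathcal K\cap\{\deg<N\}$ of dimension growing like $(\ell-1)N$, comparison with the leading term recorded in the first step pins down the rank of $\mathcal K$, and a short linear-algebra argument relating $P_N(\theta)$ to the vanishing of $g_N$ at $\theta$ upgrades this to $W\subseteq V$. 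Taking $v=\rho$ then furnishes $P\in\Mat_{1\times\ell}(\overline K[t])$ with $P\psi=0$ and $P(\theta)=\rho$.

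The step I expect to be the main obstacle is the calibration tying the last three together: one must choose the order of vanishing, the number of Frobenius twists, and the degree bound on $P_N$ so that the Siegel count, the degree inflation caused by twisting by $\Phi$ and $\mathrm{adj}(\Phi)$, and the growth/zero-counting estimate for $\mathcal E$-functions are mutually compatible --- and, crucially, so that the polynomial relations obtained really specialize at $t=\theta$ onto all of $W$ rather than merely exist. Obtaining a sufficiently sharp Liouville-type bound for $\mathcal E$ (exploiting the functional equation itself to tame the growth of $\psi$) is the technical heart of the whole argument.
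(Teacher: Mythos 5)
The paper does not prove this statement; it quotes it verbatim as \cite[Theorem~3.1.1]{ABP04} and uses it as a black box, so there is no internal proof to compare your sketch against.

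With that understood, your outline is a faithful high-level paraphrase of the argument in the cited source: a Siegel--Thue type auxiliary construction of vectors $P_N$ with $P_N\psi$ vanishing to high order at $t=\theta$, propagation of those zeros to $\theta^{1/q},\theta^{1/q^2},\ldots$ via the Frobenius twist together with $\mathrm{adj}(\Phi)$ (the hypothesis $\det\Phi=c(t-\theta)^s$ is exactly what keeps the resulting denominators tame), a Schwarz--Jensen zero-counting inequality on a fixed disk that forces the stacked $\mathcal{E}$-function to vanish identically, and a final rank/evaluation argument promoting the polynomial relations thus produced to surjectivity of evaluation-at-$\theta$ onto $W$. You also correctly single out the quantitative calibration of these estimates --- vanishing order, degree bounds, number of twists --- as the technical heart; this is precisely where the bulk of the work in \cite{ABP04} lies and cannot be waved through. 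So your writeup should be read as an accurate sketch consistent with the cited proof, but it is a sketch, not a proof, and would not substitute for the citation.
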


\subsection{Some constructions of dual $t$-motives} \label{subsec:dual motives}

\subsubsection{General case}

We briefly review some constructions of dual $t$-motives introduced in \cite{CPY19} (see also \cite{Cha14,CH21,Har21}).  Let $\fs=(s_1,\ldots,s_r) \in \mathbb N^r$ be a tuple of positive integers and $\fQ=(Q_1,\dots,Q_r) \in \overline K[t]^r$ satisfying the condition
\begin{equation} \label{eq: condition for Q}
(\lVert Q_1 \rVert_\infty / |\theta|_\infty^{\frac{qs_1}{q-1}})^{q^{i_1}} \ldots (\lVert Q_r \rVert_\infty / |\theta|_\infty^{\frac{qs_r}{q-1}})^{q^{i_r}} \to 0
\end{equation}
as $0  \leq i_r < \dots < i_1$ and $ i_1 \to \infty$.

We consider the dual $t$-motives $\mathcal M_{\fs,\fQ}$ and $\mathcal M_{\fs,\fQ}'$ attached to $(\fs,\fQ)$ given by the matrices
\begin{align*}
\Phi_{\fs,\fQ} &=
\begin{pmatrix}
(t-\theta)^{s_1+\dots+s_r} & 0 & 0 & \dots & 0 \\
Q_1^{(-1)} (t-\theta)^{s_1+\dots+s_r} & (t-\theta)^{s_2+\dots+s_r} & 0 & \dots & 0 \\
0 & Q_2^{(-1)} (t-\theta)^{s_2+\dots+s_r} & \ddots & & \vdots \\
\vdots & & \ddots & (t-\theta)^{s_r} & 0 \\
0 & \dots & 0 & Q_r^{(-1)} (t-\theta)^{s_r} & 1
\end{pmatrix} \\
&\in \Mat_{r+1}(\overline K[t]),
\end{align*}
and $\Phi'_{\fs,\fQ} \in \Mat_r(\overline K[t])$ is the upper left $r \times r$ sub-matrix of $\Phi_{\fs,\fQ}$.

Throughout this paper, we work with the Carlitz period $\widetilde \pi$ which is a fundamental period of the Carlitz module (see \cite{Gos96, Tha04}). We fix a choice of $(q-1)$st root of $(-\theta)$ and set
	\[ \Omega(t):=(-\theta)^{-q/(q-1)} \prod_{i \geq 1} \left(1-\frac{t}{\theta^{q^i}} \right) \in \bT^\times \]
so that
	\[ \Omega^{(-1)}=(t-\theta)\Omega \quad \text{ and } \quad \frac{1}{\Omega(\theta)}=\widetilde \pi. \]
Given $(\fs,\fQ)$ as above, Chang introduced the following series (see \cite[Lemma 5.3.1]{Cha14} and also \cite[Eq. (2.3.2)]{CPY19})
\begin{align} \label{eq: series L}
\frakL(\fs;\fQ)=\frakL(s_1,\dots,s_r;Q_1,\dots,Q_r):=\sum_{i_1 > \dots > i_r \geq 0} (\Omega^{s_r} Q_r)^{(i_r)} \dots (\Omega^{s_1} Q_1)^{(i_1)}.
\end{align}

It is proved that $\frakL(\fs,\fQ) \in \mathcal E$ (see \cite[Lemma 5.3.1]{Cha14}). Here we recall that $\mathcal E$ denotes the ring of series $\sum_{n \geq 0} a_nt^n \in \overline K[[t]]$ such that $\lim_{n \to +\infty} \sqrt[n]{|a_n|_\infty}=0$ and $[K_\infty(a_0,a_1,\ldots):K_\infty]<\infty$. In the sequel, we will use the following crucial property of this series (see \cite[Lemma 5.3.5]{Cha14} and \cite[Proposition 2.3.3]{CPY19}): for all $j \in \mathbb Z^{\geq 0}$, we have
\begin{equation} \label{eq: power seriesChang}
\frakL(\fs;\fQ) \left(\theta^{q^j} \right)=\left(\frakL(\fs;\fQ)(\theta)\right)^{q^j}.
\end{equation}

Then the matrix given by	
\begin{align*}
\Psi_{\fs,\fQ} &=
\begin{pmatrix}
\Omega^{s_1+\dots+s_r} & 0 & 0 & \dots & 0 \\
\frakL(s_1;Q_1) \Omega^{s_2+\dots+s_r} & \Omega^{s_2+\dots+s_r} & 0 & \dots & 0 \\
\vdots & \frakL(s_2;Q_2) \Omega^{s_3+\dots+s_r} & \ddots & & \vdots \\
\vdots & & \ddots & \ddots & \vdots \\
\frakL(s_1,\dots,s_{r-1};Q_1,\dots,Q_{r-1}) \Omega^{s_r}  & \frakL(s_2,\dots,s_{r-1};Q_2,\dots,Q_{r-1}) \Omega^{s_r} & \dots & \Omega^{s_r}& 0 \\
\frakL(s_1,\dots,s_r;Q_1,\dots,Q_r)  & \frakL(s_2,\dots,s_r;Q_2,\dots,Q_r) & \dots & \frakL(s_r;Q_r) & 1
\end{pmatrix} \\
&\in \text{GL}_{r+1}(\bT)
\end{align*}
satisfies
	\[ \Psi_{\fs,\fQ}^{(-1)}=\Phi_{\fs,\fQ} \Psi_{\fs,\fQ}. \]
Thus $\Psi_{\fs,\fQ}$ is a rigid analytic trivialization associated to the dual $t$-motive $\mathcal M_{\fs,\fQ}$.

We also denote by $\Psi_{\fs,\fQ}'$ the upper $r \times r$ sub-matrix of $\Psi_{\fs,\fQ}$. It is clear that $\Psi_\fs'$ is a rigid analytic trivialization associated to the dual $t$-motive $\mathcal M_{\fs,\fQ}'$.

Further, combined with Eq. \eqref{eq: power seriesChang}, the above construction of dual $t$-motives implies that $\widetilde \pi^w \frak L(\fs;\fQ)(\theta)$ where $w=s_1+\dots+s_r$ has the MZ (multizeta) property in the sense of \cite[Definition 3.4.1]{Cha14}. By \cite[Proposition 4.3.1]{Cha14}, we get

\begin{proposition} \label{prop: MZ property}
Let $(\fs_i;\fQ_i)$ as before for $1 \leq i \leq m$. We suppose that all the tuples of positive integers $\fs_i$ have the same weight, say $w$. Then the following assertions are equivalent:
\begin{itemize}
\item[i)] $\frak L(\fs_1;\fQ_1)(\theta),\dots,\frak L(\fs_m;\fQ_m)(\theta)$ are $K$-linearly independent.

\item[ii)] $\frak L(\fs_1;\fQ_1)(\theta),\dots,\frak L(\fs_m;\fQ_m)(\theta)$ are $\overline K$-linearly independent.
\end{itemize}
\end{proposition}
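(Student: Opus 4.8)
The plan is to observe that the content of this proposition is essentially already contained in Chang's framework, so the argument should amount to verifying that the hypotheses of \cite[Proposition 4.3.1]{Cha14} are met and then quoting it. Concretely, the implication ii) $\Rightarrow$ i) is trivial since $K \subseteq \overline K$, so all the work lies in i) $\Rightarrow$ ii), i.e.\ upgrading $K$-linear independence to $\overline K$-linear independence. The point made just before the statement is that, because of the twisting compatibility \eqref{eq: power seriesChang}, each value $\widetilde\pi^{\,w}\frakL(\fs_i;\fQ_i)(\theta)$ has the MZ property in the sense of \cite[Definition 3.4.1]{Cha14}; moreover they all share the \emph{same} weight $w$ by hypothesis. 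This is exactly the setup in which Chang's ``MZ property forces $K$-linear independence to coincide with $\overline K$-linear independence'' result applies.

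First I would recall the construction: for each $i$, form the dual $t$-motive $\mathcal M_{\fs_i,\fQ_i}$ with its matrices $\Phi_{\fs_i,\fQ_i}$ and rigid analytic trivialization $\Psi_{\fs_i,\fQ_i}$ as displayed above, whose bottom-left entry specializes at $t=\theta$ to (a scalar multiple of) $\frakL(\fs_i;\fQ_i)(\theta)$ after multiplying by the appropriate power of $\widetilde\pi$. Then I would take the direct sum (or fibre product over $\mathbf 1$) of these motives to obtain a single object whose period matrix records all the values $\frakL(\fs_1;\fQ_1)(\theta),\dots,\frakL(\fs_m;\fQ_m)(\theta)$ simultaneously, together with $\widetilde\pi^{\,w}$. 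The entire-ness of $\frakL(\fs_i;\fQ_i)$, i.e.\ the fact that it lies in $\mathcal E$ (from \cite[Lemma 5.3.1]{Cha14}, which uses the growth condition \eqref{eq: condition for Q}), guarantees we are in the analytic setting required for the Anderson-Brownawell-Papanikolas machinery underlying \cite[Proposition 4.3.1]{Cha14}.

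Next I would invoke \cite[Proposition 4.3.1]{Cha14} directly: it states that for a family of quantities each having the MZ property and all of the same weight, $\overline K$-linear dependence among them implies $K$-linear dependence (indeed, one can even clear the $\overline K$-coefficients into $K$-coefficients by a Frobenius-descent argument using the relation $f(\theta^{q^j}) = f(\theta)^{q^j}$). Contrapositively, $K$-linear independence implies $\overline K$-linear independence, which is precisely i) $\Rightarrow$ ii). Since the two values $\frakL(\fs_i;\fQ_i)(\theta)$ and $\widetilde\pi^{\,w}\frakL(\fs_i;\fQ_i)(\theta)$ differ only by the fixed factor $\widetilde\pi^{\,w} \in \C_\infty^\times$, linear independence of one family over a field containing $\widetilde\pi^{\,-w}$ is equivalent to that of the other; but in fact the MZ property is stated for the $\widetilde\pi$-normalized values, so one simply applies the cited proposition to $\widetilde\pi^{\,w}\frakL(\fs_i;\fQ_i)(\theta)$ and then divides through by $\widetilde\pi^{\,w}$.

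The main obstacle, such as it is, is bookkeeping rather than a genuine mathematical difficulty: one must check carefully that the hypothesis ``same weight $w$'' is used correctly (it is what makes $\widetilde\pi^{\,w}$ a common normalizing factor, so that the $K$-span of the raw values and the $K$-span of the normalized values coincide — this is where the single-weight restriction is essential and where the statement would genuinely fail for mixed weights), and that the tuples $(\fs_i,\fQ_i)$ all satisfy the convergence condition \eqref{eq: condition for Q} so that the series $\frakL(\fs_i;\fQ_i)$ are entire and the dual $t$-motive construction is valid. Once these are in place, the proposition is an immediate consequence of \cite[Proposition 4.3.1]{Cha14}.
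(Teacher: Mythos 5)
Your proposal is correct and takes the same route as the paper: the paper simply notes, immediately before the statement, that each $\widetilde\pi^{w}\frakL(\fs_i;\fQ_i)(\theta)$ has the MZ property (via \eqref{eq: power seriesChang}) and then quotes \cite[Proposition 4.3.1]{Cha14}, with the $\widetilde\pi^{w}$-normalization handled implicitly exactly as you describe. One small simplification to your write-up: scaling all the values by the fixed nonzero constant $\widetilde\pi^{w}$ preserves linear (in)dependence over \emph{any} coefficient field, so the aside about the field containing $\widetilde\pi^{-w}$ is unnecessary.
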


We end this section by mentioning that Chang \cite{Cha14} also proved analogue of Goncharov's conjecture in this setting.

\subsubsection{Dual $t$-motives connected to MZV's and AMZV's} \label{sec:MZV motives} 

Following Anderson and Thakur \cite{AT09} we introduce dual $t$-motives connected to MZV's and AMZV's. We briefly review Anderson-Thakur polynomials introduced in \cite{AT90}. For $k \geq 0$, we set $[k]:=\theta^{q^k}-\theta$ and $D_k:= \prod^k_{\ell=1} [\ell]^{q^{k-\ell}}$. For $n \in \N$ we write $n-1 = \sum_{j \geq 0} n_j q^j$ with $0 \leq n_j \leq q-1$ and define
	\[ \Gamma_n:=\prod_{j \geq 0} D_j^{n_j}. \]

We set $\gamma_0(t) :=1$ and $\gamma_j(t) :=\prod^j_{\ell=1} (\theta^{q^j}-t^{q^\ell})$ for $j\geq 1$. Then Anderson-Thakur polynomials $\alpha_n(t) \in A[t]$ are given by the generating series
	\[ \sum_{n \geq 1} \frac{\alpha_n(t)}{\Gamma_n} x^n:=x\left( 1-\sum_{j \geq 0} \frac{\gamma_j(t)}{D_j} x^{q^j} \right)^{-1}. \]
Finally, we define $H_n(t)$ by switching $\theta$ and $t$
\begin{align*}
H_n(t)=\alpha_n(t) \big|_{t=\theta, \, \theta=t}.
\end{align*}
By \cite[Eq. (3.7.3)]{AT90} we get
\begin{equation} \label{eq: deg theta Hn}
\deg_\theta H_n \leq \frac{(n-1) q}{q-1}<\frac{nq}{q-1}.
\end{equation}

Let $\fs=(s_1,\ldots,s_r) \in \mathbb N^r$ be a tuple and $\mathfrak \epsilon=(\epsilon_1,\ldots,\epsilon_r) \in (\F_q^\times)^r$. Recall that $\overline{\mathbb F}_q$ denotes the algebraic closure of $\mathbb F_q$ in $\overline{K}$. For all $1 \leq i \leq r$ we fix a fixed $(q-1)$-th root $\gamma_i \in \overline{\mathbb F}_q$ of $\epsilon_i \in \mathbb F_q^\times$ and set $Q_{s_i,\epsilon_i}:=\gamma_i H_{s_i}$. Then we set $\fQ_{\fs,\fe}:=(Q_{s_1,\epsilon_1},\dots,Q_{s_r,\epsilon_r})$ and put $\frak L(\fs;\fe):=\frak L(\fs;\fQ_{\fs,\fe})$. By \eqref{eq: deg theta Hn} we know that $\lVert H_n \rVert_\infty < |\theta|_\infty ^{\tfrac{n q}{q-1}}$ for all $n \in \N$, thus $\fQ_{\fs,\fe}$ satisfies Condition \eqref{eq: condition for Q}. Thus we can define the dual $t$-motives $\mathcal M_{\fs,\fe}=\mathcal M_{\fs,\fQ_{\fs,\fe}}$ and $\mathcal M_{\fs,\fe}'=\mathcal M_{\fs,\fQ_{\fs,\fe}}'$ attached to $\fs$ whose matrices and rigid analytic trivializations will be denoted by $(\Phi_{\fs,\fe},\Psi_{\fs,\fe})$ and $(\Phi_{\fs,\fe}',\Psi_{\fs,\fe}')$, respectively. These dual $t$-motives are connected to MZV's and AMZV's by the following result (see \cite[Proposition 2.12]{CH21} for more details):
\begin{equation} \label{eq: MZV}
\frak L(\fs;\fe)(\theta)=\frac{\gamma_1 \dots \gamma_r \Gamma_{s_1} \dots \Gamma_{s_r} \zeta_A \begin{pmatrix}
\fe \\ \fs
\end{pmatrix}}{\widetilde \pi^{w(\fs)}}.
\end{equation}
By a result of Thakur \cite{Tha09}, one can show (see \cite[Theorem 2.1]{Har21}) that $\zeta_A \begin{pmatrix}
\fe \\ \fs
\end{pmatrix} \neq 0$. Thus $\frak L(\fs;\fe)(\theta) \neq 0$.

\subsubsection{Dual $t$-motives connected to CMPL's and ACMPL's} \label{sec:CMPL motives} 

We keep the notation as above. Let $\fs=(s_1,\ldots,s_r) \in \mathbb N^r$ be a tuple and $\fe=(\epsilon_1,\ldots,\epsilon_r) \in (\F_q^\times)^r$.  For all $1 \leq i \leq r$ we have a fixed $(q-1)$-th root $\gamma_i$ of $\epsilon_i \in \mathbb F_q^\times$ and set $Q_{s_i,\epsilon_i}':=\gamma_i$. Then we set $\fQ_{\fs,\fe}':=(Q_{s_1,\epsilon_1}',\dots,Q_{s_r,\epsilon_r}')$ and put
\begin{align} \label{eq: series Li}
\frakLi(\fs;\fe)=\frak L(\fs;\fQ_{\fs,\fe}')=\sum_{i_1 > \dots > i_r \geq 0} (\gamma_{i_r} \Omega^{s_r})^{(i_r)} \dots (\gamma_{i_1} \Omega^{s_1})^{(i_1)}.
\end{align}

Thus we can define the dual $t$-motives $\mathcal N_{\fs,\fe}=\mathcal N_{\fs,\fQ_{\fs,\fe}'}$ and $\mathcal N_{\fs,\fe}'=\mathcal N_{\fs,\fQ_{\fs,\fe}'}'$ attached to $(\fs,\fe)$. These dual $t$-motives are connected to CMPL's and ACMPL's by the following result (see \cite[Lemma 5.3.5]{Cha14} and \cite[Prop. 2.3.3]{CPY19}):
\begin{equation} \label{eq: ACMPL}
\frakLi(\fs;\fe)(\theta)=\frac{\gamma_1 \dots \gamma_r \Li \begin{pmatrix}
\fe \\ \fs
\end{pmatrix}}{\widetilde \pi^{w(\fs)}}.
\end{equation}

\subsection{A result for linear independence}

\subsubsection{Setup} 

Let $w \in \N$ be a positive integer. Let $\{(\fs_i;\fQ_i)\}_{1 \leq i \leq n}$ be a collection of pairs satisfying Condition \eqref{eq: condition for Q} such that $\fs_i$ always has weight $w$. We write $\fs_i=(s_{i1},\dots,s_{i \ell_i}) \in \mathbb N^{\ell_i}$ and $\fQ_i=(Q_{i1},\dots,Q_{i\ell_i}) \in (\Fq^\times)^{\ell_i}$ so that $s_{i1}+\dots+s_{i \ell_i}=w$. We introduce the set of tuples
	\[ I(\fs_i;\fQ_i):=\{\emptyset,(s_{i1};Q_{i1}),\dots,(s_{i1},\dots,s_{i (\ell_i-1)};Q_{i1},\dots,Q_{i(\ell_i-1)})\},\]
and set
	\[ I:=\cup_i I(\fs_i;\fQ_i). \]

\subsubsection{Linear independence} 

We are now ready to state the main result of this section.

\begin{theorem} \label{theorem: linear independence}
We keep the above notation. We suppose further that $\{(\fs_i;\fQ_i)\}_{1 \leq i \leq n}$ satisfies the following conditions:
\begin{itemize}
\item[(LW)] For any weight $w'<w$, the values $\frak L(\frak t;\fQ)(\theta)$ with $(\frak t;\fQ) \in I$ and $w(\frak t)=w'$ are all $K$-linearly independent. In particular, $\frak L(\frak t;\fQ)(\theta)$ is always nonzero.

\item[(LD)] There exist $a \in A$ and $a_i \in A$ for $1 \leq i \leq n$ which are not all zero such that
\begin{equation*}
a+\sum_{i=1}^n a_i \frak L(\fs_i;\fQ_i)(\theta)=0.
\end{equation*}
\end{itemize}

For all $(\frak t;\fQ) \in I$, we set the following series in $t$
\begin{equation} \label{eq:f}
f_{\mathfrak t;\fQ}:= \sum_i a_i(t) \mathfrak L(s_{i(k+1)},\dots,s_{i \ell_i};Q_{i(k+1)},\dots,Q_{i \ell_i}),
\end{equation}
where the sum runs through the set of indices $i$ such that $(\mathfrak t;\fQ)=(s_{i1},\dots,s_{i k};Q_{i1},\dots,Q_{i k})$ for some $0 \leq k \leq \ell_i-1$. 

Then for all $(\mathfrak t;\fQ) \in I$, $f_{\mathfrak t;\fQ}(\theta)$ belongs to $K$.
\end{theorem}

\begin{remark}
1) Here we note that  LW stands for Lower Weights and LD for Linear Dependence.

2) With the above notation we have
\[ f_{\emptyset}= \sum_i a_i(t) \mathfrak L(\fs_i;\fQ_i). \]

2) In fact, we improve \cite[Theorem B]{ND21} in two directions. First, we remove the restriction to Anderson-Thakur polynomials and tuples $\fs_i$. Second, and more importantly, we allow an additional term $a$, which is crucial in the sequel. More precisely, in the case of MZV's, while \cite[Theorem B]{ND21} investigates linear relations between MZV's of weight $w$, Theorem \ref{theorem: linear independence}  investigates linear relations between MZV's of weight $w$ and suitable powers $\widetilde \pi^w$ of the Carlitz period.
\end{remark}

\begin{proof}
The proof will be divided into two steps.

\medskip
\noindent {\bf Step 1.} We first construct a dual $t$-motive to which we will apply the Anderson-Brownawell-Papanikolas criterion. We recall $a_i(t):=a_i \rvert_{\theta=t} \in \Fq[t]$.

For each pair $(\fs_i;\fQ_i)$ we have attached to it a matrix $\Phi_{\fs_i,\fQ_i}$. For $\fs_i=(s_{i1},\dots,s_{i \ell_i}) \in \mathbb N^{\ell_i}$ and $\fQ_i=(Q_{i1},\dots,Q_{i\ell_i}) \in (\Fq^\times)^{\ell_i}$ we recall
	\[ I(\fs_i;\fQ_i)=\{\emptyset,(s_{i1};Q_{i1}),\dots,(s_{i1},\dots,s_{i (\ell_i-1)};Q_{i1},\dots,Q_{(\ell_i-1)})\}, \]
and $I:=\cup_i I(\fs_i;\fQ_i)$.

We now construct a new matrix $\Phi'$ indexed by elements of $I$, say 
\[
\Phi'=\left(\Phi'_{(\mathfrak t;\fQ),(\mathfrak t';\fQ')}\right)_{(\mathfrak t;\fQ),(\mathfrak t';\fQ') \in I} \in \Mat_{|I|}(\overline K[t]). 
\]
For the row which corresponds to the empty pair $\emptyset$ we put
\begin{align*}
\Phi'_{\emptyset,(\mathfrak t';\fQ')}=
\begin{cases}
(t-\theta)^w & \text{if } (\mathfrak t';\fQ')=\emptyset, \\
0 & \text{otherwise}.
\end{cases}
\end{align*}
For the row indexed by $(\mathfrak t;\fQ)=(s_{i1},\dots,s_{i j};Q_{i1},\dots,Q_{ij})$ for some $i$ and $1 \leq j \leq \ell_i-1$ we put
\begin{align*}
\Phi'_{(\mathfrak t;\fQ),(\mathfrak t';\fQ')}=
\begin{cases}
(t-\theta)^{w-w(\mathfrak t')} & \text{if } (\mathfrak t';\fQ')=(\mathfrak t;\fQ), \\
Q_{ij}^{(-1)} (t-\theta)^{w-w(\mathfrak t')} & \text{if } (\mathfrak t';\fQ')=(s_{i1},\dots,s_{i (j-1)};Q_{i1},\dots,Q_{i (j-1)}), \\
0 & \text{otherwise}.
\end{cases}
\end{align*}
Note that $\Phi_{\fs_i,\fQ_i}'=\left(\Phi'_{(\mathfrak t;\fQ),(\mathfrak t';\fQ')}\right)_{(\mathfrak t;\fQ),(\mathfrak t';\fQ') \in I(\fs_i;\fQ_i)}$ for all $i$.

We define $\Phi \in \Mat_{|I|+1}(\overline K[t])$ by
\begin{align*}
\Phi=\begin{pmatrix}
\Phi' & 0  \\
\bv & 1
\end{pmatrix} \in \Mat_{|I|+1}(\overline K[t]), \quad \bv=(v_{\mathfrak t,\fQ})_{(\mathfrak t;\fQ) \in I} \in \Mat_{1 \times|I|}(\overline K[t]),
\end{align*}
where
\[
v_{\mathfrak t;\fQ}=\sum_i a_i(t) Q_{i \ell_i}^{(-1)} (t-\theta)^{w-w(\mathfrak t)} .
\]
Here the sum runs through the set of indices $i$ such that $(\mathfrak t;\fQ)=(s_{i1},\dots,s_{i (\ell_i-1)};Q_{i1},\dots,Q_{i (\ell_i-1)})$ and the empty sum is defined to be zero.

We now introduce a rigid analytic trivialization matrix $\Psi$ for $\Phi$. We define $\Psi'=\left(\Psi'_{(\mathfrak t;\fQ),(\mathfrak t';\fQ')}\right)_{(\mathfrak t;\fQ),(\mathfrak t';\fQ') \in I} \in  \text{GL}_{|I|}(\bT)$ as follows. For the row which corresponds to the empty pair $\emptyset$ we define
\begin{align*}
\Psi'_{\emptyset,(\mathfrak t';\fQ')}=
\begin{cases}
\Omega^w & \text{if } (\mathfrak t';\fQ')=\emptyset, \\
0 & \text{otherwise}.
\end{cases}
\end{align*}
For the row indexed by $(\mathfrak t;\fQ)=(s_{i1},\dots,s_{i j};Q_{i1},\dots,Q_{i j})$ for some $i$ and $1 \leq j \leq \ell_i-1$ we put
\begin{align*}
&\Psi'_{(\mathfrak t;\fQ),(\mathfrak t';\fQ')}= \\
&
\begin{cases}
\mathfrak L(\mathfrak t;\fQ) \Omega^{w-w(\mathfrak t)} & \text{if } (\mathfrak t';\fQ')=\emptyset, \\
\mathfrak L(s_{i(k+1)},\dots,s_{ij};Q_{i(k+1)},\dots,Q_{ij}) \Omega^{w-w(\mathfrak t)} & \text{if } (\mathfrak t';\fQ')=(s_{i1},\dots,s_{i k};Q_{i1},\dots,Q_{i k}) \text{ for some } 1 \leq k \leq j, \\
0 & \text{otherwise}.
\end{cases}
\end{align*}
Note that $\Psi_{\fs_i,\fQ_i}'=\left(\Psi'_{(\mathfrak t;\fQ),(\mathfrak t';\fQ')}\right)_{(\mathfrak t;\fQ),(\mathfrak t';\fQ') \in I(\fs_i;\fQ_i)}$ for all $i$.

We define $\Psi \in \text{GL}_{|I|+1}(\bT)$ by
\begin{align*}
\Psi=\begin{pmatrix}
\Psi' & 0  \\
\bff & 1
\end{pmatrix} \in \text{GL}_{|I|+1}(\bT), \quad \bff=(f_{\mathfrak t;\fQ})_{\mathfrak t \in I} \in \Mat_{1 \times|I|}(\bT).
\end{align*}
Here we recall (see Eq. \eqref{eq:f})
\begin{equation*}
f_{\mathfrak t;\fQ}= \sum_i a_i(t) \mathfrak L(s_{i(k+1)},\dots,s_{i \ell_i};Q_{i(k+1)},\dots,Q_{i \ell_i})
\end{equation*}
where the sum runs through the set of indices $i$ such that $(\mathfrak t;\fQ)=(s_{i1},\dots,s_{i k};Q_{i1},\dots,Q_{i k})$ for some $0 \leq k \leq \ell_i-1$. In particular, $f_{\emptyset}= \sum_i a_i(t) \mathfrak L(\fs_i;\fQ_i)$.

By construction and by \S \ref{subsec:dual motives}, we get $\Psi^{(-1)}=\Phi \Psi$, that means $\Psi$ is a rigid analytic trivialization for $\Phi$.

\medskip
\noindent {\bf Step 2.} Next we apply the Anderson-Brownawell-Papanikolas criterion (see Theorem \ref{thm:ABP}) to prove Theorem \ref{theorem: linear independence}.

In fact, we define
\begin{align*}
\widetilde \Phi=\begin{pmatrix}
1 & 0  \\
0 & \Phi
\end{pmatrix} \in  \Mat_{|I|+2}(\overline K[t])
\end{align*}
and consider the vector constructed from the first column vector of $\Psi$
\begin{align*}
\widetilde \psi=\begin{pmatrix}
1 \\
\Psi_{(\mathfrak t;\fQ),\emptyset}' \\
f_\emptyset
\end{pmatrix}_{(\mathfrak t;\fQ) \in I}.
\end{align*}
Then we have $\widetilde \psi\twistinv=\widetilde \Phi \widetilde \psi$.

We also observe that for all $(\mathfrak t;\fQ) \in I$ we have $\Psi_{(\mathfrak t;\fQ),\emptyset}'=\mathfrak L(\mathfrak t;\fQ) \Omega^{w-w(\mathfrak t)}$. Further,
\begin{align*}
a+f_\emptyset(\theta)=a+\sum_i a_i \mathfrak L(\fs_i;\fQ_i)(\theta)=0.
\end{align*}
By Theorem \ref{thm:ABP} with $\rho=(a,0,\dots,0,1)$ we deduce that there exists $\bh=(g_0,g_{\mathfrak t,\fQ},g) \in \Mat_{1 \times (|I|+2)}(\overline K[t])$ such that $\bh \psi=0$, and that
$g_{\mathfrak t,\fQ}(\theta)=0$ for $(\mathfrak t,\fQ) \in I$, $g_0(\theta)=a$ and $g(\theta)=1 \neq 0$. If we put $ \bg:=(1/g)\bh \in \Mat_{1 \times (|I|+2)}(\overline K(t))$, then all the entries of $ \bg$ are regular at $t=\theta$.

Now we have
\begin{align} \label{eq: reduction}
( \bg- \bg\invtwist \widetilde \Phi) \widetilde \psi= \bg \widetilde \psi-( \bg  \widetilde \psi)\invtwist=0.
\end{align}
We write $ \bg- \bg\invtwist \widetilde \Phi=(B_0,B_{\mathfrak t},0)_{\mathfrak t \in I}$. We claim that $B_0=0$ and $B_{\mathfrak t,\fQ}=0$ for all $(\mathfrak t;\fQ) \in I$. In fact, expanding \eqref{eq: reduction} we obtain
\begin{equation} \label{eq: B}
B_0+\sum_{\mathfrak t \in I} B_{\mathfrak t,\fQ} \mathfrak L(\mathfrak t;\fQ) \Omega^{w-w(\mathfrak t)}=0.
\end{equation}

By \eqref{eq: power seriesChang} we see that for $(\mathfrak t;\fQ) \in I$ and $j \in \N$,
\begin{equation*}
\frakL(\mathfrak t;\fQ)(\theta^{q^j})=(\frakL(\mathfrak t;\fQ)(\theta))^{q^j}
\end{equation*}
which is nonzero by Condition $(LW)$.

First, as the function $\Omega$ has a simple zero at $t=\theta^{q^k}$ for $k \in \N$, specializing \eqref{eq: B} at $t=\theta^{q^j}$ yields $B_0(\theta^{q^j})=0$ for $j \geq 1$. Since $B_0$ belongs to $\overline K(t)$, it follows that $B_0=0$.

Next, we put $w_0:=\max_{(\mathfrak t;\fQ) \in I} w(\mathfrak t)$ and denote by $I(w_0)$ the set of $(\mathfrak t;\fQ) \in I$ such that $w(\mathfrak t)=w_0$. Then dividing \eqref{eq: B} by $\Omega^{w-w_0}$ yields
\begin{equation} \label{eq: B1}
\sum_{(\mathfrak t;\fQ) \in I} B_{\mathfrak t,\fQ} \mathfrak L(\mathfrak t;\fQ) \Omega^{w_0-w(\mathfrak t)}=\sum_{(\mathfrak t;\fQ) \in I(w_0)} B_{\mathfrak t,\fQ} \mathfrak L(\mathfrak t;\fQ)+\sum_{(\mathfrak t;\fQ) \in I \setminus I(w_0)} B_{\mathfrak t,\fQ} \mathfrak L(\mathfrak t;\fQ) \Omega^{w_0-w(\mathfrak t)}=0.
\end{equation}
Since each $B_{\mathfrak t,\fQ}$ belongs to $\overline K(t)$, they are defined at $t=\theta^{q^j}$ for $j \gg 1$. Note that the function $\Omega$ has a simple zero at $t=\theta^{q^k}$ for $k \in \N$. Specializing \eqref{eq: B1} at $t=\theta^{q^j}$ and using \eqref{eq: power seriesChang} yields
	\[ \sum_{(\mathfrak t;\fQ) \in I(w_0)} B_{\mathfrak t,\fQ}(\theta^{q^j}) (\mathfrak L(\mathfrak t;\fQ)(\theta))^{q^j}=0 \]
for $j \gg 1$.

We claim that $B_{\mathfrak t,\fQ}(\theta^{q^j})=0$ for $j \gg 1$ and for all $(\mathfrak t;\fQ) \in I(w_0)$. Otherwise, we get a non-trivial $\overline K$-linear relation between $\mathfrak L(\mathfrak t;\fQ)(\theta)$ with $(\frak t;\fQ) \in I$ of weight $w_0$. By Proposition \ref{prop: MZ property} we deduce a non-trivial $K$-linear relation between $\mathfrak L(\mathfrak t;\fQ)(\theta)$ with $(\frak t;\fQ) \in I(w_0)$, which contradicts with Condition $(LW)$.
Now we know that $B_{\mathfrak t,\fQ}(\theta^{q^j})=0$ for $j \gg 1$ and for all $(\mathfrak t;\fQ) \in I(w_0)$. Since each $B_{\mathfrak t,\fQ}$ belongs to $\overline K(t)$, it follows that
$B_{\mathfrak t,\fQ}=0$ for all $(\mathfrak t;\fQ) \in I(w_0)$.

Next, we put $w_1:=\max_{(\mathfrak t;\fQ) \in I \setminus I(w_0)} w(\mathfrak t)$ and denote by $I(w_1)$ the set of $(\mathfrak t;\fQ) \in I$ such that $w(\mathfrak t)=w_1$. Dividing \eqref{eq: B} by $\Omega^{w-w_1}$ and specializing at $t=\theta^{q^j}$ yields
	\[ \sum_{(\mathfrak t;\fQ) \in I(w_1)} B_{\mathfrak t,\fQ}(\theta^{q^j}) (\mathfrak L(\mathfrak t;\fQ)(\theta))^{q^j}=0 \]
for $j \gg 1$. Since $w_1<w$, by Proposition \ref{prop: MZ property} and Condition $(LW)$ again we deduce that $B_{\mathfrak t,\fQ}(\theta^{q^j})=0$ for $j \gg 1$ and for all $(\mathfrak t;\fQ) \in I(w_1)$. Since each $B_{\mathfrak t,\fQ}$ belongs to $\overline K(t)$, it follows that
$B_{\mathfrak t,\fQ}=0$ for all $(\mathfrak t;\fQ) \in I(w_1)$. Repeating the previous arguments we deduce that $B_{\mathfrak t,\fQ}=0$ for all $(\mathfrak t;\fQ) \in I$ as required.

We have proved that $ \bg- \bg\invtwist \widetilde \Phi=0$. Thus
\begin{align*}
\begin{pmatrix}
1 & 0 & 0 \\
0 & \text{Id} & 0 \\
g_0/g & (g_{\mathfrak t,\fQ}/g)_{(\mathfrak t;\fQ) \in I}  & 1
\end{pmatrix}^{(-1)}
\begin{pmatrix}
1 & 0  \\
0 & \Phi
\end{pmatrix}
= \begin{pmatrix}
1 & 0 & 0 \\
0 & \Phi' & 0 \\
0 & 0 & 1
\end{pmatrix}
\begin{pmatrix}
1 & 0 & 0 \\
0 & \text{Id} & 0 \\
g_0/g & (g_{\mathfrak t,\fQ}/g)_{(\mathfrak t;\fQ) \in I}  & 1
\end{pmatrix}.
\end{align*}
By \cite[Prop. 2.2.1]{CPY19} we see that the common denominator $b$ of $g_0/g$ and $g_{\mathfrak t,\fQ}/g$ for $(\mathfrak t,\fQ) \in I$ belongs to $\Fq[t] \setminus \{0\}$. If we put $\delta_0=bg_0/g$ and $\delta_{\mathfrak t,\fQ}=b g_{\mathfrak t,\fQ}/g$ for $(\mathfrak t,\fQ) \in I$ which belong to $\overline K[t]$ and $\delta:=(\delta_{\mathfrak t,\fQ})_{\mathfrak t \in I} \in \Mat_{1 \times |I|}(\overline K[t])$, then $\delta_0^{(-1)}=\delta_0$ and
\begin{align} \label{eq:equation for delta}
\begin{pmatrix}
\text{Id} & 0 \\
\delta & 1
\end{pmatrix}^{(-1)}
\begin{pmatrix}
\Phi' & 0  \\
b \bv & 1
\end{pmatrix}
= \begin{pmatrix}
 \Phi' & 0 \\
0 & 1
\end{pmatrix}
\begin{pmatrix}
\text{Id} & 0 \\
\delta & 1
\end{pmatrix}.
\end{align}

If we put $X:=\begin{pmatrix}
\text{Id} & 0 \\
\delta & 1
\end{pmatrix}
\begin{pmatrix}
\Psi' & 0  \\
b \bff & 1
\end{pmatrix}$, then $X^{(-1)}=\begin{pmatrix}
 \Phi' & 0 \\
0 & 1
\end{pmatrix} X$. By \cite[\S 4.1.6]{Pap08} there exist $\nu_{\mathfrak t,\fQ} \in \Fq(t)$ for $(\mathfrak t,\fQ) \in I$ such that if we set $\nu=(\nu_{\mathfrak t,\fQ})_{(\mathfrak t,\fQ) \in I} \in \Mat_{1 \times |I|}(\Fq(t))$,
\begin{align*}
X=\begin{pmatrix}
\Psi' & 0 \\
0 & 1
\end{pmatrix}
\begin{pmatrix}
\text{Id} & 0 \\
\nu & 1
\end{pmatrix}.
\end{align*}
Thus the equation $\begin{pmatrix}
\text{Id} & 0 \\
\delta & 1
\end{pmatrix}
\begin{pmatrix}
\Psi' & 0  \\
b \bff & 1
\end{pmatrix}=\begin{pmatrix}
\Psi' & 0 \\
0 & 1
\end{pmatrix}
\begin{pmatrix}
\text{Id} & 0 \\
\nu & 1
\end{pmatrix}$ implies
\begin{equation} \label{eq:nu}
\delta \Psi'+b \bff=\nu.
\end{equation}
The left-hand side belongs to $\bT$, so does the right-hand side. Thus $\nu=(\nu_{\mathfrak t,\fQ})_{(\mathfrak t,\fQ) \in I} \in \Mat_{1 \times |I|}(\Fq[t])$. For any $j \in \N$, by specializing \eqref{eq:nu} at $t=\theta^{q^j}$ and using \eqref{eq: power seriesChang} and the fact that $\Omega$ has a simple zero at $t=\theta^{q^j}$ we deduce that
	\[ \bff(\theta)=\nu(\theta)/b(\theta). \]
Thus for all $(\mathfrak t,\fQ) \in I$, $f_{\mathfrak t;\fQ}(\theta)$ given as in \eqref{eq:f} belongs to $K$. 
\end{proof}


\section{Linear relations between ACMPL's} \label{sec:transcendental part}

In this section we use freely the notation of \S \ref{sec:algebraic part} and  \S \ref{sec:CMPL motives}.

\subsection{Preliminaries} 

We begin this section by proving several auxiliary lemmas which will be useful in the sequel. We recall that $\overline{\mathbb F}_q$ denotes the algebraic closure of $\mathbb F_q$ in $\overline{K}$.

\begin{lemma} \label{lem: gamma_i}
Let $\epsilon_i \in \Fq^\times$ be different elements. We denote by $\gamma_i \in \overline{\F}_q$ a $(q-1)$-th root of $\epsilon_i$. Then $\gamma_i$ are all $\Fq$-linearly independent.
\end{lemma}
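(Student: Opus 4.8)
The plan is to observe that each $\gamma_i$ is an eigenvector of the $q$-power Frobenius endomorphism with eigenvalue $\epsilon_i$, and then to invoke the elementary fact that eigenvectors attached to pairwise distinct eigenvalues of a linear operator are linearly independent.

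First I would set up the Frobenius map. Regard $\overline{\F}_q$ as an $\F_q$-vector space and consider $\phi \colon \overline{\F}_q \to \overline{\F}_q$, $\phi(x) = x^q$. Since Frobenius is additive and $c^q = c$ for every $c \in \F_q$, the map $\phi$ is $\F_q$-linear. Next, from $\gamma_i^{q-1} = \epsilon_i$ and $\epsilon_i \neq 0$ we get $\gamma_i \neq 0$ and
\[
\phi(\gamma_i) = \gamma_i^{q} = \gamma_i \cdot \gamma_i^{q-1} = \epsilon_i \gamma_i,
\]
so $\gamma_i$ is a nonzero eigenvector of $\phi$ for the eigenvalue $\epsilon_i$.

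Finally, since the $\epsilon_i$ are pairwise distinct by hypothesis, the $\gamma_i$ are eigenvectors of the $\F_q$-linear operator $\phi$ for distinct eigenvalues and hence are $\F_q$-linearly independent. Concretely, if $\sum_i c_i \gamma_i = 0$ with $c_i \in \F_q$, then applying $\phi^k$ for $k = 0, 1, \dots, n-1$ (where $n$ is the number of $\gamma_i$'s) gives $\sum_i c_i \epsilon_i^{k} \gamma_i = 0$; the Vandermonde matrix $(\epsilon_i^{k})_{0 \le k \le n-1,\, 1 \le i \le n}$ is invertible over $\F_q$ because the $\epsilon_i$ are distinct, so $c_i \gamma_i = 0$ and therefore $c_i = 0$ for all $i$. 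There is no serious obstacle here; the only points requiring care are the $\F_q$-linearity of $\phi$ (which is precisely why the base field must be $\F_q$) and the remark that $\gamma_i \neq 0$.
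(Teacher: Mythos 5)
Your proof is correct, and it takes a genuinely different route from the paper's. The paper fixes a generator $\epsilon$ of the cyclic group $\Fq^\times$ together with a $(q-1)$-th root $\gamma$, proves that $X^{q-1}-\epsilon$ is irreducible over $\Fq$ (so that $1,\gamma,\dots,\gamma^{q-2}$ is an $\Fq$-basis of $\Fq(\gamma)$), and then handles the general collection by noting that any $(q-1)$-th root of $\epsilon^i$ is an $\Fq^\times$-multiple of $\gamma^i$, which does not affect $\Fq$-linear independence. You instead observe that the $q$-power Frobenius $\phi(x)=x^q$ is an $\Fq$-linear endomorphism of $\overline{\F}_q$ and that $\phi(\gamma_i)=\epsilon_i\gamma_i$, so each $\gamma_i$ is a nonzero eigenvector of $\phi$ with eigenvalue $\epsilon_i$; linear independence is then the standard fact about eigenvectors for distinct eigenvalues, which you also make concrete via the Vandermonde determinant. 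Your version is shorter, bypasses the reduction to a single primitive element and the polynomial-irreducibility argument, and isolates the Galois-theoretic content (Frobenius acting $\Fq$-linearly) cleanly; the paper's version is more elementary in that it only needs an explicit irreducibility computation, at the cost of the auxiliary reduction. Both are complete and correct.
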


\begin{proof}
We know that $\Fq^\times$ is cyclic as a multiplicative group. Let $\epsilon$ be a generating element of  $\Fq^\times$ so that $\Fq^\times=\langle \epsilon \rangle$. Let $\gamma$ be the associated $(q-1)$-th root of $\epsilon$. Then for all $1 \leq i \leq q-1$ it follows that $\gamma^i$ is a $(q-1)$-th root of $\epsilon^i$. Thus it suffices to show that the polynomial $P(X)=X^{q-1}-\epsilon$ is irreducible in $\Fq[X]$. Suppose that this is not the case, write $P(X)=P_1(X)P_2(X)$ with $1 \leq \deg P_1<q-1$. Since the roots of $P(X)$ are of the form $\alpha \gamma$ with $\alpha \in \Fq^\times$, those of $P_1(X)$ are also of this form. Looking at the constant term of $P_1(X)$, we deduce that $\gamma^{\deg P_1} \in \Fq^\times$. If we put $m=\text{gcd}(\deg P_1,q-1)$, then $1 \leq m<q-1$ and $\gamma^m \in \Fq^\times$. Letting $\beta:=\gamma^m \in \Fq^\times$, we get $\beta^{\frac{q-1}{m}}=\gamma^{q-1}=\epsilon$. Since $1 \leq m<q-1$, we get a contradiction with the fact that $\Fq^\times=\langle \epsilon \rangle$. The proof is finished.
\end{proof}

\begin{lemma} \label{lem: same character}
Let $ \Li \begin{pmatrix}
 \fe_i  \\
\fs_i \end{pmatrix} \in \mathcal{AL}_w$ and $a_i \in K$ satisfying
\begin{equation*}
\sum_i a_i \frakLi(\fs_i;\fe_i)(\theta)=0.
\end{equation*}
For $\epsilon \in \Fq^\times$ we denote by $I(\epsilon)=\{i:\, \chi(\fe_i)=\epsilon\}$ the set of indices whose corresponding character equals $\epsilon$. Then for all $\epsilon \in \Fq^\times$,
	\[ \sum_{i \in I(\epsilon)} a_i \frakLi(\fs_i;\fe_i)(\theta)=0. \]
\end{lemma}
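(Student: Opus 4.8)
The plan is to clear the Carlitz period from the relation, land in a constant-field extension of $K_\infty$, and then separate the characters by a Frobenius--Vandermonde argument.

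First I would use \eqref{eq: ACMPL}. Since every $\fs_i$ has weight $w$, we have $\widetilde\pi^{\,w}\,\frakLi(\fs_i;\fe_i)(\theta)=c_i\,\Li\begin{pmatrix}\fe_i\\ \fs_i\end{pmatrix}$, where $c_i\in\overline{\F}_q^{\times}$ denotes the product of the fixed $(q-1)$-th roots attached to the components of $\fe_i$, so that $c_i^{\,q-1}=\chi(\fe_i)$. Multiplying the hypothesis by the nonzero element $\widetilde\pi^{\,w}$ turns it into $\sum_i a_i\,c_i\,\Li\begin{pmatrix}\fe_i\\ \fs_i\end{pmatrix}=0$. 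Here every $a_i$ lies in $K$ and every $\Li\begin{pmatrix}\fe_i\\ \fs_i\end{pmatrix}$ lies in $K_\infty=\Fq((1/\theta))$, whereas the finitely many $c_i$ lie in a common finite extension $\F_{q^m}$ of $\Fq$; hence this identity actually takes place inside the constant-field extension $\F_{q^m}((1/\theta))\subset\C_\infty$.

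Next I would bring in the $\Fq$-linear field automorphism $\phi$ of $\F_{q^m}((1/\theta))$ that raises each coefficient (of a power of $1/\theta$) to the $q$-th power; it fixes $K_\infty$ pointwise. Applying $\phi^{k}$ to the relation above fixes each $a_i$ and each $\Li\begin{pmatrix}\fe_i\\ \fs_i\end{pmatrix}$ (their coefficients lie in $\Fq$) and sends $c_i$ to $c_i^{\,q^{k}}$. For $i\in I(\epsilon)$, using $c_i^{\,q-1}=\epsilon$ together with $\epsilon^{q}=\epsilon$, one gets $c_i^{\,q^{k}}=c_i\,\bigl(c_i^{\,q-1}\bigr)^{(q^{k}-1)/(q-1)}=\epsilon^{\,1+q+\dots+q^{k-1}}c_i=\epsilon^{\,k}\,c_i$. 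Thus, writing $B_\epsilon:=\sum_{i\in I(\epsilon)}a_i\,c_i\,\Li\begin{pmatrix}\fe_i\\ \fs_i\end{pmatrix}\in\F_{q^m}((1/\theta))$, we obtain $\sum_{\epsilon\in\Fq^{\times}}\epsilon^{\,k}B_\epsilon=0$ for every $k\ge 0$.

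Finally, taking $k=0,1,\dots,q-2$ produces a square linear system in the $q-1$ unknowns $(B_\epsilon)_{\epsilon\in\Fq^{\times}}$ whose coefficient matrix $\bigl(\epsilon^{\,k}\bigr)_{\epsilon,k}$ is a Vandermonde matrix on the pairwise distinct elements of $\Fq^{\times}$, hence invertible over $\Fq$; therefore $B_\epsilon=0$ for all $\epsilon\in\Fq^{\times}$. Dividing by $\widetilde\pi^{\,w}$ and applying \eqref{eq: ACMPL} once more gives $\sum_{i\in I(\epsilon)}a_i\,\frakLi(\fs_i;\fe_i)(\theta)=0$, which is the assertion. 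I do not anticipate a genuine obstacle here; the only point that needs a little care is the bookkeeping around the constant-field extension, in particular checking that $\phi$ fixes each $\Li\begin{pmatrix}\fe_i\\ \fs_i\end{pmatrix}$, which holds because these are power series in $1/\theta$ with coefficients in $\Fq$. Alternatively one can avoid $\phi$ entirely: by Lemma \ref{lem: gamma_i} the fixed $(q-1)$-th roots of the distinct elements of $\Fq^{\times}$ are $\Fq$-linearly independent, hence remain $K_\infty$-linearly independent inside $\F_{q^m}((1/\theta))$, so grouping the original relation according to characters yields the claim at once.
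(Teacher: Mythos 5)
Your main argument is correct and takes a genuinely different route from the paper's. The paper first proves a separate arithmetic lemma (Lemma \ref{lem: gamma_i}: the chosen $(q-1)$-th roots $\gamma_\epsilon$ of the distinct elements $\epsilon\in\Fq^\times$ are $\Fq$-linearly independent, shown by proving $X^{q-1}-\epsilon$ irreducible for a generator $\epsilon$), then multiplies the relation by $\widetilde\pi^w$, groups the products $\gamma_{i1}\cdots\gamma_{i\ell_i}$ by their character, and invokes the $K_\infty$-linear independence of the $\gamma_\epsilon$'s (inherited from $\Fq$-linear independence since $K_\infty=\Fq((1/\theta))$) to kill each character-block. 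Your Frobenius--Vandermonde argument replaces Lemma \ref{lem: gamma_i} entirely: by applying the coefficient-wise $q$-power map $\phi$ to the relation inside $\F_{q^m}((1/\theta))$ and using $c_i^{q^k}=\epsilon^k c_i$, you extract $\sum_\epsilon \epsilon^k B_\epsilon=0$ for $k=0,\dots,q-2$ and conclude by invertibility of the Vandermonde matrix on $\Fq^\times$. This is more elementary in the sense that it requires no irreducibility argument and no separate lemma on roots, only the obvious fact that $\phi$ fixes $K_\infty$; on the other hand, the paper's Lemma \ref{lem: gamma_i} is stated separately presumably because it isolates a clean structural fact about Kummer extensions of $\Fq$ that is reused conceptually. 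Your closing remark (``alternatively one can avoid $\phi$ entirely\ldots'') is exactly the paper's proof, so you have in effect given both. One small point worth making explicit in your grouping step: for distinct $i\in I(\epsilon)$ the scalars $c_i$ are generally different $(q-1)$-th roots of the same $\epsilon$, differing by elements of $\Fq^\times$; this is harmless since $\Fq^\times\subset K_\infty$, but it is why $B_\epsilon$ being zero still translates, after dividing by $\widetilde\pi^w$, into $\sum_{i\in I(\epsilon)}a_i\,\frakLi(\fs_i;\fe_i)(\theta)=0$ rather than into a relation with a single factored-out $\gamma_\epsilon$.
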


\begin{proof}
We keep the notation of Lemma \ref{lem: gamma_i}. Suppose that we have a relation
	\[\sum_i \gamma_i a_i=0\]
with $a_i \in K_\infty$. By Lemma \ref{lem: gamma_i} and the fact that $K_\infty=\Fq((1/\theta))$, we deduce that $a_i=0$ for all $i$.

By \eqref{eq: ACMPL} the relation $\sum_i a_i \frakLi(\fs_i;\fe_i)(\theta)=0$ is equivalent to the following one
	\[ \sum_i a_i \gamma_{i1} \dots \gamma_{i\ell_i}  \Li \begin{pmatrix}
 \fe_i  \\
\fs_i  \end{pmatrix}=0. \]
By the previous discussion, for all $\epsilon \in \Fq^\times$,
	\[ \sum_{i \in I(\epsilon)} a_i \gamma_{i1} \dots \gamma_{i\ell_i}  \Li \begin{pmatrix}
 \fe_i  \\
\fs_i  \end{pmatrix}=0. \]
By \eqref{eq: ACMPL} again we deduce the desired relation	
	\[ \sum_{i \in I(\epsilon)} a_i \frakLi(\fs_i;\fe_i)(\theta)=0. \]
\end{proof}

\begin{lemma} \label{lem: KuanLin}
Let $m \in \mathbb N$, $\varepsilon \in \Fq^\times$, $\delta \in \overline K[t]$ and $F(t,\theta) \in \overline{\F}_q[t,\theta]$ (resp. $F(t,\theta) \in \F_q[t,\theta]$) satisfying
	\[ \varepsilon\delta = \delta^{(-1)}(t-\theta)^m+F^{(-1)}(t,\theta). \]
Then $\delta \in \overline{\F}_q[t,\theta]$ (resp. $\delta \in \F_q[t,\theta]$) and
	\[ \deg_\theta \delta \leq \max\left\{\frac{qm}{q-1},\frac{\deg_\theta F(t,\theta)}{q}\right\}. \]
\end{lemma}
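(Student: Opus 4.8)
The governing equation is $\varepsilon\delta = \delta^{(-1)}(t-\theta)^m+F^{(-1)}(t,\theta)$, which I want to read as a recursion determining $\delta$ from its own twist. First I would argue that $\delta$ is in fact a polynomial in $t$, i.e. $\delta\in\overline K[t]$ already by hypothesis, and the real content is to show $\delta\in\overline{\F}_q[t,\theta]$ (no denominators in $\theta$, or rather: $\delta$ is a genuine polynomial, not a rational function in $\theta$) together with the degree bound. Treating $\delta$ as a Laurent series in $1/\theta$ with coefficients in $\overline K[t]$ (or using that $\delta\in\overline K[t]\subset \C_\infty((1/\theta))[t]$), I would compare $\theta$-adic orders on both sides. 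The twisting operation sends a monomial $c\theta^j t^k$ to $c^q\theta^{qj}t^k$, so $\deg_\theta(\delta^{(-1)}) = q\deg_\theta\delta$ whenever $\delta\ne 0$. Thus on the right-hand side the term $\delta^{(-1)}(t-\theta)^m$ has $\theta$-degree $q\deg_\theta\delta + m$, while the left-hand side $\varepsilon\delta$ has $\theta$-degree $\deg_\theta\delta$. If $\deg_\theta\delta > \max\{qm/(q-1),\ \deg_\theta F/q\}$ one gets $q\deg_\theta\delta + m > \deg_\theta\delta$ and also $q\deg_\theta\delta+m > \deg_\theta F^{(-1)} = q\deg_\theta F$ (using $\deg_\theta F^{(-1)}=q\deg_\theta F$), so the highest $\theta$-power on the right cannot cancel and cannot be matched by the left — contradiction unless that top coefficient vanishes. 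Running this leading-term analysis carefully gives $\deg_\theta\delta\le\max\{qm/(q-1),\ \deg_\theta F/q\}$.

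For the integrality statement ($\delta\in\overline{\F}_q[t,\theta]$, resp.\ $\F_q[t,\theta]$), I would look at the other end: the lowest-order behaviour in $1/\theta$. Write $\delta=\sum_{j\le N} c_j(t)\theta^j$ as a (finite, by the degree bound, or a priori Laurent) expansion; I want to show there is no pole at $\theta=\infty$ beyond polynomial order and no genuine denominator. Since $\delta\in\overline K[t]$ is already assumed, this reduces to showing the coefficients of $\delta$, viewed in $\overline K = \overline{\F}_q(\theta)$-coefficients, actually lie in $\overline{\F}_q[\theta]$. The equation $\varepsilon\delta - F^{(-1)} = \delta^{(-1)}(t-\theta)^m$ shows $\delta^{(-1)}$ divides a polynomial expression, but more usefully: suppose $\delta$ had a pole at some finite place $v$ of $\overline{\F}_q(\theta)$; then $\delta^{(-1)}$ has a pole of $q$ times the order at the corresponding place, and $(t-\theta)^m$ is a unit there, so the right side has a pole of order $q\cdot(\text{ord})$ while the left side $\varepsilon\delta - F^{(-1)}$ has a pole of order at most $\text{ord}$ (as $F^{(-1)}$ is a polynomial) — again impossible unless the pole order is $0$. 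Hence $\delta\in\overline{\F}_q[\theta][t]=\overline{\F}_q[t,\theta]$. The $\F_q$-rationality is then automatic: if $F\in\F_q[t,\theta]$ and $\varepsilon\in\F_q^\times$, then the equation, being $\F_q$-linear in the unknown $\delta$ modulo the $q$-power twist (which fixes $\F_q$), forces $\delta$ to be fixed by $\mathrm{Gal}(\overline{\F}_q/\F_q)$ acting on coefficients — one checks the Galois conjugate $\delta^\tau$ satisfies the same equation with the same $F$ and $\varepsilon$, and by the uniqueness implicit in the degree bound (the recursion $\delta\mapsto$ determined by $\delta^{(-1)}$ has at most one solution of bounded degree), $\delta^\tau=\delta$.

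The step I expect to be the main obstacle is making the "compare orders / leading coefficients cannot cancel" argument fully rigorous when $\delta$ is a priori only in $\overline K[t] = \overline{\F}_q(\theta)[t]$ rather than a polynomial in $\theta$: one must simultaneously control behaviour at $\theta=\infty$ (giving the degree bound) and at finite places of $\overline{\F}_q(\theta)$ (giving integrality), and the bookkeeping for the exponent $qm/(q-1)$ — which arises precisely as the fixed point of $x\mapsto qx+m$ shifted, i.e.\ solving $x = qx+m$ over... no: $qx+m = $ the balance point where $q\deg_\theta\delta+m$ would equal, after the geometric-series unwinding $\deg_\theta\delta \le m + m/q + m/q^2+\cdots = qm/(q-1)$ — needs to be tracked through an iteration of the twist rather than a single application. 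I would handle this by iterating the equation: substituting $\delta^{(-1)} = $ (expression from applying the relation once more, twisted) repeatedly, so that $\delta = \varepsilon^{-1}\delta^{(-1)}(t-\theta)^m + \varepsilon^{-1}F^{(-1)}$ unwinds into a convergent-in-$1/\theta$ series $\delta = \sum_{k\ge 0}\varepsilon^{-(k+1)} (F^{(-k-1)})\prod_{j=1}^{k}(t-\theta)^{(j)\,m}\cdot(\ldots)$, from which both the degree bound $qm/(q-1)$ (from the product of twisted $(t-\theta)^m$ factors) and the polynomiality (each summand is a polynomial, and the series terminates or converges appropriately) can be read off directly. This mirrors the argument in \cite[Lemma 5.3.5]{Cha14} and the analysis of $\frak L$-series in \S\ref{subsec:dual motives}, so I would expect to cite that circle of ideas rather than redo it in full.
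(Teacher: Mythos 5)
The proposal has the twist running the wrong way, and this collapses the degree comparison. By the paper's definition, $(-1)$-twisting extracts $q$-th roots of the $t$-coefficients, so (once one knows $\delta\in\overline{\F}_q[t,\theta]$, or by working with the extended $\infty$-adic valuation on $\overline K$) one has $\deg_\theta\delta^{(-1)} = \tfrac{1}{q}\deg_\theta\delta$, \emph{not} $q\deg_\theta\delta$. With your version of the degree formula, $\deg_\theta\bigl(\delta^{(-1)}(t-\theta)^m\bigr)$ would equal $q\deg_\theta\delta+m$ and hence strictly exceed $\deg_\theta\delta$ for every nonzero $\delta$; the leading-term argument would then force $\delta=0$ unconditionally, which is absurd, so the hypothesis ``$\deg_\theta\delta>\max\{qm/(q-1),(\deg_\theta F)/q\}$'' plays no role and the bound cannot be extracted this way. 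The same inversion undermines the pole count at finite places, which is likewise asserted to multiply rather than divide orders by $q$. The correct move, and the paper's, is to $(+1)$-twist the governing equation once (using $\varepsilon^q=\varepsilon$) to get $\varepsilon\delta^{(1)}=\delta(t-\theta^q)^m+F(t,\theta)$; here $\deg_\theta\delta^{(1)}=q\deg_\theta\delta$ genuinely grows, the bound $\max\{qm/(q-1),(\deg_\theta F)/q\}$ falls out exactly as you wanted, and all calculations remain in $\overline{\F}_q[\theta]$ instead of accumulating fractional powers $\theta^{1/q^k}$ as in your iterated-unwinding fallback (whose terms moreover have $t$-degree growing with $k$, so they cannot sum termwise to the polynomial $\delta$).

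There is a second, independent gap in the integrality step: the place-by-place pole argument presupposes the coefficients of $\delta$ lie in $\overline{\F}_q(\theta)$, whereas a priori they lie only in $\overline K = \overline{\F_q(\theta)}$, a strictly larger field containing wildly ramified elements such as $\theta^{1/q}$, so ``pole at a finite place of $\overline{\F}_q(\theta)$'' is not yet well-defined for them. The paper sidesteps this entirely by comparing $t^j$-coefficients in the once-twisted equation: writing $\delta=a_nt^n+\cdots+a_0$, the terms $\varepsilon a_j^q$ coming from $\delta^{(1)}$ are absent for $j>n$, so the top-range $a_{j-m}$ are expressed $\overline{\F}_q[\theta]$-polynomially in the $b_j$ (coefficients of $F$) and in $\theta^q$, and a two-stage descending induction on $j$ places every $a_j$ in $\overline{\F}_q[\theta]$. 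Your Galois-descent argument for the $\F_q$-rational case is a perfectly good finish once these two steps are repaired (and the uniqueness it requires is elementary from $\deg_t$, since a nonzero $\delta_0$ with $\varepsilon\delta_0=\delta_0^{(-1)}(t-\theta)^m$ would need $\deg_t\delta_0=\deg_t\delta_0+m$, impossible).
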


\begin{proof}
The proof follows the same line as that of \cite[Theorem 2]{KL16} where it is shown that if $F(t,\theta) \in \F_q[t,\theta]$ and $\varepsilon=1$, then $\delta \in \F_q[t,\theta]$. We write down the proof for the case $F(t,\theta) \in \overline{\F}_q[t,\theta]$ for the convenience of the reader.

By twisting once the equality $\varepsilon\delta = \delta^{(-1)}(t-\theta)^m+F^{(-1)}(t,\theta)$ and the fact that $\varepsilon^q=\varepsilon$, we get
	\[ \varepsilon\delta^{(1)} = \delta(t-\theta^q)^m+F(t,\theta). \]
We put $n=\deg_t \delta$ and express
	\[ \delta=a_n t^n + \dots +a_1t+a_0 \in \overline K[t] \]
with $a_0,\dots,a_n \in \overline K$. For $i<0$ we put $a_i=0$.

Since $\deg_t \delta^{(1)}=\deg_t \delta=n < \delta(t-\theta^q)^m=n+m$, it follows that $\deg_t F(t,\theta)=n+m$. Thus we write $F(t,\theta)=b_{n+m} t^{n+m}+\dots+b_1 t+b_0$ with $b_0,\dots,b_{n+m} \in \overline{\F}_q[\theta]$. Plugging into the previous equation, we obtain
	\[ \varepsilon(a_n^q t^n + \dots +a_0^q) = (a_n t^n + \dots +a_0)(t-\theta^q)^m+b_{n+m} t^{n+m}+\dots+b_0. \]

Comparing the coefficients $t^j$ for $n+1 \leq j \leq n+m$ yields
	\[ a_{j-m}+\sum_{i=j-m+1}^{n} {m \choose j-i} (-\theta^q)^{m-j+i} a_i+b_j=0. \]
Since $b_j \in \overline{\F}_q[\theta]$ for all $n+1 \leq j \leq n+m$, we can show by descending induction that $a_j \in \overline{\F}_q[\theta]$ for all $n+1-m \leq j \leq n$.

If $n+1-m \leq 0$, then we are done. Otherwise, comparing the coefficients $t^j$ for $m \leq j \leq n$ yields
	\[ a_{j-m}+\sum_{i=j-m+1}^{n} {m \choose j-i} (-\theta^q)^{m-j+i} a_i+b_j-\varepsilon a_j^q=0. \]
Since $b_j \in \overline{\F}_q[\theta]$ for all $m \leq j \leq n$ and $a_j \in \overline{\F}_q[\theta]$ for all $n+1-m \leq j \leq n$, we can show by descending induction that $a_j \in \overline{\F}_q[\theta]$ for all $0 \leq j \leq n-m$. We conclude that $\delta \in \overline{\F}_q[t,\theta]$.

We now show that $\deg_\theta \delta \leq \max\{\frac{qm}{q-1},\frac{\deg_\theta F(t,\theta)}{q}\}$. Otherwise, suppose that $\deg_\theta \delta > \max\{\frac{qm}{q-1},\frac{\deg_\theta F(t,\theta)}{q}\}$. Then $\deg_\theta \delta^{(1)}=q \deg_\theta \delta$. It implies that $\deg_\theta \delta^{(1)} > \deg_\theta (\delta(t-\theta^q)^m)=\deg_\theta \delta+qm$ and $\deg_\theta \delta^{(1)} >\deg_\theta F(t,\theta)$. Hence we get
	\[\deg_\theta (\varepsilon \delta^{(1)})= \deg_\theta \delta^{(1)} > \deg_\theta(\delta(t-\theta^q)^m+F(t,\theta)), \]
which is a contradiction.
\end{proof}

\subsection{Linear relations: statement of the main result}

\begin{theorem} \label{thm: trans ACMPL}
Let $w \in \N$. We recall that the set  $\mathcal{J}'_w$ consists of positive tuples $\fs = (s_1, \dots, s_n)$ of weight $w$ such that $ q \nmid s_i$ for all $i$. Suppose that we have a non-trivial relation
\begin{equation*}
a+\sum_{\fs_i \in \mathcal{J}'_w} a_i \frakLi(\fs_i;\fe_i)(\theta)=0, \quad \text{for $a, a_i \in K$.}
\end{equation*}
Then $q-1 \mid w$ and $a \neq 0$.

Further, if $q-1 \mid w$, then there is a unique relation
\begin{equation*}
1+\sum_{\fs_i \in \mathcal{J}'_w} a_i \frakLi(\fs_i;\fe_i)(\theta)=0, \quad \text{for $a_i \in K$.}
\end{equation*}
Also, for indices $(\fs_i; \fe_i)$ with nontrivial coefficient $a_i$, we have $\fe_i = (1, \dots, 1)$.

In particular, the ACMPL's in $\mathcal{AS}_w$ are linearly independent over $K$.
\end{theorem}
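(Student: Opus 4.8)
The plan is to apply Theorem \ref{theorem: linear independence} with the collection $\{(\fs_i;\fe_i)\}$ ranging over all pairs with $\fs_i \in \mathcal{J}'_w$ and $\fe_i$ arbitrary in $(\F_q^\times)^{\depth(\fs_i)}$, using as input data the algebraic results of Section \ref{sec:algebraic part} to control the lower-weight part and the structure of the $\sigma$-linear system that comes out of the Anderson--Brownawell--Papanikolas machinery. First I would set up the induction on the weight $w$: the base cases $w<q$ are handled by the observation that $\mathcal{AS}_w=\mathcal{AT}_w$ there, combined with the period interpretation \eqref{eq: ACMPL} and Proposition \ref{prop: MZ property}. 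For the inductive step, I would assume that for all $w'<w$ the ACMPL's in $\mathcal{AS}_{w'}$ are $K$-linearly independent, so that Condition (LW) of Theorem \ref{theorem: linear independence} is satisfied for the index set $I=\cup_i I(\fs_i;\fe_i)$ — here one must check that all the shorter tuples appearing in the $I(\fs_i;\fe_i)$'s still lie (after applying $\iota^{-1}$ or by direct inspection) in the span of a lower-weight $\mathcal{AS}_{w'}$, which follows from Theorem \ref{thm: strong Brown} applied in lower weight. Granting a non-trivial relation $a+\sum a_i\frakLi(\fs_i;\fe_i)(\theta)=0$, Condition (LD) holds and the theorem produces, for every $(\mathfrak t;\fQ)\in I$, an element $f_{\mathfrak t,\fQ}(\theta)\in K$.

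Next I would run the descent on the $f_{\mathfrak t,\fQ}$'s exactly as in \cite[\S 4 and \S 5]{ND21}, but now tracking the character via Lemma \ref{lem: same character}: after reducing to a fixed character $\epsilon$, the relations \eqref{eq:nu} become a system of $\sigma$-linear equations of the shape $\varepsilon\delta=\delta^{(-1)}(t-\theta)^m+F^{(-1)}(t,\theta)$, to which Lemma \ref{lem: KuanLin} applies and forces the unknown "correcting" polynomials $\delta_{\mathfrak t,\fQ}$ to have bounded $\theta$-degree and to lie in $\overline{\F}_q[t,\theta]$. Propagating these degree bounds down through the depth filtration (innermost tuples first) and comparing with the degree growth forced by the $\Omega$-powers, one concludes in the generic situation that all the $\delta$'s, hence all the $a_i$, must vanish — contradicting non-triviality. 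The one place where this argument does not close is precisely when $q-1\mid w$: there the scalar $\delta_0$ with $\delta_0^{(-1)}=\delta_0$, i.e. $\delta_0\in\F_q[t]$, can be non-zero, and tracing back the construction this corresponds to the extra term $a$ being allowed to be non-zero; a dimension count of the solution space of the homogeneous $\sigma$-system shows it is exactly one-dimensional in that case, giving the uniqueness statement. When $q-1\nmid w$ the solution space is trivial, so $a=0$ is forced and the relation is trivial — but then linear independence of $\mathcal{AS}_w$ follows immediately.

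It remains to show that in the unique relation obtained when $q-1\mid w$ the coefficient of $\widetilde\pi^w$ (equivalently, $a$) is non-zero; this is what upgrades "at most one relation" to "no relation among the ACMPL's alone", i.e. linear independence of $\mathcal{AS}_w$. The plan here is to argue by contradiction: if $a=0$ we would get a non-trivial $K$-linear relation purely among $\{\frakLi(\fs_i;\fe_i)(\theta):\fs_i\in\mathcal{J}'_w\}$, i.e. among the elements of $\mathcal{AS}_w$; but by Theorem \ref{thm: strong Brown} the set $\mathcal{AS}_w$ generates $\mathcal{AL}_w$ and has cardinality $s(w)$, while the strong Brown's theorem also gives that the transition matrix between $\mathcal{AS}_w$ and the generating family indexed by $\mathcal{AJ}_w$ is invertible — combining this with the weak Brown bound one sees $\dim_K\mathcal{AL}_w=s(w)=|\mathcal{AS}_w|$ would be contradicted by any such relation. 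Concretely, one feeds the hypothetical relation $a=0$ back into the ABP output and observes it forces an honest $K$-linear dependence among a set known to be a basis by the strong Brown's theorem, which is absurd.

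The main obstacle, and the genuinely new point compared to \cite{ND21}, is the analysis of the $\sigma$-linear system in the case $q-1\mid w$: showing the homogeneous solution space is exactly one-dimensional (not larger), so that "uniqueness" is literal, and then separately showing the inhomogeneous part is realized with $a\neq 0$. The first part is a careful bookkeeping exercise with Lemma \ref{lem: KuanLin} and the degree bounds \eqref{eq: deg theta Hn}; the second part is where we genuinely need the full strength of Brown's theorem for ACMPL's (Theorem \ref{thm: strong Brown}) rather than just the weak version — this is the reversal of logic flagged in the introduction, where the algebraic input is used to pin down a transcendental coefficient.
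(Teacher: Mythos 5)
Your overall strategy matches the paper's: set up induction on $w$, use Lemma~\ref{lem: same character} to reduce to a single character, apply Theorem~\ref{theorem: linear independence}, and analyze the resulting $\sigma$-linear system with Lemma~\ref{lem: KuanLin}. But the last step, where you show $a\neq 0$ in the unique relation, contains a genuine circularity. You write that if $a=0$ then one gets ``an honest $K$-linear dependence among a set known to be a basis by the strong Brown's theorem,'' but Theorem~\ref{thm: strong Brown} only establishes that $\mathcal{AS}_w$ \emph{generates} $\mathcal{AL}_w$, i.e., $\dim_K\mathcal{AL}_w\leq s(w)$. It does not give linear independence of $\mathcal{AS}_w$; that is precisely the content of the theorem you are proving, and the equality $\dim_K\mathcal{AL}_w=s(w)$ only appears later (Theorem~\ref{thm:ACMPL}) as a \emph{consequence} of the present theorem. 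So you are assuming the conclusion. The invertibility of the transition matrix in the proof of Theorem~\ref{thm: strong Brown} transforms one spanning family into another; it does not certify that either family is linearly independent over~$K$.

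The way the paper actually closes this gap is constructive and is the genuinely new idea: having established via ABP that the $\sigma$-linear system has an essentially unique solution (so that up to $K^\times$-scalar there is \emph{at most one} non-trivial relation $a\widetilde\pi^w+\sum_i a_i\Li\begin{pmatrix}\fve_i\\\fs_i\end{pmatrix}=0$ with terms in $\mathcal{AS}_w$, and $\fve_i=(1,\dots,1)$), one then produces an explicit such relation with $a\neq 0$. Writing $w=(q-1)m$, expand the product $\Li(q-1)^m$ using stuffle relations and Theorem~\ref{thm: strong Brown} as a $K$-linear combination of elements of $\mathcal{AS}_w$, and use the identity $\Li(q-1)=\zeta_A(q-1)=-D_1^{-1}\widetilde\pi^{q-1}$ to convert the left-hand side into $(-D_1)^{-m}\widetilde\pi^w$. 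This gives $(-D_1)^m\widetilde\pi^w-\sum_i a_i\Li\begin{pmatrix}\fve_i\\\fs_i\end{pmatrix}=0$, a relation with non-zero $\widetilde\pi^w$-coefficient. Uniqueness then forces \emph{every} non-trivial relation to have $a\neq 0$, which in particular rules out any non-trivial relation purely among the elements of $\mathcal{AS}_w$. You correctly flag that the strong Brown's theorem is needed here, but the mechanism is the construction of this one explicit witness, not a contradiction with an already-known basis.
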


\begin{remark} \label{rem:remark on Brown}
We emphasize that although Theorem \ref{thm: trans ACMPL} is a purely transcendental result, it is crucial that we need the full strength of algebraic theory for ACMPL's (i.e., Theorem \ref{thm: strong Brown}) to conclude (see the last step of the proof).
\end{remark}

As a direct consequence of Theorem \ref{thm: trans ACMPL}, we obtain:
\begin{theorem} \label{thm:ACMPL}
Let $w \in \N$. Then the ACMPL's in $\mathcal{AS}_w$ form a basis for $\mathcal{AL}_w$. In particular,
	\[ \dim_K \mathcal{AL}_w=s(w). \]
\end{theorem}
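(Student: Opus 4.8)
\textbf{Proof plan for Theorem \ref{thm:ACMPL}.}
The plan is to combine the strong Brown theorem (Theorem \ref{thm: strong Brown}) with the transcendental result (Theorem \ref{thm: trans ACMPL}). Recall that Theorem \ref{thm: strong Brown} gives that $\mathcal{AS}_w$ spans $\mathcal{AL}_w$, hence $\dim_K \mathcal{AL}_w \leq |\mathcal{AS}_w| = s(w)$. So the only thing left is to prove that the ACMPL's in $\mathcal{AS}_w$ are $K$-linearly independent; once this is done, $\mathcal{AS}_w$ is a spanning set of cardinality $s(w)$ that is linearly independent, hence a basis, and $\dim_K \mathcal{AL}_w = s(w)$.

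First I would observe that linear independence of $\mathcal{AS}_w$ over $K$ is exactly the ``In particular'' clause already asserted in Theorem \ref{thm: trans ACMPL}, so in principle the proof is a one-line deduction. To make this self-contained I would spell out why the final clause of Theorem \ref{thm: trans ACMPL} follows from its first two clauses. Suppose $\sum_{\fs_i \in \mathcal{J}'_w} a_i \Li\begin{pmatrix}\fe_i \\ \fs_i\end{pmatrix} = 0$ with $a_i \in K$ not all zero. Using the identity \eqref{eq: ACMPL}, namely $\frakLi(\fs_i;\fe_i)(\theta) = \gamma_{i1}\cdots\gamma_{i\ell_i}\,\Li\begin{pmatrix}\fe_i \\ \fs_i\end{pmatrix} / \widetilde\pi^{w}$, one rewrites this as a nontrivial $K$-linear relation $\sum_i a_i' \frakLi(\fs_i;\fe_i)(\theta) = 0$ (here $a_i' = a_i/(\gamma_{i1}\cdots\gamma_{i\ell_i})$; since $\gamma_{ij} \in \overline{\F}_q$ these coefficients still lie in a finite extension, but one uses Lemma \ref{lem: same character} to reduce to fixed character $\epsilon$, on which block $\gamma_{i1}\cdots\gamma_{i\ell_i}$ is a common scalar $\gamma_\epsilon$ and can be cleared, keeping coefficients in $K$). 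This is a nontrivial relation of the shape in Theorem \ref{thm: trans ACMPL} with $a = 0$. But the first assertion of Theorem \ref{thm: trans ACMPL} says that in any nontrivial relation $a + \sum a_i \frakLi(\fs_i;\fe_i)(\theta) = 0$ one must have $a \neq 0$, a contradiction. Hence no such nontrivial relation exists and $\mathcal{AS}_w$ is $K$-linearly independent.

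Then I would conclude: $\mathcal{AS}_w$ spans $\mathcal{AL}_w$ by Theorem \ref{thm: strong Brown} and is $K$-linearly independent by the above, so it is a $K$-basis; consequently $\dim_K \mathcal{AL}_w = |\mathcal{AS}_w| = s(w)$, where the cardinality $s(w)$ was computed in \S \ref{sec:AS_w}. The genuinely hard input here is of course Theorem \ref{thm: trans ACMPL} itself (the transcendental argument via the Anderson-Brownawell-Papanikolas criterion and the delicate final step invoking strong Brown to show the coefficient of $\widetilde\pi^w$ is nonzero); at the level of this corollary there is no real obstacle, only the bookkeeping of passing between the $\Li$-normalization and the $\frakLi$-normalization and the character decomposition from Lemma \ref{lem: same character}. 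I would make sure to state explicitly that the equality in the dimension bound, rather than just an inequality, is what upgrades Theorem \ref{thm: strong Brown} to the full basis statement.
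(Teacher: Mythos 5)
Your proposal is correct and follows exactly the paper's own argument: the paper proves Theorem \ref{thm:ACMPL} in three lines by citing Theorem \ref{thm: trans ACMPL} for the $K$-linear independence of $\mathcal{AS}_w$ and Theorem \ref{thm: strong Brown} for the spanning property, then counting $|\mathcal{AS}_w|=s(w)$. Your extra paragraph unpacking why the independence clause of Theorem \ref{thm: trans ACMPL} follows from its $a\neq 0$ assertion (via \eqref{eq: ACMPL} and Lemma \ref{lem: same character}) is accurate bookkeeping that the paper leaves inside the statement of that theorem, so there is nothing to correct.
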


\begin{proof}
By Theorem \ref{thm: trans ACMPL} the ACMPL's in $\mathcal{AS}_w$ are all linearly independent over $K$. Then by Theorem \ref{thm: strong Brown} we deduce that the ACMPL's in $\mathcal{AS}_w$ form a basis for $\mathcal{AL}_w$. Hence $\dim_K \mathcal{AL}_w=|\mathcal{AS}_w|=s(w)$ as required.
\end{proof}

\subsection{Proof of Theorem \ref{thm: trans ACMPL}} \ppar

We outline the ideas of the proof. Starting from such a nontrivial relation, we apply the Anderson-Brownawell-Papanikolas criterion in \cite{ABP04} and reduce to the solution of a system of $\sigma$-linear equations. In contrast to \cite[\S 4 and \S 5]{ND21}, this system has a unique solution when $q-1$ divides $w$. We first show that for such a weight $w$ up to a scalar in $K^\times$ there is at most one linear relation between ACMPL's in $\mathcal{AS}_w$ and $\widetilde \pi^w$. Second we show a linear relation between ACMPL's in $\mathcal{AS}_w$ and $\widetilde \pi^w$ where the coefficient of $\widetilde \pi^w$ is nonzero. For this we use Brown's theorem for AMCPL's, i.e., Theorem \ref{thm: strong Brown}.

We are back to the proof of Theorem \ref{thm: trans ACMPL}. We claim that if $q-1 \nmid w$, then any linear relation
	\[ a+\sum_{\fs_i \in \mathcal{J}'_w} a_i \frakLi(\fs_i;\fe_i)(\theta)=0 \]
with $a,a_i \in K$ implies that $a=0$. In fact, if we recall that $\overline{\mathbb F}_q$ denotes the algebraic closure of $\mathbb F_q$ in $\overline{K}$, then the claim follows from Eq \eqref{eq: ACMPL} and that $\widetilde{\pi}^w\not\in \overline{\mathbb F}_q \left(( \frac{1}{\theta}\right))$ since $q-1 \nmid w$.

The proof is by induction on the weight $w \in \N$. For $w=1$, we distinguish two cases:
\begin{itemize}
\item  If $q>2$, then by the previous remark it suffices to show that if
	\[a+\sum_i a_i \frakLi(1;\epsilon_i)(\theta)=0, \]
then $a_i=0$ for all $i$. In fact, it follows immediately from Lemma \ref{lem: same character}. 

\item If $q=2$, then $w=q-1=1$. Then the theorem holds from the facts that there is only one index $(\fs_1;\fe_1)=(1,1)$ and that $\Li(1)=\zeta_A(1)=-D^{-1}_1 \widetilde \pi$.
\end{itemize}

Suppose that Theorem \ref{thm: trans ACMPL} holds for all $w'<w$. We now prove that it holds for $w$. Suppose that we have a linear relation
\begin{equation} \label{eq: non trivial relation MZV}
a+\sum_i a_i \frakLi(\fs_i;\fe_i)(\theta)=0.
\end{equation}

By Lemma \ref{lem: same character} and its proof we can suppose further that $\fe_i$ has the same character, i.e., there exists $\epsilon \in \Fq^\times$ such that for all $i$,
\begin{equation} \label{eq: same character}
\chi(\fe_i)=\epsilon_{i1} \dots \epsilon_{i\ell_i}=\epsilon.
\end{equation}

We now apply Theorem \ref{theorem: linear independence} to our setting of ACMPL's. We recall that by Eq. \eqref{eq: series Li}, 
\[
\frakLi(\fs;\fe)=\frak L(\fs;\fQ_{\fs,\fe}'),
\]
and also
\begin{align} \label{eq: I recall}
I(\fs_i; \fe_i) &= \{\emptyset, (s_{i1}; \epsilon_{i1}), \dots, (s_{i1}, \dots, s_{i(\ell_{i}-1)}; \epsilon_{i1}, \dots, \epsilon_{i(\ell_i-1)})\}, \notag \\
I &= \cup_i I(\fs_i;\fe_i).
\end{align}
We know that the hypothesis are verified:
\begin{itemize}
\item[(LW)] By the induction hypothesis, for any weight $w'<w$, the values $\frakLi(\frak t;\fe)(\theta)$ with $(\frak t;\fe) \in I$ and $w(\frak t)=w'$ are all $K$-linearly independent.

\item[(LD)] By \eqref{eq: non trivial relation MZV}, there exist $a \in A$ and $a_i \in A$ for $1 \leq i \leq n$ which are not all zero such that
\begin{equation*}
a+\sum_{i=1}^n a_i \frakLi(\fs_i;\fe_i)(\theta)=0.
\end{equation*}
\end{itemize}
Thus Theorem \ref{theorem: linear independence} implies that for all $(\mathfrak t;\fe) \in I$, $f_{\mathfrak t;\fe}(\theta)$ belongs to $K$ where $f_{\mathfrak t;\fe}$ is given by
\begin{equation*}
f_{\mathfrak t;\fe}:= \sum_i a_i(t) \frakLi(s_{i(k+1)},\dots,s_{i \ell_i};\epsilon_{i(k+1)},\dots,\epsilon_{i \ell_i}).
\end{equation*}
Here, the sum runs through the set of indices $i$ such that $(\mathfrak t;\fe)=(s_{i1},\dots,s_{i k};\epsilon_{i1},\dots,\epsilon_{i k})$ for some $0 \leq k \leq \ell_i-1$.

We derive a direct consequence of the previous rationality result. Let $(\mathfrak t;\fe) \in I$ and $\mathfrak t \neq \emptyset$. Then $(\mathfrak t;\fe)=(s_{i1},\dots,s_{i k};\epsilon_{i1},\dots,\epsilon_{i k})$ for some $i$ and $1 \leq k \leq \ell_i-1$. We denote by $J(\mathfrak t;\fe)$ the set of all such $i$. We know that there exists $a_{\mathfrak t;\fe} \in K$ such that
\begin{equation*}
a_{\mathfrak t;\fe}+f_{\mathfrak t;\fe}(\theta)=0,
\end{equation*}
or equivalently,
\begin{equation*}
a_{\mathfrak t;\fe}+\sum_{i \in J(\frak t;\fe)} a_i \frakLi(s_{i(k+1)},\dots,s_{i \ell_i};\epsilon_{i(k+1)},\dots,\epsilon_{i \ell_i})(\theta)=0.
\end{equation*}
The ACMPL's appearing in the above equality belong to $\mathcal{AS}_{w-w(\mathfrak t)}$. By the induction hypothesis, we can suppose that $\epsilon_{i(k+1)}=\dots=\epsilon_{i \ell_i}=1$. Further, if $q-1 \nmid w-w(\frak t)$, then $a_i(t)=0$ for all $i \in J(\frak t;\fe)$. Therefore,  letting $(\fs_i;\fe_i)=(s_{i1},\dots,s_{i \ell_i};\epsilon_{i1},\dots,\epsilon_{i \ell_i})$ we can suppose that $s_{i2},\dots,s_{i \ell_i}$ are all divisible by $q-1$ and $\epsilon_{i2}=\dots=\epsilon_{i \ell_i}=1$. In particular, for all $i$, $\epsilon_{i1}=\chi(\fe_i)=\epsilon$.

Now we want to solve \eqref{eq:equation for delta}. Further, in this system we can assume that the corresponding element $b \in \Fq[t] \setminus \{0\}$ equals $1$. We define
	\[ J:=I \cup \{(\fs_i;\fe_i)\} \]
where $I$ is given as in \eqref{eq: I recall}. For $(\mathfrak t;\fe) \in J$ we denote by $J_0(\frak t;\fe)$ consisting of $(\frak t';\fe') \in I$ such that there exist $i$ and $0 \leq j <\ell_i$ so that $(\frak t;\fe)=(s_{i1},s_{i2},\dots,s_{ij};\epsilon,1,\dots,1)$ and $(\frak t';\fe')=(s_{i1},s_{i2},\dots,s_{i(j+1)};\epsilon,1,\dots,1)$. In particular, for $(\frak t;\fe)=(\fs_i;\fe_i)$, $J_0(\frak t;\fe)$ is the empty set. For $(\mathfrak t;\fe) \in J \setminus \{\emptyset\}$, we also put
	\[ m_{\frak t}:=\frac{w-w(\frak t)}{q-1} \in \mathbb Z^{\geq 0}. \]
Then it is clear that \eqref{eq:equation for delta} is equivalent finding $(\delta_{\mathfrak t;\fe})_{(\mathfrak t;\fe) \in J} \in \Mat_{1 \times |J|}(\overline K[t])$ such that
\begin{equation} \label{eq:delta}
\delta_{\mathfrak t;\fe}=\delta_{\mathfrak t;\fe}^{(-1)} (t-\theta)^{w-w(\frak t)}+\sum_{(\mathfrak t';\fe') \in J_0(\frak t;\fe)} \delta_{\mathfrak t';\fe'}^{(-1)} (t-\theta)^{w-w(\frak t)}, \quad \text{for all } (\frak t;\fe) \in J \setminus \{\emptyset\},
\end{equation}
and
\begin{equation} \label{eq:delta emptyset}
\delta_{\mathfrak t;\fe}=\delta_{\mathfrak t;\fe}^{(-1)} (t-\theta)^{w-w(\frak t)}+\sum_{(\mathfrak t';\fe') \in J_0(\frak t;\fe)} \delta_{\mathfrak t';\fe'}^{(-1)} \gamma^{(-1)} (t-\theta)^{w-w(\frak t)}, \quad \text{for } (\frak t;\fe)=\emptyset.
\end{equation}
Here $\gamma^{q-1}=\epsilon$. In fact, for $(\frak t;\fe)=(\fs_i;\fe_i)$, the corresponding equation becomes $\delta_{\fs_i;\fe_i}=\delta_{\fs_i;\fe_i}^{(-1)}$. Thus $\delta_{\fs_i;\fe_i}=a_i(t) \in \Fq[t]$.

Letting $y$ be a variable, we denote by $v_y$ the valuation associated to the place $y$ of the field $\Fq(y)$. We put
	\[ T:=t-t^q, \quad X:=t^q-\theta^q. \]

We claim that
\begin{itemize}
\item[1)] For all $(\mathfrak t;\fe) \in J \setminus \{\emptyset\}$, the polynomial $\delta_{\frak t;\fe}$ is of the form
	\[ \delta_{\frak t;\fe}=f_{\frak t} \left(X^{m_{\frak t}}+\sum_{i=0}^{m_{\frak t}-1} P_{\frak t,i}(T) X^i \right)\]
where
\begin{itemize}
\item $f_{\frak t} \in \Fq[t]$,
\item for all $0 \leq i \leq m_{\frak t}-1$, $P_{\frak t,i}(y)$ belongs to $\Fq(y)$ with $v_y(P_{\frak t,i}) \geq 1$.
\end{itemize}

\item[2)] For all $\mathfrak t \in J \setminus \{\emptyset\}$ and all $\frak t' \in J_0(\frak t)$, there exists $P_{\frak t,\frak t'} \in \Fq(y)$ such that
	\[ f_{\frak t'}=f_{\frak t} P_{\frak t,\frak t'}(T). \]
In particular, if $f_{\frak t}=0$, then $f_{\frak t'}=0$.
\end{itemize}

The proof is by induction on $m_{\frak t}$. We start with $m_{\frak t}=0$. Then $\frak t=\fs_i$ and $\fe=\fe_i$ for some $i$. We have observed that $\delta_{\fs_i;\fe_i}=a_i(t) \in \Fq[t]$ and the assertion follows.

Suppose that the claim holds for all $(\frak t;\fe) \in J \setminus \{\emptyset\}$ with $m_{\frak t}<m$. We now prove the claim for all $(\frak t;\fe) \in J \setminus \{\emptyset\}$ with $m_{\frak t}=m$. In fact, we fix such $\frak t$ and want to find $\delta_{\frak t;\fe} \in \overline K[t]$ such that
\begin{equation} \label{eq:delta1}
 \delta_{\mathfrak t;\fe}=\delta_{\mathfrak t;\fe}^{(-1)} (t-\theta)^{(q-1)m}+\sum_{(\mathfrak t';\fe') \in J_0(\frak t;\fe)} \delta_{\mathfrak t';\fe'}^{(-1)} (t-\theta)^{(q-1)m}.
\end{equation}
By the induction hypothesis, for all $(\frak t';\fe') \in J_0(\frak t;\fe)$, we know that
	\[ \delta_{\frak t';\fe'}=f_{\frak t'} \left(X^{m_{\frak t'}}+\sum_{i=0}^{m_{\frak t'}-1} P_{\frak t',i}(T) X^i \right)\]
where
\begin{itemize}
\item $f_{\frak t'} \in \Fq[t]$,
\item for all $0 \leq i \leq m_{\frak t'}-1$, $P_{\frak t',i}(y) \in \Fq(y)$ with $v_y(P_{\frak t,i}) \geq 1$.
\end{itemize}
For $(\frak t';\fe') \in J_0(\frak t;\fe)$, we write $\frak t'=(\frak t,(m-k)(q-1))$ with $0 \leq k < m$ and $k \not\equiv m \pmod{q}$, in particular $m_{\frak t'}=k$. We put $f_k=f_{\frak t'}$ and $P_{\frak t',i}=P_{k,i}$ so that
\begin{equation} \label{eq:delta2}
\delta_{\frak t';\fe'}=f_k \left(X^k+\sum_{i=0}^{k-1} P_{k,i}(T) X^i \right) \in \Fq[t,\theta^q].
\end{equation}

By Lemma \ref{lem: KuanLin}, $\delta_{\frak t;\fe}$ belongs to $K[t]$, and
$\deg_\theta \delta_{\frak t;\fe} \leq mq$. Further, since $\delta_{\frak t;\fe}$ is divisible by $(t-\theta)^{(q-1)m}$,  we write $\delta_{\frak t;\fe}=F (t-\theta)^{(q-1)m}$ with $F \in K[t]$ and $\deg_\theta F \leq m$. Dividing \eqref{eq:delta1} by $(t-\theta)^{(q-1)m}$ and twisting once yields
\begin{equation} \label{eq:F}
F^{(1)}=F (t-\theta)^{(q-1)m}+\sum_{(\mathfrak t';\fe') \in J_0(\frak t;\fe)} \delta_{\mathfrak t';\fe'}.
\end{equation}
As $\delta_{\frak t';\fe'} \in \Fq[t,\theta^q]$ for all $(\frak t';\fe') \in J_0(\frak t;\fe)$, it follows that $F (t-\theta)^{(q-1)m} \in \Fq[t,\theta^q]$. As $\deg_\theta F \leq m$, we get
	\[ F=\sum_{0 \leq i \leq m/q} f_{m-iq} (t-\theta)^{m-iq}, \quad \text{for $f_{m-iq} \in \Fq[t]$}. \]
Thus
\begin{align*}
& F (t-\theta)^{(q-1)m}=\sum_{0 \leq i \leq m/q} f_{m-iq} (t-\theta)^{mq-iq}=\sum_{0 \leq i \leq m/q} f_{m-iq} X^{m-i}, \\
& F^{(1)}=\sum_{0 \leq i \leq m/q} f_{m-iq} (t-\theta^q)^{m-iq}=\sum_{0 \leq i \leq m/q} f_{m-iq} (T+X)^{m-iq}.
\end{align*}
Putting these and \eqref{eq:delta2} into \eqref{eq:F} gets
\begin{align*}
& \sum_{0 \leq i \leq m/q} f_{m-iq} (T+X)^{m-iq}=\sum_{0 \leq i \leq m/q} f_{m-iq} X^{m-i}+\sum_{\substack{0 \leq k<m \\ k \not\equiv m \pmod{q}}} f_k \left(X^k+\sum_{i=0}^{k-1} P_{k,i}(T) X^i \right).
\end{align*}
Comparing the coefficients of powers of $X$ yields the following linear system in the variables $f_0,\dots,f_{m-1}$:
\begin{equation*}
B_{\big| y=T} \begin{pmatrix}
f_{m-1} \\
\vdots \\
f_0
\end{pmatrix}=f_m \begin{pmatrix}
Q_{m-1} \\
\vdots \\
Q_0
\end{pmatrix}_{\big| y=T}.
\end{equation*}
Here for $0 \leq i \leq m-1$, $Q_i={m \choose i} y^{m-i} \in y\Fq[y]$ and $B=(B_{ij})_{0 \leq i,j \leq m-1} \in \Mat_m(\Fq(y))$ such that
\begin{itemize}
\item $v_y(B_{ij}) \geq 1$ if $i>j$,
\item $v_y(B_{ij}) \geq 0$ if $i<j$,
\item $v_y(B_{ii})=0$ as $B_{ii}=\pm 1$.
\end{itemize}
The above properties follow from the fact that $P_{k,i} \in \Fq(y)$ and $v_y(P_{k,i}) \geq 1$.
Thus $v_y(\det B)=0$ so that $\det B \neq 0$. It follows that for all $0 \leq i \leq m-1$, $f_i=f_m P_i(T)$ with $P_i \in \Fq(y)$ and $v_y(P_i) \geq 1$ and we are done.

To conclude, we have to solve \eqref{eq:delta} for $(\frak t;\fe)=\emptyset$. We have some extra work as we have a factor $\gamma^{(-1)}$ on the right hand side of \eqref{eq:delta emptyset}. We use $\gamma^{(-1)}=\gamma/\epsilon$ and put $\delta:=\delta_{\emptyset,\emptyset}/\gamma \in \overline K[t]$. Then we have to solve
\begin{equation} \label{eq:delta emptyset2}
\epsilon \delta=\delta^{(-1)} (t-\theta)^w+\sum_{(\mathfrak t';\fe') \in J_0(\emptyset)} \delta_{\mathfrak t';\fe'}^{(-1)} (t-\theta)^w.
\end{equation}

We distinguish two cases.
\subsubsection{Case 1: $q-1 \nmid w$, says $w=m(q-1)+r$ with $0<r<q-1$} \ppar

We know that for all $(\frak t';\fe') \in J_0(\emptyset)$, says $\frak t'=((m-k)(q-1)+r)$ with $0 \leq k \leq m$  and $k \not\equiv m-r \pmod{q}$,
\begin{equation} \label{eq:delta4}
\delta_{\frak t';\fe'}=f_k \left(X^k+\sum_{i=0}^{k-1} P_{k,i}(T) X^i \right) \in \Fq[t,\theta^q]
\end{equation}
where
\begin{itemize}
\item $f_k \in \Fq[t]$,
\item for all $0 \leq i \leq k-1$, $P_{k,i}(y)$ belongs to $\Fq(y)$ with $v_y(P_{k,i}) \geq 1$.
\end{itemize}

By Lemma \ref{lem: KuanLin}, $\delta$ belongs to $K[t]$. We claim that $\deg_\theta \delta \leq mq$. Otherwise, we have $\deg_\theta \delta_\emptyset > mq$. Twisting \eqref{eq:delta emptyset2} once gets
\begin{equation*}
\epsilon \delta^{(1)}=\delta (t-\theta^q)^w+\sum_{(\mathfrak t';\fe') \in J_0(\emptyset)} \delta_{\mathfrak t';\fe'} (t-\theta^q)^w.
\end{equation*}
As $\deg_\theta \delta > mq$, we compare the degrees of $\theta$ on both sides and obtain
	\[ q\deg_\theta \delta=\deg_\theta \delta+wq. \]
Thus $q-1 \mid w$, which is a contradiction. We conclude that $\deg_\theta \delta \leq mq$.

From \eqref{eq:delta emptyset2} we see that $\delta$ is divisible by $(t-\theta)^w$. Thus we write $\delta=F (t-\theta)^w$ with $F \in K[t]$ and $\deg_\theta F \leq mq-w=m-r$. Dividing \eqref{eq:delta emptyset2} by $(t-\theta)^w$ and twisting once yields
\begin{equation} \label{eq:F1}
\epsilon F^{(1)}=F (t-\theta)^w+\sum_{(\mathfrak t';\fe') \in J_0(\emptyset)} \delta_{\mathfrak t'}.
\end{equation}
Since $\delta_{\frak t';\fe'} \in \Fq[t,\theta^q]$  for all $(\frak t';\fe') \in J_0(\emptyset)$, it follows that $F (t-\theta)^w \in \Fq[t,\theta^q]$. As $\deg_\theta F \leq m-r$, we write
	\[ F=\sum_{0 \leq i \leq (m-r)/q} f_{m-r-iq} (t-\theta)^{m-r-iq}, \quad \text{for $f_{m-r-iq} \in \Fq[t]$}. \]
It follows that
\begin{align*}
& F (t-\theta)^w=\sum_{0 \leq i \leq (m-r)/q} f_{m-r-iq} (t-\theta)^{mq-iq}=\sum_{0 \leq i \leq (m-r)/q} f_{m-r-iq} X^{m-i}, \\
& F^{(1)}=\sum_{0 \leq i \leq (m-r)/q} f_{m-r-iq} (t-\theta^q)^{m-r-iq}=\sum_{0 \leq i \leq (m-r)/q} f_{m-r-iq} (T+X)^{m-r-iq}.
\end{align*}
Putting these and \eqref{eq:delta4} into \eqref{eq:F1} yields
\begin{align*}
& \epsilon \sum_{0 \leq i \leq (m-r)/q} f_{m-r-iq} (T+X)^{m-r-iq} \\
&=\sum_{0 \leq i \leq (m-r)/q} f_{m-r-iq} X^{m-i}+\sum_{\substack{0 \leq k \leq m \\ k \not\equiv m-r \pmod{q}}} f_k \left(X^k+\sum_{i=0}^{k-1} P_{k,i}(T) X^i \right).
\end{align*}
Comparing the coefficients of powers of $X$ yields the following linear system in the variables $f_0,\dots,f_m$:
\begin{equation*}
B_{\big| y=T} \begin{pmatrix}
f_m \\
\vdots \\
f_0
\end{pmatrix}=0.
\end{equation*}
Here $B=(B_{ij})_{0 \leq i,j \leq m} \in \Mat_{m+1}(\Fq(y))$ such that
\begin{itemize}
\item $v_y(B_{ij}) \geq 1$ if $i>j$,
\item $v_y(B_{ij}) \geq 0$ if $i<j$,
\item $v_y(B_{ii})=0$ as $B_{ii} \in \Fq^\times$.
\end{itemize}
The above properties follow from the fact that $P_{k,i} \in \Fq(y)$ and $v_y(P_{k,i}) \geq 1$.
Thus $v_y(\det B)=0$. Hence $f_0=\dots=f_m=0$. It follows that $\delta_\emptyset=0$ as $\delta=0$ and $\delta_{\frak t';\fe'}=0$ for all $(\frak t';\fe') \in J_0(\emptyset)$. We conclude that $\delta_{\frak t;\fe}=0$ for all $(\frak t;\fe) \in J$. In particular, for all $i$, $a_i(t)=\delta_{\fs_i;\fe_i}=0$, which is a contradiction. Thus this case can never happen.

\subsubsection{Case 2: $q-1 \mid w$, says $w=m(q-1)$} \ppar

By similar arguments as above, we show that $\delta=F (t-\theta)^{(q-1)m}$ with $F \in K[t]$ of the form
	\[ F=\sum_{0 \leq i \leq m/q} f_{m-iq} (t-\theta)^{m-iq}, \quad \text{for $f_{m-iq} \in \Fq[t]$}. \]
Thus
\begin{align*}
& F (t-\theta)^{(q-1)m}=\sum_{0 \leq i \leq m/q} f_{m-iq} (t-\theta)^{mq-iq}=\sum_{0 \leq i \leq m/q} f_{m-iq} X^{m-i}, \\
& F^{(1)}=\sum_{0 \leq i \leq m/q} f_{m-iq} (t-\theta^q)^{m-iq}=\sum_{0 \leq i \leq m/q} f_{m-iq} (T+X)^{m-iq}.
\end{align*}
Putting these and \eqref{eq:delta2} into \eqref{eq:delta emptyset2} gets
\begin{align*}
& \epsilon \sum_{0 \leq i \leq m/q} f_{m-iq} (T+X)^{m-iq}\\
&=\sum_{0 \leq i \leq m/q} f_{m-iq} X^{m-i}+\sum_{\substack{0 \leq k<m \\ k \not\equiv m \pmod{q}}} f_k \left(X^k+\sum_{i=0}^{k-1} P_{k,i}(T) X^i \right).
\end{align*}

Comparing the coefficients of powers of $X$ yields
	\[ \epsilon f_m=f_m \]
and the following linear system in the variables $f_0,\dots,f_{m-1}$:
\begin{equation*}
B_{\big| y=T} \begin{pmatrix}
f_{m-1} \\
\vdots \\
f_0
\end{pmatrix}=f_m \begin{pmatrix}
Q_{m-1} \\
\vdots \\
Q_0
\end{pmatrix}_{\big| y=T}.
\end{equation*}
Here for $0 \leq i \leq m-1$, $Q_i={m \choose i} y^{m-i} \in y\Fq[y]$ and $B=(B_{ij})_{0 \leq i,j \leq m-1} \in \Mat_m(\Fq(y))$ such that
\begin{itemize}
\item $v_y(B_{ij}) \geq 1$ if $i>j$,
\item $v_y(B_{ij}) \geq 0$ if $i<j$,
\item $v_y(B_{ii})=0$ as $B_{ii} \in \Fq^\times$.
\end{itemize}
The above properties follow from the fact that $P_{k,i} \in \Fq(y)$ and $v_y(P_{k,i}) \geq 1$.
Thus $v_y(\det B)=0$ so that $\det B \neq 0$.

We distinguish two subcases.

\medskip
\noindent {\bf Subcase 1: $\epsilon \neq 1$.} \ppar
It follows that $f_m=0$. Then $f_0=\dots=f_{m-1}=0$. Thus $\delta_{\frak t;\fe}=0$ for all $(\frak t;\fe) \in J$. In particular, for all $i$, $a_i(t)=\delta_{\fs_i;\fe_i}=0$. This is a contradiction and we conclude that this case can never happen.

\medskip
\noindent {\bf Subcase 2: $\epsilon=1$.} \ppar

It follows that $\gamma \in \Fq^\times$ and thus
\begin{itemize}
\item[1)] The polynomial $\delta_\emptyset=\delta \gamma$ is of the form
	\[ \delta_\emptyset=f_\emptyset \left(X^m+\sum_{i=0}^{m-1} P_{\emptyset,i}(T) X^i \right)\]
with
\begin{itemize}
\item $f_\emptyset \in \Fq[t]$,
\item for all $0 \leq i \leq m-1$, $P_{\emptyset,i}(y) \in  \Fq(y)$ with $v_y(P_{\emptyset,i}) \geq 1$.
\end{itemize}

\item[2)] For all $(\frak t';\fe') \in J_0(\emptyset)$, there exists $P_{\emptyset,\frak t'} \in \Fq(y)$ such that
	\[ f_{\frak t'}=f_\emptyset P_{\emptyset,\frak t'}(T). \]
\end{itemize}

Hence there exists a unique solution $(\delta_{\mathfrak t;\fe})_{(\mathfrak t;\fe) \in J} \in \Mat_{1 \times |J|}(K[t])$ of \eqref{eq:delta} up to a factor in $\Fq(t)$. Recall that for all $i$, $a_i(t)=\delta_{\fs_i;\fe_i}$. Therefore, up to a scalar in $K^\times$, there exists at most one non-trivial relation
	\[ a \widetilde \pi^w+\sum_i a_i \Li \begin{pmatrix}
 \fve_i  \\
\fs_i  \end{pmatrix}=0 \]
with $a_i \in K$ and $\Li \begin{pmatrix}
 \fve_i  \\
\fs_i  \end{pmatrix} \in \mathcal{AS}_w$. Further, we must have $\fve_i=(1,\dots,1)$ for all $i$.

To conclude, it suffices to exhibit such a relation with $a \neq 0$. In fact, we recall $w=(q-1)m$ and then express $\Li(q-1)^m=\Li \begin{pmatrix}
 1 \\
q-1  \end{pmatrix}^m$ as a $K$-linear combination of ACMPL's of weight $w$.  By Theorem \ref{thm: strong Brown}, we can write
	\[ \Li(q-1)^m=\Li \begin{pmatrix}
 1 \\
q-1  \end{pmatrix}^m=\sum_i a_i \Li \begin{pmatrix}
 \fve_i  \\
\fs_i  \end{pmatrix}, \quad \text{where $a_i \in K$, $\Li \begin{pmatrix}
 \fve_i  \\
\fs_i  \end{pmatrix} \in \mathcal{AS}_w.$} \]
We note that $\Li(q-1)=\zeta_A(q-1)=-D^{-1}_1 \widetilde \pi^{q-1}$. Thus
	\[ (-D_1)^{-m} \widetilde \pi^w-\sum_i a_i \Li \begin{pmatrix}
 \fve_i  \\
\fs_i  \end{pmatrix}=0, \]
which is the desired relation. 


\section{Applications on AMZV's and Zagier-Hoffman's conjectures in positive characteristic} \label{sec:applications}

In this section we give two applications of the study of ACMPL's.

First we use Theorem \ref{thm:ACMPL} to prove Theorem \ref{thm: ZagierHoffman AMZV} which calculates the dimensions of the vector space $\mathcal{AZ}_w$ of alternating multiple zeta values in positive characteristic (AMZV's) of fixed weight introduced by Harada \cite{Har21}. Consequently we determine all linear relations for AMZV's. To do so we develop an algebraic theory to obtain a weak version of Brown's theorem for AMZV's. Then we deduce that $\mathcal{AZ}_w$ and $\mathcal{AL}_w$ are equal and conclude. In contrast to the setting of MZV's, although the results are clean, we are unable to obtain either sharp upper bounds or sharp lower bounds for $\mathcal{AZ}_w$ for general $w$ without the theory of ACMPL's.

Second we restrict our attention to MZV's and determine all linear relations between MZV's. In particular, we obtain a proof of Zagier-Hoffman's conjectures in positive characteristic in full generality (i.e., Theorem \ref{thm: ZagierHoffman}) and generalize the work of one of the authors \cite{ND21}.

\subsection{Linear relations between AMZV's} \label{sec: application AMZV}

\subsubsection{Preliminaries}

For $d \in \mathbb{Z}$ and for $\fs=(s_1,\dots,s_n) \in \N^n$, recalling $S_d(\fs)$ and $S_{<d}(\fs)$ given in \S \ref{sec: definition Sd}, and further letting $\begin{pmatrix}
 \fve  \\
\fs  \end{pmatrix}  =  \begin{pmatrix}
 \varepsilon_1 & \dots & \varepsilon_n \\
s_1 & \dots & s_n \end{pmatrix} $ be an array, we recall (see \S \ref{sec: definition Sd})
\begin{equation*}
S_d \begin{pmatrix}
\fve \\ \fs
\end{pmatrix}  = \sum\limits_{\substack{a_1, \dots, a_n \in A_{+} \\ d = \deg a_1> \dots > \deg a_n\geq 0}} \dfrac{\varepsilon_1^{\deg a_1} \dots \varepsilon_n^{\deg a_n }}{a_1^{s_1} \dots a_n^{s_n}} \in K
\end{equation*}
and
\begin{equation*}
    S_{<d} \begin{pmatrix}
 \fve  \\
\fs  \end{pmatrix}  = \sum\limits_{\substack{a_1, \dots, a_n \in A_{+} \\ d > \deg a_1> \dots > \deg a_n\geq 0}} \dfrac{\varepsilon_1^{\deg a_1} \dots \varepsilon_n^{\deg a_n }}{a_1^{s_1} \dots a_n^{s_n}} \in K.
\end{equation*}
One verifies easily the following formulas:
\begin{align*}
& S_{<d} \begin{pmatrix}
 1& \dots & 1 \\
s_1 & \dots & s_n \end{pmatrix}  = S_{<d}(s_1, \dots, s_n),\quad S_d \begin{pmatrix}
 1 &\dots & 1 \\
s_1 & \dots & s_n \end{pmatrix}  = S_{d}(s_1, \dots, s_n),\\
& S_{d} \begin{pmatrix}
 \varepsilon \\
s  \end{pmatrix}  = \varepsilon^d S_d(s),\quad S_{d} \begin{pmatrix}
 \fve  \\
\fs  \end{pmatrix}  = S_{d} \begin{pmatrix}
 \varepsilon_1  \\
s_1  \end{pmatrix} S_{<d} \begin{pmatrix}
 \fve_{-}  \\
\fs_{-}  \end{pmatrix}.
\end{align*}

Harada \cite{Har21} introduced the alternating multiple zeta value (AMZV) as follows;
\begin{equation*}
    \zeta_A \begin{pmatrix}
 \fve  \\
\fs  \end{pmatrix}  = \sum \limits_{d \geq 0} S_d \begin{pmatrix}
 \fve  \\
\fs  \end{pmatrix}  = \sum\limits_{\substack{a_1, \dots, a_n \in A_{+} \\ \deg a_1> \dots > \deg a_n\geq 0}} \dfrac{\varepsilon_1^{\deg a_1} \dots \varepsilon_n^{\deg a_n }}{a_1^{s_1} \dots a_n^{s_n}}  \in K_{\infty}.
\end{equation*}

Using Chen's formula (see \cite{Che15}), Harada proved that for $s, t \in \mathbb{N}$ and $\varepsilon, \epsilon \in \mathbb{F}_q^{\times}$, we have
\begin{equation} \label{eq: sum AMZV}
S_d \begin{pmatrix}
 \varepsilon \\
s  \end{pmatrix} S_d \begin{pmatrix}
 \epsilon  \\
t  \end{pmatrix}  = S_d \begin{pmatrix}
 \varepsilon\epsilon  \\
s+t  \end{pmatrix}  + \sum \limits_i \Delta^i_{s,t} S_d \begin{pmatrix}
 \varepsilon\epsilon  & 1 \\
s+t-i & i \end{pmatrix} ,
\end{equation}
where
\begin{equation} \label{eq:Delta Chen}
    \Delta^i_{s,t} = \begin{cases}
			(-1)^{s-1} {i - 1  \choose s - 1} +  (-1)^{t-1} {i-1 \choose t-1} & \quad  \text{if } q - 1 \mid i \text{ and } 0 < i < s + t, \\
            0 & \quad \text{otherwise.}
		 \end{cases}
\end{equation}

\begin{remark} \label{rmk: reduce sum}
    When $s + t \leq q$, we deduce from the above formulas that 
    \begin{align*}
        S_d \begin{pmatrix}
 \varepsilon \\
s  \end{pmatrix} S_d \begin{pmatrix}
 \epsilon  \\
t  \end{pmatrix}  = S_d \begin{pmatrix}
 \varepsilon\epsilon  \\
s+t  \end{pmatrix}.
    \end{align*}
\end{remark}

He then proved similar results for products of AMZV's (see \cite{Har21}):
\begin{proposition} \label{sums}
Let $ \begin{pmatrix}
 \fve  \\
\fs  \end{pmatrix} $, $ \begin{pmatrix}
 \fe  \\
\mathfrak{t}  \end{pmatrix} $ be two arrays. Then
\begin{enumerate}
    \item There exist $f_i \in \mathbb{F}_q$ and arrays $ \begin{pmatrix}
 \fm_i  \\
\mathfrak{u}_i  \end{pmatrix} $ with $ \begin{pmatrix}
 \fm_i  \\
\mathfrak{u}_i  \end{pmatrix}  \leq  \begin{pmatrix}
 \fve  \\
\fs  \end{pmatrix}  +  \begin{pmatrix}
 \fe  \\
\mathfrak{t}  \end{pmatrix}  $ and $\depth(\mathfrak{u}_i) \leq \depth(\fs) + \depth(\mathfrak{t})$ for all $i$  such that
    \begin{equation*}
        S_d \begin{pmatrix}
 \fve  \\
\fs  \end{pmatrix} S_d \begin{pmatrix}
 \fe  \\
\mathfrak{t}  \end{pmatrix}  = \sum \limits_i f_i S_d \begin{pmatrix}
 \fm_i  \\
\mathfrak{u}_i  \end{pmatrix}  \quad \text{for all } d \in \mathbb{Z}.
    \end{equation*}
    \item There exist $f'_i \in \mathbb{F}_q$ and arrays $ \begin{pmatrix}
 \fm'_i  \\
\mathfrak{u}'_i  \end{pmatrix} $ with $ \begin{pmatrix}
 \fm'_i  \\
\mathfrak{u}'_i  \end{pmatrix}  \leq  \begin{pmatrix}
 \fve  \\
\fs  \end{pmatrix}  +  \begin{pmatrix}
 \fe  \\
\mathfrak{t}  \end{pmatrix}  $ and $\depth(\mathfrak{u}'_i) \leq \depth(\fs) + \depth(\mathfrak{t})$ for all $i$  such that
    \begin{equation*}
        S_{<d} \begin{pmatrix}
 \fve  \\
\fs  \end{pmatrix} S_{<d} \begin{pmatrix}
 \fe  \\
\mathfrak{t}  \end{pmatrix}  = \sum \limits_i f'_i S_{<d} \begin{pmatrix}
 \fm'_i  \\
\mathfrak{u}'_i  \end{pmatrix}  \quad \text{for all } d \in \mathbb{Z}.
    \end{equation*}
        \item There exist $f''_i \in \mathbb{F}_q$ and arrays $ \begin{pmatrix}
 \fm''_i  \\
\mathfrak{u}''_i  \end{pmatrix} $ with $ \begin{pmatrix}
 \fm''_i  \\
\mathfrak{u}''_i  \end{pmatrix}  \leq  \begin{pmatrix}
 \fve  \\
\fs  \end{pmatrix}  +  \begin{pmatrix}
 \fe  \\
\mathfrak{t}  \end{pmatrix}  $ and $\depth(\mathfrak{u}''_i) \leq \depth(\fs) + \depth(\mathfrak{t})$ for all $i$  such that
    \begin{equation*}
        S_d \begin{pmatrix}
 \fve  \\
\fs  \end{pmatrix} S_{<d} \begin{pmatrix}
 \fe  \\
\mathfrak{t}  \end{pmatrix}  = \sum \limits_i f''_i S_d \begin{pmatrix}
 \fm''_i  \\
\mathfrak{u}''_i  \end{pmatrix}  \quad \text{for all } d \in \mathbb{Z}.
    \end{equation*}
\end{enumerate}
\end{proposition}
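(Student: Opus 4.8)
The statement to prove is Proposition \ref{sums}, which extends the two-variable product formula of Chen-Harada for power sums $S_d\begin{pmatrix}\varepsilon\\s\end{pmatrix}$ to products of arbitrary-depth truncated sums $S_d$ and $S_{<d}$. Notice that this is the exact analogue for AMZV's of Proposition \ref{polysums} (which was proved for the $\Si_d$, $\Si_{<d}$ power sums), and the proof should follow \cite[Proposition 2.1]{ND21} almost verbatim, the only genuine new input being the single-variable identity with the coefficients $\Delta^i_{s,t}$ of \eqref{eq:Delta Chen} in place of the simpler additivity $\Si_d\begin{pmatrix}\varepsilon\\s\end{pmatrix}\Si_d\begin{pmatrix}\epsilon\\t\end{pmatrix}=\Si_d\begin{pmatrix}\varepsilon\epsilon\\s+t\end{pmatrix}$.

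\medskip
\noindent\textbf{Plan of proof.} The plan is to prove the three assertions simultaneously by induction on $\depth(\fs)+\depth(\mathfrak t)$, exploiting the recursive decomposition $S_d\begin{pmatrix}\fve\\\fs\end{pmatrix}=S_d\begin{pmatrix}\varepsilon_1\\s_1\end{pmatrix}S_{<d}\begin{pmatrix}\fve_-\\\fs_-\end{pmatrix}$ and its analogue $S_{<d}\begin{pmatrix}\fve\\\fs\end{pmatrix}=\sum_{e<d}S_e\begin{pmatrix}\fve\\\fs\end{pmatrix}$. First I would establish the base case, i.e.\ the identity for $S_d\begin{pmatrix}\varepsilon\\s\end{pmatrix}S_d\begin{pmatrix}\epsilon\\t\end{pmatrix}$: this is precisely Harada's formula recalled just above the proposition, and one checks directly that the two arrays $\begin{pmatrix}\varepsilon\epsilon\\s+t\end{pmatrix}$ and $\begin{pmatrix}\varepsilon\epsilon&1\\s+t-i&i\end{pmatrix}$ that appear satisfy the constraints $\le\begin{pmatrix}\varepsilon\\s\end{pmatrix}+\begin{pmatrix}\epsilon\\t\end{pmatrix}$: they have matching character $\varepsilon\epsilon$, matching weight $s+t$, and the partial-sum condition $s_1+\dots+s_j\le t_1+\dots+t_j$ holds since $s+t-i\le s+t$; the depth condition is clear. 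Also, $\Delta^i_{s,t}\in\F_p$. I would also record the companion identities for $S_d\cdot S_{<d}$ and $S_{<d}\cdot S_{<d}$ in depth one, which follow from the $S_d\cdot S_d$ identity by summing over the appropriate ranges of the truncation index and using $S_{<d}(\mathfrak u)=\sum_{e<d}S_e(\mathfrak u)$.

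\medskip
\noindent\textbf{Inductive step.} For the inductive step, given $\begin{pmatrix}\fve\\\fs\end{pmatrix}$ and $\begin{pmatrix}\fe\\\mathfrak t\end{pmatrix}$ of total depth $\ge 3$, I would split off the leading entries: write $S_d\begin{pmatrix}\fve\\\fs\end{pmatrix}S_d\begin{pmatrix}\fe\\\mathfrak t\end{pmatrix}=S_d\begin{pmatrix}\varepsilon_1\\s_1\end{pmatrix}S_d\begin{pmatrix}\epsilon_1\\t_1\end{pmatrix}S_{<d}\begin{pmatrix}\fve_-\\\fs_-\end{pmatrix}S_{<d}\begin{pmatrix}\fe_-\\\mathfrak t_-\end{pmatrix}$, then apply the depth-one base case to the first two factors and the inductive hypothesis (assertion (2)) to the last two. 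Collecting terms and using $S_d\begin{pmatrix}\sigma\\v\end{pmatrix}S_{<d}\begin{pmatrix}\fm\\\mathfrak u\end{pmatrix}=S_d\begin{pmatrix}\sigma&\fm\\v&\mathfrak u\end{pmatrix}$ (concatenation) together with assertion (3) of the inductive hypothesis whenever a further $S_d\cdot S_{<d}$ product is created, one rewrites the whole expression as an $\F_p$-linear combination of $S_d\begin{pmatrix}\fm_i\\\mathfrak u_i\end{pmatrix}$. The bookkeeping that the resulting arrays satisfy $\begin{pmatrix}\fm_i\\\mathfrak u_i\end{pmatrix}\le\begin{pmatrix}\fve\\\fs\end{pmatrix}+\begin{pmatrix}\fe\\\mathfrak t\end{pmatrix}$ and $\depth(\mathfrak u_i)\le\depth(\fs)+\depth(\mathfrak t)$ is the same monotonicity argument as in \cite[Proposition 2.1]{ND21}: character is multiplicative, total weight is preserved at each merge, and the partial-sum inequalities are stable under the stuffle-type operations performed. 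Assertions (2) and (3) follow from (1) by the same summation-over-truncation trick as in the base case.

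\medskip
\noindent\textbf{Main obstacle.} Since the paper itself says ``He then proved similar results'' and the proof of Proposition \ref{polysums} ``follows the same line as in \cite[Proposition 2.1]{ND21}'' and is omitted, the only genuine subtlety—hence what I would flag as the main point to get right—is that the single-variable Chen-Harada formula now produces auxiliary arrays with an \emph{extra} trailing entry $\begin{pmatrix}1\\i\end{pmatrix}$ (with $q-1\mid i$), rather than simply adding components; one must verify that this does not break the partial-weight domination $\le$ or inflate the depth beyond $\depth(\fs)+\depth(\mathfrak t)$ after the recursive merges, and that the coefficients $\Delta^i_{s,t}$ land in $\F_p$ (indeed in $\F_p$ since they are integer binomials reduced mod $p$). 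Once this is checked in the depth-one base case, the induction propagates it automatically, so in the write-up I would simply refer to \cite[Proposition 2.1]{ND21} and \cite{Har21} and omit the routine details, exactly as the authors do for Proposition \ref{polysums}.
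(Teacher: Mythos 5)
Your proposal is correct and reconstructs the proof that the paper itself omits, attributing Proposition~\ref{sums} to Harada \cite{Har21}; the strategy---depth-one base case from Chen--Harada's formula, simultaneous induction on $\depth(\fs)+\depth(\mathfrak t)$, and bookkeeping of the $\le$-domination and depth bound---is exactly the one in \cite[Proposition~2.1]{ND21}, adapted to allow the extra trailing entry $\begin{pmatrix}1\\i\end{pmatrix}$. You correctly flag the one genuine subtlety: the Chen--Harada identity produces a depth-increasing auxiliary term, and one must verify that this term still lies below $\begin{pmatrix}\fve\\\fs\end{pmatrix}+\begin{pmatrix}\fe\\\mathfrak t\end{pmatrix}$ in the $\le$-order and has depth bounded by $\depth(\fs)+\depth(\mathfrak t)$.

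One small point of precision worth noting: in your inductive step for (1), after applying (2) to the tails at total depth $m+n-2$, the extra Chen term creates a product $S_d\begin{pmatrix}\varepsilon_1\epsilon_1 & 1\\s_1+t_1-i & i\end{pmatrix}S_{<d}\begin{pmatrix}\fm'\\\mathfrak u'\end{pmatrix}$ whose total depth can be exactly $m+n$, not strictly smaller, so calling this ``assertion (3) of the inductive hypothesis'' is slightly off. The clean fix is to first split $S_d\begin{pmatrix}\varepsilon_1\epsilon_1 & 1\\s_1+t_1-i & i\end{pmatrix}=S_d\begin{pmatrix}\varepsilon_1\epsilon_1\\s_1+t_1-i\end{pmatrix}S_{<d}\begin{pmatrix}1\\i\end{pmatrix}$, merge the two $S_{<d}$ factors by (2) at total depth $\le m+n-1<m+n$, and then concatenate; alternatively, prove (3) before (1) at each total depth, observing that (3) only calls (2) at strictly smaller total depth. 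Either variant makes the dependencies acyclic, and your $\le$ and depth verifications then go through unchanged.
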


We denote by $\mathcal{AZ}$ the $K$-vector space generated by the AMZV's and $\mathcal{AZ}_w$ the $K$-vector space generated by the AMZV's of weight $w$. It follows from Proposition~\ref{sums} that $\mathcal{AZ}$ is a $K$-algebra.

\subsubsection{Algebraic theory for AMZV's} \label{sec: alg theory AMZV} 

We can extend an algebraic theory for AMZV's which follow the same line as that in \S \ref{sec:algebraic part}. 

\begin{definition}
A binary relation is a $K$-linear combination of the form
\begin{equation*}
    \sum \limits_i a_i S_d \begin{pmatrix}
 \fve_i  \\
\fs_i  \end{pmatrix}  + \sum \limits_i b_i S_{d+1} \begin{pmatrix}
 \fe_i  \\
\mathfrak{t}_i  \end{pmatrix}  =0 \quad \text{for all } d \in \mathbb{Z},
\end{equation*}
where $a_i,b_i \in K$ and $ \begin{pmatrix}
 \fve_i  \\
\fs_i  \end{pmatrix} ,  \begin{pmatrix}
 \fe_i  \\
\mathfrak{t}_i  \end{pmatrix} $ are arrays of the same weight.

A binary relation is called a fixed relation if $b_i = 0$ for all $i$.
\end{definition}

We denote by $\mathfrak{R}_{w}$ the set of all binary relations of weight $w$. From Lemma \ref{agree} and the relation $R_{\varepsilon}$ defined in \S \ref{sec:Todd}, we obtain the following binary relation
\begin{equation*}
 R_{\varepsilon} \colon \quad  S_d \begin{pmatrix}
 \varepsilon\\
q  \end{pmatrix}  + \varepsilon^{-1}D_1 S_{d+1} \begin{pmatrix}
 \varepsilon& 1 \\
1 & q-1  \end{pmatrix}  =0,
\end{equation*}
where $D_1 = \theta^q - \theta$.

For later definitions, let $R \in \mathfrak{R}_w$ be a binary relation of the form
\begin{equation} \label{eq: Rd}
    R(d) \colon \quad \sum \limits_i a_i S_d \begin{pmatrix}
 \fve_i  \\
\fs_i  \end{pmatrix}  + \sum \limits_i b_i S_{d+1} \begin{pmatrix}
 \fe_i  \\
\mathfrak{t}_i  \end{pmatrix}  =0,
\end{equation}
where $a_i,b_i \in K$ and $ \begin{pmatrix}
 \fve_i  \\
\fs_i  \end{pmatrix} ,  \begin{pmatrix}
 \fe_i  \\
\mathfrak{t}_i  \end{pmatrix} $ are arrays of the same weight. We now define some operators on $K$-vector spaces of binary relations.

First, we define operators $\mathcal B^*$. Let $ \begin{pmatrix}
 \sigma  \\
v  \end{pmatrix} $ be an array. We introduce
\begin{equation*}
    \mathcal B^*_{\sigma,v} \colon \mathfrak{R}_{w} \longrightarrow \mathfrak{R}_{w+v}
\end{equation*}
as follows: for each $R \in \mathfrak{R}_{w}$ as given in \eqref{eq: Rd},
the image $\mathcal B^*_{\sigma,v}(R) = S_d \begin{pmatrix}
 \sigma  \\
v  \end{pmatrix} \sum_{j < d} R(j)$ is a fixed relation of the form
\begin{align*}
    0 &= S_d \begin{pmatrix}
 \sigma  \\
v  \end{pmatrix}   \left(\sum \limits_ia_i S_{<d} \begin{pmatrix}
 \fve_i  \\
\fs_i  \end{pmatrix}  + \sum \limits_i  b_i S_{<d+1} \begin{pmatrix}
 \fe_i  \\
\mathfrak{t}_i  \end{pmatrix} \right)  \\
    &= \sum \limits_i a_i S_d \begin{pmatrix}
 \sigma  \\
v  \end{pmatrix} S_{<d} \begin{pmatrix}
 \fve_i  \\
\fs_i  \end{pmatrix}  + \sum \limits_i  b_i S_d \begin{pmatrix}
 \sigma  \\
v  \end{pmatrix}  S_{<d} \begin{pmatrix}
 \fe_i  \\
\mathfrak{t}_i  \end{pmatrix}  + \sum \limits_i  b_i S_d \begin{pmatrix}
 \sigma  \\
v  \end{pmatrix}  S_{d} \begin{pmatrix}
 \fe_i  \\
\mathfrak{t}_i  \end{pmatrix} \\
    &= \sum \limits_i a_i S_d \begin{pmatrix}
\sigma & \fve_i  \\
v& \fs_i  \end{pmatrix}  + \sum \limits_i  b_i S_d \begin{pmatrix}
\sigma & \fe_i  \\
v& \mathfrak{t}_i  \end{pmatrix}  + \sum \limits_i  b_i \sum \limits_j f_{i,j} S_d \begin{pmatrix}
 \fm_{i,j}  \\
\mathfrak{u}_{i,j}  \end{pmatrix} .
\end{align*}
The last equality follows from Proposition \ref{sums}.

Let $ \begin{pmatrix}
 \Sigma  \\
V  \end{pmatrix}  =  \begin{pmatrix}
 \sigma_1 & \dots & \sigma_n \\
v_1 & \dots & v_n \end{pmatrix} $ be an array. We define an operator $\mathcal{B}^*_{\Sigma,V}(R) $ by
\begin{equation*}
    \mathcal B^*_{\Sigma,V}(R) := \mathcal B^*_{\sigma_1,v_1} \circ \dots \circ \mathcal B^*_{\sigma_n,v_n}(R).
\end{equation*}

\begin{lemma} \label{polybesao AMZV}
Let $ \begin{pmatrix}
 \Sigma  \\
V  \end{pmatrix}  =  \begin{pmatrix}
 \sigma_1 & \dots & \sigma_n \\
v_1 & \dots & v_n \end{pmatrix} $ be an array. Under the notations of \eqref{eq: Rd}, suppose that for all $i$, $v_n + t_{i1} \leq q$ where $\mathfrak{t}_{i} = (t_{i1}, \mathfrak{t}_{i-})$ . Then $\mathcal B^*_{\Sigma,V}(R)$ is of the form
\begin{align*}
 \sum \limits_i a_i S_d \begin{pmatrix}
\Sigma & \fve_i  \\
V& \fs_i  \end{pmatrix} & + \sum \limits_i  b_i S_d \begin{pmatrix}
\Sigma & \fe_i  \\
V& \mathfrak{t}_i  \end{pmatrix}
\\ & + \sum \limits_i  b_i S_d  \begin{pmatrix}
 \sigma_1 & \dots & \sigma_{n-1} & \sigma_n \epsilon_{i1} &\fe_{i-}  \\
v_1 & \dots & v_{n-1} & v_n+ t_{i1} & \mathfrak{t}_{i-}  \end{pmatrix}  = 0 .
\end{align*}
\end{lemma}

\begin{proof}
From the definition, we have $\mathcal{B}^*_{\sigma_n,v_n}(R)$ is of the form
\begin{align*}
    \sum \limits_i a_i S_d \begin{pmatrix}
\sigma_n & \fve_i  \\
v_n& \fs_i  \end{pmatrix}  + \sum \limits_i  b_i S_d \begin{pmatrix}
\sigma_n & \fe_i  \\
v_n& \mathfrak{t}_i  \end{pmatrix}  + \sum \limits_i  b_i S_d \begin{pmatrix}
 \sigma_n  \\
v_n  \end{pmatrix}  S_{d} \begin{pmatrix}
 \fe_i  \\
\mathfrak{t}_i  \end{pmatrix} = 0.
\end{align*}
For all $i$, since $v_n + t_{i1} \leq q$, it follows from Remark \ref{rmk: reduce sum} that 
\begin{align*}
    S_d \begin{pmatrix}
 \sigma_n  \\
v_n  \end{pmatrix}  S_{d} \begin{pmatrix}
 \fe_i  \\
\mathfrak{t}_i  \end{pmatrix} = S_d \begin{pmatrix}
 \sigma_n  \\
v_n  \end{pmatrix} S_d \begin{pmatrix}
 \epsilon_{i1}  \\
t_{i1}  \end{pmatrix}  S_{<d} \begin{pmatrix}
 {\fe_i}_-  \\
{\mathfrak{t}_i}_-  \end{pmatrix}
&= S_d \begin{pmatrix}
 \sigma_n \epsilon_{i1} \\
v_n +  t_{i1} \end{pmatrix} S_{<d} \begin{pmatrix}
 {\fe_i}_-  \\
{\mathfrak{t}_i}_-  \end{pmatrix}\\
&= S_d \begin{pmatrix}
 \sigma_n \epsilon_{i1} & {\fe_i}_- \\
v_n +  t_{i1} & {\mathfrak{t}_i}_- \end{pmatrix},
\end{align*} 
hence $\mathcal{B}^*_{\sigma_n,v_n}(R)$ is of the form
\begin{align*}
    \sum \limits_i a_i S_d \begin{pmatrix}
\sigma_n & \fve_i  \\
v_n& \fs_i  \end{pmatrix}  + \sum \limits_i  b_i S_d \begin{pmatrix}
\sigma_n & \fe_i  \\
v_n& \mathfrak{t}_i  \end{pmatrix}  + \sum \limits_i  b_i S_d \begin{pmatrix}
 \sigma_n \epsilon_{i1} & {\fe_i}_- \\
v_n +  t_{i1} & {\mathfrak{t}_i}_- \end{pmatrix} = 0.
\end{align*}
Apply the operator $\mathcal B^*_{\sigma_1,v_1} \circ \dots \circ \mathcal B^*_{\sigma_{n - 1},v_{n - 1}}$ to $\mathcal{B}^*_{\sigma_n,v_n}(R)$, the result then follows from the definition.
\end{proof}

Second, we define operators $\mathcal C$. Let $ \begin{pmatrix}
 \Sigma  \\
V  \end{pmatrix} $ be an array of weight $v$. We introduce
\begin{equation*}
     \mathcal C_{\Sigma,V}(R) \colon \mathfrak{R}_{w} \longrightarrow \mathfrak{R}_{w+v}
\end{equation*}
as follows: for each $R \in \mathfrak{R}_{w}$ as given in \eqref{eq: Rd},
the image $\mathcal C_{\Sigma,V}(R) = R(d) S_{<d+1} \begin{pmatrix}
 \Sigma  \\
V  \end{pmatrix} $ is a binary relation of the form
\begin{align*}
    0 &= \left( \sum \limits_i a_i S_d \begin{pmatrix}
 \fve_i  \\
\fs_i  \end{pmatrix}  + \sum \limits_i b_i S_{d+1} \begin{pmatrix}
 \fe_i  \\
\mathfrak{t}_i  \end{pmatrix} \right) S_{<d+1} \begin{pmatrix}
 \Sigma  \\
V  \end{pmatrix}   \\
    &= \sum \limits_i a_i S_d \begin{pmatrix}
 \fve_i  \\
\fs_i  \end{pmatrix} S_{d} \begin{pmatrix}
 \Sigma  \\
V  \end{pmatrix}  + \sum \limits_i a_i S_d \begin{pmatrix}
 \fve_i  \\
\fs_i  \end{pmatrix} S_{<d} \begin{pmatrix}
 \Sigma  \\
V  \end{pmatrix}  + \sum \limits_i b_i S_{d+1} \begin{pmatrix}
 \fe_i  \\
\mathfrak{t}_i  \end{pmatrix} S_{<d+1} \begin{pmatrix}
 \Sigma  \\
V  \end{pmatrix} \\
    &= \sum \limits_i f_i S_d \begin{pmatrix}
 \fm_i  \\
\mathfrak{u}_i  \end{pmatrix}  + \sum \limits_i f'_i S_{d+1} \begin{pmatrix}
 \fm'_i  \\
\mathfrak{u}'_i  \end{pmatrix} .
\end{align*}
The last equality follows from Proposition \ref{sums}.

In particular, the following proposition gives the form of $\mathcal C_{\Sigma,V}(R_{\varepsilon})$.

\begin{proposition} \label{polycer1 AMZV}
Let $ \begin{pmatrix}
 \Sigma  \\ V  \end{pmatrix}$ be an array with $V = (v_1,V_{-})$ and $\Sigma = (\sigma_1, \Sigma_{-})$. Then $\mathcal C_{\Sigma,V}(R_{\varepsilon})$ is of the form
\begin{equation*}
  S_d \begin{pmatrix}
 \varepsilon\sigma_1 & \Sigma_{-} \\
q + v_1 & V_{-} \end{pmatrix}  +  \sum \limits_i a_i S_d \begin{pmatrix}
\fve_i  \\
\fs_i  \end{pmatrix}  + \sum \limits_i b_i S_{d+1} \begin{pmatrix}
\varepsilon& \fe_i  \\
1 & \mathfrak{t}_i  \end{pmatrix}  =0,
\end{equation*}
where $a_i, b_i \in K$ and $\begin{pmatrix}
\fve_i  \\
\fs_i  \end{pmatrix},  \begin{pmatrix}
 \fe_i  \\
\mathfrak{t}_i  \end{pmatrix} $ are arrays satisfying
\begin{itemize}
    \item $\begin{pmatrix}
\fve_i  \\
\fs_i  \end{pmatrix} \leq \begin{pmatrix}
\varepsilon  \\
q \end{pmatrix} + \begin{pmatrix}
\Sigma \\
V  \end{pmatrix}$ and $s_{i1} < q + v_1$ for all $i$;
\item $ \begin{pmatrix}
 \fe_i  \\
\mathfrak{t}_i  \end{pmatrix}  \leq  \begin{pmatrix}
 1  \\
q - 1 \end{pmatrix}  +  \begin{pmatrix}
 \Sigma  \\
V  \end{pmatrix} $ for all $i$.
\end{itemize} 
\end{proposition}

\begin{proof}
From the definition,  $\mathcal C_{\Sigma,V}(R_{\varepsilon})$ is of the form
\begin{equation*}
    S_d \begin{pmatrix}
\varepsilon \\
q  \end{pmatrix} S_{d} \begin{pmatrix}
 \Sigma  \\
V  \end{pmatrix}  + S_d \begin{pmatrix}
 \varepsilon \\
q  \end{pmatrix} S_{<d} \begin{pmatrix}
 \Sigma  \\
V  \end{pmatrix}  + \varepsilon^{-1} D_1 S_{d+1} \begin{pmatrix}
\varepsilon&  1  \\
1 & q-1 \end{pmatrix} S_{<d+1} \begin{pmatrix}
 \Sigma  \\
V  \end{pmatrix}  = 0.
\end{equation*}
It follows from \eqref{eq: sum AMZV} and Proposition \ref{sums} that
\begin{align*}
        S_d \begin{pmatrix}
\varepsilon \\
q  \end{pmatrix} S_{d} \begin{pmatrix}
 \Sigma  \\
V  \end{pmatrix}  + S_d \begin{pmatrix}
 \varepsilon \\
q  \end{pmatrix} S_{<d} \begin{pmatrix}
 \Sigma  \\
V  \end{pmatrix}  &=   S_d \begin{pmatrix}
 \varepsilon\sigma_1 & \Sigma_{-} \\
q + v_1 & V_{-} \end{pmatrix}  +  \sum \limits_i a_i S_d \begin{pmatrix}
\fve_i  \\
\fs_i  \end{pmatrix},  \\
  \varepsilon^{-1}D_1 S_{d+1} \begin{pmatrix}
\varepsilon&  1  \\
1 & q-1 \end{pmatrix} S_{<d+1} \begin{pmatrix}
 \Sigma  \\
V  \end{pmatrix}   &= \sum \limits_i b_i S_{d+1} \begin{pmatrix}
 \varepsilon& \fe_i  \\
1 & \mathfrak{t}_i  \end{pmatrix} ,
\end{align*}
where $a_i, b_i \in K$ and $\begin{pmatrix}
\fve_i  \\
\fs_i  \end{pmatrix},  \begin{pmatrix}
 \fe_i  \\
\mathfrak{t}_i  \end{pmatrix} $ are arrays satisfying
\begin{itemize}
    \item $\begin{pmatrix}
\fve_i  \\
\fs_i  \end{pmatrix} \leq \begin{pmatrix}
\varepsilon  \\
q \end{pmatrix} + \begin{pmatrix}
\Sigma \\
V  \end{pmatrix}$ and $s_{i1} < q + v_1$ for all $i$;
\item $ \begin{pmatrix}
 \fe_i  \\
\mathfrak{t}_i  \end{pmatrix}  \leq  \begin{pmatrix}
 1  \\
q - 1 \end{pmatrix}  +  \begin{pmatrix}
 \Sigma  \\
V  \end{pmatrix} $ for all $i$.
\end{itemize} 
This proves the proposition.
\end{proof}

Finally, we define operators $\mathcal{BC}$. Let $\varepsilon \in \mathbb{F}_q^{\times}$. We introduce
\begin{equation*}
   \mathcal{BC}_{\varepsilon,q} \colon \mathfrak{R}_{w} \longrightarrow \mathfrak{R}_{w+q}
\end{equation*}
as follows: for each $R \in \mathfrak{R}_{w}$ as given in \eqref{eq: Rd}, the image $\mathcal{BC}_{\varepsilon,q}(R)$ is a binary relation given by
\begin{align*}
    \mathcal{BC}_{\varepsilon,q}(R) = \mathcal B^*_{\varepsilon,q}(R) - \sum\limits_i b_i \mathcal C_{\fe_i,\mathfrak{t}_i} (R_{\varepsilon}).
\end{align*}

Let us clarify the definition of $\mathcal{BC}_{\varepsilon,q}$. We know that $\mathcal B^*_{\varepsilon,q}(R)$ is of the form
\begin{equation*}
    \sum \limits_i a_i S_d \begin{pmatrix}
\varepsilon& \fve_i  \\
q& \fs_i  \end{pmatrix}  + \sum \limits_i  b_i S_d \begin{pmatrix}
\varepsilon& \fe_i  \\
q& \mathfrak{t}_i  \end{pmatrix}  + \sum \limits_i  b_i S_d \begin{pmatrix}
 \varepsilon  \\
q  \end{pmatrix}  S_{d} \begin{pmatrix}
 \fe_i  \\
\mathfrak{t}_i  \end{pmatrix} = 0.
\end{equation*}
Moreover, $\mathcal C_{\fe_i,\mathfrak{t}_i} (R_{\varepsilon})$ is of the form
\begin{equation*}
S_d \begin{pmatrix}
\varepsilon& \fe_i  \\
q& \mathfrak{t}_i  \end{pmatrix}  +  S_d \begin{pmatrix}
 \varepsilon  \\
q  \end{pmatrix}  S_{d} \begin{pmatrix}
 \fe_i  \\
\mathfrak{t}_i  \end{pmatrix}  + \varepsilon^{-1}D_1 S_{d+1} \begin{pmatrix}
 \varepsilon \\
1  \end{pmatrix} S_{<d+1} \begin{pmatrix}
1  \\
q-1  \end{pmatrix} S_{<d+1} \begin{pmatrix}
\fe_i  \\
\mathfrak{t}_i  \end{pmatrix} = 0.
\end{equation*}
Combining with Proposition \ref{sums}, we have that $\mathcal{BC}_{\varepsilon,q}(R)$ is of the form
\begin{equation*}
   \sum \limits_i a_i S_d \begin{pmatrix}
\varepsilon& \fve_i  \\
q& \fs_i  \end{pmatrix}  + \sum \limits_{i,j} b_{ij} S_{d+1} \begin{pmatrix}
\varepsilon& \fe_{ij}  \\
1& \mathfrak{t}_{ij}  \end{pmatrix} =0,
\end{equation*}
where $b_{ij} \in K$ and $ \begin{pmatrix}
\fe_{ij}  \\
\mathfrak{t}_{ij}  \end{pmatrix} $ are arrays satisfying $ \begin{pmatrix}
\fe_{ij}  \\
\mathfrak{t}_{ij}  \end{pmatrix}  \leq  \begin{pmatrix}
1  \\
q-1  \end{pmatrix}  +  \begin{pmatrix}
\fe_{i}  \\
\mathfrak{t}_{i}  \end{pmatrix} $ for all $j$.

\begin{proposition}\label{polydecom AMZV}
1) Let $ \begin{pmatrix}
 \fve  \\
\fs  \end{pmatrix}  =  \begin{pmatrix}
 \varepsilon_1 & \dots & \varepsilon_n \\
s_1 & \dots & s_n \end{pmatrix} $ be an array such that $\Init(\fs) = (s_1, \dots, s_{k-1})$ for some $1 \leq k \leq n$.  Then $\zeta_A \begin{pmatrix}
 \fve  \\
\fs  \end{pmatrix} $ can be decomposed as follows:
    \begin{equation*}
    \zeta_A \begin{pmatrix}
 \fve  \\
\fs  \end{pmatrix}
    = \underbrace{\sum\limits_i a_i \zeta_A \begin{pmatrix}
 \fve'_i  \\
\fs'_i  \end{pmatrix} }_\text{type 1} + \underbrace{\sum\limits_i b_i\zeta_A \begin{pmatrix}
 \fe_i'  \\
\mathfrak{t}'_i  \end{pmatrix} }_\text{type 2} + \underbrace{\sum\limits_i c_i\zeta_A \begin{pmatrix}
 \fm_i  \\
\mathfrak{u}_i  \end{pmatrix} }_\text{type 3}  ,
    \end{equation*}
    where $a_i, b_i, c_i \in K$ such that for all $i$, the following properties are satisfied:
    \begin{itemize}
        \item For all arrays $ \begin{pmatrix}
 \fe  \\
\mathfrak{t}  \end{pmatrix} $ appearing on the right hand side,
        \begin{equation*}
            \depth(\mathfrak{t}) \geq \depth(\fs) \quad \text{and} \quad T_k(\mathfrak{t}) \leq T_k(\fs).
     \end{equation*}
     \item For the array $ \begin{pmatrix}
 \fve'  \\
\fs'  \end{pmatrix} $ of type $1$ with respect to $ \begin{pmatrix}
 \fve  \\
\fs  \end{pmatrix} $, we have $\Init(\fs) \preceq \Init(\fs')$ and $s'_k < s_k$.

\item For the array $ \begin{pmatrix}
 \fe'  \\
\mathfrak{t}'  \end{pmatrix} $ of type $2$ with respect to $ \begin{pmatrix}
 \fve  \\
\fs  \end{pmatrix} $, for all $k \leq \ell \leq n$,
        \begin{equation*}
t'_{1} +  \dots + t'_\ell < s_1 +  \dots + s_\ell.
     \end{equation*}

        \item For the array $ \begin{pmatrix}
 \fm  \\
\mathfrak{u}  \end{pmatrix} $ of type $3$ with respect to $ \begin{pmatrix}
 \fve  \\
\fs  \end{pmatrix} $, we have $\Init(\fs) \prec\Init(\mathfrak{u})$.
    \end{itemize}

\noindent 2) Let $ \begin{pmatrix}
 \fve  \\
\fs  \end{pmatrix}  =  \begin{pmatrix}
 \varepsilon_1 & \dots & \varepsilon_k \\
s_1 & \dots & s_k \end{pmatrix} $ be an array such that $\Init(\fs) = \fs$ and $s_k = q$. Then $\zeta_A \begin{pmatrix}
 \fve  \\
\fs  \end{pmatrix} $ can be decomposed as follows:
    \begin{equation*}
    \zeta_A \begin{pmatrix}
 \fve  \\
\fs  \end{pmatrix}
    =   \underbrace{\sum\limits_i b_i\zeta_A \begin{pmatrix}
 \fe'_i  \\
\mathfrak{t}'_i  \end{pmatrix} }_\text{type 2} + \underbrace{\sum\limits_i c_i\zeta_A \begin{pmatrix}
 \fm_i  \\
\mathfrak{u}_i  \end{pmatrix} }_\text{type 3}  ,
    \end{equation*}
    where $ b_i, c_i \in K$ such that for all $i$, the following properties are satisfied:
    \begin{itemize}
        \item For all arrays $ \begin{pmatrix}
 \fe  \\
\mathfrak{t}  \end{pmatrix} $ appearing on the right hand side,
        \begin{equation*}
            \depth(\mathfrak{t}) \geq \depth(\fs) \quad \text{and} \quad T_k(\mathfrak{t}) \leq T_k(\fs).
     \end{equation*}

\item For the array $ \begin{pmatrix}
 \fe'  \\
\mathfrak{t}'  \end{pmatrix} $ of type $2$ with respect to $ \begin{pmatrix}
 \fve  \\
\fs  \end{pmatrix} $,
        \begin{equation*}
t'_{1} +  \dots + t'_k < s_1 +  \dots + s_k.
     \end{equation*}

        \item For the array $ \begin{pmatrix}
 \fm  \\
\mathfrak{u}  \end{pmatrix} $ of type $3$ with respect to $ \begin{pmatrix}
 \fve  \\
\fs  \end{pmatrix} $, we have $\Init(\fs) \prec\Init(\mathfrak{u})$.
\end{itemize}
\end{proposition}

\begin{proof}
For Part 1, since $\Init(\fs) = (s_1, \dots, s_{k-1})$, we get $s_k > q$. Set $ \begin{pmatrix}
\Sigma  \\
V  \end{pmatrix}  =  \begin{pmatrix}
1 & \varepsilon_{k+1} &\dots & \varepsilon_n \\
s_k - q & s_{k+1} &\dots & s_n \end{pmatrix} $. By Proposition \ref{polycer1 AMZV}, $\mathcal{C}_{\Sigma,V}(R_{\varepsilon_k})$ is of the form
\begin{equation} \label{polyr1 AMZV}
  S_d \begin{pmatrix}
 \varepsilon_{k} & \dots & \varepsilon_n \\
s_{k} & \dots & s_n \end{pmatrix}  +  \sum \limits_i a_i S_d \begin{pmatrix}
\fve_i  \\
\fs_i  \end{pmatrix}  + \sum \limits_i b_i S_{d+1} \begin{pmatrix}
\varepsilon_k & \fe_i  \\
1 & \mathfrak{t}_i  \end{pmatrix}  =0,
\end{equation}
where $a_i, b_i \in K$ and $\begin{pmatrix}
\fve_i  \\
\fs_i  \end{pmatrix},  \begin{pmatrix}
 \fe_i  \\
\mathfrak{t}_i  \end{pmatrix} $ are arrays satisfying
\begin{align} \label{eq: part 1 compa 0 AMZV}
    \begin{pmatrix}
\fve_i  \\
\fs_i  \end{pmatrix} &\leq \begin{pmatrix}
\varepsilon_k  \\
q \end{pmatrix} + \begin{pmatrix}
\Sigma \\
V  \end{pmatrix} = \begin{pmatrix}
 \varepsilon_{k} & \dots & \varepsilon_n \\
s_{k} & \dots & s_n \end{pmatrix} \quad \text{and} \quad s_{i1} < q + v_1 = s_k;\\ \notag
\begin{pmatrix}
 \fe_i  \\
\mathfrak{t}_i  \end{pmatrix}  &\leq  \begin{pmatrix}
 1  \\
q - 1 \end{pmatrix}  +  \begin{pmatrix}
 \Sigma  \\
V  \end{pmatrix} = \begin{pmatrix}
1 & \varepsilon_{k + 1} & \dots & \varepsilon_n \\
s_k - 1 & s_{k + 1} & \dots & s_n \end{pmatrix}.
\end{align}

For $m \in \mathbb{N}$, we recall that $q^{\{m\}}$ is the sequence of length $m$ with all terms equal to $q$. Setting $s_0 = 0$, we may assume that there exists a maximal index $j$ with $0 \leq j \leq k-1$ such that $s_j < q$, hence $\Init(\fs) = (s_1, \dots, s_j, q^{\{k-j-1\}})$.

Then the operator $ \mathcal{BC}_{\varepsilon_{j+1},q} \circ \dots \circ \mathcal{BC}_{\varepsilon_{k-1},q}$ applied to the relation \eqref{polyr1 AMZV} gives
\begin{align} \label{polyr1 AMZV 2}
 S_d \begin{pmatrix}
 \varepsilon_{j+1} & \dots & \varepsilon_{k-1} & \varepsilon_{k} & \dots & \varepsilon_n \\
q & \dots & q & s_{k} & \dots & s_n \end{pmatrix}  & +  \sum \limits_i a_i S_d \begin{pmatrix}
\varepsilon_{j+1} & \dots & \varepsilon_{k-1} & \fve_i \\
q & \dots & q & \fs_i  \end{pmatrix}  \\ \notag
&
+ \sum \limits_i b_{i_1 \dots i_{k-j}} S_{d+1} \begin{pmatrix}
\varepsilon_{j+1} & \fe_{i_1 \dots i_{k-j}}  \\
1 & \mathfrak{t}_{i_1 \dots i_{k-j}}  \end{pmatrix}  =0,
\end{align}
where $b_{i_1 \dots i_{k-j}} \in K$ and for $2 \leq l \leq k-j$, $ \begin{pmatrix}
 \fe_{i_1 \dots i_l}  \\
 \mathfrak{t}_{i_1 \dots i_l}  \end{pmatrix} $ are arrays satisfying
\begin{equation*}
     \begin{pmatrix}
 \fe_{i_1 \dots i_l}  \\
 \mathfrak{t}_{i_1 \dots i_l}  \end{pmatrix}  \leq  \begin{pmatrix}
 1  \\
q - 1 \end{pmatrix}  +  \begin{pmatrix}
\varepsilon_{k-l+2} & \fe_{i_1 \dots i_{l-1}}  \\
1 & \mathfrak{t}_{i_1 \dots i_{l-1}}  \end{pmatrix}  =  \begin{pmatrix}
\varepsilon_{k-l+2} & \fe_{i_1 \dots i_{l-1}}  \\
q & \mathfrak{t}_{i_1 \dots i_{l-1}}  \end{pmatrix} .
\end{equation*}
Thus 
\begin{equation} \label{eq: part 1 compa AMZV}
     \begin{pmatrix}
 \fe_{i_1 \dots i_{k-j}}  \\
 \mathfrak{t}_{i_1 \dots i_{k-j}}  \end{pmatrix}  \leq  \begin{pmatrix}
 \varepsilon_{j+2}& \dots & \varepsilon_k  & 1 & \varepsilon_{k+1}& \dots & \varepsilon_n \\
q& \dots & q & s_{k} - 1 & s_{k+1} &\dots & s_n \end{pmatrix} .
\end{equation}

We let $ \begin{pmatrix}
\Sigma'  \\
V'  \end{pmatrix}  =  \begin{pmatrix}
 \varepsilon_{1} &\dots & \varepsilon_j \\
 s_1 &\dots & s_j \end{pmatrix} $ and we apply $\mathcal{B}^*_{\Sigma',V'}$ to \eqref{polyr1 AMZV 2}. Since $s_j < q$, i.e., $s_j + 1 \leq q$, it follows from Lemma \ref{polybesao AMZV} that 
\begin{align*}
  S_d \begin{pmatrix}
\fve \\ \fs \end{pmatrix}  &+  \sum \limits_i a_i S_d \begin{pmatrix}
\varepsilon_1 & \dots & \varepsilon_j & \varepsilon_{j+1} & \dots & \varepsilon_{k-1} & \fve_i \\
s_1 & \dots & s_j & q & \dots & q & \fs_i  \end{pmatrix}  \\
& + \sum \limits_i b_{i_1 \dots i_{k-j}} S_d \begin{pmatrix}
\varepsilon_1 & \dots & \varepsilon_j & \varepsilon_{j+1} & \fe_{i_1 \dots i_{k-j}}  \\
s_1 & \dots & s_j & 1 & \mathfrak{t}_{i_1 \dots i_{k-j}}  \end{pmatrix}  \\
& + \sum \limits_i b_{i_1 \dots i_{k-j}} S_d \begin{pmatrix}
\varepsilon_1 & \dots & \varepsilon_{j-1} & \varepsilon_j \varepsilon_{j+1} & \fe_{i_1 \dots i_{k-j}}  \\
s_1 & \dots & s_{j-1} & s_j+1 & \mathfrak{t}_{i_1 \dots i_{k-j}}  \end{pmatrix}  =0.
\end{align*}
Hence
\begin{align*}
  \zeta_A \begin{pmatrix}
\fve \\ \fs \end{pmatrix}  &+  \sum \limits_i a_i \zeta_A \begin{pmatrix}
\varepsilon_1 & \dots & \varepsilon_j & \varepsilon_{j+1} & \dots & \varepsilon_{k-1} & \fve_i \\
s_1 & \dots & s_j & q & \dots & q & \fs_i  \end{pmatrix}  \\
& + \sum \limits_i b_{i_1 \dots i_{k-j}} \zeta_A \begin{pmatrix}
\varepsilon_1 & \dots & \varepsilon_j & \varepsilon_{j+1} & \fe_{i_1 \dots i_{k-j}}  \\
s_1 & \dots & s_j & 1 & \mathfrak{t}_{i_1 \dots i_{k-j}}  \end{pmatrix}  \\
& + \sum \limits_i b_{i_1 \dots i_{k-j}} \zeta_A \begin{pmatrix}
\varepsilon_1 & \dots & \varepsilon_{j-1} & \varepsilon_j \varepsilon_{j+1} & \fe_{i_1 \dots i_{k-j}}  \\
s_1 & \dots & s_{j-1} & s_j+1 & \mathfrak{t}_{i_1 \dots i_{k-j}}  \end{pmatrix}  =0.
\end{align*}
In other words, we have
\begin{align} \label{eq: part 1 Li AMZV}
  \zeta_A \begin{pmatrix}
\fve \\ \fs \end{pmatrix}  = &-  \sum \limits_i a_i \zeta_A \begin{pmatrix}
\varepsilon_1 & \dots & \varepsilon_j & \varepsilon_{j+1} & \dots & \varepsilon_{k-1} & \fve_i \\
s_1 & \dots & s_j & q & \dots & q & \fs_i  \end{pmatrix}  \\ \notag
& - \sum \limits_i b_{i_1 \dots i_{k-j}} \zeta_A \begin{pmatrix}
\varepsilon_1 & \dots & \varepsilon_j & \varepsilon_{j+1} & \fe_{i_1 \dots i_{k-j}}  \\
s_1 & \dots & s_j & 1 & \mathfrak{t}_{i_1 \dots i_{k-j}}  \end{pmatrix}  \\ \notag
& - \sum \limits_i b_{i_1 \dots i_{k-j}} \zeta_A \begin{pmatrix}
\varepsilon_1 & \dots & \varepsilon_{j-1} & \varepsilon_j \varepsilon_{j+1} & \fe_{i_1 \dots i_{k-j}}  \\
s_1 & \dots & s_{j-1} & s_j+1 & \mathfrak{t}_{i_1 \dots i_{k-j}}  \end{pmatrix}.
\end{align}
The first term, the second term, and the third term on the right hand side of \eqref{eq: part 1 Li AMZV} are referred to as type 1, type 2, and type 3 respectively.

We now verify the conditions of arrays of type 1, type 2, and type 3 with respect to $ \begin{pmatrix}
\fve  \\
\fs  \end{pmatrix} $. We first note that $\fs = (s_1, \dots, s_j, q^{\{k-j-1\}}, s_k, \dotsc, s_n)$.

\textit{Type 1: } For $\begin{pmatrix}
 \fve'  \\
\fs'  \end{pmatrix} =
\begin{pmatrix}
\varepsilon_1 & \dots & \varepsilon_j & \varepsilon_{j+1} & \dots & \varepsilon_{k-1} & \fve_i \\
s_1 & \dots & s_j & q & \dots & q & \fs_i  \end{pmatrix}$,  it follows from \eqref{eq: part 1 compa 0 AMZV} and Remark \ref{rmk: compa depth} that 
\begin{align*}
    \depth(\fs') = j + (k - j - 1) + \depth(\fs_i) \geq j + (k - j - 1) + (n - k + 1) = \depth(\fs).
\end{align*}
For $k \leq \ell \leq n$, it follows from \eqref{eq: part 1 compa 0 AMZV} that 
\begin{align*}
    s'_1 + \cdots + s'_{\ell} \leq  s_1 + \cdots + s_{\ell}. 
\end{align*}
Since $w(\fs') = w(\fs)$, one deduces that $T_k(\fs') \leq T_k(\fs)$. Moreover, on verifies that $\Init(\fs) = (s_1, \dotsc, s_{k - 1}) \preceq \Init(\fs')$ and $s'_k = s_{i1} < s_k$ by \eqref{eq: part 1 compa 0 AMZV}.

\textit{Type 2:} For $ \begin{pmatrix}
 \fe'  \\
\mathfrak{t}'  \end{pmatrix} = \begin{pmatrix}
\varepsilon_1 & \dots & \varepsilon_j & \varepsilon_{j+1} & \fe_{i_1 \dots i_{k-j}}  \\
s_1 & \dots & s_j & 1 & \mathfrak{t}_{i_1 \dots i_{k-j}}  \end{pmatrix}$, it follows from \eqref{eq: part 1 compa AMZV} and Remark \ref{rmk: compa depth} that 
\begin{align*}
    \depth(\mathfrak{t}')  &= j + 1 + \depth(\mathfrak{t}_{i_1 \dots i_{k-j}})\\
    &\geq j + 1 + (k - j - 2) + 1 + (n - k + 1) = n + 1  > \depth(\fs).
\end{align*}
For $\ell = k$, since $s_k > q$, it follows from \eqref{eq: part 1 compa AMZV} that
\begin{align*}
    t'_1 + \cdots + t'_k \leq  s_1 + \cdots + s_j + 1 + q(k - j - 1) < s_1 + \cdots + s_{k}.
\end{align*}
For $k < \ell \leq n$, it follows from \eqref{eq: part 1 compa AMZV} that
\begin{align*}
    t'_1 + \cdots + t'_{\ell} \leq s_1 + \cdots + s_{\ell - 1} < s_1 + \cdots + s_{\ell}.
\end{align*}
Since $w(\mathfrak{t}') = w(\fs)$, one deduces that $T_k(\mathfrak{t}') \leq T_k(\fs)$.

\textit{Type 3:} For $ \begin{pmatrix}
 \fm  \\
\mathfrak{u}  \end{pmatrix} = \begin{pmatrix}
\varepsilon_1 & \dots & \varepsilon_{j-1} & \varepsilon_j \varepsilon_{j+1} & \fe_{i_1 \dots i_{k-j}}  \\
s_1 & \dots & s_{j-1} & s_j+1 & \mathfrak{t}_{i_1 \dots i_{k-j}}  \end{pmatrix}$, it follows from \eqref{eq: part 1 compa AMZV} and Remark \ref{rmk: compa depth} that 
\begin{align*}
    \depth(\mathfrak{u})  &= j + \depth(\mathfrak{t}_{i_1 \dots i_{k-j}}) \geq j + (k - j - 2) + 1 + (n - k + 1)  = \depth(\fs).
\end{align*}
For $k \leq \ell \leq n$, it follows from \eqref{eq: part 1 compa AMZV} that
\begin{align*}
    u_1 + \cdots + u_{\ell} \leq s_1 + \cdots + s_{\ell}.
\end{align*}
Since $w(\mathfrak{u}) = w(\fs)$, one deduces that $T_k(\mathfrak{u}) \leq T_k(\fs)$. Moreover, we have $\Init(\fs) \prec (s_1, \dotsc, s_{j - 1}, s_j + 1) \preceq \Init(\mathfrak{u})$. We have proved Part 1.

For Part $2$, we recall the relation $R_{\varepsilon_k}$ given by
\begin{equation*}
 R_{\varepsilon_k} \colon \quad  S_d \begin{pmatrix}
 \varepsilon_k\\
q  \end{pmatrix}  + \varepsilon_k^{-1}D_1 S_{d+1} \begin{pmatrix}
 \varepsilon_k& 1 \\
1 & q-1  \end{pmatrix}  =0.
\end{equation*}
Setting $s_0 = 0$, we may assume that there exists a maximal index $j$ with $0 \leq j \leq k-1$ such that $s_j < q$. We note that $s_k = q$, hence $\Init(\fs) = \fs = (s_1, \dots, s_j, q^{\{k-j-1\}}, q)$. Then  $ \mathcal{BC}_{\varepsilon_{j+1},q} \circ \dots \circ \mathcal{BC}_{\varepsilon_{k-1},q}(R_{\varepsilon_k})$ is of the form
\begin{align} \label{eq: Part 2 Li AMZV}
 S_d &\begin{pmatrix}
 \varepsilon_{j+1} & \dots & \varepsilon_{k} \\
q & \dots & q \end{pmatrix} + \sum \limits_i b_{i_1 \dots i_{k-j}} S_{d+1} \begin{pmatrix}
\varepsilon_{j+1} & \fe_{i_1 \dots i_{k-j}}  \\
1 & \mathfrak{t}_{i_1 \dots i_{k-j}}  \end{pmatrix}  =0,
\end{align}
where $b_{i_1 \dots i_{k-j}} \in K$ and for $2 \leq l \leq k-j$, $ \begin{pmatrix}
 \fe_{i_1 \dots i_l}  \\
 \mathfrak{t}_{i_1 \dots i_l}  \end{pmatrix} $ are arrays satisfying
\begin{equation*}
     \begin{pmatrix}
 \fe_{i_1 \dots i_l}  \\
 \mathfrak{t}_{i_1 \dots i_l}  \end{pmatrix}  \leq  \begin{pmatrix}
 1  \\
q - 1 \end{pmatrix}  +  \begin{pmatrix}
\varepsilon_{k-l+2} & \fe_{i_1 \dots i_{l-1}}  \\
1 & \mathfrak{t}_{i_1 \dots i_{l-1}}  \end{pmatrix}  =  \begin{pmatrix}
\varepsilon_{k-l+2} & \fe_{i_1 \dots i_{l-1}}  \\
q & \mathfrak{t}_{i_1 \dots i_{l-1}}  \end{pmatrix} .
\end{equation*}
Thus 
\begin{equation} \label{eq: part 2 compa AMZV}
     \begin{pmatrix}
 \fe_{i_1 \dots i_{k-j}}  \\
 \mathfrak{t}_{i_1 \dots i_{k-j}}  \end{pmatrix}  \leq  \begin{pmatrix}
 \varepsilon_{j+2}& \dots & \varepsilon_{k} & 1 \\
q& \dots & q &  q - 1 \end{pmatrix} .
\end{equation}

We let $ \begin{pmatrix}
\Sigma'  \\
V'  \end{pmatrix}  =  \begin{pmatrix}
 \varepsilon_{1} &\dots & \varepsilon_j \\
 s_1 &\dots & s_j \end{pmatrix} $ and we apply $\mathcal{B}^*_{\Sigma',V'}$ to \eqref{eq: Part 2 Li AMZV}. Since $s_j < q$, i.e., $s_j + 1 \leq q$, it follows from Lemma \ref{polybesao AMZV} that 
\begin{align*}
  S_d \begin{pmatrix}
\fve \\ \fs \end{pmatrix}  & + \sum \limits_i b_{i_1 \dots i_{k-j}} S_d \begin{pmatrix}
\varepsilon_1 & \dots & \varepsilon_j & \varepsilon_{j+1} & \fe_{i_1 \dots i_{k-j}}  \\
s_1 & \dots & s_j & 1 & \mathfrak{t}_{i_1 \dots i_{k-j}}  \end{pmatrix}  \\
& + \sum \limits_i b_{i_1 \dots i_{k-j}} S_d \begin{pmatrix}
\varepsilon_1 & \dots & \varepsilon_{j-1} & \varepsilon_j \varepsilon_{j+1} & \fe_{i_1 \dots i_{k-j}}  \\
s_1 & \dots & s_{j-1} & s_j+1 & \mathfrak{t}_{i_1 \dots i_{k-j}}  \end{pmatrix}  =0,
\end{align*}
hence 
\begin{align*}
  \zeta_A \begin{pmatrix}
\fve \\ \fs \end{pmatrix}  & + \sum \limits_i b_{i_1 \dots i_{k-j}} \zeta_A \begin{pmatrix}
\varepsilon_1 & \dots & \varepsilon_j & \varepsilon_{j+1} & \fe_{i_1 \dots i_{k-j}}  \\
s_1 & \dots & s_j & 1 & \mathfrak{t}_{i_1 \dots i_{k-j}}  \end{pmatrix}  \\
& + \sum \limits_i b_{i_1 \dots i_{k-j}} \zeta_A \begin{pmatrix}
\varepsilon_1 & \dots & \varepsilon_{j-1} & \varepsilon_j \varepsilon_{j+1} & \fe_{i_1 \dots i_{k-j}}  \\
s_1 & \dots & s_{j-1} & s_j+1 & \mathfrak{t}_{i_1 \dots i_{k-j}}  \end{pmatrix}  =0.
\end{align*}
In other words, we have
\begin{align} \label{eq: part 2 Li 2 AMZV}
  \zeta_A \begin{pmatrix}
\fve \\ \fs \end{pmatrix}  =& - \sum \limits_i b_{i_1 \dots i_{k-j}} \zeta_A \begin{pmatrix}
\varepsilon_1 & \dots & \varepsilon_j & \varepsilon_{j+1} & \fe_{i_1 \dots i_{k-j}}  \\
s_1 & \dots & s_j & 1 & \mathfrak{t}_{i_1 \dots i_{k-j}}  \end{pmatrix}  \\ \notag
& - \sum \limits_i b_{i_1 \dots i_{k-j}} \zeta_A \begin{pmatrix}
\varepsilon_1 & \dots & \varepsilon_{j-1} & \varepsilon_j \varepsilon_{j+1} & \fe_{i_1 \dots i_{k-j}}  \\
s_1 & \dots & s_{j-1} & s_j+1 & \mathfrak{t}_{i_1 \dots i_{k-j}}  \end{pmatrix}.
\end{align}
The first term and the second term on the right hand side of \eqref{eq: part 2 Li 2 AMZV} are referred to as type 2 and type 3 respectively.

We now verify the conditions of arrays of type 2 and type 3 with respect to $ \begin{pmatrix}
\fve  \\
\fs  \end{pmatrix} $.

\textit{Type 2:} For $ \begin{pmatrix}
 \fe'  \\
\mathfrak{t}'  \end{pmatrix} = \begin{pmatrix}
\varepsilon_1 & \dots & \varepsilon_j & \varepsilon_{j+1} & \fe_{i_1 \dots i_{k-j}}  \\
s_1 & \dots & s_j & 1 & \mathfrak{t}_{i_1 \dots i_{k-j}}  \end{pmatrix}$, it follows from \eqref{eq: part 2 compa AMZV} and Remark \ref{rmk: compa depth} that 
\begin{align*}
    \depth(\mathfrak{t}')  = j + 1 + \depth(\mathfrak{t}_{i_1 \dots i_{k-j}}) \geq j + 1 + (k - j)= k + 1  > \depth(\fs).
\end{align*}
It follows from \eqref{eq: part 2 compa AMZV} that
\begin{align*}
    t'_1 + \cdots + t'_k \leq  s_1 + \cdots + s_j + 1 + q(k - j - 1) < s_1 + \cdots + s_{k}.
\end{align*}
Since $w(\mathfrak{t}') = w(\fs)$, one deduces that $T_k(\mathfrak{t}') \leq T_k(\fs)$.

\textit{Type 3:} For $ \begin{pmatrix}
 \fm  \\
\mathfrak{u}  \end{pmatrix} = \begin{pmatrix}
\varepsilon_1 & \dots & \varepsilon_{j-1} & \varepsilon_j \varepsilon_{j+1} & \fe_{i_1 \dots i_{k-j}}  \\
s_1 & \dots & s_{j-1} & s_j+1 & \mathfrak{t}_{i_1 \dots i_{k-j}}  \end{pmatrix}$, it follows from \eqref{eq: part 2 compa AMZV} and Remark \ref{rmk: compa depth} that 
\begin{align*}
    \depth(\mathfrak{u})  = j + \depth(\mathfrak{t}_{i_1 \dots i_{k-j}}) \geq j + (k - j) = \depth(\fs).
\end{align*}
It follows from \eqref{eq: part 2 compa AMZV} that
\begin{align*}
    u_1 + \cdots + u_k \leq s_1 + \cdots + s_{j - 1} + (s_{j} + 1) + q(k - j - 1) + (q - 1) = s_1 + \cdots + s_k.
\end{align*}
Since $w(\mathfrak{u}) = w(\fs)$, one deduces that $T_k(\mathfrak{u}) \leq T_k(\fs)$. Moreover, we have $\Init(\fs) \prec (s_1, \dotsc, s_{j - 1}, s_j + 1) \preceq \Init(\mathfrak{u})$. We finish the proof.
\end{proof}

\begin{proposition} \label{polyalgpartlem AMZV}
For all $k \in \mathbb{N}$ and for all arrays $ \begin{pmatrix}
 \fve  \\
\fs  \end{pmatrix} $, $\zeta_A \begin{pmatrix}
 \fve  \\
\fs  \end{pmatrix} $ can be expressed as a $K$-linear combination of $\zeta_A \begin{pmatrix}
 \fe  \\
\mathfrak{t}  \end{pmatrix} $'s of the same weight such that $\mathfrak{t}$ is $k$-admissible.
\end{proposition}

\begin{proof}
The proof follows the same line as that of \cite[Proposition 3.2]{ND21}. We write down the proof for the reader's convenience.

We consider the following statement: 

$(H_k) \quad$ For all arrays $ \begin{pmatrix}
 \fve  \\
\fs  \end{pmatrix} $, we can express $\zeta_A \begin{pmatrix}
 \fve  \\ 
\fs  \end{pmatrix} $  as a $K$-linear combination of $\zeta_A \begin{pmatrix}
 \fe  \\
\mathfrak{t}  \end{pmatrix} $'s of the same weight such that $\mathfrak{t}$ is $k$-admissible.\\

We need to show that $(H_k)$ holds for all $k \in \mathbb{N}$. We proceed the proof by induction on $k$. For $k = 1$, we prove that $(H_1)$ holds by induction on the first component $s_1$ of $\fs$. If $s_1 \leq q$, then either $\fs$ is  $1$-admissible,  or $\begin{pmatrix}
 \fve  \\ 
\fs  \end{pmatrix} = \begin{pmatrix}
 \varepsilon \\ 
q \end{pmatrix}$. For the case $\begin{pmatrix}
 \fve  \\ 
\fs  \end{pmatrix} = \begin{pmatrix}
 \varepsilon \\ 
q \end{pmatrix}$, we deduce from the relation $R_{\varepsilon}$ that 
\begin{equation*}
\zeta_A \begin{pmatrix}
 \varepsilon\\
q  \end{pmatrix}  = - \varepsilon^{-1}D_1 \zeta_A \begin{pmatrix}
 \varepsilon& 1 \\
1 & q-1  \end{pmatrix},
\end{equation*}
hence $(H_1)$ holds in this case since $(1, q - 1)$ is $1$-admissible. If $s_1 > q$, we assume that $(H_1)$ holds for the array $\begin{pmatrix}
 \fve  \\ 
\fs  \end{pmatrix}$ where $s_1 < s$. We need to shows that $(H_1)$ holds for the array $\begin{pmatrix}
 \fve  \\ 
\fs  \end{pmatrix}$ where $s_1 = s$. Indeed, assume that  $ \begin{pmatrix}
 \fve  \\ 
\fs  \end{pmatrix} = \begin{pmatrix}
 \varepsilon_1 & \cdots & \varepsilon_n  \\ 
s_1 & \cdots & s_n  \end{pmatrix}$. Set $\begin{pmatrix}
 \Sigma \\ 
V  \end{pmatrix} = \begin{pmatrix}
 \varepsilon_1 & \varepsilon_2 & \cdots & \varepsilon_n  \\ 
s_1 - q & s_2 & \cdots & s_n  \end{pmatrix}$. From Proposition \ref{polycer1 AMZV}, we have $\mathcal C_{\Sigma,V}(R_{1})$ is of the form
\begin{equation*}
  S_d \begin{pmatrix}
 \varepsilon_1 & \cdots & \varepsilon_n \\
s_1 & \cdots & s_n \end{pmatrix}  +  \sum \limits_i a_i S_d \begin{pmatrix}
  \fve_i  \\
\fs_i \end{pmatrix}  + \sum \limits_i b_i S_{d+1} \begin{pmatrix}
1 & \fe_i  \\
1 & \mathfrak{t}_i  \end{pmatrix}  =0,
\end{equation*}
where $a_i, b_i \in K$ and $\begin{pmatrix}
  \fve_i  \\
\fs_i \end{pmatrix}$ are arrays satisfying $s_{i1} < q + v_1 = s_1 = s$ for all $i$. Thus we deduce that 
\begin{align*}
     \zeta_A \begin{pmatrix}
 \fve  \\ 
\fs  \end{pmatrix} =  -  \sum \limits_i a_i \zeta_A \begin{pmatrix}
  \fve_i  \\
\fs_i \end{pmatrix}  -  \sum \limits_i b_i \zeta_A \begin{pmatrix}
1 & \fe_i  \\
1 & \mathfrak{t}_i  \end{pmatrix}.
\end{align*}
It is clear that $(1, \mathfrak{t}_i)$ is $1$-admissible. Moreover, for all $i$, since $s_{i1} < s$, we deduce from the induction hypothesis that $(H_1)$ holds for $\begin{pmatrix}
 \fve_i  \\ 
\fs_i  \end{pmatrix}$, which proves that $(H_1)$ holds for $\begin{pmatrix}
 \fve  \\ 
\fs  \end{pmatrix}$. 

We next assume that $(H_{k - 1})$ holds. We need to show that $(H_k)$ holds. It suffices to consider the array $\begin{pmatrix}
 \fve  \\ 
\fs  \end{pmatrix}$ where $\fs$ is not $k$-admissible. Moreover, from the induction hypothesis of $(H_{k - 1})$, we may assume that $\fs$ is $(k - 1)$-admissible. For such array $\begin{pmatrix}
 \fve  \\ 
\fs  \end{pmatrix}$, we claim that $\depth(\fs) \geq k$. Indeed, assume that $\depth(\fs) < k$. Since $\fs$ is $(k - 1)$-admissible, one verifies that $\fs$ is $k$-admissible, which is a contradiction.

Assume that $ \begin{pmatrix}
 \fve  \\ 
\fs  \end{pmatrix} = \begin{pmatrix}
 \varepsilon_1 & \cdots & \varepsilon_n  \\ 
s_1 & \cdots & s_n  \end{pmatrix}$ where $\fs$ is not $k$-admissible and $\depth(\fs) \geq k$. In order to prove that $(H_k)$ holds for the array $\begin{pmatrix}
 \fve  \\ 
\fs  \end{pmatrix}$, we proceed by induction on $s_1 + \cdots + s_k$. The case $s_1 + \cdots + s_k = 1$ is clear.  Assume that $(H_k)$ holds when $s_1 + \cdots + s_k < s$. We need to show that $(H_k)$ holds when $s_1 + \cdots + s_k = s$. To do so, we give the following algorithm:

\textit{Algorithm: } We begin with an array $\begin{pmatrix}
 \fve  \\ 
\fs  \end{pmatrix}$ where $\fs$ is not $k$-admissible, $\depth(\fs) \geq k$ and $s_1 + \cdots + s_k = s$.

\textit{Step 1:} From Proposition \ref{polydecom AMZV}, we can decompose $\zeta_A \begin{pmatrix}
 \fve  \\
 \fs  \end{pmatrix} $ as follows:
    \begin{equation} \label{eq: polyalgpartlem AMZV}
    \zeta_A \begin{pmatrix}
 \fve  \\
\fs  \end{pmatrix}
    = \underbrace{\sum\limits_i a_i \zeta_A \begin{pmatrix}
 \fve'_i  \\
\fs'_i  \end{pmatrix} }_\text{type 1} + \underbrace{\sum\limits_i b_i\zeta_A \begin{pmatrix}
 \fe_i'  \\
\mathfrak{t}'_i  \end{pmatrix} }_\text{type 2} + \underbrace{\sum\limits_i c_i\zeta_A \begin{pmatrix}
 \fm_i  \\
\mathfrak{u}_i  \end{pmatrix} }_\text{type 3}  ,
    \end{equation}
    where $a_i,  b_i, c_i \in K$. Note that the term of type $1$ disappears when $\Init(\fs) = \fs$ and $s_n = q$. Moreover, for all arrays $ \begin{pmatrix}
 \fe  \\
\mathfrak{t}  \end{pmatrix} $ appearing on the right hand side of \eqref{eq: polyalgpartlem AMZV}, we have $\depth(\mathfrak{t}) \geq \depth(\fs) \geq k$ and $t_1 + \cdots + t_k \leq s_1 + \cdots + s_k = s$.

\textit{Step 2:} For all arrays $ \begin{pmatrix}
 \fe  \\
\mathfrak{t}  \end{pmatrix} $ appearing on the right hand side of \eqref{eq: polyalgpartlem AMZV}, if $\mathfrak{t}$ is either $k$-admissible or $\mathfrak{t}$ satisfies the condition $t_1 + \cdots + t_k < s$, then we deduce from the induction hypothesis that $(H_k)$ holds for the array $\begin{pmatrix}
 \fve  \\
\fs  \end{pmatrix}$, and hence we stop the algorithm. Otherwise, there exists an array $\begin{pmatrix}
 \fve_1  \\
\fs_1  \end{pmatrix}$ where $\fs_1$ is not $k$-admissible, $\depth(\fs_1) \geq k$ and $s_{11} + \cdots + s_{1k} = s$. For such an array, we repeat the algorithm for $\begin{pmatrix}
 \fve_1  \\
\fs_1  \end{pmatrix}$. It should be remarked that the array $\begin{pmatrix}
 \fve_1  \\
\fs_1  \end{pmatrix}$ is of type $1$ or type $3$ with respect to $\begin{pmatrix}
 \fve  \\
\fs  \end{pmatrix}$. Indeed, if the array $\begin{pmatrix}
 \fve_1  \\
\fs_1  \end{pmatrix}$ is of type $2$, then we deduce from Proposition \ref{polydecom AMZV} that $s_{11} + \cdots + s_{1k} < s_{1} + \cdots + s_{k} = s$, which is a contradiction.

It remains to show that the above algorithm stops after a finite number of steps. Set $\begin{pmatrix}
 \fve_0 \\
\fs_0  \end{pmatrix} = \begin{pmatrix}
 \fve  \\
\fs  \end{pmatrix}$. Assume that the above algorithm does not stop. Then there exists a sequence of arrays $\begin{pmatrix}
 \fve_0 \\
\fs_0  \end{pmatrix}, \begin{pmatrix}
 \fve_1  \\
\fs_1  \end{pmatrix}, \begin{pmatrix}
 \fve_2  \\
\fs_2  \end{pmatrix}, \dotsc$ such that for all $i \geq 0$, $\fs_i$ is not $k$-admissible, $\depth(\fs_i) \geq k$ and $\begin{pmatrix}
 \fve_{i + 1}  \\
\fs_{i + 1}  \end{pmatrix}$ is of type $1$ or type $3$ with respect to $\begin{pmatrix}
 \fve_i  \\
\fs_i  \end{pmatrix}$. From Proposition \ref{polydecom AMZV}, we obtain an infinite sequence
\begin{align*}
    \Init(\fs_0) \preceq \Init(\fs_1) \preceq \Init(\fs_2) \preceq \cdots
\end{align*}
which is increasing. For all $i \geq 0$, since $\fs_i$ is not $k$-admissible and $\depth(\fs_i) \geq k$, we have $\depth(\Init(\fs_i)) \leq k$, hence $\Init(\fs_i) \preceq q^{\{k\}}$. Thus there exists an index $i_0$ sufficiently large such that $\Init(\fs_i) = \Init(\fs_{i + 1})$ for all $i \geq i_0$. It follows from Proposition \ref{polydecom AMZV} that $\begin{pmatrix}
 \fve_{i + 1}  \\
\fs_{i + 1}  \end{pmatrix}$ is of type $1$ with respect to $\begin{pmatrix}
 \fve_i  \\
\fs_i  \end{pmatrix}$ for all $i \geq i_0$. Moreover, if we set $\ell = \depth(\Init(\fs_i)) + 1$, then $s_{(i + 1)\ell} < s_{i\ell}$ for all $i \geq i_0$, which is a contradiction. We deduce that the algorithm stops after a finite number of steps. 
\end{proof}

Consequently, we obtain a weak version of Brown's theorem for AMZV's as follows.
\begin{proposition} \label{prop: weak Brown AMZV}
The set of all elements $\zeta_A \begin{pmatrix}
 \fve  \\
\fs  \end{pmatrix}$ such that $\zeta_A \begin{pmatrix}
 \fve  \\
\fs  \end{pmatrix}  \in \mathcal{AT}_w$ forms a set of generators for $\mathcal{AZ}_w$. Here we recall that $\mathcal{AT}_w$ is the set of all AMZV's $\zeta_A \begin{pmatrix}
 \fve  \\
\fs  \end{pmatrix}  =  \Li \begin{pmatrix}
 \fve  \\
\fs  \end{pmatrix}$ of weight $w$ such that $ \begin{pmatrix}
 \fve  \\ 
\fs  \end{pmatrix} = \begin{pmatrix}
 \varepsilon_1 & \cdots & \varepsilon_n  \\ 
s_1 & \cdots & s_n  \end{pmatrix}$ with $s_1, \dots, s_{n-1} \leq q$ and $s_n < q$ introduced in the paragraph preceding Proposition \ref{prop: weak Brown}.
\end{proposition}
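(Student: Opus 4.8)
The plan is to transport the weak Brown's theorem for ACMPL's (Proposition \ref{prop: weak Brown}) to the setting of AMZV's via the operations $\mathcal B^*$ and $\mathcal C$. First I would set up the binary-relation machinery of \S \ref{sec:Todd} verbatim for the power sums $S_d\begin{pmatrix}\fve\\\fs\end{pmatrix}$, $S_{<d}\begin{pmatrix}\fve\\\fs\end{pmatrix}$ attached to AMZV's. The key input is Proposition \ref{sums}, which is the AMZV-analogue of Proposition \ref{polysums}; it guarantees that products $S_d\cdot S_d$, $S_{<d}\cdot S_{<d}$ and $S_d\cdot S_{<d}$ re-expand as $\F_p$-linear combinations of power sums $S_d\begin{pmatrix}\fm_i\\\fu_i\end{pmatrix}$ with $\begin{pmatrix}\fm_i\\\fu_i\end{pmatrix}\le\begin{pmatrix}\fve\\\fs\end{pmatrix}+\begin{pmatrix}\fe\\\ft\end{pmatrix}$ and controlled depth. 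With this in hand, the operators $\mathcal B^*_{\sigma,v}$, $\mathcal B^*_{\Sigma,V}$, $\mathcal C_{\Sigma,V}$ and $\mathcal{BC}_{\varepsilon,q}$ are defined exactly as in \S \ref{sec:Todd}, sending $\mathfrak{BR}_w\to\mathfrak{BR}_{w+v}$ (resp. $\mathfrak{BR}_{w+q}$), and Lemma \ref{polybesao} and Proposition \ref{polycer1} carry over with $\Si$ replaced by $S$ and with the seed relation $R_\varepsilon$ now furnished by Chen's formula \eqref{eq:Delta Chen} together with the fundamental relation (note the shape of $R_\varepsilon$ changes slightly, but it is still a binary relation expressing $S_d\begin{pmatrix}\varepsilon\\q\end{pmatrix}$ in terms of $S_{d+1}$ of lower initial tuples).

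Next I would replay the decomposition Proposition \ref{polydecom} in the AMZV setting: given a positive array $\begin{pmatrix}\fve\\\fs\end{pmatrix}$ with $\Init(\fs)=(s_1,\dots,s_{k-1})$ and $s_k>q$, apply $\mathcal C_{\Sigma,V}(R_\varepsilon)$ with $\begin{pmatrix}\Sigma\\V\end{pmatrix}=\begin{pmatrix}\varepsilon^{-1}\varepsilon_k&\varepsilon_{k+1}&\dots\\s_k-q&s_{k+1}&\dots\end{pmatrix}$, then successively apply the $\mathcal{BC}$ and $\mathcal B^*$ operators to climb back up to the full initial segment, yielding a type-1/type-2/type-3 decomposition of $\zeta_A\begin{pmatrix}\fve\\\fs\end{pmatrix}$ with the same weight-drop and $\Init$-increase properties. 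The only difference from \cite{ND21} and \S \ref{sec:algebraic part} is that the coefficients are no longer automatically divisible by $D_1$ and the arrays $\begin{pmatrix}\fe_i\\\ft_i\end{pmatrix}$ produced by Chen's formula have a slightly different (but still $\le\begin{pmatrix}1\\q-1\end{pmatrix}+\begin{pmatrix}\Sigma\\V\end{pmatrix}$-bounded) shape — for the weak Brown's theorem these refinements are irrelevant, so I would simply record the analogue of Proposition \ref{polyalgpartlem}: every $\zeta_A\begin{pmatrix}\fve\\\fs\end{pmatrix}$ is a $K$-linear combination of $\zeta_A\begin{pmatrix}\fe\\\ft\end{pmatrix}$'s of the same weight with $\ft$ $k$-admissible, proved by the same double induction (on $k$, then on $s_1+\dots+s_k$) with the termination argument using $\Init(\fs_i)\preceq q^{\{k\}}$. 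Taking $k=w$ gives that $\mathcal{AT}_w$ generates $\mathcal{AZ}_w$, which is the statement.

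The main obstacle — and the place where genuine care is needed rather than copying — is verifying that Chen's shuffle formula \eqref{eq:Delta Chen} really does give a binary relation $R_\varepsilon$ of the right form, i.e. that the auxiliary terms $S_{d}\begin{pmatrix}\varepsilon\epsilon&1\\s+t-i&i\end{pmatrix}$ appearing there, after being fed through $\mathcal C_{\Sigma,V}$, land in $\mathfrak{BR}_{w+v}$ with the weight/initial-tuple inequalities of Proposition \ref{polydecom} intact. Concretely, Chen's formula mixes a term $S_d$ with terms $S_{d}$ of larger depth but the same index $d$, whereas the ACMPL seed relation $R_\varepsilon$ cleanly separates an $S_d$ term from $S_{d+1}$ terms; so one must check that the $S_d\cdot S_{<d}$ expansion of Proposition \ref{sums}(3) together with the telescoping $\sum_{j<d}$ still produces the binary (two-index) shape, and that the extra $\Delta^i_{s,t}$-terms respect $T_k(\ft)\le T_k(\fs)$ because $i<s+t$ forces the partial sums to strictly decrease past position $k$. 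Once this compatibility is in place the rest is a routine transcription, so I would state the combinatorial lemmas (analogues of Lemma \ref{polybesao}, Proposition \ref{polycer1}, Proposition \ref{polydecom}, Proposition \ref{polyalgpartlem}) and remark that their proofs follow \S \ref{sec:algebraic part} and \cite{ND21} mutatis mutandis, then conclude Proposition \ref{prop: weak Brown AMZV} by specializing to $k=w$ exactly as in the proof of Proposition \ref{prop: weak Brown}.
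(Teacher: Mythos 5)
Your proposal is exactly the paper's approach: the paper's own proof is a one-line remark that the algebraic theory of \S\ref{sec:algebraic part} transports to AMZV's verbatim once Proposition~\ref{sums} is in place, and you have simply written out that transcription. One small correction to your account of the "main obstacle": the seed relation $R_\varepsilon$ is \emph{literally} the same identity in both settings, since by Lemma~\ref{agree} we have $S_d = \Si_d$ for all arrays with entries $\le q$, so $R_\varepsilon$ comes straight from Thakur's fundamental relation and has nothing to do with Chen's formula. Chen's formula \eqref{eq:Delta Chen} enters only through Proposition~\ref{sums}, whose stated $\le$ and depth bounds already guarantee that the extra $\Delta^i_{s,t}$-terms produced inside $\mathcal C_{\Sigma,V}$ and $\mathcal{BC}_{\varepsilon,q}$ respect $T_k(\mathfrak t)\le T_k(\fs)$; so the check you were worried about is a black-box consequence of Proposition~\ref{sums}, not an extra verification. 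With that clarification the rest of your outline — analogues of Lemma~\ref{polybesao}, Proposition~\ref{polycer1}, Proposition~\ref{polydecom}, Proposition~\ref{polyalgpartlem}, then $k=w$ — is precisely what the paper intends, and your observation that $D_1$-divisibility of coefficients can be dropped for the weak version (as opposed to the strong Brown theorem) is a correct and relevant remark.
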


\begin{proof}
It follows from Proposition \ref{polyalgpartlem AMZV} in the case of $k = w$.
\end{proof}

\subsection{Proof of Theorem \ref{thm: ZagierHoffman AMZV}} \label{sec:proof Theorem A} 

As a direct consequence of Proposition \ref{prop: weak Brown} and Proposition \ref{prop: weak Brown AMZV}, we get

\begin{theorem} \label{thm:bridge}
The $K$-vector space $\mathcal{AZ}_w$ of AMZV's of weight $w$ and the $K$-vector space $\mathcal{AL}_w$ of ACMPL's of weight $w$ are the same.
\end{theorem}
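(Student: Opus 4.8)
The plan is to show each vector space is contained in the other by producing, for both AMZV's and ACMPL's, \emph{the same} finite spanning set $\mathcal{AT}_w$, and then invoke Lemma \ref{agree} to identify the two sides term by term. More precisely, I would argue as follows. First, by Proposition \ref{prop: weak Brown AMZV}, the set $\{\zeta_A\begin{pmatrix}\fve\\\fs\end{pmatrix} : \zeta_A\begin{pmatrix}\fve\\\fs\end{pmatrix}\in\mathcal{AT}_w\}$ spans $\mathcal{AZ}_w$ over $K$; here an element of $\mathcal{AT}_w$ is indexed by a positive array $\begin{pmatrix}\varepsilon_1 & \dots & \varepsilon_n\\ s_1 & \dots & s_n\end{pmatrix}$ of weight $w$ with $s_1,\dots,s_{n-1}\le q$ and $s_n<q$. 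Symmetrically, by Proposition \ref{prop: weak Brown}, the set $\{\Li\begin{pmatrix}\fve\\\fs\end{pmatrix} : \Li\begin{pmatrix}\fve\\\fs\end{pmatrix}\in\mathcal{AT}_w\}$ spans $\mathcal{AL}_w$ over $K$, with the indexing set being literally the same collection of positive arrays.

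Next comes the key identification. For an array $\begin{pmatrix}\fve\\\fs\end{pmatrix}$ with $s_1,\dots,s_{n-1}\le q$ and $s_n<q$, every component $s_i$ satisfies $s_i\le q$, so Lemma \ref{agree} applies and gives
\begin{equation*}
\zeta_A\begin{pmatrix}\fve\\\fs\end{pmatrix} = \Li\begin{pmatrix}\fve\\\fs\end{pmatrix}.
\end{equation*}
Thus the two spanning sets — one for $\mathcal{AZ}_w$, one for $\mathcal{AL}_w$ — consist of \emph{exactly the same elements} of $K_\infty$. Since $\mathcal{AZ}_w$ is the $K$-span of the first set and $\mathcal{AL}_w$ is the $K$-span of the second, and the two sets coincide as subsets of $K_\infty$, we conclude $\mathcal{AZ}_w = \mathcal{AL}_w$ as subspaces of $K_\infty$.

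The only real content to be supplied is Proposition \ref{prop: weak Brown AMZV}, i.e.\ the weak Brown theorem on the AMZV side; everything else in the argument above is formal once that is in place. I expect the main obstacle to be exactly there: one must rerun the machinery of \S\ref{sec:algebraic part} (the operators $\mathcal B^*_{\Sigma,V}$, $\mathcal C_{\Sigma,V}$, $\mathcal{BC}_{\varepsilon,q}$, the decomposition Proposition \ref{polydecom}, and the induction of Proposition \ref{polyalgpartlem}) in the AMZV setting. This works because Proposition \ref{sums} (Harada's stuffle/Chen-type product formulas) is the precise analogue of Proposition \ref{polysums}, and because the fundamental relation $R_\varepsilon$ at weight $q$ — which is the engine driving the decomposition — holds verbatim for $S_d$ by \cite[\S3.4.6]{Tha09} together with Lemma \ref{agree}. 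So the induction producing $\mathcal{AT}_w$ as a spanning set carries over with only cosmetic changes (replacing $\Si_d$ by $S_d$ throughout). Once Proposition \ref{prop: weak Brown AMZV} is granted, Theorem \ref{thm:bridge} follows immediately from Lemma \ref{agree} as above, with no further estimates needed.
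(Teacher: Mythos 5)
Your proof is correct and follows essentially the same route as the paper: the paper also deduces Theorem \ref{thm:bridge} by first establishing the weak Brown-type spanning result for AMZV's (Proposition \ref{prop: weak Brown AMZV}, obtained by rerunning the $\mathcal B^*$, $\mathcal C$, $\mathcal{BC}$ machinery of \S\ref{sec:algebraic part} with $S_d$ in place of $\Si_d$, using Proposition \ref{sums} in place of Proposition \ref{polysums}), and then identifying the two spanning sets via Lemma \ref{agree} since every entry of a tuple in $\mathcal{AT}_w$ is $\le q$.
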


By this identification we apply Theorem \ref{thm:ACMPL} to obtain Theorem \ref{thm: ZagierHoffman AMZV}.

\subsection{Zagier-Hoffman's conjectures in positive characteristic} \label{sec: application MZV}

\subsubsection{Known results}

We use freely the notation introduced in \S \ref{sec:ZagierHoffman}. We recall that for $w \in \N$, $\mathcal Z_w$ denotes the $K$-vector space spanned by the MZV's of weight~$w$ and $\mathcal T_w$ denotes the set of $\zeta_A(\fs)$ where $\fs=(s_1,\ldots,s_r) \in \N^r$ of weight $w$ with $1\leq s_i\leq q$ for $1\leq i\leq r-1$ and $s_r<q$.

Recall that the main results of \cite{ND21} state that
\begin{itemize}
\item For all $w \in \N$ we always have $\dim_K \mathcal Z_w \leq d(w)$  (see \cite[Theorem A]{ND21}).

\item For $w \leq 2q-2$ we have $\dim_K \mathcal Z_w \geq d(w)$  (see \cite[Theorem B]{ND21}). In particular, Conjecture \ref{conj: basis} holds for $w \leq 2q-2$ (see \cite[Theorem D]{ND21}).
\end{itemize}
However, as stated in \cite[Remark 6.3]{ND21} it would be very difficult to extend the method of \cite{ND21} for general weights.

As an application of our main results, we present a proof of Theorem \ref{thm: ZagierHoffman} which settles both Conjectures \ref{conj: dimension} and \ref{conj: basis}.

\subsubsection{Proof of Theorem \ref{thm: ZagierHoffman}} \label{sec:proof Theorem B}
As we have already known the sharp upper bound for $\mathcal Z_w$ (see \cite[Theorem A]{ND21}), Theorem \ref{thm: ZagierHoffman} follows immediately from the following proposition.

\begin{proposition} \label{prop:lower bound}
For all $w \in \N$ we have $\dim_K \mathcal Z_w \geq d(w)$.
\end{proposition}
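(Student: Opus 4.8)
The strategy is to deduce the lower bound $\dim_K \mathcal Z_w \geq d(w)$ by transporting the already-established transcendence results for ACMPL's over to the world of MZV's, exactly as Theorem~\ref{thm: ZagierHoffman AMZV} was obtained from Theorem~\ref{thm:ACMPL}. First I would recall that $\mathcal T_w$ (the set of $\zeta_A(s_1,\dots,s_r)$ of weight $w$ with $s_i \le q$ for $i<r$ and $s_r<q$) has cardinality $d(w)$: this follows from the recursion defining $d(w)$ by the same counting argument used for $t(w)=|\mathcal{AT}_w|$ in \S\ref{sec:weak Brown}. Since the upper bound $\dim_K \mathcal Z_w \le d(w)$ is already known (\cite[Theorem~A]{ND21}), it suffices to prove that the MZV's in $\mathcal T_w$ are linearly independent over $K$; this will simultaneously give Proposition~\ref{prop:lower bound} and Theorem~\ref{thm: ZagierHoffman}.

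The key point is that $\mathcal T_w \subseteq \mathcal{AT}_w$ (taking all $\varepsilon_i=1$), and by Lemma~\ref{agree} every $\zeta_A(\fs)$ with $\fs \in \mathcal T_w$ equals the corresponding ACMPL $\Li\begin{pmatrix}\mathbf 1\\ \fs\end{pmatrix}$, whose tuples satisfy $s_i\le q$ for $i<r$ and $s_r<q$. I would then argue as follows. Suppose we have a nontrivial $K$-linear relation $\sum_{\fs \in \mathcal T_w} a_\fs \,\zeta_A(\fs)=0$. Using the dictionary $\zeta_A(\fs)=\widetilde\pi^{w}\,\Gamma_{s_1}\cdots\Gamma_{s_r}\,\frakLi(\fs;\mathbf 1)(\theta)/(\text{constant})$ from \eqref{eq: ACMPL} (with all $\gamma_i=1$), this becomes a nontrivial relation $\sum_i b_i\,\frakLi(\fs_i;\mathbf 1)(\theta)=0$ among values $\frakLi(\fs_i;\mathbf 1)(\theta)$ attached to tuples $\fs_i$ that are \emph{not} all in $\mathcal J'_w$ (they may contain parts equal to $q$). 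To reduce to Theorem~\ref{thm: trans ACMPL}, I would invoke the bijection $\iota\colon \mathcal J'_w \to \mathcal J_w$ of \S\ref{sec:AS_w} together with Proposition~\ref{polydecom} and Theorem~\ref{thm: strong Brown}: the transition matrix from $\mathcal{AS}_w$ to $\mathcal{AT}_w$ restricted to the all-$\mathbf 1$ sublattice is triangular with units on the diagonal modulo $D_1$, so a relation among the $\Li\begin{pmatrix}\mathbf 1\\ \fs\end{pmatrix}$ with $\fs\in\mathcal T_w$ forces a relation among the $\Li\begin{pmatrix}\fve\\ \fs\end{pmatrix}$ with $\fs\in\mathcal J'_w$, i.e., among elements of $\mathcal{AS}_w$. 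By Theorem~\ref{thm:ACMPL} the latter are linearly independent, so all coefficients vanish, hence all $a_\fs=0$.

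The main obstacle is the bookkeeping in this last reduction step: one must check that the character-preserving decomposition of Proposition~\ref{polydecom} keeps the all-$\mathbf 1$ subspace invariant — that is, that expressing a $\Li\begin{pmatrix}\mathbf 1\\ \fs\end{pmatrix}$ with $\fs\in\mathcal J_w$ in terms of the $\mathcal{AS}_w$-basis never introduces a genuinely alternating term. This is plausible because $R_\varepsilon$ with $\varepsilon=1$ and the operators $\mathcal B^*$, $\mathcal C$, $\mathcal{BC}$ all preserve the property ``all signs equal to $1$'' (the characters multiply, and $\chi=1$ throughout), so the restriction of the invertible transition matrix of Theorem~\ref{thm: strong Brown} to the all-$\mathbf 1$ block is itself invertible; concretely this block is exactly the transition matrix between $\mathcal T_w$ and the basis $\{\Li\begin{pmatrix}\mathbf 1\\ \fs\end{pmatrix}: \fs\in\mathcal J'_w\}$. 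Once this is settled, linear independence of $\mathcal T_w$ — and hence Proposition~\ref{prop:lower bound}, Theorem~\ref{thm: ZagierHoffman}, and Conjectures~\ref{conj: dimension} and~\ref{conj: basis} — follows at once. An alternative, which I would mention as a fallback, is to run the proof of Theorem~\ref{thm: trans ACMPL} verbatim but restricting all tuples and sign-vectors to the trivial character $\epsilon=1$ from the outset, in which case Subcase~2 of that proof already produces the desired conclusion and the final nonvanishing assertion is automatic since $\zeta_A(\fs)\neq 0$ for all $\fs$ by Thakur.
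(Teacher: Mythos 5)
Your proposal is correct but takes a genuinely different (and heavier) route from the paper's. The paper's proof is shorter: it introduces the set $\mathcal S_w$ of CMPL's $\Li(s_1,\dots,s_r)$ of weight $w$ with $q \nmid s_i$ for all $i$ --- exactly the trivial-character subset of $\mathcal{AS}_w$ --- which has cardinality $d(w)$ since $\mathcal J'_w \overset{\iota}{\cong}\mathcal J_w$ and $|\mathcal J_w|=|\mathcal T_w|=d(w)$. Linear independence of $\mathcal S_w$ is then \emph{immediate} from Theorem \ref{thm: trans ACMPL}, being a subset of $\mathcal{AS}_w$, and $\mathcal S_w \subset \mathcal Z_w$ follows from the forward (weak Brown) direction of the decomposition for trivial character together with Lemma \ref{agree}. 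That already gives $\dim_K \mathcal Z_w \geq |\mathcal S_w| = d(w)$, with no transition matrix needed. Your main route instead proves the stronger statement that $\mathcal T_w$ itself is linearly independent, which requires inverting the transition matrix of Theorem \ref{thm: strong Brown} and checking it restricts to the all-$\mathbf 1$ block --- the extra "bookkeeping" you flag. That check does go through (the construction in Proposition \ref{polydecom} with $\varepsilon = 1$ keeps all sign entries trivial, so the $\bmod\,D_1$ unitriangularity passes to the trivial-character block), so your argument is sound, but it is doing more than Proposition \ref{prop:lower bound} requires: the paper instead derives linear independence of $\mathcal T_w$ (Theorem \ref{thm: ZagierHoffman}) \emph{after the fact}, by combining the lower bound with the generation statement and upper bound from \cite{ND21}.

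One small correction to your fallback: the nonvanishing of the coefficient $a$ of $\widetilde\pi^w$ in Subcase 2 of the proof of Theorem \ref{thm: trans ACMPL} is \emph{not} automatic from $\zeta_A(\fs)\neq 0$. It is established there by explicitly writing $(-D_1)^m \widetilde\pi^w = \Li(q-1)^m$ as a $K$-linear combination of $\mathcal{AS}_w$-elements via the weak Brown's theorem (using $\zeta_A(q-1) = -D_1^{-1}\widetilde\pi^{q-1}$); this is precisely the point where the algebraic theory is indispensable, as emphasized in Remark \ref{rem:remark on Brown}.
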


\begin{proof}
We denote by $\mathcal S_w$ the set of CMPL's consisting of $\Li(s_1,\ldots,s_r)$ of weight $w$ with  $q \nmid s_i$ for all $i$. 

Then $\mathcal{S}_w$ can be considered as a subset of $\mathcal{AS}_w$ by assuming $\fe=(1,\ldots, 1)$. In fact, all algebraic relations in \S \ref{sec:algebraic part} hold for CMPL version, i.e., for $\Si_d(s_1,\dots,s_r)=\Si_d \begin{pmatrix}1 & \dots &1 \\ s_1& \dots & s_r\end{pmatrix}$ and $\Li(s_1,\dots,s_r)=\Li \begin{pmatrix}1 & \dots &1 \\ s_1& \dots & s_r\end{pmatrix}$. It follows that $\mathcal S_w$ is contained in $\mathcal Z_w$ by Theorem \ref{thm:bridge}. Further, by \S \ref{sec:AS_w}, $|\mathcal S_w|=d(w)$. By Theorem \ref{thm: trans ACMPL} we deduce that elements in $\mathcal S_w$ are all linearly independent over $K$. Therefore, $\dim_K \mathcal Z_w \geq |\mathcal S_w|=d(w)$.
\end{proof}

\subsection{Sharp bounds without ACMPL's} \label{sec:without ACMPL} 

To end this paper we mention that without ACMPL's it seems very hard to obtain for arbitrary weight $w$
\begin{itemize}
\item either the sharp upper bound $\dim_K \mathcal{AZ}_w \leq s(w)$,
\item or the sharp lower bound $\dim_K \mathcal{AZ}_w \geq s(w)$.
\end{itemize}

We can only do this for small weights with ad hoc arguments. We collect the results below, sketch some ideas for the proofs, and refer the reader to \cite{IKLNDP23} for full details.

\begin{proposition} \label{prop:Brown adhoc}
Let $w \leq 2q-2$. Then $\dim_K \mathcal{AZ}_w \leq s(w)$.
\end{proposition}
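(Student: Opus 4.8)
The statement asserts a sharp upper bound $\dim_K \mathcal{AZ}_w \le s(w)$ for $w \le 2q-2$, which should be provable directly from the algebraic theory developed in \S\ref{sec:algebraic part} together with the weak Brown theorem for AMZV's (Proposition \ref{prop: weak Brown AMZV}), without any appeal to the transcendental input or to ACMPL's. The idea is to exploit the fact that for small weights the set $\mathcal{AT}_w$ of generators is \emph{almost} already of the desired size, so that only finitely many ``extra'' generators need to be eliminated, and these can be removed by producing explicit $K$-linear relations coming from the operators $\mathcal B^*$, $\mathcal C$, $\mathcal{BC}$ and the fundamental binary relation $R_\varepsilon$.

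\medskip
First I would record that for $w < q$ one has $\mathcal{AT}_w = \mathcal{AS}_w$ (every admissible tuple automatically has all parts $< q$, so $q \nmid s_i$ trivially), hence Proposition \ref{prop: weak Brown AMZV} already gives $\dim_K \mathcal{AZ}_w \le |\mathcal{AT}_w| = t(w) = s(w)$ and there is nothing more to do in that range. The content is therefore concentrated in the window $q \le w \le 2q-2$. Here the generators in $\mathcal{AT}_w$ are the $\zeta_A\begin{pmatrix}\fve\\\fs\end{pmatrix}$ with $s_1,\dots,s_{n-1}\le q$ and $s_n<q$; those which are \emph{not} already in $\mathcal{AS}_w$ are precisely the ones in which some part equals $q$ (and since $w\le 2q-2$ at most one part can equal $q$, forcing depth $n=2$, with shape $(q,s_2)$, $s_2<q-1$... more precisely $w=q+s_2$ so $s_2\le q-2$, or $n=1$ with a single part — but a single part $q$ is excluded from $\mathcal{AT}_w$ since $s_r<q$). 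So the redundant generators are exactly $\zeta_A\begin{pmatrix}\varepsilon_1 & \varepsilon_2\\ q & w-q\end{pmatrix}$ with $\varepsilon_1,\varepsilon_2 \in \Fq^\times$.

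\medskip
The next step is to express each such $\zeta_A\begin{pmatrix}\varepsilon_1 & \varepsilon_2\\ q & w-q\end{pmatrix}$ as a $K$-linear combination of the remaining generators, i.e.\ of elements of $\mathcal{AT}_w$ whose parts are all $<q$ (so, of elements of $\mathcal{AS}_w$). This is exactly what Proposition \ref{polydecom}(1)/(2) does in the ACMPL world, and by the algebraic dictionary (Theorem \ref{thm:bridge}, or rather the parallel algebraic theory for AMZV's built on Proposition \ref{sums}) the same decomposition — driven by applying $\mathcal C_{\Sigma,V}(R_{\varepsilon_1})$ with $\Sigma = (\varepsilon_1^{-1}\varepsilon_2)$, $V = (s_2) = (w-q)$ and then reading off the $\Si_d \leftrightarrow S_d$ translation — expresses $\zeta_A\begin{pmatrix}\varepsilon_1 & \varepsilon_2\\ q & w-q\end{pmatrix}$ in terms of $\zeta_A\begin{pmatrix}\varepsilon_1 & \varepsilon_1^{-1}\varepsilon_2 & \dots\end{pmatrix}$-type arrays of strictly smaller ``$T$-profile'' and in terms of depth-$2$ arrays $\begin{pmatrix}\varepsilon_1 & *\\ 1 & *\end{pmatrix}$ with second-part-sum $< w-1$, all of which are either already admissible-with-all-parts-$<q$ or can be recursively reduced. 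For $w\le 2q-2$ this recursion terminates after a bounded number of steps because each application strictly decreases $s_1+\dots+s_k$ while keeping $k$-admissibility, and $k$ never exceeds $2$ in this weight range. Counting the resulting spanning set — the elements of $\mathcal{AT}_w$ with all parts $<q$, which is exactly $\mathcal{AS}_w$ via the bijection $\iota$ — gives $\dim_K \mathcal{AZ}_w \le |\mathcal{AS}_w| = s(w)$.

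\medskip
\textbf{Main obstacle.} The genuinely fiddly point is the bookkeeping in the decomposition step: one must check that every array produced by $\mathcal C_{\Sigma,V}(R_{\varepsilon})$ (and by the subsequent $\mathcal B^*$/$\mathcal{BC}$ applications) either already lies in $\mathcal{AS}_w$ or feeds back into the recursion with a strictly smaller invariant, and in particular that no generator outside $\mathcal{AT}_w$ (e.g.\ with a part $>q$) can appear — this is guaranteed for $w \le 2q-2$ precisely because a part $>q$ plus anything positive already exceeds $2q-2$, so depth is forced to be small and the combinatorics stays finite. Making this termination-and-containment argument airtight, while only invoking the already-established Propositions \ref{polydecom}, \ref{polyalgpartlem} and \ref{prop: weak Brown AMZV} (adapted to AMZV's), is where the real work lies; the author presumably does this by the same ad hoc case analysis announced in \S\ref{sec:without ACMPL}, which is why the bound is only claimed for $w \le 2q-2$ rather than in general.
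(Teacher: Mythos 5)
Your proposal misidentifies the target spanning set, and this is fatal. You assert that the goal is to span $\mathcal{AZ}_w$ by the elements of $\mathcal{AT}_w$ ``with all parts $<q$,'' and that this set equals $\mathcal{AS}_w$ via $\iota$. Neither claim is right. The set $\mathcal{AS}_w$ is indexed by tuples $\fs$ with $q\nmid s_i$; for $q<w\le 2q-2$ this includes $\fs=(w)$, a single part strictly larger than $q$, so $\mathcal{AS}_w$ is not contained in $\mathcal{AT}_w$ at all. Moreover $\iota$ sends $(w)$ to $(q,w-q)$, i.e.\ to a tuple that \emph{does} contain a part equal to $q$ --- so ``all parts $<q$'' is not preserved by $\iota$, and in fact the subset of $\mathcal{AT}_w$ with all parts $<q$ has strictly smaller cardinality than $s(w)$. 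Concretely, for a single tuple $(q,w-q)$ there are $(q-1)^2$ arrays in $\mathcal{AT}_w$ but only $(q-2)(q-1)$ of them are ``redundant'' in the sense relevant here; the $q-1$ arrays $\begin{pmatrix}1&\varepsilon_2\\q&w-q\end{pmatrix}$ must be \emph{retained}. If you could eliminate all arrays with a part equal to $q$, you would get $\dim_K\mathcal{AZ}_w\le s(w)-(q-1)$, contradicting the equality $\dim_K\mathcal{AZ}_w=s(w)$ which the paper establishes.

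The spanning set that actually works, and that the paper uses, is $\mathcal{AT}_w^1$: arrays in $\mathcal{AT}_w$ for which $\varepsilon_i=1$ \emph{whenever} $s_i=q$. The fundamental relation $R_\varepsilon$ lets you replace a block $\begin{pmatrix}\varepsilon\\q\end{pmatrix}$ by terms with smaller parts only at the cost of introducing new characters on the surrounding entries; when iterated via $\mathcal B^*$ and $\mathcal C$, this collapses the character on the $q$-part to $1$ but does not eliminate $q$-parts outright. Making this precise is exactly the role of the explicit Chen-coefficient expansion of $\mathcal B_{\fe,U}\mathcal C_{\fla,W}(R_\epsilon)$ and the inductive claim $H_r$ in the paper's proof, where the condition $w\le 2q-2$ guarantees $u_i\le q-2$ and keeps all produced arrays under control. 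A secondary inaccuracy in your proposal is the statement that the only shape with a part $=q$ is $(q,w-q)$; for $q\ge 4$ one also has shapes such as $(1,q,w-q-1)$ and more generally a $q$ in any non-final position with a composition of $w-q$ distributed around it, so the reduction has to handle these as well, which is why the paper's $H_r$ is stated for arbitrary prefix $U$ and suffix $W$ rather than just depth $2$.
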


\begin{proof}
We denote by $\mathcal{AT}_w^1$ the subset of AMZV's $\zeta_A \begin{pmatrix}
	\fe \\ \fs
	\end{pmatrix}$ of $\mathcal{AT}_w$ such that $\epsilon_i=1$ whenever $s_i=q$ (i.e., the character corresponding to $q$ is always 1) and by $\langle \mathcal{AT}_w^1 \rangle$ the $K$-vector space spanned by the AMZV's in $\mathcal{AT}_w^1$. We see that $|\mathcal{AT}_w^1|=s(w)$. Thus it suffices to prove that $\langle \mathcal{AT}_w^1 \rangle=\mathcal{AZ}_w$.

Recall that for any $\epsilon \in \Fq^\times$, we recall the relation
\begin{equation*}
R_{\varepsilon} \colon \quad  S_d \begin{pmatrix}
 \varepsilon\\
q  \end{pmatrix}  + \varepsilon^{-1}D_1 S_{d+1} \begin{pmatrix}
 \varepsilon& 1 \\
1 & q-1  \end{pmatrix}  =0,
\end{equation*}
where $D_1 = \theta^q - \theta$.

We recall that the coefficients $\Delta^i_{s,t}$ are given in \eqref{eq:Delta Chen}. Let $U=(u_1,\dots,u_n)$ and $W=(w_1,\dots,w_r)$ be tuples of positive integers such that $u_n \leq q-1$ and $w_1,\dots,w_r \leq q$. Let $\fe=(\epsilon_1,\dots,\epsilon_n)\in(\Fq^\times)^n$ and $\fla=(\lambda_1,\dots,\lambda_r)\in (\Fq^\times)^r$. By direct calculations, we show that $\mathcal B_{\fe,U} \mathcal C_{\fla,W} (R_{\epsilon})$ can be written as
\begin{align*}
&S_d\begin{pmatrix}
\fe & \epsilon & \fla\\
U & q & W
\end{pmatrix}+S_d\begin{pmatrix}
\fe & \epsilon \lambda_1 & \fla_-\\
U & q + w_1 & W_-
\end{pmatrix}\\
& +\Delta^{q-1}_{q,w_1}S_d\begin{pmatrix}
\fe & \epsilon \lambda_1 & 1 & \fla_-\\
U & w_1 + 1 & q - 1 & W_-
\end{pmatrix}\\ &+\Delta^{q-1}_{q,w_1}S_d\begin{pmatrix}
\fe & \epsilon \lambda_1 & \lambda_2 &  \lambda_3 & \dots & \lambda_r\\
U & w_1 + 1 & q + w_2 - 1 & w_3 & \dots & w_r
\end{pmatrix}\\
& +\Delta^{q-1}_{q,w_1} \sum_{i=2}^{r-1}\prod_{j=2}^i(\Delta_{q-1,w_j}^{q-1}+1)S_d\begin{pmatrix}
\fe & \epsilon \lambda_1 & \lambda_2 \dots \lambda_i & \lambda_{i+1} &  \lambda_{i+2}  \dots  \lambda_r\\
U & w_1 + 1 & w_2  \dots  w_i& q + w_{i+1} - 1 & w_{i+2}  \dots  w_r
\end{pmatrix}\\
& +\Delta^{q-1}_{q,w_1} \sum_{i=2}^{r}\prod_{j=2}^i(\Delta^{q-1}_{q-1,w_j}+1)S_d \begin{pmatrix}
\fe & \epsilon \lambda_1 & \lambda_2 & \dots & \lambda_i & 1 &  \lambda_{i+1} & \dots & \lambda_r\\
U & w_1 + 1 & w_2 & \dots & w_i& q - 1 & w_{i+1} & \dots & w_r
\end{pmatrix} \\
&+\epsilon^{-1}D_1S_d\begin{pmatrix}
\fe & \epsilon & 1 & \fla\\ U & 1 & q - 1 & W
\end{pmatrix}+\epsilon^{-1}D_1S_d\begin{pmatrix}
\fe & \epsilon & \lambda_1 & \fla_-\\
U & 1 & q + w_1 - 1 & W_-
\end{pmatrix}\\
&+\epsilon^{-1}D_1S_d\begin{pmatrix}
\epsilon_1 & \dots & \epsilon_{n-1} & \epsilon_n\epsilon& 1 & \fla\\
u_1 & \dots & u_{n-1} & u_n + 1 & q - 1 & W
\end{pmatrix}\\
&+\epsilon^{-1}D_1S_d\begin{pmatrix}
\epsilon_1 & \dots & \epsilon_{n-1} & \epsilon_n\epsilon& \lambda_1 & \fla_-\\
u_1 & \dots & u_{n-1} & u_n + 1 & q + w_1 - 1 & W_-
\end{pmatrix}\\
&+\epsilon^{-1}D_1\sum_{i=1}^{r-1}\prod_{j=1}^i(\Delta_{q-1,w_j}^{q-1}+1)S_d\begin{pmatrix}
\fe & \epsilon & \lambda_1 & \dots & \lambda_i & \lambda_{i+1} & \lambda_{i+2} & \dots & \lambda_r\\
U & 1 & w_1 &\dots & w_i & q + w_{i+1} - 1 & w_{i+2} &\dots & w_r
\end{pmatrix}\\
&+\epsilon^{-1}D_1\sum_{i=1}^{r}\prod_{j=1}^i(\Delta^{q-1}_{q-1,w_j}+1)S_d
\begin{pmatrix}
\fe&\epsilon&\lambda_1\dots\lambda_i&1&\lambda_{i+1}\dots\lambda_r\\
U&1&w_1\dots w_i&q-1&w_{i+1}\dots w_r
\end{pmatrix}\\
&+\epsilon^{-1}D_1\sum_{i=1}^{r}\prod_{j=1}^i(\Delta^{q-1}_{q-1,w_j}+1)S_d\begin{pmatrix}
\epsilon_1 \dots \epsilon_{n-1}&\epsilon_n\epsilon&\lambda_1 \dots \lambda_i&1&\lambda_{i+1} \dots \lambda_r\\
u_1 \dots u_{n-1}&u_n+1&w_1 \dots w_i&q-1&w_{i+1} \dots w_r
\end{pmatrix}\\
+&\epsilon^{-1}D_1\sum_{i=1}^{r-1}\prod_{j=1}^i(\Delta^{q-1}_{q-1,w_j}+1)S_d
\begin{pmatrix}
\epsilon_1  \dots  \epsilon_{n-1}&\epsilon_n\epsilon&\lambda_1 \dots \lambda_i& \lambda_{i+1}&\lambda_{i+2} \dots \lambda_r\\
u_1  \dots  u_{n-1} &u_n+1&w_1 \dots w_i& q+w_{i+1}-1&w_{i+2} \dots w_r
\end{pmatrix} \\
&=0.
\end{align*}
We denote by $(*)$ this formula.

We denote by $H_r$ the following claim: for any tuples
of positive integers $U$ and $W=(w_1,\dots,w_r)$ of depth $r$, $\fe \in(\Fq^\times)^{\depth(U)}$ of any depth,
$\fla=(\lambda_1,\dots,\lambda_r)\in(\Fq^\times)^{r}$,  and $\epsilon \in \Fq^\times$ such that
$w(U)+w(W)+q=w$, the AMZV's $\zeta_A\begin{pmatrix}
\fe & \epsilon & \fla\\
U & q & W
\end{pmatrix}$ and $\zeta_A\begin{pmatrix}
\fe & \epsilon \lambda_1 & \fla_-\\
U & q+w_1 & W_-
\end{pmatrix}$  belong to $\langle\mathcal{AT}_w^1\rangle$.

We claim that $H_r$ holds for any $r \geq 0$. The proof is by induction on $r$. For $r=0$, we know that $W=\emptyset$. Letting $U=(u_1,\dots,u_n)$ and $\fe=(\epsilon_1,\dots,\epsilon_n)$ we apply the formula $(*)$  to get an explicit expression for $\mathcal
B_{\fe,U}(R_{\epsilon})$ given by
\begin{align*}
    S_d\begin{pmatrix}
    \fe & \epsilon\\
    U & q
    \end{pmatrix}&+\epsilon^{-1}D_1S_d\begin{pmatrix}
    \fe & \epsilon& 1\\
    U & 1 & q - 1
    \end{pmatrix} +\epsilon^{-1}D_1S_d\begin{pmatrix}
   \epsilon_1 & \dots & \epsilon_{n-1} & \epsilon_n \epsilon & 1 \\
    u_1 & \dots & u_{n-1} & u_n + 1 & q - 1
    \end{pmatrix}=0.
\end{align*}
Since $u_i \leq w(U)=w-q \leq q-2$, we deduce that $\zeta_A\begin{pmatrix}
\fe & \epsilon\\
U & q
\end{pmatrix}\in\langle \mathcal{AT}_w^1 \rangle$ as required.

Suppose that $H_{r'}$ holds for any $r'<r$. We now show that $H_r$ holds. We proceed again by induction on $w_1$. For $w_1=1$, letting $U=(u_1,\dots,u_n)$ and $\fe=(\epsilon_1,\dots,\epsilon_n)$ we apply the formula $(*)$ to get an explicit expression for $\mathcal B_{\fe,U} \mathcal C_{\fla,W} (R_{\epsilon})$. As $w(U)+w(W)=w-q \leq q-2$, by induction we deduce that all the terms except the first two ones in this expression belong to $\langle \mathcal{AT}_w^1 \rangle$. Thus for any $\epsilon \in \Fq^\times$,
\begin{equation} \label{eq:alg_2q-2}
\zeta_A\begin{pmatrix}
\fe & \epsilon & \fla\\
U & q & W
\end{pmatrix}+\zeta_A\begin{pmatrix}
\fe & \epsilon \lambda_1 & \fla_-\\
U & q + 1 & W_-
\end{pmatrix} \in \langle \mathcal{AT}_w^1 \rangle.
\end{equation}
We take $\epsilon=1$. As the first term lies in $\mathcal{AT}_w^1$ by definition, we deduce that
\begin{align*}
\zeta_A\begin{pmatrix}
\fe & \lambda_1 & \fla_-\\
U & q + 1 & W_-
\end{pmatrix} \in \langle \mathcal{AT}_w^1 \rangle.
\end{align*}
Thus in \eqref{eq:alg_2q-2} we now know that the second term lies in $\langle \mathcal{AT}_w^1 \rangle$, which implies that
	\[ \zeta_A\begin{pmatrix}
\fe & \epsilon & \fla\\
U & q & W
\end{pmatrix} \in \langle \mathcal{AT}_w^1 \rangle. \]

We suppose that $H_r$ holds for all $W'=(w_1',\dots,w_r')$ such that $w_1'<w_1$. We have to show that $H_r$ holds for all $W=(w_1,\dots,w_r)$. The proof follows the same line as before. Letting $U=(u_1,\dots,u_n)$ and $\fe=(\epsilon_1,\dots,\epsilon_n)$ we apply the formula $(*)$ to get an explicit expression for $\mathcal B_{\fe,U} \mathcal C_{\fla,W} (R_{\epsilon})$. As $w(U)+w(W)=w-q \leq q-2$, by induction we deduce that all the terms except the first two ones in this expression belong to $\langle \mathcal{AT}_w^1 \rangle$. Thus for any $\epsilon \in \Fq^\times$,
\begin{equation} \label{eq:alg_2q-2 2}
\zeta_A\begin{pmatrix}
\fe & \epsilon & \fla\\
U & q & W
\end{pmatrix}+\zeta_A\begin{pmatrix}
\fe & \epsilon \lambda_1 & \fla_-\\
U & q + w_1 & W_-
\end{pmatrix} \in \langle \mathcal{AT}_w^1 \rangle.
\end{equation}
We take $\epsilon=1$ and deduce
\begin{align*}
\zeta_A\begin{pmatrix}
\fe & \lambda_1 & \fla_-\\
U & q + w_1 & W_-
\end{pmatrix} \in \langle \mathcal{AT}_w^1 \rangle.
\end{align*}
Thus in \eqref{eq:alg_2q-2 2} the second term lies in $\langle \mathcal{AT}_w^1 \rangle$, which implies
	\[ \zeta_A\begin{pmatrix}
\fe & \epsilon & \fla\\
U & q & W
\end{pmatrix} \in \langle \mathcal{AT}_w^1 \rangle. \]
The proof is complete.
\end{proof}

\begin{remark} \label{rem:2q-1}
The condition $w \leq 2q-2$ is essential in the previous proof as it allows us to significantly simplify the expression of $\mathcal B_{\fe,U} \mathcal C_{\fla,W} (R_{\epsilon})$ (see Eq. \eqref{eq:alg_2q-2 2}). For $w=2q-1$ the situation is already complicated but we can manage to prove Proposition \ref{prop:Brown adhoc}. Unfortunately, we are not able to extend it to $w=2q$.
\end{remark}

\begin{proposition}
Let either $w \leq 3q-3$, or $w=3q-2,q=2$. Then $\dim_K \mathcal{AZ}_w \geq s(w)$.
\end{proposition}

\begin{proof}
We outline a proof of this theorem and refer the reader to \cite{IKLNDP23} for more details. For $1 \leq w \leq 3q-2$, we denote by $\mathcal I_w'$ the set of tuples $\fs=(s_1,\ldots,s_r) \in \mathbb N^r$ of weight $w$ as follows:
\begin{itemize}
\item For $1 \leq w \leq 2q-2$, $\mathcal I_w'$ consists of tuples $\fs=(s_1,\ldots,s_r) \in \mathbb N^r$ of weight $w$ where $s_i \neq q$ for all $i$.

\item For $2q-1 \leq w \leq 3q-3$, $\mathcal I_w'$ consists of tuples $\fs=(s_1,\ldots,s_r) \in \mathbb N^r$ of weight $w$ of the form
\begin{itemize}
\item either $s_i \neq q, 2q-1, 2q$ for all $i$,

\item or there exists a unique integer $1 \leq i <r$ such that $(s_i,s_{i+1})=(q-1,q)$.
\end{itemize}

\item For $w = 3q-2$ and $q>2$, $\mathcal I_w'$ consists of tuples $\fs=(s_1,\ldots,s_r) \in \mathbb N^r$ of weight $w$ of the form
\begin{itemize}
\item either $s_i \neq q, 2q-1, 2q, 3q-2$ for all $i$,

\item or there exists a unique integer $1 \leq i <r$ such that $(s_i,s_{i+1}) \in \{(q-1,q),(2q-2,q)\}$, but $\fs \neq (q-1,q-1,q)$,

\item or $\fs=(q-1,2q-1)$.
\end{itemize}

\item For $q=2$ and $w = 3q-2=4$, $\mathcal I_w'$ consists of the following tuples: $(2,1,1)$, $(1,2,1)$ and $(1,3)$.
\end{itemize}
We denote by $\mathcal{AT}_w'$ the subset of AMZV's given by
	\[ \mathcal{AT}_w':=\left\{\zeta_A\begin{pmatrix}
	\fe \\ \fs
	\end{pmatrix} : \fs \in \mathcal I_w', \text{ and } \epsilon_i=1 \text{ whenever } s_i \in \{q,2q-1\} \right\}.\]
Thus, if either $w \leq 3q-3$, or $w=3q-2, q=2$, then one shows that
	\[ |\mathcal{AT}_w'|=s(w). \]

Further, for $w \leq 3q-3$ and any $(\fs;\fe)=(s_1,\dots,s_r;\epsilon_1,\dots,\epsilon_r) \in \N^r \times (\Fq^\times)^r$, if $\zeta_A \begin{pmatrix}
 \fve  \\
\fs  \end{pmatrix} \in \mathcal{AT}_w'$, then $\zeta_A \begin{pmatrix}
s_1 & \dots & s_{r-1} \\
\epsilon_1 & \dots & \epsilon_{r-1}
\end{pmatrix}$ belongs to $\mathcal{AT}_{w-s_r}'$. This property allows us to apply Theorem~\ref{theorem: linear independence} and show by induction on $w \leq 3q-3$ that the AMZV's in $\mathcal{AT}_w'$ are all linearly independent over $K$. The proof is similar to that of Theorem~\ref{thm: trans ACMPL}. We apply Theorem \ref{theorem: linear independence} and reduce to solve a system of $\sigma$-linear equations. By direct but complicated calculations, we show that there does not exist any non-trivial solutions and we are done. For $w=3q-2$ and $q=2$, it can be treated separately by the same method.
\end{proof}

\begin{remark}
1) We note that the MZV's $\zeta_A(1,2q-2)$ and $\zeta_A(2q-1)$ (resp. $\zeta_A(1,3q-3)$ and $\zeta_A(3q-2)$) are linearly dependent over $K$ by \cite[Theorem 3.1]{LRT14}. This explains the above ad hoc construction of $\mathcal{AT}_w'$.

\noindent 2) Despite extensive numerical experiments, we cannot find a suitable basis $\mathcal{AT}_w'$ for the case $w=3q-1$.
\end{remark}



\end{document}